\theoremstyle{plain}
\newtheorem{thm}{Theorem}%[section]
\newtheorem*{thmappendix*}{Theorem 8}%[section]
\newtheorem*{thmintro}{Theorem 10}%[section]
\newtheorem*{thm*}{Theorem}
\newtheorem{lemma}{Lemma}%[section]
\newtheorem{prop}{Proposition}%[section]
\newtheorem{cor}{Corollary}%[section]
\theoremstyle{definition}
\newtheorem{defin}{Definition}%[section]
\newtheorem*{nota}{\sc Notations}%[section]
\theoremstyle{remark}
\newtheorem{rmk}{\sc Remark}%[section]
\newtheorem{eg}{\sc Example}%[section]
\newcommand{\ac}{{\scriptstyle \text{\rm !`}}}
\newcommand{\mbs}{\mathbb{S}}
\newcommand{\mbk}{\mathbb{K}}
\newcommand{\mbn}{\mathbb{N}}
\newcommand{\PP}{\mathcal{P}}
\newcommand{\HOM}{\mathrm{HOM}}
\newcommand{\ov}{\overline}
\newcommand{\Aa}{\mathcal{A}}
\newcommand{\BB}{\mathcal{B}}
\newcommand{\CC}{\mathcal{C}}
\newcommand{\DD}{\mathcal{D}}
\newcommand{\EE}{\mathcal{E}}
\newcommand{\FF}{\mathcal{F}}
\newcommand{\II}{\mathcal{I}}
\newcommand{\RR}{\mathcal{R}}
\newcommand{\UU}{\mathcal{U}}
\newcommand{\VV}{\mathcal{V}}
\newcommand{\WW}{\mathcal{W}}
\newcommand{\AAA}{\mathscr{A}}
\newcommand{\BBB}{\mathscr{B}}
\newcommand{\CCC}{\mathscr{C}}
\newcommand{\DDD}{\mathscr{D}}
\newcommand{\EEE}{\mathscr{E}}
\newcommand{\FFF}{\mathscr{F}}
\newcommand{\PPP}{\mathscr{P}}
\newcommand{\QQQ}{\mathscr{Q}}
\newcommand{\ra}{\rightarrow}
\newcommand{\uCog}{\mathsf{uCog}}
\newcommand{\cCog}{\mathsf{cCog}}
\newcommand{\cLieCog}{\mathsf{cLieCog}}
\newcommand{\uCocom}{\mathsf{uCocom}}
\newcommand{\uNilCocom}{\mathsf{uNilCocom}}
\newcommand{\Liealg}{\mathsf{Lie}-\mathsf{alg}}
\newcommand{\Tfree}{\mathbb{T}}
\newcommand{\Hinich}{\mathsf{Hinich} -\mathsf{cog}}
\newcommand{\Ccomod}{\mathscr C -\mathsf{comod}}
\newcommand{\Pmod}{\mathscr P -\mathsf{mod}}
\newcommand{\Operad}{\mathsf{Operad}}
\newcommand{\cCoop}{\mathsf{cCoop}}
\newcommand{\sSet}{\mathsf{sSet}}
\newcommand{\dgMod}{\mathsf{dgMod}}
\newcommand{\gMod}{\mathsf{gMod}}
\newcommand{\catC}{\mathsf{C}}
\newcommand{\catD}{\mathsf{D}}
\newcommand{\catE}{\mathsf{E}}
\newcommand{\Palg}{\mathscr{P}-\mathsf{alg}}
\newcommand{\Qalg}{\mathscr{Q}-\mathsf{alg}}
\newcommand{\Ccog}{\mathscr{C}-\mathsf{cog}}
\newcommand{\Dcog}{\mathscr{D}-\mathsf{cog}}
\newcommand{\Artinalg}{\mathsf{Artin}-\mathsf{alg}}
\newcommand{\colim}{\mathrm{colim}}
\newcommand{\Map}{\mathrm{Map}}
\newcommand{\Def}{\mathrm{Def}}
\newcommand{\Tw}{ Tw}
\newcommand{\End}{\mathrm{End}}
\newcommand{\Lie}{\mathscr{L}ie}
\newcommand{\uAs}{u\mathscr{A}s}
\newcommand{\Com}{\mathscr{C}om}
\newcommand{\uCom}{u\mathscr{C}om}
\newcommand{\itemt}{\item[$\triangleright$]}
\title{Homotopy theory of unital algebras}
\author{Brice Le Grignou}
\address{University of Utrecht}
\email{b.n.legrignou@uu.nl}
\date{\today}
\subjclass[2010]{18D50, 18G30, 18G55, 55U15 and 55U40}
\keywords{Operads, Koszul duality, bar cobar constructions}
\thanks{The author was supported by the ANR SAT grant.}
\begin{document}

\maketitle

\begin{abstract}
This paper provides an extensive study of the homotopy theory of types of algebras with units, like unital associative algebras or unital commutative algebras for instance. To this purpose, we endow the Koszul dual category of curved coalgebras, where the notion of quasi-isomorphism barely makes sense, with a model category structure Quillen equivalent to that of unital algebras. To prove such a result, we use recent methods based on presentable categories. This allows us to describe the homotopy properties of unital algebras in a simpler and richer way. Moreover, we endow the various model categories with several enrichments which induce suitable models for the mapping spaces and describe the formal deformations of morphisms of algebras. \end{abstract}

\setcounter{tocdepth}{1}
\tableofcontents

\section*{Introduction}

Among the various types of algebras, some of them include units like the ubiquitous unital associative algebras and unital commutative algebras or the unital Batalin--Vilkovisky algebras which arose in Mathematical Physics. When working with a chain complex carrying such an algebraic structure, like the de Rham algebra of differential manifolds for instance, one would like to understand the homotopical properties that this algebraic data satisfies with the underlying differential map. The purpose of the present paper is develop a framework which allows one to prove the homotopical properties carried by types of algebras with units; that is, their property up to quasi-isomorphisms.\\
 
In order to work with types of algebras in a general way, one needs a precise notion which encodes these ones. This is achieved by the concept of an operad. Operads are generalizations of associative algebras which encode some types of algebras (associative, commutative, Lie, Batalin-Vilkovisky, \ldots) in a way that a representation of an operad $\PPP$ is a chain complex together with a structure of algebra of the type encoded by $\PPP$.\\

Besides, one of the most common and powerful tool to study homotopical algebra, that is to study categories with a notion of equivalences, is the model category structure introduced by Daniel Quillen which makes the manipulation of weak equivalences easier by means of other maps called respectively cofibrations and fibrations. Hinich proved in the article \cite{Hinich97}, that the category of algebras over an operad carries a model structure whose weak equivalences are quasi-isomorphisms and whose fibrations are surjections. In a purely theoretical perspective, this model structure describes all the homotopical data of this category. However, the cofibrant objects are not easy to handle; they are the retracts of free algebras whose generators carries a particular filtration.\\

Hinich (\cite{Hinich01}) embedded the category of differential graded (dg) Lie algebras into the category of dg cocommutative coalgebras. From the model structure of the category of dg Lie algebras he obtained a model structure on the category of dg cocommutative coalgebras which is Quillen equivalent to the first one. In this new model category, any object is cofibrant. Moreover, this context allows one to build structures and morphisms using obstruction methods. So this new context of dg cocommutative coalgebras is more suitable to study the homotopy theory of dg Lie algebras than the category of dg Lie algebras itself. In a similar perspective, Lefevre-Hasegawa embedded the category of nonunital dg associative algebras into the category dg coassociative coalgebras shown to be Quillen equivalent to the first one; see \cite{LefevreHasegawa03}. Vallette generalized these results to all types of algebras encoded by any operad satisfying a technical condition: it is an augmented operad. Augmented operads are related to the dual notion of conilpotent cooperads by an adjunction called the operadic bar-cobar adjunction $\Omega \dashv B$. Vallette embedded the category of algebras over an augmented operad $\PPP$ into category of coalgebras over a cooperad $\PPP^\ac$ called the Koszul dual of $\PPP$. He transferred the model structure on the category of $\PPP$-algebras to the category of $\PPP^\ac$-coalgebras and got again a Quillen equivalence between these two model categories.\\

However the operads describing types algebras with units do not satisfy the technical condition to be augmented. To extend the result of Vallette to categories of algebras over any operad, one first needs to modify the operadic bar-cobar adjunction. Inspired by the work of Hirsh and Mill\`es  in \cite{HirshMilles12}, we introduce an adjunction \`a la bar-cobar relating dg operads to curved conilpotent cooperads.
 $$
\xymatrix{\text{Curved conilpotent cooperads} \ar@<1ex>[r]^(0.67){\Omega_u} & \text{dg Operads} \ar@<1ex>[l]^(0.33){B_c}}
$$
Moreover, any morphism of dg operads $f$ from a cobar construction $\Omega_u \CCC$ of a curved conilpotent cooperad $\CCC$ to an operad $\PPP$ comes equipped with an adjunction $\Omega_f \dashv B_f$ relating $\PPP$-algebras to $\CCC$-coalgebras.
 $$
\xymatrix{\CCC-\text{coalgebras} \ar@<1ex>[r]^(0.54){\Omega_f} & \PPP-\text{algebras} \ar@<1ex>[l]^(0.46){B_f}}
$$
The model structure of $\PPP$-algebras can be transferred to the category of $\CCC$-coalgebras along this adjunction.

\begin{thmintro}
 Let $\alpha: \CCC \to  \PPP$ be an operadic twisting morphism and let $\Omega_\alpha \dashv B_\alpha $ be the bar-cobar adjunction between $\PPP$-algebras and $\CCC$-coalgebras induced by $\alpha$. There exists a model structure on the category of $\CCC$-coalgebras whose cofibrations (resp. weak equivalences) are morphisms whose image under $\Omega_\alpha$ is a cofibration (resp. weak equivalence). With this model category structure,  the adjunction $\Omega_\alpha \dashv B_\alpha $ is a Quillen adjunction.
\end{thmintro}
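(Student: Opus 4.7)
The plan is to invoke a general existence theorem for left-induced model structures along adjunctions between locally presentable categories, in the style of the recent work of Hess--Kedziorek--Riehl--Shipley. The input data are the combinatorial Hinich model structure on $\Palg$ and the local presentability of $\Ccog$, which should follow from the (co)monadic description of the cofree $\CCC$-coalgebra construction established earlier in the paper. Given these, the theorem guarantees the existence of the left-lifted model structure on $\Ccog$ provided one can verify the so-called \emph{acyclicity condition}.

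First I would define the candidate classes of the lifted structure: cofibrations and weak equivalences are exactly those described in the statement, and fibrations are defined by the right lifting property against candidate trivial cofibrations. Since $\Omega_\alpha$ is a left adjoint, these candidate cofibrations and weak equivalences are automatically accessible classes, stable under the categorical operations required of them (pushouts, transfinite composition, retracts, two-out-of-three).

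The heart of the argument is the acyclicity condition: every map in $\Ccog$ must factor as a candidate trivial cofibration followed by a candidate fibration. By a standard argument involving pullbacks of path objects, this reduces to constructing, for every $\CCC$-coalgebra $C$, a \emph{good path object}---a factorization of the diagonal $C \to C \times C$ as $C \xrightarrow{w} \mathrm{Path}(C) \to C \times C$ in which $w$ is a candidate weak equivalence. I would build $\mathrm{Path}(C)$ by transferring the polynomial de~Rham path object from the algebra side through a suitable cofree $\CCC$-coalgebra construction, and then verify that $\Omega_\alpha(w)$ is a quasi-isomorphism by a weight filtration on $\CCC$ and a spectral sequence comparison with the corresponding path object of $\Omega_\alpha C$. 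This explicit construction and the filtration argument underlying it form the main obstacle of the proof.

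Once the model structure exists, the Quillen adjunction property of $\Omega_\alpha \dashv B_\alpha$ is tautological: $\Omega_\alpha$ preserves cofibrations and trivial cofibrations by the very definition of these classes in $\Ccog$, so no further verification is needed.
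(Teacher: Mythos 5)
Your overall strategy is the right one and is the same as the paper's: both invoke the Bayeh--Hess--Karpova--Kedziorek--Riehl--Shipley existence theorem for left-induced model structures on presentable categories, and both observe that the Quillen adjunction statement is then automatic. However, there are two genuine gaps in how you propose to feed the hypotheses into that theorem.

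First, the presentability of $\Ccog$ does \emph{not} follow from a comonadic description: the paper explicitly remarks that the category of coalgebras over a curved conilpotent cooperad does not seem to be comonadic over any known presentable category (the cofree construction $\CCC\circ\VV$ only exists at the level of \emph{graded} coalgebras, and the curvature obstructs lifting it to the curved setting). Instead the paper proves presentability directly, by showing that every $\CCC$-coalgebra is the filtered colimit of its finite-dimensional sub-coalgebras and that finite-dimensional $\CCC$-coalgebras are compact. You would need to supply an argument of this kind rather than appeal to comonadicity.

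Second, and more seriously, you have dualized the acyclicity condition the wrong way. For a \emph{left}-induced structure the condition to verify is that maps with the right lifting property against the induced cofibrations are weak equivalences, and the sufficient criterion is the dual of Quillen's path-object argument: one needs a cofibrant replacement functor together with a \emph{cylinder} object for every cofibrant object, i.e.\ a factorization of the fold map $\DDD\oplus\DDD\to\DDD$ through a weak equivalence, not a factorization of the diagonal $\DDD\to\DDD\times\DDD$ through a path object. The paper first shows that the induced cofibrations are exactly the monomorphisms (so every object is cofibrant and the replacement functor is the identity), and then builds the cylinder as the pullback $\EEE=B_\alpha(\AAA)\times_{B_\alpha\Omega_\alpha\DDD}\DDD$, where $\AAA$ is a cylinder of $\Omega_\alpha\DDD$ on the algebra side; the delicate point is proving that $\EEE\to\DDD$ is a weak equivalence, which is done by exhibiting it as a filtered quasi-isomorphism for the coradical filtration, using a contracting homotopy and the operadic K\"unneth formula. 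Your filtration-and-spectral-sequence idea is in the right spirit, but it is aimed at the wrong universal construction; as written, producing a good path object would establish the acyclicity condition for a \emph{right}-induced structure, not the left-induced one claimed here.
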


To prove this theorem, we use new techniques coming from category theory. Specifically, we utilize a Theorem of \cite{BHKKRS14} involving presentable categories.\\

We study in details the particular case where the morphism of operads $f$ from $\Omega_u \CCC$ to $\PPP$ is a quasi-isomorphism; for instance if $f$ is the identity $\iota$ of $\Omega_u \CCC$. In this case, the Quillen adjunction $\Omega_\iota \dashv B_\iota$ is a Quillen equivalence. We show that the fibrant $\CCC$-coalgebras are the images of the $\Omega_u \CCC$-algebras under the functor $B_\iota$. So, switching from the category of $\Omega_u \CCC$-algebras to the category of $\CCC$-coalgebras by the functor $B_\iota$ amounts to introduce new morphisms between $\Omega_u \CCC$-algebras. These new morphisms can be built using obstruction methods. Moreover, any $\Omega_u \CCC$-algebra becomes cofibrant in this new context.\\

This article also deals with enrichments of the category of $\PPP$-algebras for any differential graded operad $\PPP$, and of the category of $\CCC$-coalgebras for any curved cooperad $\CCC$. These two categories are enriched in simplicial sets in a way that recovers the mapping spaces. Moreover, they are tensored, cotensored and enriched in cocommutative coalgebras. These cocommutative coalgebras encode the formal deformations of morphisms of algebras over an operad. In the context of nonsymmetric operads and nonsymmetric cooperads, this enrichment can be extended to all coassociative coalgebras. These coassociative coalgebras encode in single objects both the mapping spaces and the deformation of morphisms.\\

Finally, we apply the framework developed here to concrete operads like the operad $\uAs$ of unital associative algebras and the operad $\uCom$ of unital commutative algebras. For these two operads, the process of curved Koszul duality developed in \cite{HirshMilles12} relates the curved cooperads $\uAs^\ac$ and $\uCom^\ac$ to respectively the operads $\uAs$ and $\uCom$.

\subsection*{Layout}

The article is organized as follows. In the first part, we recall several notions about category theory, and homological algebra. In the second part, we recall the notions of operads, cooperads, algebras over an operad and coalgebras over a cooperad. We also prove some results, as the presentability of the category of coalgebras over a curved cooperad, that we will need in the sequel. The third part deals with enrichments of the category of algebras over an operad and of the category of coalgebras over a curved cooperad; specifically, we study enrichments over simplicial sets, cocommutative coalgebras and coassociative coalgebras. In the fourth part, we introduce an adjunction \`a la bar-cobar between operads and curved cooperads related to a notion of twisting morphism. We use it to define an adjunction between $\PPP$-algebras and $\CCC$-coalgebras for a twisting morphism from a curved cooperad $\CCC$ to an operad $\PPP$. In the fifth section, we recall the projective model structure on the category of algebras over an operad. We describe models for the mapping spaces and we show that the enrichment over cocommutative coalgebras encodes deformations of morphisms. The sixth part transfers the projective model structure on $\PPP$-algebras along the previous adjunction to obtain a model structure on $\CCC$-coalgebras and a Quillen adjunction. The seventh part deals with these model structures in the case where the operad $\PPP$ is the cobar construction $\Omega_u \CCC$ of $\CCC$. In particular, the adjunction induced is a Quillen equivalence. Finally, in the eighth part, we apply the formalism developed in the previous sections to study the examples of unital associative algebras and unital commutative algebras. 

\subsection*{Acknowledgements} This article is the second part of my PhD thesis. I would like to thank my advisor Bruno Vallette for his precious advice and careful review of this paper. I also would like to thank Damien Calaque and Kathryn Hess for reviewing my thesis. Finally, the Laboratory J.A. Dieudonn\'e in the University of Nice provided excellent working conditions.

\subsection*{Conventions and notations}

\begin{itemize}
 \itemt We work over a field $\mbk$.
 \itemt The category of graded $\mbk$-modules is denoted $\gMod$. The category of chain complexes is denoted $\dgMod$. They are endowed with their usual closed symmetric monoidal structures. The internal hom is denoted $[\ ,\ ]$. The category of chain complexes is also endowed with its projective model structure where the weak equivalences are the quasi-isomorphisms and where the fibrations are the degreewise surjections. The degree of an homogeneous element $x$ of a graded $\mbk$-module or a chain complex is denoted $|x|$.
 \itemt For any integer $n$, let $D^n$ be the chain complex generated by one element in degree $n$ and its boundary in degree $n-1$. Let $S^n$ be the chain complex generated by a cycle in degree $n$.
 \itemt The category of simplicial set is denoted $\sSet$. It is endowed with its Kan--Quillen model structure; see \cite[I.11.3]{GoerssJardine09}.
  \itemt The following type of diagram
$$
\xymatrix{\catC \ar@<1ex>[r]^(0.5){L} & \catD \ar@<1ex>[l]^(0.5){R}}
$$
means that the functor $R$ is right adjoint to the functor $L$.
\itemt For any graded $\mbk$-module $\VV$ endowed with a filtration $(F_n \VV)_{n \in \mbn}$, the graded complex associated to this filtration is denoted $G\VV$. In other words,
 $$
 G\VV = \bigoplus_n G_n \VV
 $$
 where $G_n \VV= F_n \VV / F_{n-1} \VV$. If $\VV$ is a chain complex such that $(F_n \VV)_{n \in \mbn}$ is a filtration of chain complexes, that is $d(F_n \VV)\subset F_n \VV$ for any integer $n$, then $G\VV$ inherits a structure of chain complex.
\end{itemize}
\vspace{1cm}

\section{Preliminaries}

In this first section, we recall some categorical concepts like the presentability and the notions of enrichment, tensoring and cotensoring. Moreover, we describe several notions of coalgebras like coassociative coalgebras and cocommutative coalgebras that have been extensively studied respectively in \cite{GetzlerGoerss99} and in \cite{Hinich01}. More specifically the category of coassociative coalgebras admits a model structure related by a Quillen adjunction to the category of simplicial sets; the category of conilpotent cocommutative coalgebras admits a model structure Quillen equivalent to the projective model structure on Lie algebras. Finally, we describe the Sullivan polynomial algebras.

\subsection{Presentable categories}

\begin{defin}[Presentable category]
  Let $\catC$ be a cocomplete category. An object $X$ of $\catC$ is called \textit{compact} if for any filtered diagram $F: I \ra \catC$ the map $\colim (\hom_\catC (X,F)) \ra \hom_\catC (X ,\colim F)$ is an isomorphism. The category $\catC$ is said to be \textit{presentable} if there exists a set of compact objects such that any object of $\catC$ is the colimit of a filtered diagram involving only these compact objects. 
\end{defin}
 
 The following proposition is a classical result of category theory.
 
\begin{prop}\cite{AdamekRosicky94}\label{prop:presentfolk}
 A functor $L: \catC \ra \catD$ between presentable categories is a left adjoint if and only if it preserves colimits.
\end{prop}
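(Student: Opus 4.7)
The ``only if'' direction is standard: if $L$ admits a right adjoint $R$, then for any diagram $(X_i)$ with colimit $X$ in $\catC$ and any $Y \in \catD$ one has
\begin{align*}
\Hom_\catD(L(\colim_i X_i), Y) &\cong \Hom_\catC(\colim_i X_i, R(Y)) \\
&\cong \lim_i \Hom_\catC(X_i, R(Y)) \\
&\cong \lim_i \Hom_\catD(L(X_i), Y),
\end{align*}
and the Yoneda lemma in $\catD$ then identifies $L(\colim_i X_i)$ with $\colim_i L(X_i)$.

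For the converse, suppose $L: \catC \ra \catD$ preserves colimits. The plan is to construct a right adjoint $R$ pointwise, via representability. For each $Y \in \catD$, I would consider the presheaf
\[ F_Y := \Hom_\catD(L(-), Y) : \catC^{op} \ra \mathsf{Set}. \]
Since $L$ preserves colimits and $\Hom_\catD(-,Y)$ turns them into limits, $F_Y$ sends colimits in $\catC$ to limits in $\mathsf{Set}$, i.e.\ $F_Y$ is a continuous presheaf on $\catC$. It then suffices to invoke the representability theorem for presentable categories: every continuous presheaf on a presentable category is representable. Once $F_Y$ is represented by an object $R(Y) \in \catC$, naturality in $Y$ (via Yoneda) promotes $R$ to a functor $\catD \ra \catC$, and the resulting natural isomorphism $\Hom_\catC(X, R(Y)) \cong \Hom_\catD(L(X), Y)$ is exactly the adjunction $L \dashv R$.

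To prove the representability step, I would fix a small full subcategory $\catC_0 \subset \catC$ of compact objects through which every object of $\catC$ is a filtered colimit, as supplied by the definition of presentability. The category of elements of $F_Y |_{\catC_0^{op}}$ is then a small category, and its canonical colimit in $\catC$ produces a candidate $R(Y)$ together with a tautological natural transformation $\Hom_\catC(-, R(Y)) \ra F_Y$. This transformation is a bijection on objects of $\catC_0$ by construction, and extends to a bijection on all of $\catC$ using the compactness of objects of $\catC_0$ together with the fact that any $X \in \catC$ can be written as a filtered colimit of such.

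The main obstacle is precisely this last verification: one must show that the $R(Y)$ cooked up from the small category of elements represents $F_Y$ on all of $\catC$, not merely on $\catC_0$. This is the step that genuinely uses both halves of the presentability hypothesis, namely the compactness of a small set of generators and their density via filtered colimits, and is where any weaker assumption on $\catC$ would break the argument.
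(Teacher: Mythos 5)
The paper offers no proof of this proposition: it is simply quoted from Ad\'amek--Rosick\'y as a classical result, so your argument can only be measured against the standard one. Your ``only if'' direction is correct, and your overall strategy for the converse --- representing the continuous presheaf $F_Y = \Hom_\catD(L(-),Y)$ pointwise and assembling the representing objects into a right adjoint via Yoneda --- is exactly the standard route (the main alternative being the dual Special Adjoint Functor Theorem, using that presentable categories are co-wellpowered and have a strong generator).

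There is, however, a genuine gap in your representability step, and you have located the difficulty in the wrong place. The tautological transformation $\Hom_\catC(-,R(Y)) \ra F_Y$ is \emph{not} ``a bijection on objects of $\catC_0$ by construction'': only surjectivity is built in, since each pair $(X_0,\xi)$ contributes a colimit coprojection $X_0 \ra R(Y)$ whose image is $\xi$. Injectivity requires identifying $\Hom_\catC\bigl(X_0, \colim_{(X,\xi)} X\bigr)$ with $\colim_{(X,\xi)}\Hom_\catC(X_0,X) \simeq F_Y(X_0)$, and compactness of $X_0$ only yields this when the category of elements of $F_Y|_{\catC_0^{op}}$ is \emph{filtered} --- which fails for an arbitrary presheaf. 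To repair this you must choose $\catC_0$ closed under finite colimits (the compact objects can be so chosen), observe that $F_Y$ sends these finite colimits to finite limits, and deduce that the category of elements is filtered; only then does compactness give the bijection on $\catC_0$. By contrast, the step you single out as ``the main obstacle'' --- extending from $\catC_0$ to all of $\catC$ --- is the purely formal part: writing $X = \colim_i X_i$ as a filtered colimit with $X_i \in \catC_0$, one has $\Hom_\catC(X,R(Y)) \simeq \lim_i \Hom_\catC(X_i,R(Y)) \simeq \lim_i F_Y(X_i) \simeq F_Y(X)$, the last isomorphism by continuity of $F_Y$.
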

   
  \subsection{Tensoring, cotensoring and enrichment}

In this section, we recall the definition of tensored-cotensored-enriched category over a monoidal category. See \cite{Borceux94} for the original reference.

\begin{defin}[Action, coaction]\label{defin:tensored}
  Let $(\catE, \otimes , \II)$ be a monoidal category and let $\catC$ be a category.
\begin{itemize}
\itemt An enrichment of $\catC$ over $\catE$ is a bifunctor $[-,-]: \catC^{op} \times \catC \ra \catE$ together with functorial morphisms
$$
\begin{cases}
 \gamma_{X,Y,Z}: [Y,Z]\otimes[X,Y] \ra [X,Z]\ ,\\
 \upsilon_X : \II \ra [X,X]  
\end{cases}
$$
for any objects $X$, $Y$ and $Z$ of $\catC$ and which are composition and unit in terms of the following commutative diagrams.
$$
\xymatrix{[Y,Z]\otimes[X,Y]\otimes[V,X] \ar[r]^(0.58){\gamma_{X,Y,Z} \otimes Id} \ar[d]_{Id \otimes \gamma_{V,X,Y} \otimes Id} & [X,Z]\otimes[V,X] \ar[d]^{\gamma_{V,X,Z}}\\
[Y,Z]\otimes[V,Y] \ar[r]_{\gamma_{V,Y,Z}} & [V,Z]}
$$
$$
\xymatrix{[X,Y]\otimes[X,X]\ar[rd] & [X,Y] \ar[r]^(0.4){\upsilon_Y \otimes Id} \ar[l]_(0.4){Id \otimes \upsilon_X} \ar[d]^{Id} & [Y,Y]\otimes[X,Y] \ar[ld]\\
&[X,Y]}
$$
 \itemt A right action of $\catE$ on $\catC$ is a functor 
 $$
 - \otimes - : \catC \times \catE \ra \catC
 $$
 together with functorial isomorphisms
 $$
 \begin{cases}
 X \otimes (\Aa \otimes \BB)  \simeq (X \otimes \Aa ) \otimes \BB \ ,\\
X \otimes \II  \simeq X\ ,
\end{cases}
 $$
 for any $X \in \catC$, any $\Aa, \BB \in \catE$; these functors are compatible with the monoidal structure of $\catE$ in terms of the following commutative diagrams.
 $$
 \xymatrix{\big( (X \otimes \Aa) \otimes \BB \big) \otimes \CC \ar[r] \ar[d] & \big(X \otimes (\Aa \otimes \BB )\big) \otimes \CC  \ar[r] & X \otimes \big( (\Aa \otimes \BB ) \otimes \CC \big)  \ar[d]\\     
   (X \otimes \Aa) \otimes  (\BB  \otimes \CC ) \ar[rr] &&  X \otimes \big( \Aa \otimes ( \BB  \otimes \CC ) \big)}
   $$
   
   $$
 \xymatrix{ (X \otimes \II) \otimes \Aa  \ar[rd] \ar[rr] && X \otimes (\II \otimes \Aa ) \ar[ld] \\
 & X \otimes \Aa}
 $$
 \itemt A left coaction of $\catE$ on $\catC$ is a functor:
 $$
 \langle -, - \rangle: \catE^{op} \times \catC \ra \catC
 $$
 together with functorial isomorphisms
 $$
 \begin{cases}
 \langle \Aa \otimes \BB, X \rangle \simeq \langle \Aa \langle \BB, X \rangle \rangle\ ,\\
\langle \II, X \rangle  \simeq X\ .
\end{cases}
 $$
which satisfy the commutative duals of the diagrams above.
\end{itemize}

\end{defin}
   
\begin{defin}[Category tensored-cotensored-enriched over a monoidal category]\label{def:tce}
  Let $\catE$ be a monoidal category and let $\catC$ be a category. We say that $\catC$ is \textit{tensored-cotensored-enriched} over $\catE$ if there exists three functors:
  $$
\begin{cases}
 \{-,-\}: \catC^{op} \times \catC \ra \catE\\
 - \triangleleft - : \catC \times \catE \ra \catC\\
 \langle -, - \rangle: \catE^{op} \times \catC \ra \catC
\end{cases}
  $$
 together with functorial isomorphisms
 $$
 \hom_\catC (X \triangleleft \Aa , Y) \simeq \hom_\catE (\Aa, \{X,Y\}) \simeq \hom_\catC (X, \langle \Aa,  Y \rangle)\ ,
 $$
 for any $X,Y \in \catC$, any $\Aa, \BB \in \catE$ and where $\II$ is the monoidal unit of $\catE$, such that $ - \triangleleft - $ defines a right action of $\catE$ on $\catC$.
\end{defin}

The axioms and terminology of these notions are justified by the following proposition.

\begin{prop}\label{prop:tensenrich}
If the category $\catC$ is tensored-cotensored-enriched over $\catE$, then, it is enriched in the usual sense and the functor $\langle -, - \rangle$ is a left coaction in the sense of Definition \ref{defin:tensored}.
\end{prop}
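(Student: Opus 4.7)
The plan is to produce every structure morphism for the enrichment on $\{-,-\}$ and for the coaction on $\langle -,- \rangle$ as the adjunct of a morphism already available from the right action side, and then to deduce every coherence axiom by the Yoneda lemma, where it reduces to the corresponding axiom of the right action.

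For the enrichment structure on $\{-,-\}$, I first define the unit $\upsilon_X : \II \to \{X,X\}$ as the adjunct, under the isomorphism $\hom_\catE(\II,\{X,X\}) \simeq \hom_\catC(X \triangleleft \II, X)$, of the canonical isomorphism $X \triangleleft \II \simeq X$ supplied by the right action axioms. Taking $\Aa = \{X,Y\}$ and the identity in $\hom_\catE(\{X,Y\},\{X,Y\}) \simeq \hom_\catC(X \triangleleft \{X,Y\}, Y)$ produces an evaluation $\varepsilon_{X,Y} : X \triangleleft \{X,Y\} \to Y$. The composition $\gamma_{X,Y,Z}$ is then defined as the adjunct of the composite
$$X \triangleleft \bigl(\{X,Y\} \otimes \{Y,Z\}\bigr) \simeq (X \triangleleft \{X,Y\}) \triangleleft \{Y,Z\} \xrightarrow{\varepsilon_{X,Y} \triangleleft \mathrm{id}} Y \triangleleft \{Y,Z\} \xrightarrow{\varepsilon_{Y,Z}} Z,$$
the first isomorphism coming from the right action axiom. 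To match the paper's convention $\gamma : \{Y,Z\} \otimes \{X,Y\} \to \{X,Z\}$ literally, the tensor factors are reordered via the symmetry of $\catE$, which is available in all cases of interest here. The associativity pentagon and unit triangles for $(\gamma,\upsilon)$ are then verified by applying $\hom_\catE(-,\{X,Z\})$: via the adjunction, both sides of each diagram become maps out of an iterated action $X \triangleleft (\Aa_1 \otimes \cdots \otimes \Aa_n)$, and these coincide by the pentagon and triangle identities for $-\triangleleft-$. Yoneda transports the equality back to $\catE$.

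For the left coaction on $\langle -,- \rangle$, fix $X \in \catC$ and combine the given hom-isomorphisms with the right action axioms to produce the natural chains
$$\hom_\catC(Y, \langle \II, X\rangle) \simeq \hom_\catE(\II, \{Y,X\}) \simeq \hom_\catC(Y \triangleleft \II, X) \simeq \hom_\catC(Y, X),$$
$$\hom_\catC(Y, \langle \Aa \otimes \BB, X\rangle) \simeq \hom_\catC\bigl((Y \triangleleft \Aa) \triangleleft \BB, X\bigr) \simeq \hom_\catC(Y, \langle \Aa, \langle \BB, X\rangle\rangle),$$
the second chain obtained by interpolating $\hom_\catE(\Aa \otimes \BB,\{Y,X\})$ and iterating the $\{-,-\}/\langle-,-\rangle$ adjunction twice. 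Yoneda then yields the required natural isomorphisms $\langle \II, X\rangle \simeq X$ and $\langle \Aa \otimes \BB, X\rangle \simeq \langle \Aa, \langle \BB, X\rangle\rangle$, and the dual pentagon and unit triangle coherences for $\langle -,- \rangle$ are obtained by the same Yoneda argument from the corresponding coherences of the right action $-\triangleleft-$.

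The main obstacle is essentially bookkeeping: tracking naturality and associativity data across several adjunction-transport steps so that the structure maps on $\{-,-\}$ and $\langle -,- \rangle$ are genuinely natural and genuinely coherent, and — in a non-symmetric ambient $\catE$ — reconciling the order of the tensor factors in the convention for $\gamma$ with the order inherited from the right action.
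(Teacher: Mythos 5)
Your proposal is correct and follows essentially the same route as the paper: the composition is obtained as the adjunct of the evaluation maps composed through the associativity isomorphism of the right action, coherence is reduced to the action axioms, and the coaction isomorphisms for $\langle -,-\rangle$ are produced by the same chain of functorial hom-isomorphisms plus Yoneda. Your explicit treatment of the unit and of the tensor-factor ordering in $\gamma$ is slightly more careful than the paper's, but the argument is the same.
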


\begin{proof}[Proof]
Suppose that the category $\catC$ is tensored-cotensored-enriched over $\catE$. On the one hand, let us define the composition relative to the enrichment $\{-,-\}$. For any object $X,Y$ of $\catC$, the identity morphism of $\{X,Y\}$ defines a morphism $X \triangleleft \{X,Y\} \ra Y$. So for any objects $X$, $Y$, $Z$, we have a map
 $$
 X \triangleleft (\{X,Y\} \otimes  \{Y,Z\}) \simeq (X \triangleleft \{X,Y\}) \triangleleft  \{Y,Z\} \ra Y\triangleleft \{Y,Z\} \ra Z
 $$
 and hence a map $\{X,Y\} \otimes  \{Y,Z\} \ra \{X,Z\}$. So is defined the composition. The coherence diagrams of Definition \ref{defin:tensored} ensure us that the composition is associative and gives us a unit. On the other hand, let us show that the functor $\langle -, - \rangle$ is a left coaction. For any $X,Y \in \catC$ and any $\Aa, \BB \in \catE$, we have functorial isomorphisms:
 
\begin{align*}
 \hom_\catC (X, \langle \Aa \otimes \BB , Y \rangle) &\simeq \hom_\catC (X \triangleleft ( \Aa \otimes \BB), Y ) \simeq \hom_\catC ( (X \triangleleft  \Aa) \triangleleft \BB, Y )\\ & \simeq \hom_\catC (X \triangleleft \Aa, \langle  \BB , Y \rangle) \simeq \hom_\catC (X , \langle \Aa \langle  \BB , Y \rangle \rangle)\ .
\end{align*}
By the Yoneda lemma, this gives us a functorial isomorphism $\langle \Aa \otimes \BB , Y \rangle \simeq \langle \Aa \langle  \BB , Y \rangle \rangle$. This functorial isomorphism satisfies the coherence conditions of Definition \ref{defin:tensored} because the functorial isomorphism $X \triangleleft ( \Aa \otimes \BB) \simeq (X \triangleleft  \Aa) \triangleleft \BB$ satisfies the coherence conditions of the same definition.

\end{proof}

\begin{prop}\label{prop:presenttensor}
 Let $\catE$ be a presentable monoidal category and let $\catC$ be a presentable category. 
 
\begin{itemize}
 \itemt Suppose that there exists a right action $- \triangleleft -$ of $\catE$ on $\catC$ and that for any $\Aa \in \catE$ and for any $X \in \catC$, the functors $X \triangleleft -: \catE \ra \catC$ and $- \triangleleft \Aa : \catC \ra \catC$ preserve colimits. Then, $\catC$ is tensored-cotensored-enriched over $\catE$.
 \itemt Suppose that there exists a left coaction $\langle -,  - \rangle$ of $\catE$ on $\catC$ and that there exists a functor 
 $$
 - \triangleleft - : \catC \times \catE \ra \catC
 $$
 together with a functorial isomorphism
 $$
 \hom_\catC (X \triangleleft \Aa , Y) \simeq  \hom_\catC (X, \langle \Aa,  Y \rangle)\ .
 $$
 Suppose moreover that the functor $\langle -,  Y \rangle: \catE^{op} \ra \catC$ sends colimits in $\catE$ to limits. Then, $\catC$ is tensored-cotensored-enriched over $\catE$.
\end{itemize}

\end{prop}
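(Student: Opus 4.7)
The plan is to invoke the adjoint functor theorem for presentable categories, Proposition \ref{prop:presentfolk}, to produce the missing functors in both items, and then to repackage the resulting adjunctions into the data demanded by Definition \ref{def:tce}.

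For the first item, the given right action provides, for each $\Aa \in \catE$ and each $X \in \catC$, two cocontinuous functors between presentable categories, namely $- \triangleleft \Aa : \catC \to \catC$ and $X \triangleleft - : \catE \to \catC$. Applying Proposition \ref{prop:presentfolk} pointwise yields right adjoints, which I will denote $\langle \Aa, - \rangle : \catC \to \catC$ and $\{X, - \} : \catC \to \catE$. Their functoriality in the remaining variable is forced by the uniqueness of right adjoints: a morphism $\Aa \to \Aa'$, respectively $X \to X'$, induces a natural transformation of the left adjoints, which one transports to the right adjoints (reversing direction in the $\catC^{op}$ and $\catE^{op}$ factors). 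The two-variable hom isomorphism
$$
\hom_\catC(X \triangleleft \Aa, Y) \simeq \hom_\catE(\Aa, \{X, Y\}) \simeq \hom_\catC(X, \langle \Aa, Y \rangle)
$$
is then the conjunction of these two pointwise adjunctions, and the hypothesis that $- \triangleleft -$ is a right action is kept unchanged, so all the data of Definition \ref{def:tce} is in place.

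For the second item, only the enrichment $\{-, -\}$ still has to be constructed. The crucial step is to show that $X \triangleleft - : \catE \to \catC$ preserves colimits; once this is done, the first item applies. Given a colimit $\colim_i \Aa_i$ in $\catE$ and any $Y \in \catC$, the adjunction hypothesis combined with the fact that $\langle -, Y \rangle$ sends this colimit to the corresponding limit yields
\begin{align*}
\hom_\catC(X \triangleleft \colim_i \Aa_i, Y) &\simeq \hom_\catC(X, \langle \colim_i \Aa_i, Y \rangle) \simeq \hom_\catC(X, \lim_i \langle \Aa_i, Y \rangle) \\
&\simeq \lim_i \hom_\catC(X \triangleleft \Aa_i, Y) \simeq \hom_\catC(\colim_i X \triangleleft \Aa_i, Y),
\end{align*}
and the Yoneda lemma delivers the required isomorphism $X \triangleleft \colim_i \Aa_i \simeq \colim_i X \triangleleft \Aa_i$. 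Proposition \ref{prop:presentfolk} then produces the right adjoint $\{X, -\}$, and bifunctoriality is handled as in the first item. It remains to upgrade $- \triangleleft -$ to a genuine right action; this is obtained by the same Yoneda manoeuvre used in Proposition \ref{prop:tensenrich}, transporting the associativity and unitality isomorphisms of the left coaction $\langle -, - \rangle$ across the adjunction $X \triangleleft \Aa \dashv \langle \Aa, Y \rangle$, and checking the coherence diagrams dualises those of the proof of Proposition \ref{prop:tensenrich}.

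I do not expect any genuine obstacle: the argument is essentially a two-step application of the adjoint functor theorem together with standard bookkeeping on two-variable adjunctions. The step that requires a little care is the colimit-preservation computation in the second item, since it is precisely what relates the two hypotheses there (the coaction and the one-sided adjunction) to the hypothesis of the first item; once that is carried out, the rest is formal.
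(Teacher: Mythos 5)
Your proposal is correct and follows essentially the same route as the paper: the first item is a direct application of the adjoint functor theorem for presentable categories, and the second item reduces to the first by checking that $X \triangleleft -$ preserves colimits (via the colimit-to-limit hypothesis on $\langle -, Y\rangle$ and Yoneda) and that $- \triangleleft -$ is a right action by the argument of Proposition \ref{prop:tensenrich}. You merely spell out the Yoneda computation and the bifunctoriality bookkeeping that the paper leaves implicit.
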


\begin{proof}[Proof]
 The first point is a direct consequence of Proposition \ref{prop:presentfolk}. Let us prove the second point. Since, $\catE$ left coacts on $\catC$, by the same arguments as in the proof of Proposition \ref{prop:tensenrich}, we can show that the bifunctor $- \triangleleft -$ is a right action of $\catE$ on $\catC$. Moreover, since the functors $\langle -,  Y \rangle$ preserve limits, then any functor of the form $X \triangleleft -$ preserves colimits. The result is then a direct consequence of the first point. 
\end{proof}

\begin{defin}[Homotopical enrichment]\label{defin:almosthomotop}\leavevmode
Let $\mathsf M$ be a model category and let $\catE$ be a model category with a monoidal structure. We say that  $\mathsf M$ is homotopically enriched over $\catE$ if it enriched over $\catE$ and if for any cofibration $f: X \ra X'$ in $\mathsf M$ and any fibration $g : Y \ra Y'$ in $\mathsf M$, the morphism in $\catE$:
 $$
 \{X',Y\} \ra \{X',Y\} \times_{\{X,Y'\}} \{X,Y\}
 $$ 
 is a fibration. Moreover, we require this morphism to be a weak equivalence whenever $f$ or $g$ is a weak equivalence.
\end{defin}

\subsection{Coalgebras}

\begin{defin}[Coalgebras]\leavevmode
A \textit{coassociative coalgebra} $\CCC=(\CC,\Delta ,\epsilon)$ is a chain complex $\CC$ equipped with a coassociative coproduct $\Delta : \CC \to \CC \otimes \CC$ and a counit $\epsilon : \CC \to \mbk$ such that $Id_\CC = (Id_\CC \otimes \epsilon) \Delta = (\epsilon \otimes Id_\CC) \Delta$. The kernel of the map $\epsilon$ is denoted $\ov \CC$. The coalgebra $\CCC$ is said cocommutative if $\Delta = \tau \Delta$ where
$$
\tau (x\otimes y) = (-1)^{|x||y|} y \otimes x\ .
$$
A \textit{graded} atom is a nonzero element $1 \in \CC$ such that $\Delta 1 = 1 \otimes 1$. In this context, let us define the map $\ov \Delta: \ov \CC \to \ov \CC \otimes  \ov \CC$ as follows:
$$
\ov \Delta x := \Delta x - 1 \otimes x - x \otimes 1  \in \ov \CC \otimes \ov \CC\ .
$$
A graded atom $1$ is called a \textit{dg atom} if $d1=0$. A \textit{conilpotent coalgebra} $\CCC=(\CC,\Delta,\epsilon, 1)$ is the data of a coassociative coalgebra $(\CC,\Delta,\epsilon)$ together with a graded atom such that, for any $x \in \ov \CC$, there exists an integer $n$ such that
$$
\ov\Delta^n x := (Id_\CC^{\otimes n-1} \otimes \ov\Delta)  \cdots (Id_\CC \otimes \ov\Delta) \ov\Delta (x) =0\ .
$$
A conilpotent cocommutative coalgebra $\CCC$ is said to be a \textit{Hinich coalgebra} if $1$ is a dg atom. We denote by $\uCog$ be the category of coassociative coalgebras and by $\uCocom$ the category of cocommutative coalgebras. Let $\uNilCocom$ (resp. $\Hinich$) be the full subcategory of $\uCocom$ made up of conilpotent cocommutative coalgebras (resp. Hinich coalgebras).
\end{defin}

Any conilpotent coalgebra $\CCC$ has a canonical filtration called the coradical filtration
$$
F_{n}^{rad}\CC := \mbk \cdot 1 \oplus \{x \in \ov\CC |\ \ov\Delta^{n+1} x =0\}\ ,
$$
which is not necessarily stable under the codifferential $d$.

\begin{prop}
Let $f$ be a morphism of coalgebras between two conilpotent coalgebras $\CCC=(\CC,\Delta,\epsilon, 1)$ and $\DDD=(\DD,\Delta',\epsilon', 1')$. Then, $f(1)=1'$.
\end{prop}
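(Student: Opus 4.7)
The strategy is to decompose $f(1)$ using the counit of $\DDD$, exploit the coalgebra morphism condition to extract a self-cogrouplike equation for the error term, and then use conilpotence to force that error term to vanish.

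First I would use the counit. Applying $(\epsilon \otimes \Id_{\CC}) \Delta = \Id_\CC$ to $1$ gives $\epsilon(1) \cdot 1 = 1$, whence $\epsilon(1) = 1$; similarly $\epsilon'(1') = 1$. Since $\epsilon' f = \epsilon$, we get $\epsilon'(f(1)) = 1$. Writing $\DD = \mbk \cdot 1' \oplus \ov \DD$ via the splitting induced by $\epsilon'$, this means
$$
f(1) = 1' + y \quad \text{with } y \in \ov \DD.
$$

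Next I would use the compatibility of $f$ with the coproducts. From $\Delta 1 = 1 \otimes 1$ we obtain
$$
\Delta'(1' + y) = \Delta' f(1) = (f \otimes f)(1 \otimes 1) = f(1) \otimes f(1) = (1' + y) \otimes (1' + y).
$$
Since $\Delta' 1' = 1' \otimes 1'$, subtracting yields $\Delta' y = 1' \otimes y + y \otimes 1' + y \otimes y$, that is,
$$
\ov \Delta' y = y \otimes y.
$$

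Finally I would iterate this identity. A straightforward induction on $n$, using $\ov\Delta'^{n+1} = (\Id_\DD^{\otimes n} \otimes \ov \Delta') \circ \ov \Delta'^n$, gives $\ov \Delta'^{n} y = y^{\otimes (n+1)}$ for every $n \geq 1$. Since $\DDD$ is conilpotent with respect to $1'$ and $y \in \ov \DD$, there exists some $n$ with $y^{\otimes (n+1)} = \ov\Delta'^n y = 0$. Working over a field, $y^{\otimes (n+1)} = 0$ in $\DD^{\otimes(n+1)}$ forces $y = 0$, and therefore $f(1) = 1'$.

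I do not foresee a real obstacle here: the argument is purely formal, the only mild subtlety being to carry out the induction in step three correctly and to remember that the conilpotence hypothesis on $\DDD$ is phrased in terms of the distinguished atom $1'$, so the decomposition $f(1) = 1' + y$ with $y \in \ov \DD$ is exactly what is needed to apply it.
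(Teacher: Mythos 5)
Your proof is correct and follows essentially the same route as the paper's: write $f(1)=1'+y$ with $y\in\ov\DD$, deduce $\ov\Delta' y = y\otimes y$ from compatibility with the coproducts, and use conilpotence to conclude $y=0$. The extra detail you supply (checking via the counit that the error term really lies in $\ov\DD$) is a welcome elaboration of a step the paper leaves implicit.
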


\begin{proof}[Proof]
Let $x \in \ov\DD$ such that $f(1)= 1' + x$. Since $\Delta f(1)=(f \otimes f) \Delta (1)$, then $\ov \Delta x = x \otimes x$. Since there exits an integer $n$ such that $\ov\Delta^n(x)= x \otimes \cdots \otimes x=0$, then $x=0$.
\end{proof}

\begin{prop}
 The categories $\uCog$, $\uCocom$ and $\uNilCocom$ and $\Hinich$ are presentable. The forgetful functor from $\uCog$ to the category of chain complexes has a right adjoint called the cofree counital coalgebra functor. The same statement holds for the category $\uCocom$. The functor $\CCC \mapsto \ov \CC$ from the category $\Hinich$ to the category of chain complexes has a right adjoint. The tensor product of the category of chain complexes induces closed symmetric monoidal structures on the categories $\uCog$ and $\uCocom$.
\end{prop}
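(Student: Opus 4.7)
The overall plan is to first establish presentability by a Sweedler-type argument, and then to extract all of the required adjoints and internal homs from Proposition \ref{prop:presentfolk}, which says that any colimit-preserving functor between presentable categories automatically admits a right adjoint.

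For presentability, the key input is the classical fact, extended to the dg setting in \cite{GetzlerGoerss99} and \cite{Hinich01}, that any counital coassociative (respectively cocommutative) coalgebra is the filtered colimit of its degreewise finite-dimensional sub-coalgebras; such sub-coalgebras form, up to isomorphism, a set of compact generators, whence the presentability of $\uCog$ and $\uCocom$. For $\uNilCocom$ and $\Hinich$, conilpotency and the presence of a dg atom are preserved both under formation of sub-coalgebras and under filtered colimits computed in $\uCocom$; restricting the generators to the relevant subcategory therefore yields presentability in these cases as well.

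Next I would verify that the forgetful functors $\uCog \to \dgMod$ and $\uCocom \to \dgMod$ preserve colimits. Filtered colimits and coequalizers are created from $\dgMod$ with the obvious induced coalgebra structure, and the coproduct of two coalgebras $(\CC,\Delta_\CC,\epsilon_\CC)$ and $(\DD,\Delta_\DD,\epsilon_\DD)$ is the direct sum $\CC \oplus \DD$ equipped with the block-diagonal comultiplication $\Delta_\CC \oplus \Delta_\DD$ and the sum counit $\epsilon_\CC + \epsilon_\DD$. Proposition \ref{prop:presentfolk} then delivers the cofree counital coalgebra and cofree counital cocommutative coalgebra functors. For $\Hinich$, the colimit of a diagram $(\CCC_i)$ is obtained by taking the colimit of the reduced parts $\ov{\CC}_i$ in $\dgMod$ and re-attaching a fresh atom; consequently the functor $\CCC \mapsto \ov\CC$ preserves colimits and has a right adjoint by the same proposition.

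Finally, the symmetric monoidal structure on $\dgMod$ lifts to $\uCog$ and $\uCocom$ via the usual formulae
$$
\Delta_{\CC \otimes \DD} = (\id \otimes \tau \otimes \id)(\Delta_\CC \otimes \Delta_\DD), \qquad \epsilon_{\CC \otimes \DD} = \epsilon_\CC \otimes \epsilon_\DD.
$$
For any fixed coalgebra $\DDD$, the endofunctor $- \otimes \DDD$ preserves colimits, since these are computed underlying in $\dgMod$ and tensoring with a chain complex preserves colimits there. One last application of Proposition \ref{prop:presentfolk} produces the internal hom and hence the closed symmetric monoidal structure. The principal technical point throughout is the colimit-preservation of the underlying-complex functors; once that is in place, the rest follows formally from presentability.
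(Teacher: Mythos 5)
Your argument is correct and follows essentially the same route as the paper, which simply delegates to \cite[2.1, 2.2, 2.5]{AnelJoyal13}: presentability via the fundamental theorem on finite-dimensional sub-coalgebras, then Proposition \ref{prop:presentfolk} applied to the colimit-preserving forgetful, reduced-part and $-\otimes\DDD$ functors to produce the cofree functors and the internal hom. The one point to tighten is that the compact generators should be the \emph{totally} finite-dimensional sub-dg-coalgebras (which the fundamental theorem does supply, after closing a finite-dimensional sub-coalgebra $\VV$ under the differential via $\VV + d\VV$), rather than merely degreewise finite-dimensional ones, since the latter need not be compact.
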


\begin{proof}[Proof]
 The results are proven in \cite[2.1, 2.2,2.5]{AnelJoyal13} for the category $\uCog$. The methods used apply mutatis mutandis for the other categories.
 \end{proof}

\begin{thm}[\cite{GetzlerGoerss99}]
 The full sub category $\uCog^{\geq 0}$ of $\uCog$ made up of nonnegatively graded coalgebras admits a model structure whose cofibrations are the monomorphisms and whose weak equivalences are the quasi-isomorphisms.
\end{thm}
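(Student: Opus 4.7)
The plan is to construct this model structure as a cofibrantly generated one and invoke Jeff Smith's recognition theorem for combinatorial model structures on a presentable category. First, $\uCog^{\geq 0}$ is locally presentable: the arguments proving presentability of $\uCog$ apply verbatim, the connectivity condition $\CC_n = 0$ for $n<0$ carving out a reflective subcategory stable under the colimits and limits of the ambient category. The class of quasi-isomorphisms satisfies the two-out-of-three axiom and is closed under retracts, because the forgetful functor $\uCog^{\geq 0} \to \dgMod^{\geq 0}$ creates both; monomorphisms are likewise closed under retracts.

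The engine of the argument is the classical fundamental theorem of coalgebras: every element of a coassociative counital coalgebra lies in a finite-dimensional subcoalgebra. Consequently every object of $\uCog^{\geq 0}$ is the filtered colimit of its finite-dimensional subcoalgebras, and up to isomorphism the finite-dimensional objects form a set of $\aleph_0$-presentable generators. I would take as set $I$ of generating cofibrations the inclusions $A \hookrightarrow B$ with $B$ finite-dimensional, and as set $J$ of generating trivial cofibrations the subset of those inclusions that are in addition quasi-isomorphisms. The small object argument then yields the two functorial factorization systems, and the maps having the right lifting property against $I$ are readily identified with the surjective quasi-isomorphisms, a natural candidate for the trivial fibrations.

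The main obstacle is to verify that relative $J$-cell complexes are weak equivalences, i.e., that the class of monic quasi-isomorphisms is stable under pushouts and transfinite compositions in $\uCog^{\geq 0}$. Unlike in $\dgMod$, pushouts in $\uCog^{\geq 0}$ are not computed on the underlying complex, so this step requires care. Given a pushout square
\[
\xymatrix{A \ar@{^{(}->}[r] \ar[d] & B \ar[d] \\ C \ar[r] & D}
\]
with $A\hookrightarrow B$ a monic quasi-isomorphism, one shows that $C \to D$ remains monic and that the quotient complex $D/C$ embeds in (indeed is isomorphic to) $B/A$, which is acyclic. Here the nonnegative grading is crucial: for connective complexes a monomorphism with acyclic cokernel is automatically a quasi-isomorphism, whereas in unbounded grading this fails. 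Transfinite compositions are treated analogously, using that filtered colimits in $\uCog^{\geq 0}$ \emph{are} computed at the level of underlying complexes and preserve acyclicity. Combined with Smith's recognition theorem, these ingredients assemble into the desired model structure.
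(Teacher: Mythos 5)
The paper offers no proof of this statement---it is quoted directly from \cite{GetzlerGoerss99}---so your argument can only be judged on its own terms. Your skeleton (local presentability, the fundamental theorem of coalgebras, generation by finite-dimensional subcoalgebras, a recognition theorem) is the right one, but there is a genuine gap at its heart, namely the generating trivial cofibrations. If you run Kan's recognition theorem with your explicit set $J$ of finite-dimensional acyclic inclusions, then besides ``relative $J$-cell complexes are weak equivalences'' you must also show that every monic quasi-isomorphism is a retract of a relative $J$-cell complex; you never address this, and it fails for your $J$: given a monic quasi-isomorphism $\Aa\hookrightarrow\BB$ and $b\in\BB$, there is in general no \emph{finite-dimensional} subcoalgebra $\FF\ni b$ with $\Aa\cap\FF\hookrightarrow\FF$ acyclic, since killing the relative homology forces one to adjoin infinitely many elements. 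This is exactly why Getzler--Goerss take countable-dimensional inclusions as generating trivial cofibrations. If instead you invoke Smith's theorem properly, you need not choose $J$ at all, but you must then verify that the class of quasi-isomorphisms is accessible (not addressed) and that every map with the right lifting property against $I$ is a quasi-isomorphism. The latter is not ``readily identified'': your characterization of that class as the surjective quasi-isomorphisms is false, as the counit $\mbk\oplus D^1\to\mbk$ of the square-zero coalgebra on $D^1$ is a surjective quasi-isomorphism which fails to lift against the inclusion of $\mbk\cdot 1\oplus\mbk\cdot e$ (with $e$ a primitive degree-$0$ cycle) into the coalgebra obtained by adjoining $f$ with $\Delta f=f\otimes 1+1\otimes f+e\otimes e$. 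The inclusion that is actually needed is true, but requires first showing that every monomorphism is a relative $I$-cell complex and then extracting a section and a fibrewise homotopy from the lifting property.

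Two of your supporting claims are also wrong, though not fatally so. Colimits, hence pushouts, in $\uCog^{\geq 0}$ \emph{are} computed on underlying complexes: the forgetful functor to chain complexes is comonadic and creates colimits, a fact this paper itself uses for $\CCC$-coalgebras. So the stability of monic quasi-isomorphisms under pushout and transfinite composition is immediate rather than delicate. And over a field a monomorphism of chain complexes with acyclic cokernel is a quasi-isomorphism in \emph{any} grading, by the long exact homology sequence; connectivity plays no role at that step. (Where connectivity genuinely matters is elsewhere: in unbounded degrees quasi-isomorphisms cease to be the appropriate weak equivalences for coalgebras, which is the point of the curved and conilpotent theory developed in the rest of the paper.)
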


The category $\Hinich$ is related to the category of Lie-algebras by an adjunction described in \cite{Quillen69}:
$$
\xymatrix{\Hinich \ar@<1ex>[r]^(0.54){\mathcal{L}} & \Liealg\ . \ar@<1ex>[l]^(0.46){\mathcal{C}}}
$$

\begin{thm}\cite{Hinich01}
 There exists a model structure on the category $\Hinich$  whose cofibrations are monomorphisms and whose weak equivalences are morphisms whose image under the functor $\mathcal{L}$ is a quasi-isomorphism. The class of weak equivalences is contained in the class of quasi-isomorphisms. Moreover, the adjunction $\mathcal L \dashv \mathcal C$ is a Quillen equivalence when the category of Lie algebras is equipped with its projective model structure whose fibrations (resp. weak equivalences) are surjections (resp. quasi-isomorphisms) (see \cite{Hinich97}). 
\end{thm}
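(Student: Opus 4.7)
The plan is to obtain the model structure by transferring the projective model structure of Lie algebras along the adjunction $\mathcal{L} \dashv \mathcal{C}$. By definition, a morphism $f$ of $\Hinich$ is declared a weak equivalence when $\mathcal{L}(f)$ is a quasi-isomorphism of Lie algebras, and a cofibration when it is a degreewise monomorphism. Fibrations are defined by the right lifting property against trivial cofibrations. The two-out-of-three and retract axioms for weak equivalences are immediate from the corresponding properties of quasi-isomorphisms and the functoriality of $\mathcal{L}$; monomorphisms in $\Hinich$ are evidently closed under composition, pushouts, and transfinite composition.

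Next I would establish the factorization and lifting axioms via a Smith-type argument for combinatorial model categories, which is available because $\Hinich$ is presentable. I would produce a set of generating cofibrations $I$ from a small set of monomorphisms between presentable Hinich coalgebras (e.g.\ images under $\mathcal{C}$ of generators of the projective structure on Lie algebras, corrected to ensure monomorphicity by passing to coradical subcoalgebras). Similarly, the generating trivial cofibrations $J$ are obtained from acyclic generators in Lie algebras. Factorizations come from the small object argument on $I$ and $J$, using that $\Hinich$ is cocomplete and $I$-cell complexes are monomorphisms. The nontrivial acyclicity condition of Smith's theorem, namely that every $I$-injective is a weak equivalence, is handled by showing that $I$-injectives have the right lifting property against all monomorphisms and using this to identify them with fibrations whose image under $\mathcal{L}$ is a trivial fibration of Lie algebras.

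To prove the Quillen adjunction, note that $\mathcal{L}$ preserves cofibrations (since it sends degreewise monomorphisms of cocommutative coalgebras to monomorphisms of underlying modules, which then produce cofibrations of Lie algebras via the free functor structure of $\mathcal{L}$) and weak equivalences by definition; hence $\mathcal{L}$ is left Quillen. For the Quillen equivalence, the central input is the classical fact, due to Quillen, that for every dg Lie algebra $L$ the counit $\mathcal{L}\,\mathcal{C}(L) \to L$ is a quasi-isomorphism; this follows from the Koszul duality between $\Com$ and $\Lie$ together with a filtration argument on the coradical filtration of $\mathcal{C}(L)$. Combined with the fact that every object of $\Hinich$ is cofibrant (all morphisms being monomorphisms, $0 \to \CCC$ is a cofibration), this shows that the derived counit is a weak equivalence at every object and, by standard abstract nonsense, that the derived unit is a weak equivalence at fibrant $\CCC \in \Hinich$. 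The inclusion of weak equivalences into quasi-isomorphisms then follows: if $\mathcal{L}(f)$ is a quasi-isomorphism, applying $\mathcal{C}$ and using that $\mathcal{C}$ preserves quasi-isomorphisms between cofibrant Lie algebras gives, via the unit, that $f$ itself is a quasi-isomorphism.

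The main obstacle is the verification of Smith's acyclicity condition, because in $\Hinich$ one cannot directly invoke a path object construction in chain complexes: the functor $\mathcal{L}$ does not commute with arbitrary limits, so a genuine model for the factorization of the diagonal must be built by hand, most likely via the Sullivan-style polynomial algebras mentioned in the preliminaries. Getting this path object right — and checking that the trivial cofibration half of the factorization lands in monomorphisms with the expected acyclicity property under $\mathcal{L}$ — is the crux of the argument.
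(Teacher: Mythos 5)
The paper does not prove this statement: it is recalled verbatim from \cite{Hinich01}. The honest benchmark is therefore Hinich's original argument together with the paper's own Theorem \ref{thmprincipal} and Section 6, which establish the exact analogue (a left-induced model structure on coalgebras over a curved conilpotent cooperad) by the same modern strategy you are proposing: presentability plus the acyclicity criterion of \cite{BHKKRS14}, \cite{HKRS15}. Your overall architecture -- left-transfer along $\mathcal{L}\dashv\mathcal{C}$, a combinatorial existence theorem, the counit $\mathcal{L}\mathcal{C}(L)\to L$ being a quasi-isomorphism for the Quillen equivalence, and the unit argument for the inclusion of weak equivalences into quasi-isomorphisms -- is sound and is essentially the route the paper takes in the general case.

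There are, however, two genuine gaps. First, your parenthetical justification that $\mathcal{L}$ sends monomorphisms to cofibrations of Lie algebras does not work as stated: a cofibration in the projective model structure on Lie algebras is not merely an injection of underlying modules, and ``the free functor structure of $\mathcal{L}$'' proves nothing by itself. The actual argument (compare Lemma \ref{lem:cofib} and Proposition \ref{prop:cofib}) filters a monomorphism of coalgebras by the coradical filtration and exhibits each stage of $\mathcal{L}(f)$ as a pushout of a free generating cofibration; without this, neither the identification of cofibrations with monomorphisms nor the Quillen adjunction is established. Second, you correctly isolate the acyclicity condition as the crux but then reach for the wrong tool: for a \emph{left}-induced structure in which every object is cofibrant, the criterion of Theorem \ref{thm:hessandcie} asks for a cylinder object, not a path object, and the Sullivan polynomial algebras naturally produce cocylinders on the algebra side. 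The construction that actually closes the argument (Proposition \ref{prop:proptech} in the paper's general setting, and the corresponding step in Hinich) builds a cylinder for $\CCC$ as the pullback $\mathcal{C}(\mathrm{Cyl}(\mathcal{L}\CCC))\times_{\mathcal{C}\mathcal{L}\CCC}\CCC$ and verifies by a filtered quasi-isomorphism argument that it maps to $\CCC$ by a weak equivalence. Relatedly, your proposed generating cofibrations as ``images under $\mathcal{C}$ of generators'' cannot be right, since $\mathcal{C}$ is the right adjoint; generating the monomorphisms must instead be done abstractly from presentability.
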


\begin{defin}[Deformation problems]
Let $\Artinalg$ be the category of nonpositively graded local finite dimensional dg commutative algebra. A deformation problem is a functor from the category $\Artinalg$ to the category of simplicial sets.
\end{defin}

Lurie showed in \cite{Lurie11} that a suitable infinity-category of deformation problems (called formal moduli problems), is equivalent to the infinity-category of Lie algebras if the characteristic of the base field $\mbk$ is zero. Therefore, it is equivalent to the infinity-category of Hinich coalgebras. In that perspective, any Hinich coalgebra $\CCC$ induces a deformation problem as follows:
$$
R \in \Artinalg \mapsto \Map_{\Hinich}(R^*, \CCC)\ .
$$

\begin{rmk}
 We use the definition of Hinich of a deformation problem given in \cite{Hinich01}. We do not describe here the homotopy theory of such deformation problems nor a precise link with the work of Lurie who uses the framework of quasi categories (see \cite{Lurie11}). In the sequel, we will only use the fact that, for any morphism of deformation problems $f: X \to Y$, if $f(R)$ is a weak equivalence of simplicial sets for any algebra $R \in \Artinalg$, then $f$ is an equivalence of deformation problems.  
\end{rmk}

\subsection{Coalgebras and simplicial sets}

In this subsection, we describe a Quillen adjunction between the category of simplicial sets and the category of coassociative coalgebras. This adjunction is part of the Dold--Kan correspondence. From a simplicial set $X$, one can produce a chain complex $DK(X)$ called the normalized Moore complex. In degree $n$, $DK(X)_n$ is the sub-vector space of $\mbk \cdot X_n$ which is the intersection of the kernels of the faces $d_0$, \ldots, $d_{n-1}$. The differential is $(-1)^n d_n$. Moreover, the Alexander-Whitney map makes the functor $DK$ comonoidal. Then, the diagonal map $X \ra X \times X$ gives to $DK(X)$ a structure of coalgebras. Thus, we have a functor $DK^c$ from simplicial sets to the category $\uCog$ of coassociative coalgebras. This functor $DK^c$ admits a right adjoint $N$ defined by
$$
N(\CC)_n:=\hom_{\uCog}(DK^c(\Delta[n]),\CC)\ .
$$
Actually, we have the following sequence of adjunctions,
$$
\xymatrix{\sSet \ar@<1ex>[r]^(0.46){DK^c} & \uCog^{\geq 0} \ar@<1ex>[l]^(0.55){N} \ar@<1ex>[r]^(0.54){in} & \uCog \ar@<1ex>[l]^(0.46){tr} }
$$
where $in$ is the embedding of $\uCog^{\geq 0}$ into $\uCog$ and where $tr$ is the truncation.

\begin{prop}
The above adjunction between $\uCog^{\geq 0}$ and $\sSet$ is a Quillen adjunction.
\end{prop}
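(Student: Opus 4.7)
The plan is to check that the left adjoint $DK^c$ preserves cofibrations and trivial cofibrations, which is the definition of a left Quillen functor. In both model categories the cofibrations are precisely the monomorphisms: in $\sSet$ with its Kan--Quillen structure by standard theory, and in $\uCog^{\geq 0}$ by the Getzler--Goerss theorem recalled above. Since every simplicial set is cofibrant, I will in fact show the stronger statement that $DK^c$ preserves all weak equivalences; combined with preservation of monomorphisms, this yields what is required.

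For preservation of monomorphisms, let $f : X \hookrightarrow Y$ be an inclusion of simplicial sets. By definition of the normalized Moore complex,
$$
DK(X)_n = \bigcap_{i=0}^{n-1} \ker\bigl(d_i : \mbk \cdot X_n \to \mbk \cdot X_{n-1}\bigr),
$$
which is exactly the intersection of $DK(Y)_n$ with the subspace $\mbk \cdot X_n \subset \mbk \cdot Y_n$. Hence $DK(f)$ is a levelwise injection of chain complexes, and therefore $DK^c(f)$ is a monomorphism in $\uCog^{\geq 0}$, since the forgetful functor to $\dgMod$ creates equalizers and thus detects monomorphisms.

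For preservation of weak equivalences, let $g : X \to Y$ be a weak equivalence in $\sSet$. The classical Dold--Kan theorem identifies the homology of the normalized Moore complex $DK(X)$ with the singular homology $H_*(|X|; \mbk)$. Since a weak equivalence of simplicial sets realizes to a weak homotopy equivalence of topological spaces, it induces an isomorphism on singular homology, hence $DK(g)$ is a quasi-isomorphism. Passing from $DK$ to $DK^c$ does not affect the underlying chain complex, so $DK^c(g)$ is a quasi-isomorphism in $\uCog^{\geq 0}$.

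Together these two observations show that $DK^c$ sends cofibrations to cofibrations and (trivial) cofibrations to (trivial) cofibrations, i.e., the adjunction is Quillen. The only point really requiring care is the identification of monomorphisms in $\uCog^{\geq 0}$ with injective morphisms of underlying chain complexes, which is the main (and rather minor) obstacle; it follows from the construction of equalizers in $\uCog$ given in the references already invoked in this section. Everything else is a direct application of classical algebraic topology.
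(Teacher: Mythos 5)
Your proof is correct and follows the same route as the paper: both arguments reduce the Quillen condition to checking that $DK^c$ sends monomorphisms to monomorphisms and weak homotopy equivalences to quasi-isomorphisms, which the paper simply cites from Goerss--Jardine while you fill in the details.
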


\begin{proof}[Proof]
 The functor $DK^c$ carries monomorphisms to monomorphisms and weak homotopy equivalences to quasi-isomorphisms; see \cite[III.2]{GoerssJardine09}.
\end{proof}

\subsection{The Sullivan algebras of polynomial forms on standard simplicies}

\begin{defin}[Sullivan polynomial algebras]\cite{Sullivan77}
 For any integer $n \in \mbn$, the $n^{th}$ \textit{algebra of polynomial forms} is the following differential graded unital commutative algebra: 
 $$
 \Omega_n:= \mbk [t_0, \ldots , t_n, dt_0, \ldots , dt_n]/(\Sigma t_i =1)
 $$
 where the degree of $t_i$ is zero and where $d_{\Omega_n} (t_i)=dt_i$. In particular, $\sum dt_i =0$.
 \end{defin}
  Any map of finite ordinals $\phi : [n] \ra [m]$ defines a morphism of differential graded unital commutative algebra:
\begin{align*}
 \Omega(\phi) :\Omega_{m} &\ra \Omega_n\\
 t_i & \mapsto \sum_{\phi(j)=i} t_j \ .
\end{align*}
Therefore, the collection $\{\Omega_n\}_{n \in \mbn}$ defines a simplicial differential graded commutative algebra. Moreover, one can extend this construction to a contravariant functor $\Omega_\bullet$ from simplicial sets to differential graded unital commutative algebras such that $\Omega_{\Delta[n]}=\Omega_n$. This functor is part of an adjunction.
$$
\xymatrix{\sSet \ar@<1ex>[r]^(0.35){\Omega_\bullet} & \uCom - \mathsf{alg}^{op} \ar@<1ex>[l]}
$$

\begin{prop}\cite[8]{BousfieldGugenheim76}\label{prop:sullivanalgadj}
When the characteristic of the field $\mbk$ is zero, then the category $\uCom-\mathsf{alg}$ of differential graded unital commutative algebras admits a projective model structure where fibrations (resp. weak equivalences) are degreewise surjections (resp. quasi-isomorphisms). In that context, the adjunction between simplicial sets and $\uCom-\mathsf{alg}$ is a Quillen adjunction.
\end{prop}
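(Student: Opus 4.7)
The proof has two essentially independent parts: existence of the projective model structure on $\uCom-\mathsf{alg}$, and the Quillen adjunction statement. I will plan them in turn.

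\textbf{Model structure.} My plan is to transfer the projective model structure on $\dgMod$ along the free-forgetful adjunction $S \dashv U$, where $S : \dgMod \to \uCom-\mathsf{alg}$ is the symmetric algebra functor. Applying Quillen's transfer theorem requires two inputs. First, the target category must be cocomplete and admit the small object argument; this is immediate because $\uCom-\mathsf{alg}$ is algebras over an operad, hence presentable, as recalled in the excerpt. Second, one must verify the acyclicity condition: every pushout of a map of the form $S(0 \hookrightarrow D^n) : \mbk \to S(D^n)$ along an arbitrary morphism is a quasi-isomorphism. This is precisely where characteristic zero enters: in characteristic zero the coinvariants $(-)_{\Sigma_n}$ are exact, so $S$ preserves quasi-isomorphisms of chain complexes, and in particular $S(D^n)$ is acyclic (quasi-isomorphic to $\mbk$). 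A pushout of $\mbk \to S(D^n)$ along $\mbk \to A$ is $A \to A \otimes S(D^n)$, which is a quasi-isomorphism by the Künneth formula over a field.

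\textbf{Quillen adjunction.} Since $\Omega_\bullet$ is left adjoint to $\uCom-\mathsf{alg}^{op} \to \sSet$, I want to show it sends (trivial) cofibrations of $\sSet$ to (trivial) cofibrations of $\uCom-\mathsf{alg}^{op}$, which amounts to sending them, viewed contravariantly, to (trivial) \emph{fibrations} of $\uCom-\mathsf{alg}$. Because $\Omega_\bullet$ converts colimits of simplicial sets into limits of algebras, and because the (trivial) cofibrations of $\sSet$ are generated by the boundary inclusions $\partial\Delta[n] \hookrightarrow \Delta[n]$ and the horn inclusions $\Lambda^k[n] \hookrightarrow \Delta[n]$, it suffices to check:
\begin{itemize}
\itemt The restriction $\Omega_n \twoheadrightarrow \Omega_{\partial\Delta[n]}$ is a degreewise surjection.
\itemt The restriction $\Omega_n \twoheadrightarrow \Omega_{\Lambda^k[n]}$ is moreover a quasi-isomorphism.
\end{itemize}
Surjectivity on a horn or boundary is a combinatorial extension statement for polynomial forms along faces, provable by an explicit partition-of-unity construction in the barycentric coordinates $t_0, \ldots, t_n$. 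For the quasi-isomorphism, I would prove the polynomial Poincaré lemma for $\Omega_n$ and for $\Omega_{\Lambda^k[n]}$: both are quasi-isomorphic to $\mbk$. The lemma for $\Omega_n$ follows from an explicit chain homotopy given by integration of polynomials in one variable, exactly as in the smooth case; for $\Omega_{\Lambda^k[n]}$ one adapts this homotopy along a deformation retraction of the horn onto a vertex.

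\textbf{Main obstacle.} The acyclicity step in the transfer is routine once one isolates the role of characteristic zero. The substantial technical heart is the polynomial de Rham calculation, i.e., proving that $\Omega$ applied to each horn $\Lambda^k[n]$ is acyclic (concentrated in $H^0 = \mbk$). This is the content of Bousfield--Gugenheim's polynomial de Rham theorem and requires a genuine construction of a contracting homotopy on the subalgebra of forms that vanish on the missing face of the horn; the rest of the proof is formal.
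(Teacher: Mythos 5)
The paper does not prove this proposition at all; it is quoted directly from Bousfield--Gugenheim \cite[8]{BousfieldGugenheim76}, and your outline reproduces exactly the standard argument given there: transfer of the projective model structure along the free--forgetful adjunction (with characteristic zero entering through exactness of $\Sigma_n$-coinvariants to make $S(D^n)\to\mbk$ a quasi-isomorphism), followed by reduction to the generating (trivial) cofibrations and the extension/Poincar\'e lemma for polynomial forms on boundaries and horns. Your plan is correct and takes essentially the same route as the cited source.
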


\section{Operads, cooperads, algebras and coalgebras}

The purpose of this section is to recall the definitions of operads, cooperads, algebras over an operad and coalgebras over a cooperad that we will use in the sequel; we refer the reader to the book \cite{LodayVallette12}. Moreover, we prove that the category of coalgebras over a curved cooperad is presentable.

\subsection{Operads and cooperads}

We recall here the definitions of operads and cooperads. We refer to the book \cite{LodayVallette12} and to the article \cite{HirshMilles12}.

\begin{defin}[Symmetric modules]
  Let $\mbs$ be the groupoid whose objects are integers $n \in \mbn$ and whose morphisms are:
 $$
\begin{cases}
 \hom_\mbs(n,m)=  \emptyset \text{ if }n \neq m\\
  \hom_\mbs(n,n)= \mbs_n \text{ otherwise .}
\end{cases}
$$
 A graded $\mbs$-module (resp. dg $\mbs$-module) is a presheaf on $\mbs$ valued in the category of graded $\mbk$-modules (resp. chain complexes). The name $\mbs$-module will refer both to graded $\mbs$-modules and dg $\mbs$-modules. We say that a $\mbs$-module $\VV$ is reduced if $\VV(0)= \{0\}$.
\end{defin}

 The category of $\mbs$-modules has a monoidal structure which is as follows: for any $\mbs$-modules $\VV$ and $\WW$, and for any $n \geq 1$:

$$
(\VV \circ \WW) (n):=\bigoplus_{k \geq 1} \VV(k) \otimes_{\mbs_k} \left( \bigoplus_{ X_1  \sqcup \cdots \sqcup X_k = \{1, \ldots, n\} } \WW( \# X_1) \otimes \cdots \otimes \WW( \# X_k)  \right)
$$
where $\# X_i$ is the cardinal of the set $X_i$. For $n=0$,
$$
(\VV \circ \WW) (0):=\VV (0) \oplus \Big(\bigoplus_{k \geq 1 }   \VV(k) \otimes_{\mbs_k} \big( \WW(0) \otimes \cdots \otimes \WW(0)  \big)\Big)\ .
$$
 The monoidal unit is given by the $\mbs$-module $\II$ which is $\mbk$ in arity $1$ and $\{0\}$ in other arities.

\begin{nota}
 
\begin{itemize}\leavevmode
 \itemt For any dg $\mbs$-module $\VV$, we will denote by $\VV^{grad}$ the underlying graded $\mbs$-module.
  \itemt Let $f: \VV \ra \VV'$ and $g: \WW \ra \WW'$ and $h: \WW \ra \WW'$ be three morphisms of $\mbs$-modules. Then, we denote by $f \circ (g;h)$ the map from $\VV \circ \WW$ to $\VV' \circ \WW'$ defined as follows:
$$
f \circ (g;h) :=\sum_{i+j=n-1}  f \otimes_{\mbs_n} (g^{\otimes i} \otimes h \otimes g^{\otimes j})\ .
$$
In the case where $g$ is the identity, we use the notation $f \circ' h$.
$$
f \circ'h := f \circ (Id;h)\ .
$$
\itemt For any two graded $\mbs$-modules (resp. dg $\mbs$-modules) $\VV$ and $\WW$, we denote by $[\VV,\WW]$ the graded $\mbk$-module (resp. chain complex):
 $$
 [\VV,\WW]_n := \prod_{k \geq 0\atop l \in \mbn} \hom_{\mbk[\mbs_n]}(\VV(k)_l , \WW(k)_{l+n})\ .
 $$
 In that context morphisms of chain complex from $X$ to $[\VV,\WW]$ are in one-to-one correspondence with morphism of $\mbs$-modules from $X \otimes \VV$ to $\WW$.
\end{itemize}
\end{nota}

\begin{prop}\cite[6]{LodayVallette12}
 If the characteristic of the field $\mbk$ is zero, then the operadic Kunneth
$$
H(\VV \circ \WW) \simeq H(\VV) \circ H(\WW)\ ,
$$
holds for any dg $\mbs$-modules $\VV$ and $\WW$,where $H$ denotes the homology.
\end{prop}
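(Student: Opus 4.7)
The plan is to reduce the claim to two classical facts: the Künneth theorem over a field, and the exactness of $\mbs_k$-coinvariants in characteristic zero.

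First I would observe that homology commutes with arbitrary direct sums of chain complexes, so it suffices to identify the homology of each summand appearing in the definition of $(\VV \circ \WW)(n)$. Concretely, for a fixed $n$ and a fixed $k \geq 1$, the summand can be written as $\VV(k) \otimes_{\mbk[\mbs_k]} T_k(\WW)(n)$, where
\[
T_k(\WW)(n) := \bigoplus_{X_1 \sqcup \cdots \sqcup X_k = \{1,\ldots,n\}} \WW(\# X_1) \otimes \cdots \otimes \WW(\# X_k),
\]
and analogously for the $n=0$ term. Because the action of $\mbs_k$ permutes the summands (and acts on the tensor factors by the Koszul sign rule), this is a chain complex of $\mbk[\mbs_k]$-modules.

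Next, I would apply the classical Künneth theorem over the field $\mbk$: the external product induces a natural isomorphism
\[
H(\VV(k)) \otimes H(T_k(\WW)(n)) \xrightarrow{\;\sim\;} H\bigl(\VV(k) \otimes T_k(\WW)(n)\bigr),
\]
and iterating it on $T_k(\WW)(n)$ yields
\[
H(T_k(\WW)(n)) \simeq \bigoplus_{X_1 \sqcup \cdots \sqcup X_k = \{1,\ldots,n\}} H(\WW)(\# X_1) \otimes \cdots \otimes H(\WW)(\# X_k),
\]
as $\mbs_k$-modules (the symmetric group acts on both sides by permuting the partitions together with the Koszul signs, and the Künneth isomorphism is natural in each factor, hence equivariant).

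Finally, the crucial point is to pass from the tensor product over $\mbk$ to the tensor product over $\mbk[\mbs_k]$, that is, to the $\mbs_k$-coinvariants. Here I would invoke Maschke's theorem: since $\mathrm{char}(\mbk) = 0$, the group algebra $\mbk[\mbs_k]$ is semisimple, so every $\mbk[\mbs_k]$-module is projective. In particular, the functor $(-)_{\mbs_k} = - \otimes_{\mbk[\mbs_k]} \mbk$ is exact, hence commutes with homology. Applying this to the chain complex $\VV(k) \otimes T_k(\WW)(n)$ of $\mbk[\mbs_k]$-modules gives
\[
H\bigl(\VV(k) \otimes_{\mbk[\mbs_k]} T_k(\WW)(n)\bigr) \simeq H\bigl(\VV(k) \otimes T_k(\WW)(n)\bigr)_{\mbs_k},
\]
and combining this with the previous step and summing over $k$ yields the desired isomorphism $H(\VV \circ \WW) \simeq H(\VV) \circ H(\WW)$.

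The only delicate step is the last one, where characteristic zero is genuinely used: in positive characteristic the functor $(-)_{\mbs_k}$ fails to be exact, and the Künneth-type identification of homology does not descend to the coinvariants. Everything else is either formal (direct sums, naturality) or a standard application of the Künneth theorem for chain complexes over a field.
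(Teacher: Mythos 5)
Your proof is correct; the paper itself gives no argument for this proposition but simply cites \cite[Chapter 6]{LodayVallette12}, and your reduction to the Künneth theorem over a field plus Maschke's theorem (semisimplicity of $\mbk[\mbs_k]$ in characteristic zero, hence exactness of the coinvariants functor and its compatibility with homology) is precisely the standard argument given in that reference.
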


\begin{defin}[Operads]
A graded operad $\PPP= (\PP,\gamma, 1)$ (resp. dg operad) is a monoid in the category of graded $\mbs$-modules (resp. dg $\mbs$-modules). We denote by $\Operad$ the category of dg operads.
\end{defin}

\begin{eg}
For any graded $\mbk$-module (resp. chain complex) $\VV$, $\End_\VV$ is the graded operad (resp. dg operad) defined by:
$$
\End_\VV (n) := \hom (\VV^{\otimes n}, \VV) \ .
$$
The composition in the operad $\End_\VV$ is given by the composition of morphisms of graded $\mbk$-modules (resp. chain complexes).
\end{eg}

A degree $k$ derivation $d$ on a graded operad $\PPP=(\PP, \gamma,1)$ is the data of degree $k$ maps $d:\PP(n) \ra \PP(n)$ which commute with the action of $\mbs_n$ and such that
$$
d\ \gamma = \gamma\  (d \circ Id + Id \circ' d) \ .
$$

\begin{prop}\cite[5]{LodayVallette12}
 The forgetful functor from operads to $\mbs$-modules has a left adjoint called the free operad functor and denoted $\Tfree$. For any $\mbs$-module $\VV$, $\Tfree \VV$ is the $\mbs$-module made up of trees whose vertices are filled with elements of $\VV$ with coherent arity. The composition is given by the grafting of trees. 
\end{prop}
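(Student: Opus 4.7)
The plan is to give the standard explicit construction of the free operad and verify its universal property, which is the statement needed. I will first construct a candidate $\Tfree \VV$ as an $\mbs$-module, then endow it with an operadic composition, and finally check that it satisfies the required adjunction.

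First I would introduce the category of (isomorphism classes of) rooted planar trees with input labels, together with their automorphism groups, so that the collection of trees with $n$ leaves forms an $\mbs$-set. For a $\mbs$-module $\VV$, I would define $\Tfree \VV(n)$ as the direct sum, indexed by (isomorphism classes of) rooted trees $\tau$ with $n$ labeled leaves, of the tensor product $\bigotimes_{v \in \mathrm{Vert}(\tau)} \VV(|v|)$, where $|v|$ denotes the number of incoming edges at the vertex $v$. Taking coinvariants with respect to the automorphisms of $\tau$ produces a well-defined $\mbs_n$-module structure from the labeling of the leaves. Equivalently, one can define $\Tfree \VV$ inductively by setting $\Tfree_0 \VV = \II$, $\Tfree_{k+1} \VV = \II \oplus \VV \circ \Tfree_k \VV$, and taking the colimit along the obvious inclusions; this second description is what one uses to verify categorical properties, while the tree description is what gives the concrete content of the statement.

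Next I would define the operadic composition $\gamma : \Tfree \VV \circ \Tfree \VV \to \Tfree \VV$ by grafting: given a tree $\tau$ labeled by $\VV$ in the outer slot, and a family of trees $(\sigma_1,\ldots,\sigma_k)$ labeled by $\VV$ in the inner slots, one grafts each $\sigma_i$ onto the $i$-th leaf of $\tau$ and regards the result as a single tree with vertices still labeled by $\VV$. The unit $\II \to \Tfree \VV$ is the inclusion of the trivial tree. Associativity of grafting is essentially tautological, as is the unit axiom. The $\mbs$-equivariance is a direct consequence of how the symmetric group acts on leaf labels. I would also include the canonical map $\eta_\VV : \VV \to \Tfree \VV$ which sends $v \in \VV(n)$ to the corolla $c_n$ with a single vertex labeled by $v$ and $n$ labeled leaves.

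Finally I would verify the universal property: for any operad $\PPP$ and any morphism of $\mbs$-modules $f : \VV \to \PP$, I would define $\tilde f : \Tfree \VV \to \PP$ tree by tree, by induction on the number of vertices, using the composition $\gamma_\PPP$ of $\PPP$ to interpret each grafting in the tree. The inductive description $\Tfree_{k+1} \VV = \II \oplus \VV \circ \Tfree_k \VV$ makes this extension canonical, and compatibility with grafting then forces $\tilde f$ to be a morphism of operads; uniqueness follows because any operad morphism out of $\Tfree \VV$ extending $f$ is determined on corollas, hence on all trees by the composition axiom. The expected main obstacle is purely bookkeeping: one must be careful with the $\mbs_n$-actions and with the treatment of arity zero (so that corollas with no inputs and the monoidal unit $\II$ are correctly distinguished), and ensure that grafting descends to the coinvariants under tree automorphisms, but no conceptual difficulty arises beyond this.
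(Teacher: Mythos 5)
Your construction is correct and is essentially the standard one from \cite[Chapter 5]{LodayVallette12}, which the paper simply cites without reproving: the tree module with grafting, the corolla inclusion, and the extension of a map $\VV \to \PP$ by induction on the number of vertices. The only point you might make explicit is that in the dg case the differential on $\Tfree \VV$ is the unique derivation extending $d_\VV$, but this is routine bookkeeping and does not affect the argument.
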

 
There is a one-to-one correspondence between the degree $k$ derivation on the graded free operad $\Tfree \VV$ and the degree $k$ maps from $\VV$ to $\Tfree \VV$. Indeed, from such a map $u$ one can produce the derivation $D_u$ such that for any tree $T$ labeled by elements of $\VV$:
$$
D_u (T) := \sum_{v} Id \otimes \cdots \otimes u(v) \otimes \cdots \otimes Id\ . 
$$
where the sum is taken over the vertices of the tree $T$.

\begin{defin}[Cooperads]
 A cooperad $\CCC=(\CC, \Delta, \epsilon)$ is a comonoid in the category of $\mbs$-modules. We denote by $\ov \CC$ the kernel of the morphism $\epsilon : \CC \ra \II$. A cooperad $\CCC$ is said to be coaugmented if it is equipped with a morphism of cooperads $\II \ra \CCC$. In this case, we denote by $1$ the image of the unit of $\mbk$ into $\CC(1)$. A coaugmented cooperad $\CCC$ is said to be conilpotent if the process of successive decomposition stabilizes in finite time for any element. A precise definition is given in \cite[5.8.6]{LodayVallette12}.
 \end{defin}
 
The forgetful functor from conilpotent cooperads to $\mbs$-modules which sends $\CCC$ to $\ov \CC$ has a right adjoint sending $\VV$ to the tree module $\Tfree (\VV)$ with the decomposition given by the degrafting of trees. We denote it by $\Tfree^c (\VV)$. We also denote by $\delta : \CCC \ra \Tfree^c (\ov \CC )$ the counit of the adjunction. Any conilpotent cooperad is equipped with a filtration called the coradical filtration
$$
F^{rad}_n \CC (m) :=\{p \in \CC(m) | \delta (p) \in \Tfree^{\leq n} (\ov \CC) (m) \}
$$
where the symbol $ \Tfree^{\leq n}$ denotes the trees with at most $n$ vertices. In particular, $F^{rad}_0 \CC = \II$.
\begin{nota}
Let $\CC$ be coaugmented cooperad and $m$ be an integer. We denote by $\Delta_m$ the composite map
$$
\Delta_m : \xymatrix{\CC \ar[r]^{\Delta} &  \CC \circ \CC \ar[r] & \Tfree (\ov \CC) \ar@{->>}[r] & \Tfree^m (\ov \CC)\ .}
$$
\end{nota}
 A degree $k$ coderivation on a cooperad $\CCC=(\CC,\Delta,\epsilon)$ is a degree $k$ map $d$ of $\mbs$-modules from $\CC$ to $\CC$ such that 
$$
\Delta\  d =(d \circ Id + Id \circ' d)\Delta \ .
$$
 If the cooperad is coaugmented, we also require that $d(1)=0$. Let $\Tfree^c (\VV)$ be a cofree conilpotent cooperad. There is a one-to-one correspondence between degree $k$ coderivation on $\Tfree^c (\VV)$ and degree $k$ maps from $\ov \Tfree (\VV)$ to $\VV$. Indeed, such a map $u$ is uniquely extended by the following coderivation $D_u$ defined on any tree $T$ labeled by elements of $\VV$ as follows:
$$
D_u (T) := \sum_{T' \subset T}  Id \otimes \cdots \otimes u(T') \otimes \cdots \otimes Id\ ,
$$
where the sum is taken on the subtrees $T'$ of $T$.

\begin{defin}[Curved cooperads]
A curved cooperad $\CCC= (\CC,\Delta, \epsilon, 1 , d , \theta)$ is a coaugmented graded cooperad equipped with a degree $-2$ map $\theta : \CC(1) \ra \mbk$ and a degree $-1$ coderivation $d$ such that
\begin{align*}
& d^2 = (\theta \otimes Id -  Id \otimes \theta)  \Delta_2\ , \\
&\theta d =0\ .
\end{align*}
A morphism of curved cooperads is a morphism of cooperads $\phi: \CCC \ra \DDD$ which commutes with the coderivations and such that $\theta_\CCC = \theta_\DDD\phi$. We denote by $\cCoop$ the category of curved conilpotent cooperads.
\end{defin}

The coradical filtration of a conilpotent cooperad has the following property with respect to the decomposition map.

\begin{lemma}\label{lem:cooptech}
 Let $\CCC=(\CC,\Delta,\epsilon,1)$ be a conilpotent cooperad. Then
 $$
 \Delta (F^{rad}_n \CC) \subset \sum_{p_0 + \cdots + p_k \leq n} (F^{rad}_{p_0} \CC)(k) \otimes_{\mbs_k} (F^{rad}_{p_1} \CC \otimes \cdots \otimes F^{rad}_{p_k} \CC)\ .
 $$
\end{lemma}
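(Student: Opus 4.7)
The plan is to lift the question to the cofree conilpotent cooperad $\Tfree^c(\ov\CC)$, where the coradical filtration is literally the grading by number of vertices and the decomposition is given by degrafting of trees. The canonical map $\delta : \CCC \to \Tfree^c(\ov\CC)$ is a morphism of cooperads, which gives the commutative square
$$
\xymatrix{\CC \ar[r]^{\Delta_\CC} \ar[d]_{\delta} & \CC \circ \CC \ar[d]^{\delta \circ \delta}\\
\Tfree^c(\ov\CC) \ar[r]_{\Delta_{\Tfree^c}} & \Tfree^c(\ov\CC) \circ \Tfree^c(\ov\CC)\ .}
$$
The decomposition $\Delta_{\Tfree^c}$ is degrafting, which manifestly preserves the total number of vertices: a tree with $m$ vertices splits as a root tree with $p_0$ vertices grafted onto branches with $p_1, \ldots, p_k$ vertices, and $p_0 + p_1 + \cdots + p_k = m$. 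Hence
$$
\Delta_{\Tfree^c}\bigl(\Tfree^{\leq n}(\ov\CC)\bigr) \subset \sum_{p_0 + \cdots + p_k \leq n} \Tfree^{p_0}(\ov\CC)(k) \otimes_{\mbs_k} \bigl(\Tfree^{p_1}(\ov\CC) \otimes \cdots \otimes \Tfree^{p_k}(\ov\CC)\bigr) .
$$
Chasing $p \in F^{rad}_n \CC$ through the square, and using the tautological $F^{rad}_n \CC = \delta^{-1}(\Tfree^{\leq n}(\ov\CC))$, one obtains that $(\delta \circ \delta)(\Delta p)$ lies in the target filtration above.

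The second step is to descend this information back to $\CC \circ \CC$. First, $\delta$ is injective: if $\delta(c) = 0$, then both its identity component $\epsilon(c)$ and its corolla component $\bar c \in \ov\CC$ vanish, so $c = 0$. More importantly, $\delta$ is strictly filtered, in the sense that the induced map
$$
\mathrm{gr}\,\delta : F^{rad}_n\CC / F^{rad}_{n-1}\CC \to \Tfree^n(\ov\CC),
$$
sending the class of $c$ to the top-vertex component $\delta^{(n)}(c)$, is injective by the very definition of $F^{rad}_{n-1}\CC$. Since we work over a field, tensor products preserve injections, hence $\mathrm{gr}\,(\delta \circ \delta)$ is injective between the associated gradeds of the filtration $F_n(\CC \circ \CC) := \sum_{p_0+\cdots+p_k \leq n} F^{rad}_{p_0}\CC(k) \otimes_{\mbs_k} (F^{rad}_{p_1}\CC \otimes \cdots \otimes F^{rad}_{p_k}\CC)$ and the (genuinely graded) analogue on the right. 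Since the coradical filtration is exhaustive by conilpotency of $\CCC$, so is $F_\bullet(\CC \circ \CC)$, and a standard argument (take the minimal $N$ with $\Delta p \in F_N$ and, if $N > n$, derive a contradiction from the injectivity of $\mathrm{gr}\,(\delta \circ \delta)$ at level $N$) concludes.

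The main obstacle is this descent, because the filtration on $\CC \circ \CC$ involves both tensor products and $\mbs_k$-coinvariants. The essential point making it work is that the target filtration on $\Tfree^c(\ov\CC) \circ \Tfree^c(\ov\CC)$ is induced by a genuine grading (total number of vertices), so the associated-graded formalism applies cleanly and the strict filtration of $\delta$ transfers to $\delta \circ \delta$.
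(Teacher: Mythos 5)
Your argument is essentially the paper's: the paper's entire proof is the one-line reduction to the cofree case via $\delta$ and the identity $F_n^{rad}\CC = \delta^{-1}(\Tfree^{\leq n}(\ov \CC))$ — i.e.\ your first paragraph — with both the degrafting computation and the descent left implicit, so your write-up is a correct fleshing-out of the same route. The only delicate point is the associated-graded step through the $\otimes_{\mbs_k}$-coinvariants, which is most cleanly justified by replacing it with the filtered retraction of $\delta$ provided by Lemma \ref{lemtech:filt} (exactly as the paper does for the analogous Proposition \ref{prop:cogtech}): a retraction $\psi$ with $\psi(F_n)=F_n$ yields $(\psi\circ\psi)(\delta\circ\delta)=\mathrm{Id}$ and hence $(\delta\circ\delta)^{-1}(F_n)=F_n$ directly, with no exactness of coinvariants needed.
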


\begin{proof}[Proof]
It suffices to prove the result for cofree cooperads. Indeed, any conilpotent cooperad $\CCC$ is equipped with a map $\delta: \CC \ra \Tfree^c (\ov \CC)$ such that $F_n^{rad}\CC = \delta^{-1}(F_n^{rad} \Tfree^c(\ov \CC))$.
\end{proof}

\begin{lemma}\label{lemma:curvcofree}
 Let $\CCC= \Tfree^c(\VV)$ be a cofree conilpotent graded cooperad equipped with a degree $-2$ map $\theta : \Tfree(\VV)(1) \twoheadrightarrow \VV (1) \ra \mbk$. Let  $\phi : \ov \Tfree \VV \ra \VV$ be a degree $-1$ map and let $D_\phi$ be the corresponding coderivation on $\CCC$. Then, the triple $(\Tfree^c \VV, D_\phi,\theta)$ is a curved cooperad if and only if $\phi$ satisfies the following equation:
 $$
 \phi D_\phi = (\theta \otimes \pi_\VV - \pi_\VV \otimes \theta )\Delta_2
 $$
where $\pi_\VV$ is the projection $\Tfree (\VV) \ra \VV$.
\end{lemma}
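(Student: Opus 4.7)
The approach exploits the universal property of the cofree conilpotent cooperad $\Tfree^c(\VV)$: a coderivation on it is uniquely determined by its composition with the canonical projection $\pi_\VV : \Tfree^c(\VV) \twoheadrightarrow \VV$. The plan is to reduce both defining conditions of a curved cooperad structure --- the curvature equation $D_\phi^2 = (\theta \otimes Id - Id \otimes \theta) \Delta_2$ and the compatibility $\theta D_\phi = 0$ --- to the single stated identity at the cogenerator level.

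For the forward direction, I compose the curvature equation with $\pi_\VV$ on the left. Since $D_\phi(1) = 0$ and $\pi_\VV D_\phi = \phi$ on $\ov \Tfree \VV$ (by the defining property of the coderivation extension recalled in the preceding discussion), the left-hand side becomes $\phi \circ D_\phi$, while the right-hand side is manifestly $(\theta \otimes \pi_\VV - \pi_\VV \otimes \theta) \Delta_2$.

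For the converse, set $\Theta := (\theta \otimes Id - Id \otimes \theta) \Delta_2$. The crucial observation is that both $D_\phi^2$ and $\Theta$ are coderivations of degree $-2$ on $\Tfree^c(\VV)$. For $D_\phi^2$, a standard Koszul-sign cancellation --- using that $D_\phi$ is odd --- annihilates the cross-terms when one computes $\Delta D_\phi^2$ from the coderivation identity $\Delta D_\phi = (D_\phi \circ Id + Id \circ' D_\phi) \Delta$, leaving $\Delta D_\phi^2 = (D_\phi^2 \circ Id + Id \circ' D_\phi^2) \Delta$. For $\Theta$, one verifies $\Delta \Theta = (\Theta \circ Id + Id \circ' \Theta) \Delta$ by a direct coassociativity computation, exploiting that $\theta$ is supported in arity $1$. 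The universal property then reduces the identity $D_\phi^2 = \Theta$ to the equality of their $\pi_\VV$-projections, which is exactly the hypothesis. The compatibility $\theta D_\phi = 0$ is handled analogously: since $\theta$ factors through $\pi_\VV$, it reduces to a condition on $\phi$ extracted from the hypothesis restricted to arity $1$, where the right-hand side is antisymmetric under $\theta \leftrightarrow \pi_\VV$.

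The main technical obstacle is the verification that $\Theta$ is itself a coderivation on $\Tfree^c(\VV)$. This reduces to a combinatorial identity about three-vertex decompositions of trees labeled by $\VV$, proved by a careful application of coassociativity together with the arity restriction on $\theta$; alternatively, it can be read off from the explicit formula $D_u(T) = \sum_{T' \subset T} Id \otimes \cdots \otimes u(T') \otimes \cdots \otimes Id$ for coderivations of a cofree cooperad recalled earlier.
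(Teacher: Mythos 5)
Your argument for the curvature identity is correct and is, at bottom, the paper's own argument in more structural clothing. The paper's converse consists of (i) the observation that $D_\phi^2(T)=\sum_{T'\subset T} Id \otimes (\phi D_\phi(T'))\otimes Id$, i.e.\ that $D_\phi^2$ is the coderivation whose corestriction to $\VV$ is $\phi D_\phi=\pi_\VV D_\phi^2$, and (ii) an explicit vertex-by-vertex cancellation showing that the coderivation extending $(\theta\otimes\pi_\VV-\pi_\VV\otimes\theta)\Delta_2$ is exactly $(\theta\otimes Id - Id\otimes\theta)\Delta_2$: each internal arity-one vertex contributes $\pm\big(\theta(v)(T-v)-\theta(v)(T-v)\big)=0$, and only the bottom and top arity-one vertices survive. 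Step (ii) is precisely your ``main technical obstacle'' that $\Theta:=(\theta\otimes Id-Id\otimes\theta)\Delta_2$ is a coderivation with the stated corestriction, and step (i) is your appeal to the square of an odd coderivation being a coderivation. So the two proofs coincide; yours trades the explicit tree computation for the uniqueness of a coderivation with given corestriction, which is fine provided the verification for $\Theta$ is actually carried out --- your second suggested route, reading it off the formula $D_u(T)=\sum_{T'\subset T}\cdots$, is exactly the paper's computation.

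The one claim that does not hold up is the ``analogous'' treatment of $\theta D_\phi=0$. Writing $\theta=\theta_0\pi_\VV$ and using $\pi_\VV D_\phi=\phi$ on $\ov\Tfree\VV$, postcomposing the hypothesis with $\theta_0$ yields $\theta_0\,\phi D_\phi=\theta D_\phi^2=0$, which is automatic once $D_\phi^2=\Theta$ (since $\theta\Theta=(\theta\otimes\theta-\theta\otimes\theta)\Delta_2=0$); it does \emph{not} yield $\theta D_\phi=\theta_0\phi=0$, which is an independent condition on the arity-one component of $\phi$. There is no antisymmetry to exploit: the displayed equation constrains $\phi D_\phi$, not $\phi$. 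The paper's own proof is equally silent on $\theta D_\phi = 0$, so you are not deviating from it here, but you should either impose $\theta_0\phi=0$ as an extra hypothesis or drop the claim that it follows from the stated equation.
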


\begin{proof}[Proof]
If $(\Tfree^c \VV, D_\phi,\theta)$ is a curved cooperad, then $\phi D_\phi = \pi_\VV D^2_\phi= (\theta \otimes \pi_\VV - \pi_\VV \otimes \theta )\Delta_2$. Conversely, suppose that  $\phi D_\phi = (\theta \otimes \pi_\VV - \pi_\VV \otimes \theta )\Delta_2$. For any tree $T$ labeled by elements of $\VV$, one can prove that
 $$
 D^2_\phi (T) =\sum_{T' \subset T}  Id \otimes (\phi D_\phi(T'))  \otimes Id\ .
 $$
 Actually, it is the sum over every arity one vertex $v$ of
\begin{itemize}
 \itemt $\pm \theta (v)(T-v)$ if $v$ is the bottom vertex or a top vertex.
 \itemt $\pm (\theta(v)(T-v) -\theta(v)(T-v))=0$ otherwise.
\end{itemize}
Hence $(\Tfree^c \VV, D_\phi,\theta)$ is a curved cooperad.
\end{proof}

 There exists notions of $\mbn$-modules, nonsymmetric operads, nonsymmetric cooperads and their morphisms defined for instance in \cite[5.9]{LodayVallette12}. We will speak about the nonsymmetric context to refer to these ones. Notice that the operadic Kunneth formula holds in the nonsymmetric context without the assumption that the characteristic of the field $\mbk$ is zero.

\subsection{Algebras over an operad}
\begin{defin}[Algebras over an operad]
Let $\PPP=(\PP,\gamma,1)$ be an operad. A $\PPP$-\textit{module} $\AAA=(\Aa,\gamma_\AAA)$ is a left module in the category of $\mbs$-module, that is a $\mbs$-module $\Aa$ equipped with a map $\gamma_{\AAA}:\PP \circ \Aa \ra \Aa$ such that the following diagrams commute.
 $$
 \xymatrix{\PP \circ \PP \circ \Aa \ar[r]^(0,6){Id \circ \gamma_{\AAA}} \ar[d]_{\gamma_\AAA \circ Id} & \PP \circ \Aa \ar[d]^{\gamma_\AAA}\\ \PPP \circ \Aa \ar[r]_{\gamma_\AAA} & \Aa}
\hspace{1cm}
 \xymatrix{\II \circ \Aa \ar[r]^{1 \circ Id} \ar@/_2pc/[rr]^{Id} & \PPP \circ \Aa \ar[r]^{\gamma_\Aa} & \Aa}
 $$
 A morphism of $\PPP$-modules from $\AAA$ to $\BBB=(\BB,\gamma_\BBB)$ is a morphism of $\mbs$-modules $f:\Aa \ra \BB$ such that $\gamma_{\BBB} (Id \circ f)= f \gamma_{\AAA}$. A $\PPP$-\textit{algebra} is a $\PPP$-module $\Aa$ concentrated in arity $0$. We denote by $\Palg$ the category of $\PPP$-algebras. 
  \end{defin}

The forgetful functor from the category of $\PPP$-modules to the category of $\mbs$-modules has a left adjoint given by
 $$
 \VV \mapsto \PP \circ \VV
 $$
 The images of this left adjoint functor are called the free $\PPP$-modules.

\begin{defin}[Ideal]
   An \textit{ideal} of a $\PPP$-algebra $\AAA$ is a sub-chain complex $\BB \subset \Aa$ such that for any $p \in \PP(n)$ and $(x_i)_{i=1}^n \in \Aa$ ($n\geq 1$):
  $$
  \gamma_\AAA \big( p \otimes_{\mbs_n} ( x_1 \otimes \cdots \otimes x_n) \big) \in \BB
  $$
  whenever one of the $x_i$ is in $\BB$ (for $n \geq 1$). Then the quotient $\Aa/\BB$ has an induced structure of $\PPP$-algebra. 
\end{defin}

\begin{defin}[Derivation]
 Let $\PPP$ be a graded operad and let $\AAA$ be a $\PPP$-module. Suppose that the graded operad $\PPP$ is equipped with a degree $k$ derivation $d_\PPP$. Then, a \textit{derivation} of $\AAA$ is a degree $k$ map $d_\AAA$ from $\Aa$ to $\Aa$ such that
  $$
 d_\AAA\  \gamma_{\AAA} = \gamma_{ \AAA}\ \big( d_\PPP \circ Id_\AAA + Id \circ' d_\AAA \big)\ .
  $$
\end{defin}
 
Let $\PPP$ be a graded operad equipped with a degree $k$ derivation $d_\PPP$. There is a one-to-one correspondence between the derivations of a free $\PPP$-module $\AAA= \PP \circ \VV$ and the degree $k$ maps $\VV \ra \PP \circ \VV$. Indeed, any such map $u: \VV \ra \PP \circ \VV$ is uniquely extended by the derivation $D_u= d_\PPP \circ Id + D'$ on $\AAA$ where:
  $$
  D' := \xymatrix{\PP \circ \VV \ar[r]^(0.45){Id \circ u} & \PP \circ \PP \circ \VV \ar[r]^(0.55){\gamma \circ Id} & \PP \circ \VV } \ .
  $$

\subsection{Coalgebras over a cooperad}

\begin{defin}[Comodules and coalgebras over a cooperad]
 Let $\CCC= (\CC,\Delta, \epsilon)$ be a cooperad. A $\CCC$-\textit{comodule} $\DDD=(\DD, \Delta_{\DDD})$ is a left $\CCC$-comodule in the category of $\mbs$-modules, that is a $\mbs$-module $\DD$ together with a morphism $\Delta_{\DDD}: \DD \ra \CC \circ \DD$ such that the following diagrams commute.
 $$
 \xymatrix{\DD \ar[r]^{\Delta_{\DDD}} \ar[d]_{\Delta_{\DDD}} & \CC \circ \DD \ar[d]^{Id \circ \Delta_{\DDD}}\\
 \CC \circ \DD \ar[r]_(0.45){\Delta_\CCC \circ Id} & \CC \circ \CC \circ \DD}
 \hspace{1cm}
 \xymatrix{\DD \ar[r]^{\Delta_{\DDD}} \ar@/_2pc/[rr]^{Id} & \CC \circ \DD \ar[r]^{\epsilon \circ Id} & \DD}
 $$
A $\CCC$-\textit{coalgebra} is a $\CCC$-comodule concentrated in arity $0$.
\end{defin}

\begin{rmk}
 Our notion of $\CCC$-coalgebra recovers actually a notion called sometimes in the literature conilpotent $\CCC$-coalgebra; see \cite[5.4.8]{LodayVallette12}.
\end{rmk}

  Let $\CCC$ be a coaugmented cooperad. Then, the forgetful functor from the category of $\CCC$-comodules to the category of $\mbs$-modules has a right adjoint  which sends $\VV$ to $\CCC \circ \VV$. The images of the right adjoint are called the cofree $\CCC$-comodules.
 
 \begin{defin}[Coderivation]
 Let $\CCC$ be a graded cooperad and let $\DDD=(\DD, \Delta_{\DDD})$ be a $\CCC$-comodule. Suppose that $\CCC$ is equipped with a degree $k$ coderivation $d_\CCC$. A \textit{coderivation} on $\DDD$ is a degree $k$ map $d_\DDD$ from $\DD$ to $\DD$ such that
 $$
 \Delta_{\DDD} d_\DDD= (d_\CCC \circ Id + Id \circ' d_\DDD) \Delta_{\DDD}\ . 
$$
\end{defin}

 Let $\CCC$ be a cooperad equipped with a degree $k$ coderivation and let $\VV$ be a graded $\mbk$-module. Then, there is a one-to-one correspondence between the coderivations on the $\CCC$-coalgebra $\CCC \circ \VV$ and the degree $k$ maps $\CCC \circ \VV \ra \VV$. Indeed, any such map $u$ induces the coderivation
$$
D_u:= (d_\CCC \circ  Id_\VV) + (Id \circ (\pi ; u))(\Delta_\CCC \circ Id_\VV)\ ,
$$
where $\pi = \epsilon \circ Id : \CCC \circ \VV \ra \VV$.

\begin{defin}[Coalgebra over a curved cooperad]
 Let $\CCC$ be a curved cooperad. A $\CCC$-\textit{coalgebra} is a graded $\CCC^{grad}$-coalgebra $\DDD=(\DD, \Delta_{\DDD})$ together with a coderivation $d_\DD$ such that
$$
d_\DD^2 = (\theta_\CCC \circ Id) \Delta_{\DDD}
$$
\end{defin}

\begin{prop}\label{prop:curvedshortcut}
 Let $\CCC=(\CC,\Delta,\epsilon,1,d,\theta)$ be a conilpotent curved cooperad and let $\VV$ be a graded $\mbk$-module. There is a one-to-one correspondence between the degree $-1$ maps $\phi: \CC \circ \VV \ra \VV$ such that
 $$
\phi D_\phi := \phi \big(Id \circ (\pi ; \phi)\big)(\Delta_\CCC \circ Id_\VV) +  \phi (d_\CC \circ Id_\VV ) = \theta \circ Id_\VV
 $$
 and the structures of $\CCC$-coalgebras (where $\CCC$ is considered as a curved cooperad) on the graded cofree coalgebra $\CCC^{grad} \circ \VV$.
\end{prop}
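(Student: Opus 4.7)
The plan is to leverage the bijection (recalled in the paragraph just before the proposition) between degree-$k$ coderivations on the cofree graded coalgebra $\CCC^{grad}\circ \VV$ and degree-$k$ maps $\CC \circ \VV \to \VV$, given by $\phi \mapsto D_\phi$ with inverse $D \mapsto \pi D$. In particular, one checks using the counit axiom $(\epsilon \circ Id_\VV)(\Delta_\CCC \circ Id_\VV) = Id_{\CC\circ \VV}$ and $\epsilon d_\CCC = 0$ that $\pi D_\phi = \phi$.

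A $\CCC$-coalgebra structure on the cofree graded coalgebra $\CCC^{grad}\circ \VV$ is precisely a degree $-1$ coderivation $d_\DD$ satisfying $d_\DD^2 = (\theta_\CCC \circ Id)\Delta_\DDD$. Via the bijection, $d_\DD = D_\phi$ for a unique degree $-1$ map $\phi$, and the task reduces to showing that the curvature equation $D_\phi^2 = (\theta_\CCC \circ Id)\Delta_\DDD$ is equivalent to the stated identity $\phi D_\phi = \theta_\CCC \circ Id_\VV$.

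The direct implication follows by projecting both sides to $\VV$ via $\pi$. On the left, $\pi D_\phi = \phi$ yields $\pi D_\phi^2 = \phi D_\phi$. On the right, a short direct inspection---using that $\theta_\CCC$ is supported in arity $1$, that $\theta_\CCC(1)=0$ for degree reasons, and that $\pi$ projects onto the coaugmentation-image $\mathbf{1}\otimes\VV$---shows $\pi(\theta_\CCC\circ Id)\Delta_\DDD = \theta_\CCC\circ Id_\VV$. The only surviving contribution at arity $n\geq 2$ vanishes (no inner factor can be $\mathbf{1}$), and at arity $1$ only the "middle" decomposition $\bar c\otimes 1$ of $\Delta(c)$ contributes, giving $\theta_\CCC(c)\cdot v$.

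For the converse, I would show that both $D_\phi^2$ and $(\theta_\CCC\circ Id)\Delta_\DDD$ satisfy a common \emph{twisted} coderivation identity, and hence are both determined by their projections to $\VV$. A direct expansion of
\[
\Delta_\DDD\, D_\phi^2 \;=\; (d_\CCC\circ Id + Id\circ' D_\phi)^2\,\Delta_\DDD,
\]
together with the cancellation of cross-terms $(d_\CCC\circ Id)(Id\circ' D_\phi) + (Id\circ' D_\phi)(d_\CCC\circ Id)=0$, valid because $D_\phi$ and $d_\CCC$ have odd degree, yields
\[
\Delta_\DDD\, D_\phi^2 \;=\; \bigl(d_\CCC^{\,2}\circ Id_\VV \;+\; Id\circ' D_\phi^2\bigr)\,\Delta_\DDD.
\]
Using coassociativity and the curved-cooperad axiom $d_\CCC^{\,2} = (\theta_\CCC \otimes Id - Id\otimes \theta_\CCC)\Delta_2$, one verifies that $(\theta_\CCC\circ Id)\Delta_\DDD$ satisfies the same twisted identity with the same outer twist $d_\CCC^{\,2}\circ Id_\VV$. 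Their difference is then an ordinary coderivation (with vanishing outer twist), and by the coderivation-projection bijection it vanishes if and only if its corestriction $\phi D_\phi - \theta_\CCC\circ Id_\VV$ does.

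The main obstacle is the verification of the twisted-coderivation identity for $(\theta_\CCC\circ Id)\Delta_\DDD$: it is a careful tree-bookkeeping argument that interleaves the coassociativity of $\Delta_\CCC$ with the curvature relation, ensuring that the two applications of $\Delta_\DDD$ together with the single application of $\theta_\CCC$ in the middle reorganize into the outer twist $d_\CCC^{\,2}\circ Id_\VV$ plus an inner $Id\circ'(\theta_\CCC\circ Id)\Delta_\DDD$ term.
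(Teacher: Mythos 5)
Your proposal is correct, but it is organized genuinely differently from the paper's proof. The paper handles the converse by brute force: it writes $D_\phi^2$ out as a sum of four explicit operators on $\CC\circ\VV$ using the formula $D_\phi=(d_\CCC\circ Id)+(Id\circ(\pi;\phi))(\Delta\circ Id)$, groups the three terms not involving $d_\CCC^2$ into $\big(Id\circ(\pi;\phi D_\phi)\big)(\Delta\circ Id)$ so that the hypothesis $\phi D_\phi=\theta\circ Id_\VV$ can be substituted, and then expands $d_\CCC^2\circ Id$ via the curvature axiom, the two leftover pieces cancelling to leave $\big((\theta\circ Id)\Delta\big)\circ Id_\VV$. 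You never touch the explicit formula for $D_\phi$: you derive $\Delta_\DDD D_\phi^2=(d_\CCC^2\circ Id+Id\circ' D_\phi^2)\Delta_\DDD$ purely from the coderivation property and odd-degree sign cancellations, check that $(\theta\circ Id)\Delta_\DDD$ satisfies the same twisted identity, and invoke the cofreeness bijection to compare corestrictions only — which settles both implications in one stroke. What your route buys is conceptual economy: the uniqueness-of-coderivations principle replaces the operator bookkeeping. What it costs is concentrated in the step you rightly flag as the main obstacle: the twisted identity for $(\theta\circ Id)\Delta_\DDD$ reduces, after pushing everything through the comodule axiom, to the cooperad-level identity $(\theta\circ Id)\Delta_\CCC-\big(Id\circ(\epsilon;\theta)\big)\Delta_\CCC=d_\CCC^2$, where the trivial decompositions $1\otimes c$ and $c\otimes 1^{\otimes k}$ contribute the same term $\theta(c)\cdot 1$ to each summand and cancel, leaving exactly $(\theta\otimes Id-Id\otimes\theta)\Delta_2$. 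This is the same kernel of calculation the paper performs in its ``on the other hand'' step, so the two proofs ultimately rest on the identical use of the curvature axiom; yours just isolates it more cleanly, and it does go through.
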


\begin{proof}[Proof]
A structure of $\CCC$-coalgebra on $\CCC^{grad} \circ \VV$ amounts to the data of a degree $-1$ coderivation $D_\phi$ such that $D_\phi^2=(\theta \circ Id_\CCC \circ Id_\VV)(\Delta_\CCC \circ Id_\VV)$. If $D^2_\phi= (\theta \circ Id_\CCC \circ Id_\VV)(\Delta_\CCC \circ Id_\VV)$, then $\phi D_\phi= \theta \circ Id_\VV$. Conversely, suppose that $\phi D_\phi = \theta \circ Id$. We have:
\begin{align*}
 D^2_\phi &= (d^2_\CCC \circ Id_\CC) + \big(Id \circ (\pi ; \phi)\big)(\Delta \circ Id) (d_\CCC \circ Id_\VV) + \big(d_\CCC \circ (\pi ; \phi)\big)(\Delta \circ Id) \\
 &+  \big(Id \circ (\pi ; \phi)\big)(\Delta \circ Id) \big(Id \circ (\pi ; \phi)\big)(\Delta \circ Id) \ .
\end{align*}
On the one hand,
\begin{align*}
 &\big(Id \circ (\pi ; \phi)\big)(\Delta \circ Id) \big(Id \circ (\pi ; \phi)\big)(\Delta \circ Id)  +\big(Id \circ (\pi ; \phi)\big)(\Delta \circ Id) (d_\CCC \circ Id_\CC) + \big(d_\CCC \circ (\pi ; \phi)\big)(\Delta \circ Id) \\
 &= \big(Id \circ (\pi ; \phi)\big)(Id \circ' D_\phi)(\Delta \circ Id) =\big(Id \circ (\pi ; \phi D_\phi)\big)(\Delta \circ Id) =\Big(\big(Id \circ (\epsilon ; \theta)\big) \Delta\Big)\circ Id\ .
\end{align*}
On the other hand,
$$
(d_\CCC^2 \circ Id) = \big((\theta \circ Id)\Delta\big) \circ Id+ \Big( \big( \sum Id \otimes_\mbs (\epsilon^{\otimes i} \otimes \theta \otimes \epsilon^{\otimes j} ) \big) \Delta\Big)\circ Id\ .
$$
Hence $D^2_\phi = \big((\theta \circ Id)\Delta\big) \circ Id_\VV$.
\end{proof}

\begin{defin}[Coradical filtration]
  Any $\CCC$-coalgebra $\DDD= (\DD,\Delta_\DDD)$ over a conilpotent cooperad $\CCC$ admits a filtration called the \textit{coradical filtration} and defined as follows.
 $$
 F^{rad}_n \DD := \{x \in \DD | \Delta_\DDD (x) \in (F^{rad}_n \CC) \circ \DD \}\ .
 $$
\end{defin}

\begin{prop}\label{prop:cogtech}
Let $\CCC$ be a conilpotent cooperad and let $\DDD$ be a $\CCC$-coalgebra. For any integer $n$:
$$
 \Delta_{\DDD} (F^{rad}_n \DD) \subset \sum_{i_0+ i_1 + \cdots + i_k = n} (F^{rad}_{i_0} \CC) (k) \otimes_{\mbs_k} ( F^{rad}_{i_1} \DD \otimes \cdots \otimes F^{rad}_{i_k} \DD)\ .
 $$
\end{prop}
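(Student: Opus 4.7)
The plan is to follow the scheme used to prove Lemma~\ref{lem:cooptech} and reduce the proposition to the case where $\CCC$ itself is a cofree cooperad. Since $\CCC$ is conilpotent, the universal map $\delta : \CCC \to \Tfree^c(\ov\CC)$ is an injective morphism of cooperads satisfying $F^{rad}_n\CC = \delta^{-1}(F^{rad}_n\Tfree^c(\ov\CC))$. Post-composing with the structure map $\Delta_\DDD$ yields a $\Tfree^c(\ov\CC)$-coalgebra structure $(\delta\circ\id_\DD)\Delta_\DDD$ on $\DD$ whose coradical filtration agrees with the original one. Because $\delta$ is injective and compatible with the filtrations, an inclusion of the form
\[
\bigl((\delta\circ\id_\DD)\Delta_\DDD\bigr)(F^{rad}_n\DD) \subseteq \sum_{i_0+\cdots+i_k=n} F^{rad}_{i_0}\Tfree^c(\ov\CC)(k)\otimes_{\mbs_k}\bigl(F^{rad}_{i_1}\DD \otimes \cdots \otimes F^{rad}_{i_k}\DD\bigr)
\]
immediately pulls back to the desired inclusion for $\Delta_\DDD$. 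It therefore suffices to prove the proposition when $\CCC = \Tfree^c(\VV)$ for some $\mbs$-module $\VV$.

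In this cofree cooperad case, an element $\Delta_\DDD(x) \in \Tfree^c(\VV)\circ\DD$ is a sum $\sum_T T \otimes d_1^T \otimes \cdots \otimes d_{k_T}^T$ over trees $T$ labelled by $\VV$, and $x \in F^{rad}_n\DD$ forces every contributing tree to have size at most $n$. The coassociativity equality $(\Delta_\CCC \circ \id_\DD)\Delta_\DDD(x) = (\id_\CCC \circ \Delta_\DDD)\Delta_\DDD(x)$ is an identity in $\Tfree^c(\VV) \circ \Tfree^c(\VV) \circ \DD$, a space whose natural basis consists of two-level labelled trees. Expanding the left-hand side via the degrafting coproduct writes each contributing tree $T$ with $|T| \leq n$ as a top tree $T_0$ of size $p_0$ together with a forest $(T_1, \ldots, T_l)$ of sizes $p_1, \ldots, p_l$ satisfying $p_0 + \sum_{j\geq 1} p_j = |T| \leq n$; the right-hand side has the form $\sum c \otimes \Delta_\DDD(d_1) \otimes \cdots \otimes \Delta_\DDD(d_k)$. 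Matching coefficients tree shape by tree shape forces $\Delta_\DDD(d_j^T)$ to involve only trees of size at most $p_j$, i.e.\ $d_j^T \in F^{rad}_{p_j}\DD$, so that each summand of $\Delta_\DDD(x)$ belongs to $F^{rad}_{p_0}\Tfree^c(\VV)(k)\otimes_{\mbs_k}(F^{rad}_{p_1}\DD \otimes \cdots \otimes F^{rad}_{p_k}\DD)$ with $\sum p_j \leq n$.

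The main obstacle is precisely this coefficient-matching step: for a generic $\CCC$-coalgebra the decomposition $\Delta_\DDD(x) = \sum c \otimes d_1 \otimes \cdots \otimes d_k$ is not unique, so one cannot read off the coradical filtration of the individual $d_j$'s directly from an arbitrary expression of $\Delta_\DDD(x)$. The crucial feature of the cofree cooperad $\Tfree^c(\VV)$ is that the two-level tree basis of $\Tfree^c(\VV)\circ\Tfree^c(\VV)\circ\DD$ makes such a matching unambiguous, which is what justifies and motivates the initial reduction to the cofree cooperad case.
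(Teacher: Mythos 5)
Your proof is correct in outline but takes a genuinely different route from the paper's. The paper never changes the cooperad: it first proves the purely cooperadic estimate (Lemma \ref{lem:cooptech}), then observes that $\Delta_\DDD\colon\DD\to\CC\circ\DD$ realises $\DD$ as a \emph{filtered retract} of the cofree $\CCC$-coalgebra $\CC\circ\DD$ --- Lemma \ref{lemtech:filt} provides a retraction $\nabla$ with $\nabla\Delta_\DDD=\id$ and $\nabla(F^{rad}_n\CC\circ\DD)=F^{rad}_n\DD$ --- and writes $\Delta_\DDD=(\id\circ\nabla)(\Delta_\CCC\circ\id)\Delta_\DDD$ to transport the cooperad-level estimate to $\DDD$. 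You instead push the coalgebra structure forward along $\delta\colon\CCC\to\Tfree^c(\ov\CC)$ and run the tree-shape bookkeeping directly on the comodule coassociativity identity; that bookkeeping is sound (for a fixed top tree $S$ of size $p_0\le n$, the matching terms on the degrafted side come from trees of total size $\le n$, so the attached forests have total size $\le n-p_0$). What the paper's route buys is precisely the tool you elide at two points: both the claim that the inclusion ``immediately pulls back'' along the injection $\delta\circ\id_\DD$, and the claim that the matching ``forces $d_j^T\in F^{rad}_{p_j}\DD$'', are assertions that the preimage under a filtered injection of a sum of tensor products of filtration pieces is the corresponding sum of preimages --- this is not automatic, and the filtered splitting of Lemma \ref{lemtech:filt} is exactly what makes such steps rigorous (note also that only the full tensors $\sum_\alpha d^T_{1,\alpha}\otimes\cdots\otimes d^T_{k,\alpha}$ are well defined, not the individual factors, so the conclusion must be phrased for the tensor, which suffices). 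With those two steps patched by the same splitting argument, your proof goes through; in exchange it avoids introducing the cofree coalgebra $\CC\circ\DD$ and makes the combinatorial content of the statement explicit.
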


\begin{lemma}\label{lemtech:filt}
Let $\VV$ and $\WW$ be two graded $\mbk$-modules equipped with filtrations $(F_n \VV)_{n \in \mbn}$ and $(F_n \WW)_{n \in \mbn}$, and let $\phi: \VV \ra \WW$ be an injection such that $F_n \VV= \phi^{-1}(F_n \WW)$ for any integer $n$. Then, there exists a map $\psi: \WW \ra \VV$ such that $\psi \phi = Id$ and $\psi (F_n \WW) = F_n \VV$ for any $n \in \mbn$.
\end{lemma}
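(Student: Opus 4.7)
The plan is to split $\WW$ as $\phi(\VV) \oplus K$ in such a way that $K$ carries a filtration compatible with the filtration of $\WW$, and then to define $\psi$ to be $\phi^{-1}$ on $\phi(\VV)$ and $0$ on $K$. Since we work over a field $\mbk$, every graded subspace admits a graded complement, so we can build everything by induction.

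First, from the hypothesis $F_n \VV = \phi^{-1}(F_n \WW)$ and the injectivity of $\phi$ I deduce the two identities
\[
\phi(F_n \VV) = \phi(\VV) \cap F_n \WW, \qquad \phi(F_n \VV) \cap F_{n-1} \WW = \phi(F_{n-1} \VV).
\]
The first is immediate; the second follows because $\phi(F_n \VV) \cap F_{n-1} \WW = \phi(\VV) \cap F_n \WW \cap F_{n-1} \WW = \phi(\VV) \cap F_{n-1} \WW = \phi(F_{n-1} \VV)$.

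Next, I construct a graded subspace $F_n K \subset F_n \WW$ for each $n \in \mbn$ such that $F_{n-1} K \subset F_n K$ and $F_n \WW = \phi(F_n \VV) \oplus F_n K$. For $n=0$, pick any graded complement $F_0 K$ of $\phi(F_0 \VV)$ in $F_0 \WW$. For the inductive step, the key identity above gives
\[
\phi(F_n \VV) + F_{n-1} \WW = \phi(F_n \VV) \oplus F_{n-1} K
\]
as a subspace of $F_n \WW$; choose a graded complement $E_n$ of this subspace in $F_n \WW$ and set $F_n K := F_{n-1} K \oplus E_n$. Setting $K := \bigcup_{n} F_n K$, exhaustiveness of the filtration of $\WW$ together with the equalities $F_n \WW = \phi(F_n \VV) \oplus F_n K$ yields $\WW = \phi(\VV) \oplus K$, and moreover $\phi(\VV) \cap F_n K = \phi(F_n \VV) \cap F_n K = 0$.

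Finally, define $\psi : \WW \to \VV$ as $\phi^{-1}$ on the summand $\phi(\VV)$ and $0$ on $K$. Then $\psi \phi = \mathrm{Id}_\VV$ by construction, and $\psi(F_n \WW) = \psi\bigl(\phi(F_n \VV) \oplus F_n K\bigr) = F_n \VV$. The only delicate step is the inductive construction of the $F_n K$, and the whole argument rests on the identity $\phi(F_n \VV) \cap F_{n-1} \WW = \phi(F_{n-1} \VV)$, which is precisely where the hypothesis $F_n \VV = \phi^{-1}(F_n \WW)$ is used.
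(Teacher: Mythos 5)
Your proof is correct and follows essentially the same route as the paper's: both inductively build a graded complement of $\phi(F_n\VV)$ inside $F_n\WW$ compatible with the filtration (your $F_nK$ is the paper's $\UU_n$, your $E_n$ its $\UU'_n$), take the union, and define $\psi$ as $\phi^{-1}$ on $\phi(\VV)$ and $0$ on the complement. You are merely more explicit about the key identity $\phi(F_n\VV)\cap F_{n-1}\WW=\phi(F_{n-1}\VV)$ and the directness checks, which the paper leaves implicit.
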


\begin{proof}[Proof]
For an integer $n \geq -1$, suppose that we have built  a sub graded $\mbk$-module $\UU_n$ of $F_{n}\WW$ such that $F_m \WW = \phi(F_m \VV) \oplus ( \UU_n \cap F_m \WW)$ for any $m \leq n$. Let $\UU'_n$ be a sub-graded $\mbk$-module of $F_{n+1} \WW$ algebraic complement to $\phi(F_{n+1} \VV) \oplus \UU_n$. Then, let $\UU_{n+1}:= \UU_n \oplus \UU'_n$. Finally, let $\UU:=\colim\  \UU_n$. We define $\psi$ by
$$
\psi=
\begin{cases}
\phi^{-1} \text{ on }\phi(\VV)\ ,\\
0 \text{ on }\UU\ .
\end{cases}
$$
 \end{proof}

\begin{proof}[Proof of Proposition \ref{prop:cogtech}:]
The map $\Delta_{\DDD}: \DD \ra \CC \circ \DD$ is actually a morphism of $\CCC$-coalgebras such that $\Delta_\DDD^{-1}(F^{rad}_n \CC \circ \DD) = F^{rad}_n \DD$. By Lemma \ref {lemtech:filt}, there exists a map of graded $\mbk$-modules $\nabla: \CC \circ \DD \ra \DD$ such that $\nabla \Delta_\DDD=Id_\DD$ and $\nabla (F_n \CC \circ \DD)= F_n \DD$. Then, the following diagram is commutative.
$$
\xymatrix{\DD \ar[r]^{\Delta} \ar[d]_{\Delta} & \CC \circ \DD \ar[d]^{\Delta \circ Id}\\ 
\CC \circ \DD \ar[r]_{Id \circ \Delta} \ar@/_2pc/[rr]_{Id} & \CC \circ \CC \circ \DD \ar[r]_{Id \circ \nabla} & \CC \circ \DD}
$$
By Lemma \ref{lem:cooptech}, we know that: 
$$
(\Delta \circ Id )\Delta (F^{rad}_n  \DD) \subset  \sum_{i_0 + \cdots + i_k =n} F^{rad}_{i_0} \CC(k) \otimes_{\mbs_k} (F^{rad}_{i_1} \CC \circ \DD \otimes \cdots \otimes F^{rad}_{i_k} \CC \circ \DD)\ ,
$$
Moreover, we know that:
$$
(Id \circ \nabla)\big( F^{rad}_{i_0} \CC(k) \otimes_{\mbs_k} (F^{rad}_{i_1} \CC \circ \DD \otimes \cdots \otimes F^{rad}_{i_k} \CC \circ \DD)) \subset F^{rad}_{i_0} \CC(k) \otimes_{\mbs_k} (F^{rad}_{i_1}  \DD \otimes \cdots \otimes F^{rad}_{i_k} \DD)\ .
$$
So, we have:
$$
\Delta(F^{rad}_n \DD) = (Id \circ \nabla)(\Delta \circ Id )\Delta (F^{rad}_n \DD) \subset \sum_k \sum_{i_0 + \cdots + i_k =n}  F^{rad}_{i_0} \CC(k) \otimes_{\mbs_k} (F^{rad}_{i_1}  \DD \otimes \cdots \otimes F^{rad}_{i_k} \DD)\ .
$$
\end{proof}

\subsection{Presentability}

This subsection deals with the presentability of the category of algebras over an operad and the presentability of the category of coalgebras over a conilpotent curved cooperad.

\begin{thm}\cite[5.2]{DrummondColeHirsh14}\label{thm:algpresentable}
Let $\PPP$ be a dg-operad. Then the category $\Palg$ of $\PPP$-algebras is presentable.
\end{thm}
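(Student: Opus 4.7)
The plan is to exhibit $\Palg$ as the category of algebras over an accessible monad on a presentable category, and invoke the classical result of Adámek--Rosický that such categories are always presentable.

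First I would recall that $\dgMod$ is a presentable category, since it is the category of modules over the ring $\mbk[d]/(d^2)$ in a suitable sense, and more concretely admits the set $\{D^n, S^n\}_{n \in \mathbb{Z}}$ as a set of compact generators. Second, the forgetful functor $U: \Palg \to \dgMod$ admits a left adjoint, namely the free $\PPP$-algebra functor
\[
F(\VV) = \PP \circ \VV = \bigoplus_{n \geq 0} \PP(n) \otimes_{\mbs_n} \VV^{\otimes n}\ ,
\]
and the adjunction $F \dashv U$ is monadic by Beck's theorem: $U$ creates reflexive coequalizers (one builds them on the underlying chain complex and transports the $\PPP$-algebra structure) and reflects isomorphisms.

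Next I would check that the associated monad $T = UF$ is accessible, i.e.\ preserves filtered colimits. This reduces to the observation that tensor products of chain complexes commute with filtered colimits, and that the coinvariants $(-)_{\mbs_n}$ and direct sums do as well; hence each summand $\PP(n) \otimes_{\mbs_n} (-)^{\otimes n}$ preserves filtered colimits, and so does their sum. Equivalently, one can verify directly that free algebras $F(D^n)$ and $F(S^n)$ are compact in $\Palg$, since $\hom_{\Palg}(F(X), -) \simeq \hom_{\dgMod}(X,U(-))$ and $U$ preserves filtered colimits.

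Finally I would invoke the general principle (Theorem 2.78 of \cite{AdamekRosicky94} or its standard variants) that if $\mathsf M$ is a locally presentable category and $T$ is an accessible monad on $\mathsf M$, then the Eilenberg--Moore category $\mathsf M^T$ is again locally presentable. Applying this to $\mathsf M = \dgMod$ and the monad $T$ above yields the presentability of $\Palg$. The only non-routine point is the accessibility of the monad, but since $\PPP$ is a dg operad (in particular each $\PP(n)$ is a fixed chain complex, not varying in the argument), the argument above suffices.
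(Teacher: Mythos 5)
Your proof is correct, and it follows the same route as the cited reference: the paper itself only quotes \cite[5.2]{DrummondColeHirsh14}, and its later remark that the coalgebra case cannot be handled because $\Ccog$ ``does not seem to be comonadic over a known presentable category'' confirms that the cited argument is precisely the monadicity-plus-accessible-monad argument you give. All the individual steps (presentability of $\dgMod$, monadicity of the free--forgetful adjunction via creation of reflexive coequalizers, accessibility of $\PP\circ -$ since $\otimes$, coinvariants and sums commute with filtered colimits, and Ad\'amek--Rosick\'y for Eilenberg--Moore categories of accessible monads) are sound.
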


The essence of the last theorem is that any $\PPP$-algebra is the colimit of a filtered diagram of finitely presented $\PPP$-algebras.

\begin{thm}\label{thm:cogcompact}
Let $\CCC$ be a conilpotent curved cooperad. The category $\Ccog$ of $\CCC$-coalgebras is presentable.
\end{thm}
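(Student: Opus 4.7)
The plan is to adapt the argument that the category of coassociative coalgebras is presentable (treated in this paper via \cite{AnelJoyal13}): I will exhibit every $\CCC$-coalgebra as the filtered colimit of its sub-coalgebras of bounded cardinality, and apply the standard recognition criterion for presentable categories.

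First I would verify that $\Ccog$ is cocomplete. The forgetful functor $U : \Ccog \to \gMod^\mbs$ to graded $\mbs$-modules is a left adjoint, the right adjoint being the cofree $\CCC$-coalgebra functor (constructed in the curved case by applying Proposition \ref{prop:curvedshortcut} to adjust the cofree graded $\CCC^{grad}$-coalgebra functor so that the curvature equation is automatically satisfied). In particular $U$ preserves colimits, which then exist in $\Ccog$ and are computed on underlying graded $\mbs$-modules.

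Second, I would fix a regular cardinal $\kappa > \max(|\mbk|, \aleph_0, |\bigsqcup_k \CC(k)|)$ and show that every element $x$ of every $\CCC$-coalgebra $\DDD$ lies in a sub-$\CCC$-coalgebra of cardinality $<\kappa$. The construction is iterative. Set $S_0 := \{x, d_\DD x\}$; given $S_n$, let $S_{n+1}$ add to $S_n$ both $d_\DD(S_n)$ and all elements of $\DD$ that appear in a fixed finite-sum representation of $\Delta_\DDD(s)$ for each $s \in S_n$. Because $\Delta_\DDD$ of a single element is a finite sum of elementary tensors, $|S_{n+1}| \leq |S_n|\cdot\aleph_0$; starting from a finite $S_0$ one obtains a countable set $S := \bigcup_n S_n$ whose $\mbk$-linear span $\DD_x$ has cardinality at most $\max(|\mbk|,\aleph_0) < \kappa$. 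By construction $\DD_x$ is stable under both $\Delta_\DDD$ and $d_\DD$, hence inherits the structure of a sub-$\CCC$-coalgebra of $\DDD$.

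Third, each $\CCC$-coalgebra is the filtered union of its sub-coalgebras of cardinality $<\kappa$, and these form only a set of isomorphism classes; moreover any such $<\kappa$-sized sub-coalgebra is $\kappa$-compact in $\Ccog$, since any map out of it into a $\kappa$-filtered colimit has image of cardinality $<\kappa$ and therefore factors through a single stage (possibly after passing to a further stage to make the structure maps agree). This set of compact generators establishes $\kappa$-presentability of $\Ccog$.

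The main obstacle is the size estimate in the second step. Its subtlety is that the coradical filtration of Proposition \ref{prop:cogtech} need not be stable under the coderivation $d_\DD$, so one cannot bound $\DD_x$ directly by induction on coradical level; one must instead close separately under $d_\DD$ and under $\Delta_\DDD$, and the cardinality bound comes from the finiteness of each individual coproduct rather than from the conilpotency of $\CCC$ as such. Conilpotency is nonetheless indispensable elsewhere in the argument: it underlies the well-definedness of the cofree coalgebra functor used in the first step, and more broadly it is Proposition \ref{prop:cogtech} that guarantees the iterative closure produces a genuine sub-comodule over the full cooperad $\CCC$.
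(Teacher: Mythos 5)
Your argument is essentially correct, but it takes a genuinely different route from the paper's. The paper proves the sharper statement that every $\CCC$-coalgebra is locally \emph{finite-dimensional}: Lemma \ref{lemma:finitedimsubcoalg} produces, by induction on the coradical filtration and using Proposition \ref{prop:cogtech}, a finite-dimensional sub-coalgebra containing any given finite-dimensional subspace, and then verifies by hand that finite-dimensional coalgebras are compact (Proposition \ref{prop:compact}). This yields local \emph{finite} presentability, and the finite-dimensional sub-coalgebras are reused later in the paper (e.g.\ in the proof of Theorem \ref{thm:coalgkoszul}). You instead settle for $\kappa$-presentability for a large $\kappa$, closing a single element under $d_\DD$ and under chosen representatives of the components of $\Delta_\DDD$; this is more elementary, avoids the coradical filtration entirely, and the only input is that $\Delta_\DDD(s)$ is a finite sum for each $s$ --- which is exactly what the definition of $\CCC$-coalgebra used here (the ``conilpotent'' one, with $\Delta_\DDD$ landing in the direct sum $\bigoplus_k \CC(k)\otimes_{\mbs_k}\DD^{\otimes k}$) guarantees. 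Both arguments are valid proofs of presentability; yours is more robust, the paper's gives a finer generation statement.

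Two of your justifications should be repaired, though neither affects the skeleton of the proof. First, you base cocompleteness on a cofree curved $\CCC$-coalgebra functor ``constructed by applying Proposition \ref{prop:curvedshortcut}''; that proposition only characterizes curved structures on a cofree \emph{graded} coalgebra $\CCC^{grad}\circ\VV$, it does not produce a right adjoint to the forgetful functor (note $\phi=0$ is not allowed when $\theta\neq 0$, so there is no canonical candidate), and the paper explicitly remarks that $\Ccog$ does not appear to be comonadic over a known presentable category. Cocompleteness should instead be checked directly: colimits are created on underlying graded $\mbk$-modules, with the induced decomposition and coderivation. Second, your closing remark that Proposition \ref{prop:cogtech} is what ``guarantees the iterative closure produces a genuine sub-comodule'' is off the mark: the span of your set $S$ is a sub-comodule simply because $\Delta_\DDD(S)\subset\CC\circ\langle S\rangle$ by construction (and $\CC(k)\otimes_{\mbs_k}\langle S\rangle^{\otimes k}$ is a direct summand of $\CC(k)\otimes_{\mbs_k}\DD^{\otimes k}$ over a field), with no appeal to the coradical filtration. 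Finally, your compactness step is stated very tersely --- the factorization of the structure-map identities through a further stage, and the injectivity half of the bijection $\colim\hom_{\Ccog}(\DDD,D)\to\hom_{\Ccog}(\DDD,\colim D)$, are precisely where the paper invests its care (Lemma \ref{lemtechcompact}) --- but the standard $\kappa$-filtered argument does go through since a $<\kappa$-dimensional coalgebra imposes fewer than $\kappa$ equations.
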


The essence of this theorem is that any $\CCC$-coalgebra is the colimit of a filtered diagram of finite dimensional $\CCC$-coalgebras. Since the category of $\CCC$-coalgebras does not seem to be comonadic over a known presentable category, we cannot use the same kind of arguments as in the proof of \cite[5.2]{DrummondColeHirsh14}.

\begin{lemma}
 The category $\Ccog$ is cocomplete.
\end{lemma}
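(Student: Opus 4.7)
My plan is to establish cocompleteness by constructing all small coproducts and all coequalizers directly, which together yield every small colimit.

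For small coproducts of a family $(\DDD_i)_{i \in I}$ of $\CCC$-coalgebras, I take the underlying graded module $\bigoplus_i \DD_i$, equipped with the coderivation $\bigoplus_i d_{\DD_i}$ and the decomposition map
\[
\bigoplus_i \DD_i \xrightarrow{\bigoplus_i \Delta_{\DDD_i}} \bigoplus_i (\CC \circ \DD_i) \longrightarrow \CC \circ \Big( \bigoplus_i \DD_i \Big),
\]
where the second arrow is induced by the canonical inclusions. All coaction axioms and the curvature identity $d^2 = (\theta \circ Id)\Delta$ hold summandwise, and the universal property follows directly from the universal property of the direct sum together with the compatibility of the structure maps.

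For coequalizers of two morphisms $f, g : \DDD \rightrightarrows \EEE$, I exhibit the coequalizer as a quotient $\pi_I : \EEE \twoheadrightarrow \EEE / I$ for a suitable graded subspace $I \subseteq \EE$. Call a graded subspace $J \subseteq \EE$ \emph{admissible} if $J$ contains the image of $f - g$ (viewed as a map of graded modules), $d_\EE(J) \subseteq J$, and $\Delta_\EEE(J) \subseteq \ker(Id_\CC \circ \pi_J)$, where $\pi_J : \EE \twoheadrightarrow \EE/J$ is the canonical projection. A direct check shows that for any admissible $J$, the quotient $\EE/J$ inherits a $\CCC$-coalgebra structure making $\pi_J$ a coalgebra map which coequalizes $f$ and $g$; conversely, the kernel of any coalgebra map out of $\EEE$ coequalizing $f$ and $g$ is admissible. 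Hence the coequalizer is $\EE/I$ for $I$ the largest admissible subspace, provided this exists.

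The main point requiring care is to show that the family of admissible subspaces is closed under arbitrary sums, so that such a largest $I$ exists and equals the sum of all admissible subspaces. This rests on the observation that whenever $J \subseteq J'$, the projection $\pi_{J'}$ factors as $q \, \pi_J$ for some $q : \EE/J \to \EE/J'$, so $\ker(Id_\CC \circ \pi_J) \subseteq \ker(Id_\CC \circ \pi_{J'})$. Decomposing any element of $\sum_\alpha J_\alpha$ into a finite sum of pieces coming from individual $J_\alpha$ and applying this factorization termwise then yields the inclusion $\Delta_\EEE(\sum_\alpha J_\alpha) \subseteq \ker(Id_\CC \circ \pi_{\sum_\alpha J_\alpha})$; the other two admissibility conditions are evidently stable under arbitrary sums. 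Once the largest admissible $I$ is in hand, the verification that $\EE/I$ truly satisfies the curved cooperad axiom $d_{\EE/I}^2 = (\theta \circ Id) \Delta_{\EE/I}$ is immediate from the corresponding identity in $\EEE$ together with the definition of the induced structure maps.
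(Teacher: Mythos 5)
Your treatment of coproducts is correct and agrees with the paper, which simply observes that all colimits are computed on the underlying graded $\mbk$-modules and equipped with the obvious induced structure maps. The problem is in your coequalizers: you have the universal property backwards. The coequalizer of $f,g:\DDD\rightrightarrows\EEE$ is \emph{initial} among quotients coequalizing $f$ and $g$, so a map $h:\EEE\to\FFF$ with $hf=hg$ factors through $\EE/I$ only if $I\subseteq\ker h$; you therefore need the \emph{smallest} admissible subspace, not the largest. Taking the largest produces the terminal coequalizing quotient, which is genuinely wrong: for $f=g$ the coequalizer must be $\EEE$ itself, whereas (for instance, when $\EEE$ is a chain complex viewed as a $\CCC$-coalgebra via $\Delta x = 1\otimes x$) every $d$-stable subspace is admissible in your sense, so your construction returns $\EE/\EE=0$. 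Your closure-under-sums argument, which is what guarantees the existence of a largest admissible subspace, is thus effort spent in the wrong direction.

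The repair is short and lands you back on the paper's one-line proof. The subspace $K=\mathrm{im}(f-g)$ is itself admissible: it is stable under the coderivation because $f$ and $g$ commute with the coderivations, and $\Delta_\EEE\bigl(f(x)-g(x)\bigr)=(Id\circ f)\Delta_\DDD(x)-(Id\circ g)\Delta_\DDD(x)$ is a telescoping sum of terms each of which has one tensor factor in $K$, hence lies in $\ker(Id_\CC\circ\pi_K)$. Since every admissible subspace contains $K$ by definition, $K$ is the smallest one, and the coequalizer is $\EE/K$ with the induced structure --- that is, the coequalizer of the underlying graded $\mbk$-modules. This is exactly the paper's argument: colimits in $\Ccog$ are created by the forgetful functor to graded $\mbk$-modules.
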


\begin{proof}[Proof]
 The colimit of a diagram of $\CCC$-coalgebras is its colimit in the category of graded $\mbk$-modules, together with the obvious decomposition map and coderivation map.
\end{proof}

\begin{lemma}\label{lemma:finitedimsubcoalg}
 For any $\CCC$-coalgebra $\DDD=(\DD,\Delta_\DDD)$ and any finite dimensional sub-graded $\mbk$-module $\VV \subset \CC$, there exists a finite dimensional sub-$\CCC$-coalgebra $\EEE$ of $\DDD$ which contains $\VV$.
\end{lemma}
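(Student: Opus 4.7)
The plan is to reduce to the one-generator case: every element $x \in \DD$ lies in a finite-dimensional sub-$\CCC$-coalgebra of $\DDD$. This reduction is harmless because if $x_1,\dots,x_r$ is a basis of $\VV$ and each $x_i$ sits in a finite-dimensional sub-coalgebra $\EEE_i \subset \DDD$, then $\EEE := \EEE_1 + \cdots + \EEE_r$ is a finite-dimensional sub-$\CCC$-coalgebra containing $\VV$, since a finite sum of sub-coalgebras is again a sub-coalgebra.

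Since $\CCC$ is conilpotent, $\CC = \bigcup_n F^{rad}_n \CC$; as $\Delta_\DDD(x)$ is a finite sum in $\CC \circ \DD$, every $x$ belongs to $F^{rad}_n \DD$ for some $n$. I argue by strong induction on $n$. For the base case $n = 0$, the inclusion $\Delta_\DDD(x) \in \II \circ \DD$ together with the counit axiom forces $\Delta_\DDD(x) = 1 \otimes x$. The coderivation identity and $d_\CCC(1) = 0$ then give $\Delta_\DDD(dx) = 1 \otimes dx$, while the curved equation $d^2 = (\theta \circ \id)\Delta_\DDD$ yields $d^2 x = \theta(1)\, x$. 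Hence $\EEE := \mbk\cdot x + \mbk\cdot dx$ is a finite-dimensional sub-coalgebra containing $x$.

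For the inductive step, take $x \in F^{rad}_n \DD$ with $n \geq 1$. Proposition \ref{prop:cogtech} expresses $\Delta_\DDD(x)$ as a sum of elementary tensors indexed by partitions $i_0 + i_1 + \cdots + i_k = n$. The crucial observation is that $F^{rad}_0\CC(k) = \II(k)$ vanishes unless $k = 1$, so the only contribution with $i_0 = 0$ comes from $\II(1) \otimes F^{rad}_n\DD$, and the counit axiom pins this part down to $1 \otimes x$. In every other summand one has $i_0 \geq 1$, whence $i_j \leq n-1$ for each $j \geq 1$. Thus I can write
$$
\Delta_\DDD(x) \;=\; 1 \otimes x \;+\; \sum_\alpha c_\alpha \otimes (y_{\alpha,1} \otimes \cdots \otimes y_{\alpha,k_\alpha})
$$
as a finite sum with every $y_{\alpha,j} \in F^{rad}_{n-1}\DD$. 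Let $\WW'$ be the finite-dimensional span of the $y_{\alpha,j}$'s. By the induction hypothesis combined with the reduction above, $\WW'$ is contained in a finite-dimensional sub-$\CCC$-coalgebra $\EEE' \subset \DDD$.

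I claim that $\EEE := \mbk\cdot x + \mbk\cdot dx + \EEE'$ is a finite-dimensional sub-$\CCC$-coalgebra containing $x$. By construction $\Delta_\DDD(x) \in \CC \circ (\mbk x + \EEE') \subset \CC \circ \EEE$. Applying the coderivation identity to the displayed formula for $\Delta_\DDD(x)$, and using $d_\CCC(1) = 0$ together with the $d$-stability of $\EEE'$, one sees that the $\DD$-factors appearing in $\Delta_\DDD(dx)$ all lie in $\mbk\cdot dx + \EEE'$, so $\Delta_\DDD(dx) \in \CC \circ \EEE$. Moreover $d^2 x = (\theta \circ \id)\Delta_\DDD(x) = \theta(1)\, x + \sum_{k_\alpha = 1} \theta(c_\alpha)\, y_{\alpha,1}$ lies in $\mbk\cdot x + \WW' \subset \EEE$, while $\EEE'$ is already stable under $\Delta_\DDD$ and $d$. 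The main obstacle is that the coradical filtration need not be $d$-stable in general; the argument sidesteps this by letting the induction produce the $d$-stable piece $\EEE'$ and enlarging only by the single extra vector $dx$ to absorb the top filtration layer.
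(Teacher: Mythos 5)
Your proof is correct and follows essentially the same route as the paper's: induction on the coradical filtration, using Proposition \ref{prop:cogtech} to see that $\Delta_\DDD(x) - 1 \otimes x$ only involves $\DD$-factors of strictly lower filtration degree, and adjoining $d$ of the new generators to compensate for the fact that the coradical filtration need not be $d$-stable. The only (harmless) difference is that you first reduce to a single generator and are somewhat more explicit in checking that the resulting $\EEE$ is closed under $\Delta_\DDD$ and $d$.
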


\begin{proof}[Proof]
 Let us prove the result by induction on the coradical filtration of $\DDD$. Suppose first that $\VV \subset F_0 \DD$. Then, $\VV+ d \VV$ is a sub $\CCC$-coalgebra of $\DDD$. Then, suppose that, for any finite dimensional sub-graded $\mbk$-module $\WW \in F^{rad}_n\DD$, there exists a finite dimensional sub $\CCC$-coalgebra $\EEE$ of $F^{rad}_n \DDD$ which contains $\WW$. Consider now a finite dimensional sub-graded $\mbk$-module $\VV \subset  F_{n+1}\DD$. By Proposition \ref{prop:cogtech}, for any element $x \in F^{rad}_{n+1}\DD$, $\Delta_\DDD (x)-1 \otimes x \in \CC\circ F^{rad}_n{\DD}$. Since we are working with conilpotent $\CCC$-coalgebras, there exists a finite dimensional sub graded $\mbk$-module $\VV(x)$ of $F^{rad}_n \DD$ such that $\Delta_\DDD (x)-1 \otimes x \in \CC \circ \VV(x)$.  Let $(e_i)_{i=1}^k$ be a linearly free family of elements of $\VV$ such that $\VV=\VV \cap F^{rad}_n\DD \oplus  \bigoplus_{i=1}^k \mbk . e_i$. By the induction hypothesis, let $\EE$ be a finite dimensional sub $\CCC$-coalgebra of $\DDD$ which contains
 $$
 \VV \cap F^{rad}_n \DD \oplus \sum \VV(e_i) + \VV(d_\DD e_i)\ .
 $$
 Then, the sum
 $$
 \EE + \sum_i (\mbk . e_i \oplus \mbk . d_\DD e_i)
 $$
 is a finite dimensional sub $\CCC$-coalgebra of $\DDD$ which contains $\VV$. 
 \end{proof}

Finally, we show that a finite dimensional $\CCC$-coalgebras is a compact object.

\begin{prop}\label{prop:compact}
A finite dimensional $\CCC$-coalgebra is a compact object.
\end{prop}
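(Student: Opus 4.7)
The plan is to verify the compactness condition directly: for any filtered diagram $F: I \to \Ccog$ with colimit $\EEE$, show that the canonical map
$$
\colim_i \hom_{\Ccog}(\DDD, F(i)) \to \hom_{\Ccog}(\DDD, \EEE)
$$
is a bijection. Two structural inputs drive the argument. First, by the cocompleteness lemma just proved, the underlying graded $\mbk$-module of $\EEE$ is the colimit $\colim_i F(i)$ computed in graded $\mbk$-modules. Second, since the composition product
$$
\CC \circ \VV = \bigoplus_{k \geq 0} \CC(k) \otimes_{\mbs_k} \VV^{\otimes k}
$$
is built from arbitrary direct sums, finite tensor powers and quotients by finite group actions, the endofunctor $\CC \circ -$ of graded $\mbk$-modules preserves filtered colimits.

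For \textbf{injectivity}, suppose $f, g: \DDD \to F(i_0)$ agree after composition with $F(i_0) \to \EEE$. Choose a finite basis $(e_1, \ldots, e_n)$ of the finite dimensional graded $\mbk$-module $\DD$. Each equation $f(e_k) = g(e_k)$ holds in the filtered colimit $\EEE$, hence already at some stage $F(i_k)$; filteredness combined with the finiteness of the basis yields a single $i_1$ where all $n$ equations hold simultaneously, so $f$ and $g$ coincide in $\hom_{\Ccog}(\DDD, F(i_1))$.

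For \textbf{surjectivity}, given a coalgebra morphism $f: \DDD \to \EEE$, fix a finite basis of $\DD$ and lift each $f(e_k)$ to a common $F(i_0)$ using the filtered colimit structure, producing a graded linear map $g_0: \DD \to F(i_0)$ whose image in $\EEE$ recovers $f$. The map $g_0$ need not respect the coalgebra structure, but the defects
$$
\delta^{\mathrm{coalg}}_k := \Delta_{F(i_0)}\, g_0(e_k) - (Id \circ g_0)\, \Delta_\DDD(e_k) \in \CC \circ F(i_0)
$$
and
$$
\delta^{\mathrm{diff}}_k := d_{F(i_0)}\, g_0(e_k) - g_0\, d_\DDD(e_k) \in F(i_0)
$$
vanish after applying the structural maps to $\CC \circ \EEE$ and $\EEE$ respectively, since $f$ is a morphism of $\CCC$-coalgebras. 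By the two filtered-colimit facts above, these finitely many defects all die at some further stage $F(i_1)$ with $i_0 \to i_1$, and $g := F(i_0 \to i_1) \circ g_0$ is then a morphism of $\CCC$-coalgebras lifting $f$.

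The main obstacle is the treatment of the coalgebra defect $\delta^{\mathrm{coalg}}_k$, for which the preservation of filtered colimits by the composition endofunctor $\CC \circ -$ is indispensable; the other steps reduce to the standard fact that any map out of a finite dimensional graded vector space into a filtered colimit of graded vector spaces factors through some finite stage.
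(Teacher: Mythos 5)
Your proof is correct, but it takes a genuinely different route from the paper's. The paper argues entirely inside the category of $\CCC$-coalgebras: for surjectivity it forms the image of $f$ in $\colim D$, lifts a basis to some stage, encloses the lifts in a finite dimensional sub-$\CCC$-coalgebra (this is where Lemma \ref{lemma:finitedimsubcoalg} is used), and then uses the technical Lemma \ref{lemtechcompact} to make the induced map onto the image an isomorphism at a later stage, through whose inverse $f$ factors; injectivity is handled by a dimension-counting comparison of images. You instead work at the level of underlying graded $\mbk$-modules: you lift the underlying linear map, record the finitely many ``defects'' measuring the failure to commute with $\Delta$ and $d$, and kill them at a later stage. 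The two nonobvious inputs you need --- that colimits in $\Ccog$ are computed on underlying graded $\mbk$-modules, and that $\CC \circ -$ preserves filtered colimits (direct sums, finite tensor powers and $\mbs_k$-coinvariants all commute with filtered colimits) --- are both sound, the first being exactly the paper's cocompleteness lemma. Your approach is the standard ``finitely presentable implies compact'' argument and is arguably more transparent and more general; the paper's approach has the side benefit of reusing Lemma \ref{lemma:finitedimsubcoalg}, which is needed anyway in the proof of Theorem \ref{thm:cogcompact} to exhibit a generating set of compact objects. The only cosmetic point: in your injectivity step you should first push $f$ and $g$ to a common index using filteredness before comparing them, since a priori they are defined at different stages.
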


We need the following technical lemma.

\begin{lemma}\label{lemtechcompact}
 Let  $D: I \ra \Ccog$ be a filtered diagram. Let $x \in D(i)$ for an object $i$ of $I$. If the image of $x$ in $\colim D$ is zero, then, there exists an object $i'$ of $I$ and a map $\phi: i \ra i'$ such that $D(\phi)(x)=0$.
\end{lemma}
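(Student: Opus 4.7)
The plan is to reduce the claim to the classical fact that in the category of graded $\mbk$-modules, an element of a term of a filtered diagram is sent to zero in the colimit if and only if it is already sent to zero somewhere along the diagram. The essential input from the coalgebraic side is the lemma proved just above, which asserts that the colimit of a diagram in $\Ccog$ is computed as the colimit of the underlying graded $\mbk$-modules, endowed with the induced decomposition map and coderivation. In particular, the forgetful functor $\Ccog \to \gMod$ preserves all colimits, hence \emph{a fortiori} filtered ones.

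Concretely, first I would post-compose the diagram $D : I \to \Ccog$ with the forgetful functor to graded $\mbk$-modules, so that the hypothesis that $x \in D(i)$ has image zero in $\colim D$ translates directly into the same vanishing statement in the filtered colimit of the underlying graded $\mbk$-modules. I would then invoke the set-theoretic description of filtered colimits of modules: two elements coincide in such a colimit exactly when they already coincide after a finite zig-zag, which in the filtered case reduces to the existence of a single arrow $\phi : i \to i'$ in $I$ with $D(\phi)(x) = 0$. This arrow $\phi$ is the one required.

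I do not anticipate any genuine obstacle here, since the argument merely combines a structural fact already recorded (preservation of colimits by the forgetful functor $\Ccog \to \gMod$) with a standard and well-known property of filtered colimits in $\gMod$. The only point to watch is that the identification of the underlying graded $\mbk$-module of $\colim D$ with the set-theoretic filtered colimit is being used; this is exactly the content of the earlier cocompleteness lemma.
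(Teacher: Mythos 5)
Your argument is correct and is essentially the paper's: both rest on the preceding cocompleteness lemma, which says that colimits in $\Ccog$ are computed on the underlying graded $\mbk$-modules. The only difference is that the paper spells out, via the cokernel presentation of the colimit and a cocone over the finite subdiagram of arrows involved, the standard fact about filtered colimits of modules that you simply cite.
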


\begin{proof}[Proof]
The colimit of the diagram $D$ is the cokernel of the map
$$
g:\bigoplus_{f : j \ra j'} D(j) \ra \bigoplus_{i \in Ob(I)} D(i) 
$$
such that for any morphism $f:j \ra j'$of $I$, the morphism $g$ sends $x \in D(j)$ to $x - D(f)(x)$. Let $x \in D(i)$ whose image in $\colim D$ is zero. Then, there exists an element $y= \sum y_f$ of $\bigoplus_{f :j \ra j'} D(j)$ such that $g(y) = x$. Let $i'$ be a cocone in $I$ of the finite diagram made up of the morphisms $f$ such that $y_f \neq 0$. Then, the image in $D(i')$ of $\sum y_f$ is the same as the image in $D(i')$ of $\sum D(f)(y_f)$. Hence, the image of $x$ in $D(i')$ is zero.
\end{proof}

\begin{proof}[Proof of Proposition \ref{prop:compact}]
  Let  $D: I \ra \Ccog$ be a filtered diagram and let $\DDD= (\DD, \Delta_\DDD)$ be a finite dimensional $\CCC$-coalgebra. We have to show that the canonical map
  $$
  \colim \big( \hom_{\Ccog}(\DDD, D) \big) \ra \hom_{\Ccog}(\DDD,\colim D )
  $$
  is bijective.
  
\begin{itemize}
 \itemt Let us first show that it is surjective. Let $f : \DDD \ra \colim D$ be a map of $\CCC$-coalgebra and let $\DD'$ be the image of $f$ inside $\colim D$ which is also a sub-$\CCC$-coalgebra of $\colim D$. Let $\{e_a\}_{a=1}^n$ be a basis of the graded $\mbk$-module $\DD'$. Since the diagram $D$ is filtered, there exists an object $i$ of $I$ and elements $x_a \in D(i)$ whose image in $\colim D$ is $e_a$. Let $\EEE$ be the smallest sub $\CCC$-coalgebra of $D(i)$ which contains all the $x_a$ and let $\EEE'$ be the image of $\EEE$ in $\colim D$. Notice that $\EEE'$ contains $\DD'$ and that the map $\EEE \ra \EEE'$ is surjective. By Lemma \ref{lemtechcompact} and since $\EEE$ is finite dimensional, there exists an object $i'$ and a map $\phi:i \ra i' $ such that, the map $\EEE'':=D(\phi)(\EEE) \ra \EEE'$ is an isomorphism of $\CCC$-coalgebras. So, let $\DDD''$ be the sub $\CCC$-coalgebra of $\EEE''$ which is the image of $\DD'$ through the inverse isomorphism $\EEE' \ra \EEE''$. Hence, the map $\DDD \ra \DDD' \ra \colim D$ factors through the map $\DDD \ra \DDD' \simeq \DDD'' \ra D(i')$ and so the canonical map $\colim \big( \hom_{\Ccog}(\DDD, D) \big) \ra \hom_{\Ccog}(\DDD,\colim D )$ is surjective.
 \itemt Let us show that it is injective. Let $f \in \hom_{\Ccog}(\DDD, D(i))$ and $g \in \hom_{\Ccog}(\DDD, D(j))$ be two maps whose images in $\hom_{\Ccog}(\DDD,\colim D)$ are the same; it is denoted $h$. Since the category $I$ is filtered, there exists an object $k$ together with maps $\phi:i \ra k$ and $\psi: j \ra k$. Then $D(\phi)f(\DDD) + D(\psi)g(\DDD)$ is a finite dimensional sub $\CCC$-coalgebra of $D(k)$ whose image in $\colim D$ is $h(\DDD)$. As in the previous point (by Lemma \ref{lemtechcompact}), the exists a map $\zeta: k \ra k'$ in $I$ such that the map $u:D(\zeta) \big(D(\phi)f(\DDD) + D(\psi)g(\CC)\big) \ra h(\DDD)$ is an isomorphism. Since the dimension (as a graded $\mbk$-module) of $D(\zeta) D(\phi)f(\DDD)$ and the dimension of $D(\zeta) D(\psi)g(\DDD)$ are both greater than the dimension of $h(\DDD)$, then we must have:
 $$
 D(\zeta) \big(D(\phi)f(\DDD) + D(\psi)g(\DDD)\big) = D(\zeta) D(\phi)f(\DDD) = D(\zeta) D(\psi)g(\DDD)\ .
 $$
 In this context, we have
 $$
 D(\zeta) D(\phi)f = u^{-1} h = D(\zeta) D(\psi)g\ .
 $$
 Hence, $f$ and $g$ represent the same element of $\colim \big( \hom_{\Ccog}(\DDD, D) \big)$.
\end{itemize}
\end{proof}

\begin{proof}[Proof of Theorem \ref{thm:cogcompact}]
The isomorphisms classes of finite dimensional $\CCC$-coalgebras form a set. By Proposition \ref{prop:compact}, any finite dimensional $\CCC$-coalgebra is a compact object of the category $\Ccog$. Moreover, any $\CCC$-coalgebra is the colimit of the diagram of its finite dimensional sub $\CCC$-coalgebras (with inclusions between them); this is a filtered diagram (and even a directed set). Hence, the category $\Ccog$ is presentable.
\end{proof}

\section{Enrichment}\label{section:enrich}

This section deals with several enrichments of the category of algebras of an operad and of the category of coalgebras of a curved conilpotent cooperad. Specifically, we prove that both the category of algebras over an operad and the category of coalgebras over a curved conilpotent cooperad are tensored, cotensored and enriched over cocommutative coalgebras and enriched over simplicial sets. In the nonsymmetric context, algebras over an operad and coalgebras over a curved conilpotent cooperad are tensored, cotensored and enriched over coassociative coalgebras; this leads to another enrichment in simplicial sets.

\subsection{Enrichment over coassociative coalgebras and cocommutative coalgebras} We show in this subsection that the category of algebras over an operad and the category of coalgebras over a curved conilpotent cooperad are tensored-cotensored-enriched (see Definition \ref{def:tce}) over the category $\uCocom$ of counital cocommutative coalgebras. Moreover, in the nonsymmetric context, they are tensored-cotensored-enriched over the category $\uCog$ of coassociative coalgebras. We will use these enrichments in the sequel to describe respectively deformations of morphisms and mapping spaces

\subsubsection{Enrichment of $\PPP$-algebras over coalgebras} Let $\PPP=(\PP,\gamma,1)$ be a dg operad. For any counital cocommutative coalgebra $\CCC= (\CC,\Delta_\CCC,\epsilon)$ and any $\PPP$-algebra $\AAA=(\Aa,\gamma_\AAA)$, the chain complex $[\CC, \Aa]$ has a canonical structure of $\PPP$-algebra as follows.
\begin{itemize}
 \item[$\triangleright$] For any $p \in \PP(n)$ ($n \geq 1$), and for any $f_1$, \ldots, $f_n \in [\CC, \Aa]$ and any $x=\CC$, then
 $$
 \gamma_{[\CC,\Aa]} \big(p \otimes_{\mbs_n}( f_1 \otimes \cdots \otimes f_n)\big) (x) = \gamma_{\AAA} (p \otimes -) (f_1 \otimes \cdots \otimes f_n) \Delta_\CCC^{n-1}(x)
 $$
  \item[$\triangleright$] For any $p \in \PP(0)$:
 $$
 \gamma_{[\CC,\Aa]} (p) =  \gamma_{\AAA} (p) \epsilon_\CC
 $$
\end{itemize}
 
 The chain complex $[\CC,\Aa]$ together with its structure of $\PPP$-algebra is denoted $[\CCC,\AAA]$.
 
\begin{lemma}
 The assignment $\CCC, \AAA \mapsto [\CCC, \AAA]$ defines a left coaction (see Definition \ref{defin:tensored}) of the category $\uCocom$ of counital cocommutative coalgebras on the category $\Palg$ of $\PPP$-algebras .
\end{lemma}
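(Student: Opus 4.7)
The plan is to verify the three ingredients required by Definition \ref{defin:tensored} for a left coaction: well-definedness of the bifunctor $[-,-]: \uCocom^{op} \times \Palg \to \Palg$, the existence of natural isomorphisms $[\CCC \otimes \DDD, \AAA] \simeq [\CCC, [\DDD, \AAA]]$ and $[\mbk, \AAA] \simeq \AAA$, and the coherence of these isomorphisms.

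First I would check that the stated formulas really do produce a $\PPP$-algebra. The subtle point is $\mbs_n$-equivariance of the structure map on $\PP(n) \otimes [\CC,\Aa]^{\otimes n}$; this is precisely where cocommutativity of $\CCC$ intervenes. For a cocommutative coalgebra, the iterated coproduct $\Delta_\CCC^{n-1}: \CC \to \CC^{\otimes n}$ is $\mbs_n$-equivariant with source carrying the trivial action and target the permutation action, hence any $\sigma \in \mbs_n$ acting on $f_1 \otimes \cdots \otimes f_n$ can be transported through $\Delta_\CCC^{n-1}$ and absorbed into the action of $\PP(n)$. Associativity of the operadic action then reduces to coassociativity of $\Delta_\CCC$ (both compositions amount to evaluating on the same iterated coproduct), and the unit axiom combines counitality of $\epsilon_\CC$ with the operadic unit axiom. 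Compatibility with differentials is the Leibniz rule, using that $\Delta_\CCC$ and $\epsilon_\CC$ are chain maps.

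Functoriality is immediate: a morphism $g: \CCC' \to \CCC$ of counital cocommutative coalgebras and a morphism $h: \AAA \to \AAA'$ of $\PPP$-algebras induce the map $[\CCC,\AAA] \to [\CCC',\AAA']$ by $f \mapsto h \circ f \circ g$, which respects the $\PPP$-algebra structures by construction. For the coaction isomorphism, I would take the natural currying isomorphism $[\CCC \otimes \DDD, \AAA] \simeq [\CCC, [\DDD, \AAA]]$ in the closed symmetric monoidal category $\dgMod$ and show it is an isomorphism of $\PPP$-algebras. The key point is that the iterated coproduct of the tensor product coalgebra $\CCC \otimes \DDD$ equals, up to the symmetry isomorphism $\tau$ of $\dgMod$ (and the attendant Koszul signs), the interleaving of $\Delta_\CCC^{n-1}$ and $\Delta_\DDD^{n-1}$; this is exactly what translates the single $\PPP$-algebra structure on $[\CCC \otimes \DDD, \AAA]$ into the nested one on $[\CCC, [\DDD, \AAA]]$. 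The unit isomorphism $[\mbk, \AAA] \simeq \AAA$ is trivial since $\Delta_\mbk(1) = 1 \otimes 1$. The coherence diagrams dual to those of Definition \ref{defin:tensored} then follow from the corresponding coherence of currying in $\dgMod$.

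The main obstacle is the sign bookkeeping in the previous paragraph: tracking the Koszul signs produced by the symmetry $\tau$ when one shuffles the iterated coproduct of $\CCC \otimes \DDD$, and verifying these precisely match the signs produced when one curries the $\PPP$-algebra structure map. Once this is written out carefully, the compatibility becomes a routine manipulation.
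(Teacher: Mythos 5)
Your proposal is correct and follows essentially the same route as the paper: the paper's proof likewise rests on functoriality of $[-,-]$ together with the currying isomorphism $\rho_{\CCC,\DDD,\AAA}\colon [\CC \otimes \DD, \Aa] \to [\CC,[\DD,\Aa]]$, checked to be a morphism of $\PPP$-algebras satisfying the coherence conditions of Definition \ref{defin:tensored}. You simply spell out more of the verifications (equivariance via cocommutativity, the sign bookkeeping) that the paper leaves implicit.
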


\begin{proof}[Proof]
The construction is functorial covariantly with respect to $\PPP$-algebras and contravariantly with respect to counital cocommutative coalgebras. Moreover, for any counital cocommutative coalgebras $\CCC$ and $\DDD$, and any $\PPP$-algebra $\AAA$ there is an isomorphism of chain complexes
$$
\rho_{\CCC,\DDD,\AAA}:[\CC \otimes \DD, \Aa] \ra [\CC,[\DD,\Aa]]
$$
such that $\rho_{\CCC,\DDD,\AAA} (f)(x)(y) = f (x \otimes y)$. This is a morphism of $\PPP$-algebra which is functorial in $\CCC$, $\DDD$ and $\AAA$, and it satisfies the coherence conditions of Definition \ref{defin:tensored}.
\end{proof}

One can define a left adjoint to the functor $[\CCC, -] $ as follows. Let $\AAA \triangleleft \CCC $ be the quotient of the free $\PPP$-algebra $\PP\circ (\Aa \otimes \CC)
 $
 by the ideal $I$ generated by the relations
 $$
\begin{cases}
 \gamma_\AAA \big(p \otimes_{\mbs_n} ( y_1 \otimes \cdots \otimes y_n) \big) \otimes x \sim  \sum (-1)^{\sum_{i < j} |x_{(i)}||y_j|} p \otimes_{\mbs_n}\big( (y_1 \otimes x_{(1)} ) \otimes \cdots \otimes (y_n \otimes x_{(n)})\big) \\
 \gamma_\AAA (p) \otimes x \sim \epsilon (x) p\  \text{for any } p \in \PP(0)\ ,
\end{cases}
 $$
with $\Delta^{n-1} (x) = \sum x_{(1)} \otimes \cdots \otimes x_{(n)}$.

\begin{thm}\label{thm:enrich}
The category of $\PPP$-algebras is tensored-cotensored-enriched over the category $\uCocom$ of counital cocommutative coalgebras. The right action is given by the functor $-\triangleleft -$ and the left coaction is given by the functor $[- ,- ]$. We denote the enrichment by $\{-,-\}$.
\end{thm}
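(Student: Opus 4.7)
The plan is to apply Proposition \ref{prop:presenttensor}. The preceding lemma already establishes that the assignment $(\CCC, \AAA) \mapsto [\CCC, \AAA]$ defines a left coaction of $\uCocom$ on $\Palg$, and both categories are presentable (by Theorem \ref{thm:algpresentable} for $\Palg$, and by the preliminaries for $\uCocom$). It then suffices to construct a functor $- \triangleleft -: \Palg \times \uCocom \to \Palg$ together with a natural adjunction isomorphism
$$
\hom_{\Palg}(\AAA \triangleleft \CCC, \BBB) \simeq \hom_{\Palg}(\AAA, [\CCC, \BBB])\ ,
$$
and to verify that the functor $[-, \BBB]: \uCocom^{op} \to \Palg$ sends colimits to limits.

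First, I would take $\AAA \triangleleft \CCC$ to be the quotient $\PP \circ (\Aa \otimes \CC) / I$ of the free $\PPP$-algebra by the ideal $I$ generated by the two relations displayed just above the theorem. The adjunction isomorphism would then be obtained by unfolding definitions: by the free-forgetful adjunction for $\PPP$-algebras and the universal property of the quotient, a morphism of $\PPP$-algebras $\AAA \triangleleft \CCC \to \BBB$ is the same data as a degree zero map of graded $\mbk$-modules $\phi: \Aa \otimes \CC \to \BB$ vanishing on the two families of relations; by the internal-hom adjunction of $\dgMod$, such a map corresponds in turn to a map $\hat{\phi}: \Aa \to [\CC, \BB]$. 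The first family of relations is designed so that it is killed by $\hat{\phi}$ exactly when $\hat{\phi}$ intertwines the arity-$n \geq 1$ operations of $\PPP$, where the $\PPP$-algebra structure on $[\CCC, \BBB]$ is defined using the iterated coproduct $\Delta^{n-1}: \CC \to \CC^{\otimes n}$; the second family plays the analogous role for the nullary operations through the counit $\epsilon$. Together they amount exactly to $\hat{\phi}$ being a morphism of $\PPP$-algebras.

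For the colimit-to-limit property, the forgetful functor $\uCocom \to \gMod$ has a right adjoint (the cofree counital cocommutative coalgebra, as recalled in the preliminaries) and so preserves colimits; dually, the forgetful functor $\Palg \to \dgMod$ has a left adjoint (the free $\PPP$-algebra functor) and so preserves limits. Since the internal-hom bifunctor of $\dgMod$ sends colimits of its first variable to limits, and since the $\PPP$-algebra structure on $[\CCC, \BBB]$ is defined pointwise from the coproduct and counit of $\CCC$, both of which commute with colimits of coalgebras, the functor $[-, \BBB]$ indeed carries colimits of $\uCocom$ to limits of $\Palg$. Proposition \ref{prop:presenttensor} then produces the cotensoring $\{-,-\}$ as the right adjoint to $- \triangleleft -$ in the first variable and assembles everything into a tensored-cotensored-enriched structure over $\uCocom$.

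The main obstacle I expect is the verification of the adjunction in the second step: one has to match the $\mbs_n$-coinvariance in $\PP(n) \otimes_{\mbs_n} (\cdots)$ with the symmetry of the iterated coproduct $\Delta^{n-1}$, together with the Koszul signs that already appear in the displayed relations. This matching is precisely what the cocommutativity of $\CCC$ ensures, and it explains why, in the symmetric setting, one must restrict to the category $\uCocom$ rather than to the larger category $\uCog$. Once these symmetries and signs are checked, the remainder of the argument is a direct application of the categorical proposition.
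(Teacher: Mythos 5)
Your proposal is correct and follows essentially the same route as the paper: both invoke the second part of Proposition \ref{prop:presenttensor}, using the preceding lemma for the left coaction $[-,-]$, the adjunction $\hom_{\Palg}(\AAA \triangleleft \CCC, \BBB) \simeq \hom_{\Palg}(\AAA, [\CCC,\BBB])$, and the fact that $[-,\BBB]$ sends colimits to limits. The paper states these three ingredients without elaboration, whereas you spell out the verification of the adjunction via the free--forgetful adjunction and the generating relations of the ideal $I$; this is a welcome expansion of the same argument, not a different one.
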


\begin{proof}[Proof]
 Since the functor $[-,-]$ defines a coaction of the category of counital cocommutative coalgebras on the category of $\PPP$-algebras, since the functor $[- , \Aa]$ sends colimits to limits and since the functor $[\CCC, -]$ is left adjoint to the functor $- \triangleleft \CCC$, then we can conclude by Proposition \ref{prop:presenttensor}.
\end{proof}

Let us describe $\{\AAA,\AAA'\}$ for two $\PPP$-algebras $\AAA$ and $\AAA'$. This is the final sub-coalgebra of the cofree cocommutative coalgebra $F([\Aa,\Aa'])$ such that the following diagram commutes
$$
\xymatrix{ \{\AAA,\AAA'\} \ar[rrr] \ar[d]_{(\epsilon,Id,\Delta,\ldots)} &&& [\Aa,\Aa'] \ar[d] \\
 \prod_{n \geq 0}\{\AAA,\AAA'\}^{\otimes n}/\mbs_n \ar[r]  & \prod_{n \geq 0} [\Aa,\Aa']^{\otimes n}/\mbs_n \ar[r]   & [\PP \circ \Aa , \PP \circ \Aa']  \ar[r]   & [\PP \circ \Aa, \Aa']\ . }
$$
where the map $\prod_{n \geq 0} [\Aa,\Aa']^{\otimes n}/\mbs_n \ra   [\PP \circ \Aa , \PP \circ \Aa']$ sends $f_1 \otimes \cdots \otimes f_n$ to $Id_{\PP(n)} \otimes_{\mbs_n} (f_1 \otimes \cdots \otimes f_n)$.

\subsubsection{Enrichment of $\CCC$-coalgebras over coalgebras} Let $\CCC=(\CC,\Delta,\epsilon,1,d,\theta)$ be a curved conilpotent cooperad.\\

 For any $\CCC$-coalgebra $\DDD= (\DD,\Delta_\DDD)$ and any counital cocommutative coalgebra $\EEE=(\EE,\Delta_\EEE,\epsilon)$, the tensor product $\DD \otimes \EE$ has a structure of $\CCC$-coalgebra given by
$$
\xymatrix{\DD \otimes \EE \ar[rr]^(0.35){\bigoplus_n \Delta_n \otimes \Delta^{n-1}}&& \bigoplus_n (\CC(n) \otimes_{\mbs_n} \DD^{\otimes n} ) \otimes \EE^{\otimes n} \ar[r] & \bigoplus_n \CC(n) \otimes_{\mbs_n} (\DD \otimes \EE)^{\otimes n}  \ .}
$$ 
\begin{thm}\label{thm:thmenrich2}
 The category $\Ccog$ of $\CCC$-coalgebras is tensored-cotensored-enriched over the category of cocommutative counital coalgebras. The right action is given by the construction $-\otimes -$. We denote the left coaction by $\langle - ,- \rangle$ and the enrichment by $\{-,-\}$.
\end{thm}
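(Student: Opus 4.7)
The plan is to mirror the proof of Theorem~\ref{thm:enrich} and invoke the first item of Proposition~\ref{prop:presenttensor}. Both $\Ccog$ and $\uCocom$ are presentable --- by Theorem~\ref{thm:cogcompact} for the former and by the earlier proposition on presentability of coalgebra categories for the latter --- so it suffices to (i) verify that the bifunctor $-\otimes-\colon \Ccog \times \uCocom \to \Ccog$ given above defines a right action of $\uCocom$ on $\Ccog$, and (ii) show that for any $\DDD \in \Ccog$ and $\EEE \in \uCocom$, the functors $\DDD \otimes -$ and $- \otimes \EEE$ preserve colimits. The proposition will then produce a cotensoring $\langle -,- \rangle$ and enrichment $\{-,-\}$, making $\Ccog$ tensored-cotensored-enriched over $\uCocom$.

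For step (i), I would first check that the prescribed composite really lands in $\bigoplus_n \CC(n) \otimes_{\mbs_n} (\DD \otimes \EE)^{\otimes n}$: the iterated coproduct $\Delta^{n-1}\colon \EE \to \EE^{\otimes n}$ is $\mbs_n$-invariant because $\EEE$ is cocommutative, so after the shuffle the image of $(\CC(n) \otimes \DD^{\otimes n}) \otimes \EE^{\otimes n}$ descends through the $\mbs_n$-coinvariants. Coassociativity of $\Delta_{\DDD\otimes\EEE}$ with respect to the graded cooperad structure on $\CCC$ is then a bookkeeping verification, combining coassociativity of $\Delta_\DDD$ relative to $\CCC$ with that of $\Delta_\EEE$. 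Equipping $\DD \otimes \EE$ with the tensor-product differential $d_\DDD \otimes \id + \id \otimes d_\EEE$, the cross terms cancel by the Koszul sign rule and $d_\EEE^2 = 0$, giving $(d_{\DDD\otimes\EEE})^2 = d_\DDD^2 \otimes \id_\EE$; substituting $d_\DDD^2 = (\theta \circ \id)\Delta_\DDD$ and unwinding the definition of $\Delta_{\DDD\otimes\EEE}$ yields exactly the curvature equation $(\theta \circ \id)\Delta_{\DDD\otimes\EEE}$. The coherence isomorphisms $(\DDD \otimes \EEE) \otimes \FFF \simeq \DDD \otimes (\EEE \otimes \FFF)$ and $\DDD \otimes \mbk \simeq \DDD$ are inherited from the monoidal structure on graded $\mbk$-modules, and their compatibility with the $\CCC$-coalgebra structures follows by inspection from the coassociativity of the $\EEE$-factors.

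For step (ii), recall that colimits in $\Ccog$ are computed at the level of underlying graded $\mbk$-modules (as shown earlier when proving $\Ccog$ is cocomplete), and likewise colimits in $\uCocom$ are computed underlyingly, since the forgetful functor admits a right adjoint (the cofree cocommutative coalgebra functor from the earlier proposition) and therefore preserves colimits. As tensor product of graded $\mbk$-modules preserves colimits in each variable, so do the functors $\DDD \otimes -$ and $- \otimes \EEE$; the first point of Proposition~\ref{prop:presenttensor} then concludes the proof.

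The main obstacle is the verification in step (i): one must carefully check that the shuffle needed to define $\Delta_{\DDD \otimes \EEE}$ is well-defined on the $\mbs_n$-coinvariants --- precisely the point at which the cocommutativity hypothesis on $\EEE$ is indispensable --- and that the resulting decomposition is coassociative relative to the cooperadic coproduct of $\CCC$ and satisfies the curvature equation. This is essentially combinatorial bookkeeping, but it explains why the enrichment is developed over $\uCocom$ rather than over the larger category $\uCog$.
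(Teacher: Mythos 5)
Your proposal is correct and follows essentially the same route as the paper: the paper's proof is exactly the observation that $-\otimes-$ defines a right action, that $\DDD\otimes-$ and $-\otimes\EEE$ preserve colimits, and then an appeal to the first point of Proposition~\ref{prop:presenttensor}. You simply spell out in more detail the verifications (cocommutativity making the shuffle descend to $\mbs_n$-coinvariants, the curvature equation, colimits being computed on underlying graded modules) that the paper leaves implicit.
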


\begin{proof}[Proof]
The assignment $\DDD, \EEE \mapsto \DDD \otimes \EEE$ defines a right action of the category of counital cocommutative coalgebras on the category of $\CCC$-coalgebras. Moreover, the functor $\DDD \otimes -$ and the functor $- \otimes \EEE$ preserve colimits. We conclude by Proposition \ref{prop:presenttensor}.
\end{proof}

If $\DDD$ and $\DDD'$ are two $\CCC$-coalgebras, then the cocommutative counital hom coalgebra $\{\DDD,\DDD'\}$ is the final sub-coalgebra of the cofree counital cocommutative coalgebra $F([\DD,\DD'])$ over the chain complex $[\DD,\DD']$ such that the following diagram commutes.
$$
\xymatrix{\{\DDD,\DDD'\} \ar[rrr]  \ar[d]_{(\epsilon,Id,\Delta,\ldots)} &&& [\DD,\DD']\ar[d]\\
 \prod_{n \geq 0} \{\DDD,\DDD'\}^{\otimes n}/\mbs_n \ar[r] & \prod_{n \geq 0} [\DD,\DD']^{\otimes n}/\mbs_n \ar[r] & [\CC \circ \DD, \CC \circ \DD']  \ar[r] &[\DD,\CC \circ \DD']}
$$

\subsubsection{Morphisms are atoms}

\begin{prop}\label{prop:atom}
For any two $\PPP$-algebras $\AAA$ and $\AAA'$, the dg atoms of the cocommutative coalgebra $\{\AAA, \AAA'\}$ are the morphisms of $\PPP$-algebras from $\AAA$ to $\AAA'$. Similarly, for any two $\CCC$-coalgebras $\DDD$ and $\DDD'$, the dg atoms of the cocommutative coalgebra $\{\DDD, \DDD'\}$ are the morphisms of $\CCC$-coalgebras from $\DDD$ to $\DDD'$.
\end{prop}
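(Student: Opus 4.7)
The plan is to read the statement through the defining adjunctions of the two enrichments. First I will observe that, for any counital cocommutative dg coalgebra $\EEE$, dg atoms of $\EEE$ are in natural bijection with morphisms of counital cocommutative dg coalgebras from the monoidal unit $\mbk$ (equipped with $\Delta(1) = 1 \otimes 1$, $\epsilon = \id_\mbk$, and trivial differential) to $\EEE$: such a morphism is determined by the image $a$ of $1$, and requiring it to respect $\Delta$, $\epsilon$ and $d$ amounts exactly to the conditions $\Delta a = a \otimes a$, $\epsilon(a)=1$, $da = 0$. The last two conditions follow automatically from $\Delta a = a \otimes a$ and $a \neq 0$, so these morphisms are exactly the dg atoms of $\EEE$.

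Then I would apply Theorem~\ref{thm:enrich}, which gives
\[
\hom_{\uCocom}(\mbk,\{\AAA,\AAA'\}) \;\simeq\; \hom_{\Palg}(\AAA \triangleleft \mbk,\AAA').
\]
Since $\mbk$ is the monoidal unit of $\uCocom$, the right action axiom of Definition~\ref{defin:tensored} provides a natural isomorphism $\AAA \triangleleft \mbk \simeq \AAA$. Composing the two bijections identifies dg atoms of $\{\AAA,\AAA'\}$ with morphisms of $\PPP$-algebras $\AAA \to \AAA'$, which proves the first assertion.

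The second assertion is proved by the same argument applied to Theorem~\ref{thm:thmenrich2}: dg atoms of $\{\DDD,\DDD'\}$ correspond to maps $\mbk \to \{\DDD,\DDD'\}$ in $\uCocom$, hence to maps $\DDD \otimes \mbk \to \DDD'$ in $\Ccog$, and the right action $\DDD \otimes \mbk \simeq \DDD$ yields the claim.

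There is no real obstacle here; the only thing to verify carefully is the unpacking of the first paragraph, i.e.\ that the three conditions defining a morphism of counital cocommutative dg coalgebras out of $\mbk$ correspond termwise to the definition of a dg atom. Everything else is a formal consequence of the tensored-cotensored-enriched structures established in Theorems~\ref{thm:enrich} and~\ref{thm:thmenrich2} together with the unit axiom of a right action.
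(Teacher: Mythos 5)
Your argument is exactly the paper's proof: the paper also identifies dg atoms of $\{\AAA,\AAA'\}$ with $\hom_{\uCocom}(\mbk,\{\AAA,\AAA'\})$ and then applies the adjunction together with $\AAA \triangleleft \mbk \simeq \AAA$, and likewise for $\CCC$-coalgebras. One minor inaccuracy in your first paragraph: $\epsilon(a)=1$ does follow from $\Delta a = a\otimes a$ and $a\neq 0$, but $da=0$ does \emph{not} (a graded atom need not be a dg atom); this is harmless here since $da=0$ is anyway forced by compatibility of the morphism $\mbk\to\EEE$ with the differentials.
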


\begin{proof}[Proof]
We have
 $$
 \hom_{\uCocom} (\mbk, \{\AAA, \AAA'\}) \simeq \hom_{\Palg} (\AAA \triangleleft \mbk ,  \AAA') \simeq \hom_{\Palg} (\AAA ,  \AAA')\ .
 $$ 
\end{proof}

\subsubsection{Nonsymmetric context} In the nonsymmetric context, we can get rid of the cocommutativity condition.

\begin{prop}\leavevmode
\begin{itemize}
 \item[$\triangleright$] If $\PPP$ is a nonsymmetric operad, then the category of $\PPP$-algebras is tensored-cotensored-enriched over the category $\uCog$ of counital coassociative coalgebras.
  \item[$\triangleright$] If $\CCC$ is a nonsymmetric conilpotent curved cooperad, then the category of $\CCC$-coalgebras is tensored-cotensored-enriched over the category $\uCog$ of counital coassociative coalgebras.
  \end{itemize}
  We denote by $\{-,-\}^{ns}$ these two enrichments over counital coassociative coalgebras.
\end{prop}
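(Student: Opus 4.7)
The plan is to carry out the same constructions as in the symmetric case but replacing the symmetric iterated coproduct $\Delta^{n-1}: \CC \to \CC^{\otimes n}$ (which, in the cocommutative setting, factors through $\CC^{\otimes n}/\mbs_n$) by the coassociative iterated coproduct of a counital coassociative coalgebra. The nonsymmetry of $\PPP$ (resp.\ of $\CCC$) is exactly what makes it possible to dispense with cocommutativity: in the defining formula for the $\PPP$-algebra structure on $[\CC,\Aa]$, the tensor factors $f_1,\dots,f_n$ are distinguishable because $\PPP(n)$ has no $\mbs_n$-action, so the coproduct need not be symmetric.

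For the first bullet, I would first define, for any counital coassociative coalgebra $\EEE = (\EE,\Delta_\EEE,\epsilon)$ and any $\PPP$-algebra $\AAA = (\Aa,\gamma_\AAA)$, a $\PPP$-algebra structure on the chain complex $[\EE,\Aa]$ by
\[
\gamma_{[\EE,\Aa]}\bigl(p \otimes (f_1\otimes\cdots\otimes f_n)\bigr)(x) = \gamma_\AAA(p \otimes -)(f_1\otimes\cdots\otimes f_n)\Delta_\EEE^{n-1}(x)
\]
for $n \geq 1$, and $\gamma_{[\EE,\Aa]}(p) = \gamma_\AAA(p)\epsilon_\EE$ for $p \in \PP(0)$. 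A direct verification using the coassociativity of $\Delta_\EEE$ and the operadic composition axioms shows that this defines a $\PPP$-algebra $[\EEE,\AAA]^{ns}$ and that the assignment $\EEE, \AAA \mapsto [\EEE,\AAA]^{ns}$ is a left coaction of $\uCog$ on $\Palg$; the coherence isomorphisms $[\EE\otimes\EE',\Aa] \simeq [\EE,[\EE',\Aa]]$ carry over verbatim from the cocommutative case since the relevant arguments only used coassociativity of $\Delta$. The functor $[\EEE,-]^{ns}: \Palg \to \Palg$ admits a left adjoint $- \triangleleft^{ns} \EEE$ defined as the quotient of the free $\PPP$-algebra $\PP\circ(\Aa\otimes\EE)$ by relations analogous to those in the proof of Theorem \ref{thm:enrich}, but with the cocommutative $\Delta^{n-1}$ replaced by the coassociative one (no symmetrization is needed because $\PPP$ is nonsymmetric). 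Finally, the functor $[-, \AAA]^{ns}: \uCog^{op} \to \Palg$ sends colimits to limits since it is built from the internal hom of chain complexes. Invoking the second point of Proposition \ref{prop:presenttensor}, together with the presentability of $\Palg$ (Theorem \ref{thm:algpresentable}) and of $\uCog$, yields the tensoring-cotensoring-enrichment $\{-,-\}^{ns}$.

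For the second bullet, I would mirror the strategy of Theorem \ref{thm:thmenrich2}. Given a $\CCC$-coalgebra $\DDD = (\DD,\Delta_\DDD)$ and a counital coassociative coalgebra $\EEE$, define a $\CCC$-coalgebra structure on $\DD\otimes\EE$ by
\[
\xymatrix@C=14pt{\DD\otimes\EE \ar[rr]^(0.4){\bigoplus_n \Delta_n \otimes \Delta_\EEE^{n-1}} && \bigoplus_n \bigl(\CC(n)\otimes\DD^{\otimes n}\bigr)\otimes \EE^{\otimes n} \ar[r] & \bigoplus_n \CC(n)\otimes(\DD\otimes\EE)^{\otimes n},}
\]
where again the absence of $\mbs_n$-action on $\CC(n)$ makes this well defined without requiring cocommutativity of $\EEE$. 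Coassociativity of $\Delta_\EEE$ gives the coassociativity of the resulting coproduct on $\DD\otimes\EE$, and the counit axiom for $\EEE$ yields $\DDD\otimes\mbk \simeq \DDD$. Functoriality in both variables and the coherence isomorphism $\DDD\otimes(\EEE\otimes\EEE') \simeq (\DDD\otimes\EEE)\otimes\EEE'$ are immediate, so this defines a right action of $\uCog$ on $\Ccog$. The functors $\DDD\otimes-$ and $-\otimes\EEE$ both preserve colimits (the former because $-\otimes\EE$ preserves colimits of graded $\mbk$-modules and colimits of $\CCC$-coalgebras are computed at the level of underlying graded modules; the latter for the same reason), so the first point of Proposition \ref{prop:presenttensor} applied to the presentable category $\Ccog$ (Theorem \ref{thm:cogcompact}) provides the tensoring-cotensoring-enrichment $\{-,-\}^{ns}$.

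The only potentially delicate step is verifying that the formulas above, which look formally identical to the symmetric ones, are genuinely well defined without dividing by $\mbs_n$: this uses in an essential way that $\PP(n)$ (resp.\ $\CC(n)$) carries no symmetric group action in the nonsymmetric context, and that the iterated coproduct of a coassociative coalgebra lands in $\EE^{\otimes n}$ with its natural ordered tensor structure. Once this is observed, every other axiom is a transcription of the corresponding argument in Theorems \ref{thm:enrich} and \ref{thm:thmenrich2}.
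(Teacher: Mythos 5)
Your proposal is correct and follows essentially the same route as the paper, which simply notes that the proof is similar to those of Theorem \ref{thm:enrich} and Theorem \ref{thm:thmenrich2}: one checks that the same formulas define a left coaction (resp.\ right action) of $\uCog$, observes that the absence of the $\mbs_n$-action makes cocommutativity unnecessary, and concludes by Proposition \ref{prop:presenttensor} using the presentability of the categories involved. Your explicit identification of where cocommutativity was used in the symmetric case is a faithful expansion of the argument the paper leaves implicit.
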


\begin{proof}[Proof]
 The proof is similar to the proofs of Theorem \ref{thm:enrich} and of Theorem \ref{thm:thmenrich2}.
\end{proof}

The inclusion functor $\uCocom \hookrightarrow \uCog$ is a left adjoint (since it preserves colimits). Let $R$ be its right adjoint. It sends any counital coassociative coalgebra to its final cocommutative subcoalgebra.

\begin{prop}
 For any $\PPP$-algebras $\AAA$ and $\AAA'$, the cocommutative coalgebra, $\{\AAA,\AAA'\}$ is the final cocommutative subcoalgebra $R(\{\AAA,\AAA'\}^{ns})$ of $\{\AAA,\AAA'\}^{ns}$.
Similarly, for any $\CCC$-coalgebras $\DDD$ and $\DDD'$, the cocommutative coalgebra, $\{\DDD,\DDD'\}$ is the final cocommutative subcoalgebra $R(\{\DDD,\DDD'\}^{ns})$ of $\{\DDD,\DDD'\}^{ns}$.
\end{prop}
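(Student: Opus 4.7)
The plan is to prove both statements by the Yoneda lemma, showing that each side represents the same functor on the category $\uCocom$. For any $\EEE \in \uCocom$, the adjunction $\mathrm{incl} \dashv R$ combined with the tensoring--enrichment adjunctions produced by the previous subsection yields natural bijections
\begin{align*}
\hom_{\uCocom}\!\left(\EEE, R(\{\AAA,\AAA'\}^{ns})\right) &\simeq \hom_{\uCog}(\EEE, \{\AAA,\AAA'\}^{ns}) \simeq \hom_{\Palg}(\AAA \triangleleft^{ns} \EEE, \AAA'),\\
\hom_{\uCocom}(\EEE, \{\AAA,\AAA'\}) &\simeq \hom_{\Palg}(\AAA \triangleleft \EEE, \AAA'),
\end{align*}
where $\triangleleft^{ns}$ denotes the right action of $\uCog$ on $\Palg$ corresponding to $\{-,-\}^{ns}$. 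Thus it suffices to exhibit a natural isomorphism $\AAA \triangleleft \EEE \simeq \AAA \triangleleft^{ns} \EEE$ of $\PPP$-algebras, valid for every cocommutative counital coalgebra $\EEE$.

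To produce this isomorphism I would compare the two explicit presentations. Both tensorings are quotients of the same underlying free $\PPP$-algebra built from $\Aa \otimes \EE$, by ideals whose generating relations have the same shape
$$\gamma_\AAA\!\left(p \otimes (y_1 \otimes \cdots \otimes y_n)\right) \otimes x \ \sim \ \sum (-1)^{\varepsilon}\, p \otimes \bigl((y_1 \otimes x_{(1)}) \otimes \cdots \otimes (y_n \otimes x_{(n)})\bigr),$$
with $\Delta^{n-1}(x) = \sum x_{(1)} \otimes \cdots \otimes x_{(n)}$. The symmetric tensoring imposes in addition the diagonal $\mbs_n$-coinvariance on each $\PP(n) \otimes (\Aa \otimes \EE)^{\otimes n}$. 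When $\EEE$ is cocommutative, the iterated coproduct $\Delta^{n-1}$ lands in the (signed) $\mbs_n$-invariants of $\EE^{\otimes n}$, and a short bookkeeping check shows that the symmetric and nonsymmetric relations then generate the same ideal; the two tensorings therefore agree as $\PPP$-algebras.

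The statement for $\CCC$-coalgebras is handled in a completely parallel way. One uses the tensoring--enrichment adjunctions $\hom_{\uCocom}(\EEE, \{\DDD,\DDD'\}) \simeq \hom_{\Ccog}(\DDD \otimes \EEE, \DDD')$ and its nonsymmetric counterpart to reduce to comparing the two recipes for the right action of $\EEE$ on $\DDD$, both of which are given by the tensor product of underlying chain complexes equipped with a decomposition built from $\Delta_n \otimes \Delta^{n-1}_\EEE$. Cocommutativity of $\EEE$ ensures that the symmetric and nonsymmetric formulas define the same $\CCC$-coalgebra structure on $\DD \otimes \EE$, whence the adjunctions conclude as before. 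The only genuinely non-formal step in either case is this identification of tensorings under cocommutativity of $\EEE$, which I expect to reduce to careful bookkeeping of $\mbs_n$-actions and Koszul signs; every remaining step is pure adjunction chasing.
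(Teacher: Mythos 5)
Your proof is correct and follows essentially the same route as the paper's: a Yoneda argument chaining the tensoring--enrichment adjunctions through the adjunction $\mathrm{incl} \dashv R$. The paper simply asserts the middle isomorphism $\hom_{\Palg}(\AAA \triangleleft \EEE, \AAA') \simeq \hom_{\uCog}(\EEE, \{\AAA,\AAA'\}^{ns})$ without comment, whereas you correctly isolate it as the one non-formal step, namely the agreement of the symmetric and nonsymmetric tensorings when $\EEE$ is cocommutative.
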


\begin{proof}[Proof]
 For any cocommutative coalgebra $\EEE$, we have:
 $$
 \hom_{\uCocom} (\EEE ,\{\AAA,\AAA'\}) \simeq \hom_{\Palg} (\AAA \triangleleft \EEE, \AAA') \simeq \hom_{\uCog} (\EEE, \{\AAA,\AAA'\}^{ns}) \simeq \hom_{\uCocom}(\EEE,R(\{\AAA,\AAA'\}^{ns}))\ .
 $$
 Since these isomorphisms are functorial, $R(\{\AAA,\AAA'\}^{ns})$ is isomorphic to $\{\AAA,\AAA'\}$.
\end{proof}

\subsection{Simplicial enrichment} In this section, we recall the fact that the Sullivan polynomials forms algebras allow one to enrich the category of algebras over an operad. See for instance \cite{Hinich01}.

\subsubsection{General case}

Let $A$ be a unital commutative $\mbk$-algebra. The functor $ A \otimes -: \dgMod \ra \dgMod_{A}$ is strong symmetric monoidal and comonoidal. Hence it induces several functors:
\begin{itemize}
 \itemt from operads to operads enriched in $A$-modules,
  \itemt from cooperads to cooperads enriched in $A$-modules,
  \itemt from $\PPP$-algebras (in the category of $\mbk$-modules) to $A \otimes \PPP$-algebras (in the category of $A$-modules),
  \itemt from $\CCC$-coalgebras (in the category of $\mbk$-modules) to $A \otimes \CCC$-coalgebras (in the category of $A$-modules).
\end{itemize}
Applying this to the case of the Sullivan algebras of polynomial forms on standard simplicies leads us to the following proposition.

\begin{prop} \label{prop:enrichsimplgen}
Let $\PPP$ be a dg operad and let $\CCC$ be a curved conilpotent cooperad. The category of $\PPP$-algebras and the category of $\CCC$-coalgebras are enriched in simplicial sets as follows:
 $$
 \HOM(\AAA,\AAA')_n:= \hom_{\Palg} (\AAA, \Omega_n \otimes \AAA') \simeq \hom_{\Omega_n \otimes \Palg} (\Omega_n \otimes \AAA, \Omega_n \otimes \AAA' )\ ,
 $$
$$
\HOM(\DDD,\DDD')_n :=\hom_{\Omega_n \otimes \Ccog} (\Omega_n \otimes \DDD, \Omega_n \otimes \DDD')\ .
$$
\end{prop}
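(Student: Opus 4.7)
The plan is to exploit the base change formalism recalled just before the statement: for any unital commutative dg $\mbk$-algebra $A$, the functor $A \otimes -$ from $\dgMod$ to $A$-modules is strong symmetric monoidal, and any algebra morphism $A \to B$ induces a strong symmetric monoidal base change $B \otimes_A -$ between the corresponding categories of modules. These base change functors therefore lift to operads, cooperads, algebras and coalgebras.

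First I would establish the isomorphism on the algebra side. Applied to the unit $\mbk \to \Omega_n$, the functor $\Omega_n \otimes -$ sends a $\PPP$-algebra $\AAA$ to an $\Omega_n \otimes \PPP$-algebra $\Omega_n \otimes \AAA$ in $\Omega_n$-modules. It admits as a right adjoint the restriction of scalars along $\mbk \to \Omega_n$: an $\Omega_n \otimes \PPP$-algebra $\BBB$ becomes a $\PPP$-algebra by forgetting the $\Omega_n$-action on its underlying chain complex. Plugging $\BBB = \Omega_n \otimes \AAA'$ into the resulting hom-adjunction gives the claimed
$$
\hom_{\Palg}(\AAA,\, \Omega_n \otimes \AAA') \simeq \hom_{\Omega_n \otimes \Palg}(\Omega_n \otimes \AAA,\, \Omega_n \otimes \AAA').
$$

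Next I would set up the simplicial structure. Every $\phi : [m] \to [n]$ in the simplex category gives a morphism $\Omega(\phi) : \Omega_n \to \Omega_m$ of dg commutative algebras, hence a strong symmetric monoidal base change $\Omega_m \otimes_{\Omega_n} -$ from $\Omega_n$-modules to $\Omega_m$-modules. This sends $\Omega_n \otimes \PPP$-algebras to $\Omega_m \otimes \PPP$-algebras and, crucially, carries $\Omega_n \otimes \AAA$ to $\Omega_m \otimes \AAA$. Postcomposition with this base change produces a map
$$
\phi^* : \HOM(\AAA,\AAA')_n \to \HOM(\AAA,\AAA')_m,
$$
and functoriality of base change together with $\Omega(\phi \circ \psi) = \Omega(\psi) \circ \Omega(\phi)$ yields the simplicial identities. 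Composition of $n$-simplices is defined as composition of $\Omega_n$-linear morphisms of $\Omega_n \otimes \PPP$-algebras and the identity at level $n$ is the identity of $\Omega_n \otimes \AAA$; associativity and unitality are automatic. Since each base change functor respects composition of morphisms, these structure maps commute with the face and degeneracy operators, giving a genuine simplicial enrichment.

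For $\CCC$-coalgebras the argument is formally identical once one has checked that $\Omega_n \otimes -$ sends a curved conilpotent cooperad $\CCC$ and its coalgebras to their counterparts in $\Omega_n$-modules, and that base change along $\Omega(\phi)$ preserves these structures. This is where I expect the only mild obstacle to lie: one needs to verify that the decomposition, the coderivation and the curvature interact correctly with the change of base ring. However, this follows formally from the strong monoidality of $A \otimes -$ (which commutes with $\circ$, with coinvariants and with direct sums used to build the composition product) together with the fact that the curvature $\theta : \CC(1) \to \mbk$ transports to $\theta \otimes A : (A \otimes \CCC)(1) \to A$. No new ingredient is needed beyond the functorial properties already listed in the excerpt.
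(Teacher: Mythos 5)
Your proposal is correct and follows essentially the same route as the paper: the simplicial operators are induced by the algebra maps $\Omega(\phi)\colon \Omega_n \to \Omega_m$ via (strong monoidal) base change, and the algebra-side identification is the standard extension/restriction-of-scalars adjunction. The paper merely makes the coalgebra case more explicit by unpacking an $\Omega_n$-linear morphism $\Omega_n \otimes \DDD \to \Omega_n \otimes \DDD'$ as a $\mbk$-linear map $f\colon \DD \to \Omega_n \otimes \DD'$ compatible with the decomposition, coderivation and curvature, and then setting $\phi^*(f) = (\Omega(\phi)\otimes \mathrm{Id})f$ --- which is exactly your postcomposition with base change.
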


\begin{proof}[Proof]
 The only point that needs to be cleared up is the simplicial structure on $\HOM(\DDD,\DDD')$. Let $\phi : [m] \ra [n]$ be a map between finite ordinals . We want to define $\phi^*: \HOM(\DDD,\DDD')_n \ra \HOM(\DDD,\DDD')_m$. An element of $\HOM(\DDD,\DDD')_n$ is a morphism of graded $\mbk$-modules $f$ from $\DD$ to $\Omega_n \otimes \DD'$ such that $fd_\DD=(d_{\Omega_n} \otimes Id_{\DD'} +Id_{\Omega_n} \otimes d_{\DD'})$ and such that the following diagrams commute
 $$
 \xymatrix{\DD \ar[d]_\Delta \ar[rr]^f && \Omega_n \otimes \DD' \ar[d]^{Id \otimes \Delta}\\
 \CC \circ \DD \ar[r]_{Id \circ f} & \CC \circ (  \Omega_n \otimes \DD) \ar[r] & \Omega_n \otimes (\CC \circ \DD') \ , }
 \xymatrix{\DD \ar[r]^f \ar[d]_{\theta_\DD} &\Omega_n \otimes \DD' \ar[d]^{Id_A \otimes \theta_{\DD'}} \\
 \mbk \ar[r] & \Omega_n\ ,}
 $$
 where the map $\CC \circ ( \Omega_n \otimes \DD') \ra  \Omega_n \otimes (\CC \circ \DD') $ in the first diagram is the following map
 $$
 x \otimes_{\mbs_k} \big((a_1 \otimes x_1)  \otimes \cdots \otimes (a_k \otimes x_k)\big) \mapsto (-1)^{|x|(\sum |a_i|)}(-1)^{\sum_{i>j}  |a_i| |x_j|}   (a_1 \cdots a_k) \otimes \big( x \otimes_{\mbs_k} (x_1 \otimes \cdots \otimes  x_k)\big)\ .
 $$
 Then, $\phi^* (f) = (\Omega[\phi] \otimes Id) f$ where $\Omega[\phi]: \Omega_n \ra \Omega_m$ is the structural map induced by $\phi$.
\end{proof}

\begin{prop}\label{prop:sullivanfinitelim}
 For any simplicial set $X$ which is the colimit of a finite diagram of simplicies $\Delta[n]$ and for any $\PPP$-algebras $\AAA$ and $\AAA'$, we have:
 $$
 \hom_{\sSet}(X,\HOM(\AAA, \AAA')) \simeq \hom_{\Palg}(\AAA,\Omega_X \otimes \AAA')\ .
 $$
\end{prop}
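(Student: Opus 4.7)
The plan is to verify the claim first on standard simplices and then extend it to finite colimits by writing both sides as limits over the indexing diagram.

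\textbf{Step 1: the case $X = \Delta[n]$.} By the Yoneda lemma, $\hom_{\sSet}(\Delta[n], \HOM(\AAA,\AAA'))$ is the set of $n$-simplices of $\HOM(\AAA,\AAA')$, which by definition equals $\hom_{\Palg}(\AAA, \Omega_n \otimes \AAA')$. Since $\Omega_{\Delta[n]} = \Omega_n$, this is exactly $\hom_{\Palg}(\AAA, \Omega_{\Delta[n]} \otimes \AAA')$. The identification is natural in the simplex variable: a map $\phi: [m] \to [n]$ acts on both sides by post-composition with $\Omega[\phi] \otimes Id_{\AAA'}$ (by the formula given in the proof of Proposition \ref{prop:enrichsimplgen} on the left, and by functoriality of $\Omega_\bullet$ on the right).

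\textbf{Step 2: behaviour under finite colimits in $X$.} Suppose $X = \colim_{i \in I} \Delta[n_i]$ for a finite diagram $I$. On the simplicial side, $\hom_{\sSet}(-, \HOM(\AAA,\AAA'))$ sends colimits to limits, giving
$$
\hom_{\sSet}(X, \HOM(\AAA,\AAA')) \simeq \lim_{i \in I} \hom_{\sSet}(\Delta[n_i], \HOM(\AAA,\AAA')).
$$
On the algebra side, the functor $\Omega_\bullet$ is the left adjoint of the contravariant adjunction of Proposition \ref{prop:sullivanalgadj} and hence sends the colimit $X = \colim_i \Delta[n_i]$ to the limit $\Omega_X \simeq \lim_i \Omega_{n_i}$ in the category of unital commutative dg algebras. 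Since limits in $\uCom\text{-}\mathsf{alg}$ are created in chain complexes, this is also a limit of chain complexes.

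\textbf{Step 3: tensoring with $\AAA'$ and passing to hom sets.} Because the diagram is \emph{finite} and because tensor product over the field $\mbk$ preserves finite limits of $\mbk$-modules, the chain complex isomorphism $\Omega_X \simeq \lim_i \Omega_{n_i}$ induces a chain complex isomorphism $\Omega_X \otimes \AAA' \simeq \lim_i (\Omega_{n_i} \otimes \AAA')$. Moreover, the $\PPP$-algebra structure on a tensor $A \otimes \AAA'$ (with $A$ a unital commutative algebra) is functorial in $A$ with respect to morphisms of commutative algebras, and since the $\PPP$-algebra structure and the $\mbs$-module maps that define it commute with finite limits of the $A$-factor, the above is in fact an isomorphism of $\PPP$-algebras. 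Finally, applying $\hom_{\Palg}(\AAA, -)$ — which sends limits in $\Palg$ to limits of sets — yields
$$
\hom_{\Palg}(\AAA, \Omega_X \otimes \AAA') \simeq \lim_{i \in I} \hom_{\Palg}(\AAA, \Omega_{n_i} \otimes \AAA').
$$
Combining Steps 1--3 gives the desired isomorphism, natural in $\AAA$ and $\AAA'$.

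The only subtle point — and the reason for the finiteness hypothesis on the diagram — is the compatibility of $- \otimes \AAA'$ with the limit presenting $\Omega_X$: tensoring with an infinite-dimensional chain complex does not commute with arbitrary limits, so restricting to finite colimits of simplices is essential. Apart from this, the argument is a formal assembly of Yoneda, the adjunction underlying $\Omega_\bullet$, and the standard fact that representable functors convert colimits into limits of sets.
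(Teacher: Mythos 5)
Your proof is correct and takes essentially the same approach as the paper: the paper's entire proof consists of the single observation that the functor $R \mapsto R \otimes \AAA'$ from commutative algebras to $\PPP$-algebras preserves finite limits, which is exactly the key point you isolate in Step 3. Steps 1 and 2 are the Yoneda and adjunction bookkeeping that the paper leaves implicit, and your remark about why finiteness of the diagram is essential correctly identifies the only non-formal ingredient.
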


\begin{proof}[Proof]
It suffices to notice that the functor from commutative algebras to $R \otimes \PPP$-algebras $R \mapsto R \otimes \AAA' $ preserves finite limits.
\end{proof}

\begin{rmk}
 The enrichment of the category of $\PPP$-algebras and of the category of $\CCC$-coalgebras over simplicial sets that we described above is a part of a more general enrichment over functors from the category of unital commutative algebras to simplicial sets:
\begin{align*}
 R &\mapsto (\hom_{\Palg} (\AAA, \Omega_n \otimes R \otimes \BBB))_{n \in \mbn}\\
 R &\mapsto (\hom_{\Omega_n \otimes R \otimes \Ccog} (\Omega_n \otimes R \otimes  \DDD, \Omega_n \otimes R \otimes \DDD')\ .)_{n \in \mbn}\ .
\end{align*}
\end{rmk}

\subsubsection{Nonsymmetric context} In the nonsymmetric context, we can use some associative algebras instead of the commutative Sullivan algebras to define a simplicial enrichment. Let $\Lambda_n$ be the linear dual of the Dold-Kan coalgebra over the standard simplex: 
$$
\Lambda_n := DK^c(\Delta[n])^*\ .
$$
This defines a simplicial unital associative algebra.\\

Besides, let $\PPP$ be a nonsymmetric dg operad. For any $\PPP$-algebra $\AAA=(\Aa,\gamma_\AAA)$, and for any associative algebra $A$, $A \otimes \Aa$ has a canonical structure of $\PPP$-algebra.
\begin{defin}[Nonsymmetric simplicial enrichment of algebras over an operad]
 For any two $\PPP$-algebras $\AAA$ and $\BBB$, let $\HOM^{ns}(\AAA,\BBB)$ be the following simplicial set:
 $$
\HOM^{ns} (\AAA,\BBB)_n:=  \hom_{\Palg} (\AAA,\Lambda_n \otimes \BBB)\ .
 $$
 This defines a simplicial enrichment of the category of $\PPP$-algebras over simplicial sets.
 \end{defin}
 
 Let $\CCC$ be a nonsymmetric curved conilpotent cooperad. For any associative algebra $A$ and for any two $\CCC$-coalgebras $\DDD=(\DD,\Delta_\DDD)$ and $\EEE=(\EE,\Delta_\EE)$, we denote by $\hom_{A,\CCC} (\DDD, \EEE)$ the set of morphisms of graded $\mbk$-modules $f$ from $\DD$ to $A \otimes \EE$ which commute with the coderivations such that the following diagrams commute.
 $$
 \xymatrix{ \DD \ar[rr]^f \ar[d]_{\Delta} && A \otimes \EE \ar[d]^{Id_A \otimes \Delta_\EEE}\\
 \CC \circ_{ns} \DD \ar[r] & \CC \circ_{ns} (A \otimes \EE) \ar[r] &A \otimes (\CC \circ_{ns} \EE) }
 \xymatrix{\DD \ar[r]^f \ar[d]_{\theta_\DD} &A\otimes \EE \ar[d]^{Id_A \otimes \theta_\EE} \\
 \mbk \ar[r] & A}
 $$

\begin{defin}[Nonsymmetric simplicial enrichment of coalgebras over a curved cooperad]
  For any two $\CCC$-coalgebras $\DDD$ and $\DDD'$, let $\HOM^{ns} (\DDD,\DDD')_n$ be the following simplicial set:
 $$
\HOM^{ns} (\DDD,\DDD')_n:=  \hom_{\Lambda_n,\CCC} (\DDD, \DDD')\ .
 $$
 This defines a simplicial enrichment of the category of $\CCC$-coalgebras over simplicial sets.
\end{defin}

These simplicial enrichments are related to the enrichments over coassociative coalgebras that we described above.

\begin{prop}\label{prop:nsenrichsimp}
 For any two $\PPP$-algebras $\AAA$ and $\BBB$ and for any two $\CCC$-coalgebras $\DDD$ and $\DDD'$, we have functorial isomorphisms
 $$
 \HOM^{ns} (\AAA,\BBB) \simeq N(\{\AAA,\BBB\}^{ns})\ ,
 $$
 $$
 \HOM^{ns} (\DDD,\DDD') \simeq N(\{\DDD,\DDD'\}^{ns}) \ .
 $$
\end{prop}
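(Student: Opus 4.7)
The strategy is to trace both simplicial sets back to the same set of $\PPP$-algebra (resp. $\CCC$-coalgebra) morphisms, using the tensored-cotensored-enriched structure over $\uCog$ together with the finite-dimensional duality between the coalgebra $DK^c(\Delta[n])$ and its linear dual, the algebra $\Lambda_n$.

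For the algebra case, I would unfold the right-hand side using the nonsymmetric analogue of the adjunctions from Theorem \ref{thm:enrich}:
$$
N(\{\AAA,\BBB\}^{ns})_n = \hom_{\uCog}\!\bigl(DK^c(\Delta[n]),\{\AAA,\BBB\}^{ns}\bigr) \simeq \hom_{\Palg}\!\bigl(\AAA \triangleleft DK^c(\Delta[n]),\BBB\bigr) \simeq \hom_{\Palg}\!\bigl(\AAA,[DK^c(\Delta[n]),\BBB]\bigr).
$$
The core step is then to construct a natural $\PPP$-algebra isomorphism $[DK^c(\Delta[n]),\BBB]\cong \Lambda_n\otimes\BBB$, where the right-hand side carries the $\PPP$-algebra structure induced by the unital associative algebra $\Lambda_n$ and the structure on $\BBB$. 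At the level of chain complexes this is the classical identification $[\mathcal{E},\BB]\cong \mathcal{E}^*\otimes \BB$ for a finite-dimensional coalgebra $\mathcal{E}$; one then verifies that the defining formula $\gamma_{[\mathcal{E},\BBB]}(p\otimes_{\mbs_n}(f_1\otimes\cdots\otimes f_n))(x)=\gamma_{\BBB}(p\otimes-)(f_1\otimes\cdots\otimes f_n)\Delta^{n-1}(x)$ becomes, after dualization, precisely the formula defining the $\PPP$-algebra structure on $\mathcal{E}^*\otimes\BBB$ from the iterated product on $\mathcal{E}^*$. Combining gives $N(\{\AAA,\BBB\}^{ns})_n \simeq \hom_{\Palg}(\AAA,\Lambda_n\otimes\BBB) = \HOM^{ns}(\AAA,\BBB)_n$.

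For the coalgebra case, the analogous chain of adjunctions produces
$$
N(\{\DDD,\DDD'\}^{ns})_n \simeq \hom_{\Ccog}\!\bigl(\DDD\otimes DK^c(\Delta[n]),\DDD'\bigr) \simeq \hom_{\Ccog}\!\bigl(\DDD,\langle DK^c(\Delta[n]),\DDD'\rangle\bigr).
$$
Here I would establish a $\CCC$-coalgebra isomorphism $\langle DK^c(\Delta[n]),\DDD'\rangle\cong \Lambda_n\otimes\DDD'$, where the right-hand side is endowed with the $\CCC$-coalgebra structure functorially induced by the associative algebra $\Lambda_n$ (the construction at the start of Section \ref{section:enrich} adapts from the commutative to the associative setting in the nonsymmetric context). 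Once this identification is in hand, a $\CCC$-coalgebra morphism $\DDD\to\Lambda_n\otimes\DDD'$ is, by the very definition of the set $\hom_{\Lambda_n,\CCC}$, an element of $\HOM^{ns}(\DDD,\DDD')_n$.

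Naturality of the bijections in the simplicial variable $[n]$, and in the arguments $\AAA,\BBB$ or $\DDD,\DDD'$, follows from the naturality of each ingredient: the tensor-cotensor adjunctions, the finite-dimensional duality, and the functors $DK^c$ and $(-)^*$. The main obstacle is the sign-tracking required to establish the two identifications $[DK^c(\Delta[n]),\BBB]\cong \Lambda_n\otimes\BBB$ and $\langle DK^c(\Delta[n]),\DDD'\rangle\cong \Lambda_n\otimes\DDD'$ compatibly with their $\PPP$-algebra (resp. $\CCC$-coalgebra) structures; these ultimately reduce to the fact that the multiplication on $\Lambda_n$ is by construction the linear dual of the comultiplication on $DK^c(\Delta[n])$, so that the formulas defining the two structures match term by term.
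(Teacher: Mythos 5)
Your treatment of the algebra case is correct and is essentially the argument the paper leaves as ``straightforward'': unwind $N$ and the tensor--cotensor adjunctions to land on $\hom_{\Palg}(\AAA,[DK^c(\Delta[n]),\BBB])$, then use finite-dimensional duality to identify $[DK^c(\Delta[n]),\BBB]$ with $\Lambda_n\otimes\BBB$ as $\PPP$-algebras; the structure formulas do match term by term because the product on $\Lambda_n$ is dual to the coproduct on $DK^c(\Delta[n])$.

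The coalgebra half, however, contains a step that does not work as stated: the claimed $\CCC$-coalgebra isomorphism $\langle DK^c(\Delta[n]),\DDD'\rangle\cong\Lambda_n\otimes\DDD'$. The object $\Lambda_n\otimes\DDD'$ carries no natural $\CCC$-coalgebra structure: the candidate structure map would go $\Lambda_n\otimes\DD'\to\Lambda_n\otimes(\CC\circ_{ns}\DD')$, and to continue to $\CC\circ_{ns}(\Lambda_n\otimes\DD')$ one would need a \emph{comultiplication} on $\Lambda_n$, whereas $\Lambda_n$ only has a multiplication. The construction at the start of Section \ref{section:enrich} puts a $\CCC$-coalgebra structure on $\DDD\otimes\EEE$ for $\EEE$ a \emph{coalgebra}; it does not adapt to an algebra tensor factor, and this is precisely why the paper defines $\hom_{\Lambda_n,\CCC}(\DDD,\DDD')$ by explicit diagrams rather than as $\hom_{\Ccog}(\DDD,\Lambda_n\otimes\DDD')$. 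The cotensor $\langle DK^c(\Delta[n]),\DDD'\rangle$ exists only abstractly (by presentability) and is in general a proper ``finalization'' of the internal hom, not $\Lambda_n\otimes\DD'$. The repair is to bypass the cotensor entirely, which is what the paper does: a graded map $f:\DD\to\Lambda_n\otimes\DD'$ corresponds by finite-dimensional duality to a graded map $\tilde f:\DD\otimes DK^c(\Delta[n])\to\DD'$, and one checks that $f$ satisfies the diagrams defining $\hom_{\Lambda_n,\CCC}(\DDD,\DDD')$ if and only if $\tilde f$ is a morphism of $\CCC$-coalgebras out of $\DDD\otimes DK^c(\Delta[n])$. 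Then $\hom_{\Ccog}(\DDD\otimes DK^c(\Delta[n]),\DDD')\simeq\hom_{\uCog}(DK^c(\Delta[n]),\{\DDD,\DDD'\}^{ns})\simeq\hom_{\sSet}(\Delta[n],N(\{\DDD,\DDD'\}^{ns}))$ finishes the proof, using only the tensoring adjunction (on the coalgebra side, not the cotensoring) and $DK^c\dashv N$.
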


\begin{proof}[Proof]
The proof for $\PPP$-algebras is straightforward. Let us prove the result for the $\CCC$-coalgebras. A morphism of graded $\mbk$-modules $f$ from $\DD$ to $\Lambda_n \otimes \DD'$ is equivalent to a morphism from $\DD \otimes DK^c(\Delta[n])$ to $\DD'$. In that context, $f$ belongs to $\hom_{\Lambda_n, \CCC}(\DDD, \DDD')$ if and only if the corresponding morphism from $\DD \otimes DK^c(\Delta[n])$ to $\DD'$ is a morphism of $\CCC$-coalgebras. So
 \begin{align*}
\HOM^{ns}(\AAA,\BBB)_n &:= \hom_{\Lambda_n, \CCC}(\DDD, \DDD')  \simeq \hom_{\Ccog}(\DDD \otimes DK^c(\Delta[n]), \DDD' )\\
& \simeq \hom_{\uCog} (DK^c(\Delta[n]),\{ \DDD,\DDD' \}^{ns} ) \simeq \hom_{\sSet} (\Delta[n] , N(\{\DDD,\DDD'\}^{ns}) )\ .
\end{align*}
\end{proof}

\section{Bar-cobar adjunctions}
The usual bar-cobar adjunction relates nonunital algebras to non-counital conilpotent coalgebras, see \cite[Chapter 2]{LodayVallette12}. It can be extended to nonunital operads and conilpotent cooperads, see \cite{GetzlerJones94}. Besides, as a direct consequence of the work of Hirsh and Mill\`es, \cite{HirshMilles12}, there exists an adjunction \`a la bar-cobar relating unital algebras with curved conilpotent coalgebras. We extend it to operads and curved conilpotent cooperads.\\

The bar-cobar adjunction $\Omega_u \dashv B_c$ is a tool to compute resolutions of operads. But it has other aspects: any morphism of operads from the cobar construction $\Omega_u \CCC$ of a curved conilpotent cooperad $\CCC$ to an operad $\PPP$ gives rise to a new adjunction \`a la bar cobar between $\CCC$-coalgebras and $\PPP$-algebras.

\subsection{Operadic bar-cobar construction}\label{subsection:operadicbarcobar}

The usual operadic bar-cobar adjunction (see \cite[Chapter 6]{LodayVallette12}) relates augmented operads to differential graded conilpotent cooperads. The bar construction $B \PPP$ of an operad $\PPP$ does use the augmentation of $\PPP$ as it is the graded cofree cooperad on the suspension of $\ov \PPP$. If $\PPP$ is not augmented, one can try to add an element to $\PPP$ whose boundary is the unit of $\PPP$ and try the same computation. This is the new bar-construction; its output is no more a differential graded cooperad but a curved cooperad.\\

The new curved bar functor $B_c$ has also a left adjoint $\Omega_u$ whose formula looks like the usual operadic cobar functor. Again, as in \cite[Chapter 6]{LodayVallette12}, this adjunction is related to a notion of twisting morphism.

\begin{defin}[Operadic bar construction]
 The \textit{operadic bar construction} of a dg operad $\PPP=(\PP,\gamma_\PPP,1)$ is the curved conilpotent cooperad $B_c\PPP= (\Tfree^c (s\PP \oplus \mbk \cdot v), D, \theta)$ where $s\PP$ is the suspension of the $\mbs$-module $\PP$ and where $v$ is an arity $1$, degree $2$ element. It is equipped with the coderivation $D$ which extends the following map from $ \ov \Tfree (s \PP \oplus \mbk \cdot v)$ to $s \PP \oplus \mbk \cdot v$:
\begin{align*}
 \Tfree (s\PP \oplus v) \ra \Tfree^{\leq 2} (s\PP \oplus v) & \ra s\PP \oplus v\\
 sx &\mapsto -sd_\PP x\\
 sx \otimes sy &\mapsto (-1)^{|x|} s \gamma_\PPP (x \otimes y)\\
 v &\mapsto s1\ .
\end{align*}
It has the following curvature map :
\begin{align*}
\theta:  \ov\Tfree (s\PP \oplus v) \ra s\PP \oplus \mbk \cdot v \ra  \mbk \cdot v &\ra \mbk \\
 v & \mapsto 1\ .
\end{align*}
\end{defin}

\begin{prop}
The map $\theta$ is actually a curvature for the coderivation, that is $D^2 =  (\theta \otimes Id -  Id \otimes \theta )\Delta_2$.
\end{prop}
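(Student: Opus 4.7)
The plan is to invoke Lemma \ref{lemma:curvcofree}. Since $B_c \PPP$ is built as a cofree conilpotent graded cooperad $\Tfree^c(\VV)$ on $\VV := s\PP \oplus \mbk\cdot v$, with coderivation $D = D_\phi$ extended from the degree $-1$ map
$$\phi : \ov\Tfree(\VV) \twoheadrightarrow \Tfree^{\leq 2}(\VV) \ra \VV$$
given in the definition, and with $\theta$ factoring as $\Tfree(\VV)(1) \twoheadrightarrow \VV(1) \twoheadrightarrow \mbk \cdot v \ra \mbk$, the lemma reduces the identity $D^2 = (\theta \otimes Id - Id \otimes \theta)\Delta_2$ to verifying
$$\phi D_\phi = (\theta \otimes \pi_\VV - \pi_\VV \otimes \theta)\Delta_2.$$
Since both sides land in $\VV$, this can be checked on a basis of labeled trees, and I would proceed by case analysis on the number $n$ of vertices.

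For $n=1$: if $T=sx$, then $\phi D_\phi(T) = \phi(-sd_\PP x) = sd_\PP^2 x = 0$; if $T=v$, then $\phi D_\phi(T) = \phi(s1) = -sd_\PP(1) = 0$ since $1$ is a cycle. The right-hand side is $0$ as $\Delta_2$ vanishes on single-vertex trees. For $n=2$ with $T = sx \otimes_i sy$ in $s\PP \otimes s\PP$, expanding $D_\phi(T)$ and applying $\phi$ gives a sum that vanishes by the derivation relation $d_\PP\, \gamma_\PPP = \gamma_\PPP(d_\PP \circ Id + Id \circ' d_\PP)$; and the right-hand side is $0$ since $\theta$ vanishes on $s\PP$. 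When $v$ is involved, e.g.\ $T = v \otimes_1 sy$, one computes
$$\phi D_\phi(T) = \phi(s1 \otimes sy) + \phi(v \otimes (-sd_\PP y)) = s\gamma_\PPP(1 \otimes y) + 0 = sy,$$
using the unit axiom $\gamma_\PPP(1 \otimes y)=y$ and the fact that $\phi$ vanishes on 2-vertex trees containing $v$; this matches $\theta(v)\,\pi_\VV(sy) - \pi_\VV(v)\,\theta(sy) = sy$. The symmetric subcase $T = sx \otimes_i v$ uses $\gamma_\PPP(x \otimes 1) = x$, and the subcase $T = v \otimes v$ gives $0$ on both sides.

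For $n \geq 3$, the right-hand side vanishes: for $(\theta \otimes \pi_\VV - \pi_\VV \otimes \theta)$ to be nonzero on a two-level decomposition, each of the two levels must be a single vertex of $\VV$, forcing $n=2$. So it suffices to check that $\phi D_\phi(T) = 0$. For $n=3$ with $T = sx \otimes sy \otimes sz$ all in $s\PP$, only the terms of $D_\phi(T)$ where $\phi$ is applied to a 2-vertex subtree contribute to $\phi D_\phi(T)$ (other terms survive as trees of size $\geq 2$ on which $\phi$ vanishes, or as size-$3$ trees), and the remaining two summands cancel by associativity of $\gamma_\PPP$. The $n=3$ subcases involving $v$ reduce to the previous ones, using systematically that $\phi$ vanishes on all 2-vertex trees containing $v$. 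For $n \geq 4$, $D_\phi(T)$ is supported on trees of size $\geq n-1 \geq 3$, on all of which $\phi$ is zero, so $\phi D_\phi(T)=0$.

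The main obstacle is the bookkeeping of Koszul signs in the 2- and 3-vertex cases and the careful handling of the arity-one element $v$: it is precisely the production of $s1$ out of $v$, followed by the unit axiom of $\gamma_\PPP$, that accounts for the nontrivial curvature on the right-hand side. All other potential contributions cancel thanks to $d_\PP^2=0$, the derivation property of $d_\PP$ with respect to $\gamma_\PPP$, the associativity of $\gamma_\PPP$, and $d_\PP(1)=0$; these are exactly the defining axioms of the dg operad $\PPP$, which is the reason why the construction works.
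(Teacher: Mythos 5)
Your proposal takes exactly the same route as the paper: the paper's proof likewise invokes Lemma \ref{lemma:curvcofree} to reduce the statement to the identity $\phi D_\phi = (\theta \otimes \pi - \pi \otimes \theta)\Delta_2$ on cogenerators and then dismisses the verification as ``a straightforward calculation.'' Your case analysis on the number of vertices correctly supplies that omitted calculation, including the key point that the curvature term arises from $\phi(v)=s1$ combined with the unit axiom of $\gamma_\PPP$.
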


\begin{proof}[Proof]
Let $\pi$ be the projection from $B_c \PPP$ to $s\PP$. By Proposition \ref{lemma:curvcofree}, it suffices to prove that $\pi D^2= (\theta \otimes \pi - \pi \otimes \theta) \Delta_2$. This is a straightforward calculation.
\end{proof}

\begin{defin}[Operadic cobar construction]
 The \textit{operadic cobar construction} of a curved conilpotent cooperad $\CCC=(\CC,\Delta,\epsilon, 1, \theta) $ is the dg operad $\Omega_u \CCC =(\Tfree s^{-1}\CC,D)$ where $D$ is the following degree $-1$ derivation
$$
s^{-1}x \mapsto \theta(x) 1 -s^{-1} dx - \sum (-1)^{|x_{(1)}|} s^{-1}x_{(1)} \otimes s^{-1}x_{(2)}\ .
$$
where $\Delta_{2}(x)= \sum x_1 \otimes x_2$.

\end{defin}

\begin{prop}
 The derivation $D$ squares to zero.
\end{prop}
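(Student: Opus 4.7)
The plan is to exploit that $D$ is a derivation on a free operad, so $D^2$ is itself a derivation (as $D$ has odd degree, $D^2 = \tfrac{1}{2}[D,D]$ is a derivation of degree $-2$). Consequently, $D^2$ is determined by its restriction to the generators $s^{-1}\CC$, and it suffices to verify that $D^2(s^{-1}x) = 0$ for every $x \in \CC$. Moreover, since any element of $\Tfree s^{-1}\CC$ is obtained from generators by operadic composition, the verification reduces to a single calculation per generator.

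Applying $D$ to the three summands of $D(s^{-1}x) = \theta(x)\cdot 1 - s^{-1}dx - \sum (-1)^{|x_{(1)}|} s^{-1}x_{(1)} \otimes s^{-1}x_{(2)}$ produces four types of contributions which must be matched against one another. First, $D(\theta(x)\cdot 1) = 0$ because $D$ vanishes on the operadic unit (the unit is not in the image of $s^{-1}\CC$). Second, the term $-D(s^{-1}dx)$ produces $s^{-1}d^2x$ and a term $-\theta(dx)\cdot 1$; the latter vanishes by the curved cooperad axiom $\theta d = 0$, and the former is rewritten via $d^2 = (\theta \otimes Id - Id \otimes \theta)\Delta_2$. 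Third, expanding $D$ on the quadratic term $\sum (-1)^{|x_{(1)}|} s^{-1}x_{(1)}\otimes s^{-1}x_{(2)}$ by the Leibniz rule yields three kinds of summands: those where $D$ hits a generator producing $\theta(x_{(i)})\cdot 1$, those involving $d$ on one factor, and those where $D$ further decomposes one factor via $\Delta_2$.

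The $\theta$-contributions from the Leibniz expansion precisely cancel the $s^{-1}(\theta \otimes Id - Id \otimes \theta)\Delta_2 x$ coming from $s^{-1}d^2x$; here one uses that applying $\theta$ to an arity $1$ factor in $\Tfree s^{-1}\CC$ reduces the tree to its other component. The $d$-contributions from the Leibniz expansion match against the image of $d$ on each leg of $\Delta_2 x$ by the coderivation property $\Delta d = (d \circ Id + Id \circ' d)\Delta$ of $\CCC$. Finally, the fully quadratic-in-$\Delta_2$ contribution gives two terms (one for $D$ hitting the left leg, one for the right), and coassociativity of $\Delta$ together with the Koszul sign conventions built into the factor $(-1)^{|x_{(1)}|}$ arranges these into a sum of tree compositions indexed the same way but with opposite signs, which therefore cancel.

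The main obstacle is sign bookkeeping: each application of Leibniz to $s^{-1}x_{(1)} \otimes s^{-1}x_{(2)}$ drags signs $(-1)^{|s^{-1}x_{(1)}|} = (-1)^{|x_{(1)}|-1}$, which must be combined with the prefactor $(-1)^{|x_{(1)}|}$ and the signs appearing inside the curvature axiom and the coderivation identity. The cleanest way to organise this is to fix once and for all the Koszul convention used to define $D$ on a tree (namely that on a generator $s^{-1}y$ placed at a vertex $v$ of a tree $T$, $D$ acts with the sign $(-1)^{\sum_{v' < v} |s^{-1}y_{v'}|}$) and then verify each of the three cancellations above as a standalone identity in $\Tfree s^{-1}\CC$; this reduces the whole argument to a finite check mimicking the classical augmented case (\cite[Chapter 6]{LodayVallette12}), with the single new input being that the curvature contributions from $d^2$ and from $D$ hitting the unit element through $\theta$ balance by construction of $\theta$.
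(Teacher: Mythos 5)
Your proposal is correct and follows the same route as the paper, which simply reduces to checking $D^2$ on the generators $s^{-1}x$ (since $D^2=\tfrac{1}{2}[D,D]$ is a derivation) and declares the remaining verification a straightforward calculation. Your expansion of that calculation — the cancellations via $\theta d=0$, the curvature identity, the coderivation property and coassociativity — is exactly the content the paper leaves implicit.
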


\begin{proof}[Proof]
 It suffices to prove the result for any element of the form $s^{-1}x$, which is a straightforward calculation.
\end{proof}

\begin{defin}[Operadic twisting morphism]
Let $\CCC=(\CC,\Delta,\epsilon, 1 , d ,\theta)$ be a curved conilpotent cooperad and let $\PPP=(\PP,\gamma_\PPP,1_\PPP,d)$ be a dg operad. An \textit{operadic twisting morphism} from $\CCC$ to $\PPP$ is a degree $-1$ map of $\mbs$-modules (or $\mbn$-modules in the nonsymmetric case):
 $$
 \alpha: \ov \CC \ra \PP
 $$
 such that 
 $$
 \partial (\alpha) + \gamma (\alpha \otimes \alpha)\Delta_2 = \Theta
 $$ 
 where $\Theta (x) = \theta(x) 1_\PPP$, for any $x \in \CCC$. We denote by $\Tw(\CCC, \PPP)$ the set of operadic twisting morphisms from $\CCC$ to $\PPP$.
\end{defin}

\begin{prop}\label{prop:barcobarcurvedaslv}
We have the following functorial isomorphisms:
$$
\hom_{\Operad}(\Omega_u \CCC, \PPP) \simeq Tw(\CCC, \PPP) \simeq \hom_{\cCoop} (\CCC, B_c \PPP)\ .
$$
\end{prop}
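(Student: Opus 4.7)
The plan is to establish the two bijections separately, with operadic twisting morphisms serving as the common intermediate object. Both directions follow the classical bar-cobar pattern (as in \cite[Chapter 6]{LodayVallette12}), but adapted so that the curvature of $\CCC$ and the element $v$ in $B_c\PPP$ exactly account for the term $\Theta$ appearing on the right of the Maurer-Cartan equation.

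For the first isomorphism $\hom_\Operad(\Omega_u \CCC,\PPP) \simeq Tw(\CCC,\PPP)$, I would exploit the freeness of $\Omega_u \CCC$ as a graded operad on the desuspended cogenerators. A morphism of graded operads $f:\Omega_u \CCC \to \PPP$ is uniquely specified by a degree $0$ morphism of $\mbs$-modules $s^{-1}\ov\CC \to \PP$, or equivalently by a degree $-1$ map $\alpha:\ov\CC \to \PP$. Since $D$ and $d_\PPP$ are both derivations and the source is free, the condition that $f$ commutes with the differentials reduces to its verification on the generators $s^{-1}x$. Substituting the explicit formula for $D(s^{-1}x)$ given in the definition of the cobar and cancelling the signs produced by $s^{-1}$ transforms the identity $f D(s^{-1}x)=d_\PPP\, f(s^{-1}x)$ into
$$
\partial(\alpha) + \gamma_\PPP(\alpha\otimes\alpha)\Delta_2 = \Theta,
$$
which is precisely the twisting morphism equation. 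The inverse assignment is clear.

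For the second isomorphism $Tw(\CCC,\PPP) \simeq \hom_\cCoop(\CCC,B_c\PPP)$, I would dualize the argument using the cofreeness of $B_c\PPP=(\Tfree^c(s\PP \oplus \mbk\cdot v),D,\theta)$ as a conilpotent graded cooperad. A morphism of coaugmented conilpotent graded cooperads $\phi:\CCC \to B_c\PPP$ is determined by its projection onto the cogenerators, which splits as a pair $(\phi_1,\phi_2)$ with $\phi_1:\ov\CC \to s\PP$ and $\phi_2:\ov\CC \to \mbk\cdot v$. Imposing the curvature compatibility $\theta_\CCC=\theta_{B_c\PPP}\phi$ immediately forces $\phi_2$ to coincide with $\theta_\CCC$ on the arity-one, degree-two part, leaving $\phi_1$ as the only free datum; setting $\alpha:=s^{-1}\phi_1$ produces a degree $-1$ map $\ov\CC \to \PP$. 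The remaining condition, $D_{B_c\PPP}\phi = \phi\, d_\CCC$, is by Lemma \ref{lemma:curvcofree} equivalent to the equation obtained after post-composing with the canonical projection $\pi:B_c\PPP \twoheadrightarrow s\PP \oplus \mbk\cdot v$. Expanding $\pi D_{B_c\PPP}\phi$ using the explicit formula for the bar coderivation on the generators $sx$, $sx\otimes sy$ and $v$, one reproduces term-for-term the Maurer-Cartan equation $\partial(\alpha) + \gamma_\PPP(\alpha\otimes\alpha)\Delta_2 = \Theta$. Again the inverse assignment is tautological.

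Finally, putting the two bijections together, composition yields the desired natural isomorphism $\hom_\Operad(\Omega_u \CCC,\PPP) \simeq \hom_\cCoop(\CCC,B_c\PPP)$, and naturality in both $\CCC$ and $\PPP$ is clear from the constructions. The main obstacle is purely bookkeeping: tracking the Koszul signs produced by the suspensions $s$ and $s^{-1}$, and verifying that the contribution coming from $\theta_\CCC$ on the cobar side matches the one coming from $v$ (via $D(v)=s\cdot 1_\PPP$ and the curvature $\theta(v)=1$) on the bar side. Once these bookkeeping details are carried out, the equivalence of the three sets of data is formal.
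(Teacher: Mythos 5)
Your proposal is correct and follows essentially the same route as the paper: the first bijection is the standard free-operad/generators argument (which the paper delegates to Hirsh--Mill\`es \cite[3.4.1]{HirshMilles12}), and the second uses exactly the paper's construction, namely that a curved-cooperad morphism $\CCC \to \Tfree^c(s\PP\oplus\mbk\cdot v)$ is determined by its corestriction to cogenerators, with the $\mbk\cdot v$ component forced to be $\theta_\CCC$ by curvature compatibility and the $s\PP$ component giving the twisting morphism. The only cosmetic remark is that the reduction of coderivation compatibility to its projection onto cogenerators is the universal property of cofree conilpotent cooperads rather than Lemma \ref{lemma:curvcofree} itself, but the substance is identical.
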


\begin{proof}[Proof]
 Proving the existence of the functorial isomorphism $\hom_{\Operad}(\Omega_u \CCC, \PPP) \simeq Tw(\CCC, \PPP)$ is similar to the proof of \cite[3.4.1]{HirshMilles12}. Let us show that we have a functorial isomorphism $Tw(\CCC, \PPP) \simeq \hom_{\cCoop} (\CCC, B_c \PPP)$. Let $\alpha: \CCC \ra \PPP$ be an operadic twisting morphism. We obtain a degree zero map from $\ov \CC$ to $s\PP \oplus \mbk \cdot v$ as follows:
\begin{align*}
 \ov \CC &\ra s\PP \oplus \mbk \cdot v\\
 c & \mapsto s \alpha (x) + \theta_{\CCC} (x)\ .
\end{align*}
This induces a morphism of graded cooperads $f_\alpha:\CCC \ra B_c \PPP = \Tfree^c (s\PP \oplus \mbk \cdot v)$ such that $\theta_\CCC = \theta_{B_c\PPP} f_\alpha$. Since $\partial (\alpha) + \gamma_\PPP (\alpha \otimes \alpha)\Delta_2 = \Theta$, then $f_\alpha$ commutes with the coderivations and so is a morphism of curved cooperads. Conversely, from any morphism of curved cooperads $f$ from $\CCC$ to $B_c \PPP$, one obtains a twisting morphism as follows:
$$
\xymatrix{\CCC \ar[r]^f & B_c \PPP  \ar@{->>}[r] & s\PPP \ar[r] & \PPP\ . }
$$
The two constructions that we described are inverse one to another.
 \end{proof}

Hence, the functors $\Omega_u$ and $B_c$ realize an adjunction between the category of dg operads and the category of curved conilpotent cooperads.
 $$
\xymatrix{\cCoop \ar@<1ex>[r]^(0.48){\Omega_u} & \Operad \ar@<1ex>[l]^(0.52){B_c}}
$$

\subsection{Twisted products}

Let $\alpha: \ov \CCC \ra \PPP$ be an operadic twisting morphism.

\begin{defin}[Twisted $\PPP$-module]
 For any $\CCC$-comodule $\DDD$, let $\PPP \circ_\alpha \DDD$ be the free $\PPP^{grad}$-module $\PP \circ \DD$ equipped with the unique derivation which extends the map
\begin{align*}
 \DD &\ra \PP \circ \DD\\
 x &\mapsto d_\DD(x) - (\alpha \circ Id )\Delta(x)\ .
\end{align*}
\end{defin}

\begin{defin}[Twisted $\CCC$-comodule] For any $\PPP$-module $\AAA$, let $\CCC \circ_\alpha \AAA$ be the cofree $\CCC^{grad}$-comodule  $\CC \circ \Aa$ equipped with a unique coderivation which extends the map
\begin{align*}
 \CC \circ \Aa &\ra \Aa\\
 x &\mapsto  \big( d_\Aa (\epsilon_\CCC \circ Id) + \gamma_{\AAA} (\alpha \circ Id) \big) (x)\ .
\end{align*}
\end{defin}

\begin{prop}
The derivation of $\PPP \circ_\alpha \DDD$ squares to zero. Hence, $\PPP \circ_\alpha \DDD$ is a dg $\PPP$-module.
Similarly, the coderivation of $\CCC \circ_\alpha \AAA$ squares to $(\theta \circ Id)\Delta$. Hence, $\CCC \circ_\alpha \AAA$ is a $\CCC$-comodule. 
\end{prop}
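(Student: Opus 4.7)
My plan in both cases is to reduce the assertion to a short computation on generators, using the bijection (recalled earlier in the paper) between (co)derivations of a free (respectively cofree) (co)module and their restrictions to (projections on) the generators.

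For $\PPP \circ_\alpha \DDD$: the derivation $D$ is odd and lies over the odd operadic derivation $d_\PPP$, so $D^2$ is a derivation of even degree over $d_\PPP^2 = 0$, hence a $\PPP^{grad}$-linear endomorphism of $\PP \circ \DD$ and uniquely determined by its restriction to the image of $\DD$; it thus suffices to prove $D^2(1 \otimes x) = 0$ for $x \in \DD$. Starting from $D(1 \otimes x) = 1 \otimes d_\DD(x) - (\alpha \circ Id)\Delta_\DDD(x)$ and applying $D$ again, I collect four contributions: the term $1 \otimes d_\DD^2(x)$, which by the curvature relation on $\DDD$ equals $1 \otimes (\theta \circ Id)\Delta_\DDD(x)$; the term $-(\alpha \circ Id)\Delta_\DDD(d_\DD(x))$, which by the coderivation axiom $\Delta_\DDD d_\DD = (d_\CCC \circ Id + Id \circ' d_\DD)\Delta_\DDD$ splits into $-(\alpha d_\CCC \circ Id)\Delta_\DDD(x)$ plus a mixed $(\alpha \circ' d_\DD)$ piece; the Leibniz expansion of $D$ applied to $(\alpha \circ Id)\Delta_\DDD(x)$, which yields $-(d_\PPP \alpha \circ Id)\Delta_\DDD(x)$, a second $(\alpha \circ' d_\DD)$ piece that cancels the first once the Koszul sign $(-1)^{|c|}$ hidden in $(Id \circ' d_\DD)$ is tracked carefully, and a quadratic double-substitution; and finally that quadratic term, which by the comodule coassociativity $(Id \circ \Delta_\DDD)\Delta_\DDD = (\Delta_\CCC \circ Id)\Delta_\DDD$ and the partial operadic composition equals $(\gamma_\PPP(\alpha \otimes \alpha)\Delta_2 \circ Id)\Delta_\DDD(x)$. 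Gathering everything leaves
\[
D^2(1 \otimes x) \;=\; 1 \otimes (\theta \circ Id)\Delta_\DDD(x) \;-\; \bigl((d_\PPP\alpha + \alpha d_\CCC + \gamma_\PPP(\alpha \otimes \alpha)\Delta_2) \circ Id\bigr)\Delta_\DDD(x).
\]
The operadic twisting morphism equation replaces the bracketed map by $\Theta : c \mapsto \theta(c) \cdot 1_\PPP$; since $\theta$ is supported in arity one and $1_\PPP$ is the operadic unit, $(\Theta \circ Id)\Delta_\DDD(x) = 1 \otimes (\theta \circ Id)\Delta_\DDD(x)$, so the two surviving terms cancel and $D^2 = 0$.

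For $\CCC \circ_\alpha \AAA$ the argument is dual: a coderivation on the cofree $\CCC^{grad}$-comodule $\CC \circ \Aa$ is determined by its composite with the projection $\pi = \epsilon \circ Id : \CC \circ \Aa \to \Aa$, so it suffices to check $\pi D_u^2 = \pi (\theta \circ Id)\Delta$. The same four families of terms appear, with two differences: $\AAA$ contributes no curvature because it is a genuine dg $\PPP$-module ($d_\Aa^2 = 0$), but the curvature of the cooperad now enters through $d_\CCC^2 = (\theta \otimes Id - Id \otimes \theta)\Delta_2$. After invoking the twisting morphism equation as above, the $\Theta$-piece produced by the linear and quadratic terms cancels against the $(-Id \otimes \theta)\Delta_2$ contribution of $d_\CCC^2$, leaving exactly the $(\theta \otimes Id)\Delta_2$ term, which is $\pi(\theta \circ Id)\Delta$. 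The main obstacle is the bookkeeping of Koszul signs---particularly the cancellation of the two mixed $(\alpha \circ' d_\DD)$ pieces and the identification of the iterated comodule structure with $\Delta_2$ in the quadratic step---after which every identity used is a formal consequence of the twisting morphism equation, the coderivation axioms, and comodule coassociativity.
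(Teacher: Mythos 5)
Your proposal is correct and follows the same route as the paper: reduce to the generators (respectively, to the projection onto $\Aa$) using that a (co)derivation of a free module (respectively cofree comodule) over a fixed (co)derivation of the (co)operad is determined there, and then verify the identity via the twisting morphism equation $\partial(\alpha)+\gamma(\alpha\otimes\alpha)\Delta_2=\Theta$ together with the curvature relations $d_\DD^2=(\theta\circ Id)\Delta_\DDD$ and $d_\CCC^2=(\theta\otimes Id - Id\otimes\theta)\Delta_2$. The paper merely labels this verification a ``straightforward calculation,'' whereas you carry it out; the term-by-term cancellation you describe is exactly what that calculation amounts to.
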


\begin{proof}[Proof]
To prove the first point, it suffices to show that $ \pi D^2 = 0$, which is a straightforward calculation. To prove the second point, it suffices to show that $\pi D^2 = (\theta \circ Id)$, which is a straightforward calculation.
\end{proof}

\begin{defin}[Twisting morphism relative to an operadic twisting morphism]
 For any $\CCC$-comodule $\DDD=(\DD,\Delta_\DDD)$ and any $\PPP$-module $\AAA=(\Aa,\gamma_\AAA)$ an $\alpha$-twisting morphism from $\DDD$ to $\AAA$ is a degree $0$ map $\phi: \DD \to \Aa$ such that
 $$
 \partial(\phi) + \gamma_\Aa(\alpha \circ \phi)\Delta_\CC = 0\ .
 $$
 We denote by $\Tw_\alpha (\DDD, \AAA)$ the set of $\alpha$-twisting morphisms from $\DDD$ to $\AAA$.
\end{defin}

\begin{prop}\label{prop:adjmodcomod}
There are functorial isomorphisms
 $$
 \hom_{\Pmod} (\PPP \circ_\alpha \DDD, \AAA) \simeq \Tw_\alpha (\DDD, \AAA) \simeq \hom_{\Ccomod} (\DDD, \CCC \circ_\alpha \AAA)
 $$
 for any $\CCC$-comodule $\DDD$ and any $\PPP$-module $\AAA$.
\end{prop}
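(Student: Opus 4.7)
The plan is to deduce both bijections from the free--forgetful adjunctions at the level of graded objects, and then identify the ``commutes with (co)differentials'' condition with the twisting morphism equation. Throughout, I rely on two facts established earlier in the paper: derivations on a free $\PPP$-module $\PP\circ\VV$ are in one-to-one correspondence with degree $k$ maps $\VV\to\PP\circ\VV$ (extension by $\PPP$-linearity), and coderivations on a cofree $\CCC$-comodule $\CCC\circ\VV$ are in one-to-one correspondence with degree $k$ maps $\CCC\circ\VV\to\VV$ (corestriction via the counit $\pi=\epsilon_\CCC\circ Id$).

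For the first bijection, given a graded map $\phi:\DD\to\Aa$, let $f:\PP\circ\DD\to\Aa$ be its unique $\PPP^{grad}$-module extension, characterized by $f=\gamma_\AAA(Id_\PP\circ\phi)$. By the same uniqueness principle, a $\PPP^{grad}$-module morphism $f$ from $\PPP\circ_\alpha\DDD$ to $\AAA$ is compatible with the two differentials if and only if the obstruction $fD_{\PPP\circ_\alpha\DDD}-d_\AAA f$, which is $\PPP^{grad}$-linear in a suitable sense, vanishes on the generating submodule $\DD\subset\PP\circ\DD$. Using the explicit formula $D_{\PPP\circ_\alpha\DDD}|_\DD = d_\DDD - (\alpha\circ Id)\Delta_\DDD$ and the fact that $f$ restricts to $\phi$, this restriction equals
$$
\phi\, d_\DDD - \gamma_\AAA(\alpha\circ\phi)\Delta_\DDD - d_\AAA\phi = -\bigl(\partial(\phi) + \gamma_\AAA(\alpha\circ\phi)\Delta_\DDD\bigr).
$$
Its vanishing is precisely the condition that $\phi$ be an $\alpha$-twisting morphism.

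The second bijection is obtained dually. A $\CCC^{grad}$-comodule morphism $g:\DDD\to\CC\circ\Aa$ is uniquely determined by its corestriction $\phi:=\pi\circ g:\DD\to\Aa$, via $g=(Id_\CC\circ\phi)\Delta_\DDD$. The obstruction $gD_\DDD - D_{\CCC\circ_\alpha\AAA}g$ is itself a coderivation-like map with values in the cofree $\CCC^{grad}$-comodule $\CC\circ\Aa$, so by the cofree property it vanishes if and only if its composition with $\pi$ vanishes. Using the defining formula for the coderivation on $\CCC\circ_\alpha\AAA$ (which extends $d_\Aa(\epsilon_\CCC\circ Id)+\gamma_\AAA(\alpha\circ Id)$) together with counitality, one computes
$$
\pi\bigl(gD_\DDD - D_{\CCC\circ_\alpha\AAA}g\bigr) = \phi\,d_\DDD - d_\Aa\phi - \gamma_\AAA(\alpha\circ\phi)\Delta_\DDD = -\bigl(\partial(\phi) + \gamma_\AAA(\alpha\circ\phi)\Delta_\DDD\bigr),
$$
again the $\alpha$-twisting equation.

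The main obstacle is the passage from the generator/cogenerator level to the full (co)module. For the first bijection, one must argue cleanly that the difference $fD-d_\AAA f$ is determined by its restriction to $\DD$; this follows from the fact that $f$ is $\PPP^{grad}$-linear while $D_{\PPP\circ_\alpha\DDD}$ is a derivation, hence the difference satisfies a derivation identity relative to $f$, which reduces checking to the generators. For the second bijection the dual statement holds: the difference is a $g$-coderivation into the cofree comodule $\CC\circ\Aa$, determined by its projection via $\pi$. Once this reduction is justified, the two sets of equations coincide on the nose with $\Tw_\alpha(\DDD,\AAA)$, and functoriality in $\DDD$ and $\AAA$ is immediate from the explicit nature of the constructions.
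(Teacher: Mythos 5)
Your argument is correct and is exactly the standard free/cofree-adjunction argument that the paper invokes by citing \cite[11.3.2]{LodayVallette12}: identify (co)module morphisms out of the free module (into the cofree comodule) with maps on (co)generators, then check that compatibility with the twisted (co)derivations reduces, by the (co)derivation property, to the vanishing of $\partial(\phi)+\gamma_\AAA(\alpha\circ\phi)\Delta_\DDD$ on $\DD$. The signs and the reduction-to-generators step both check out, so this is a faithful expansion of the proof the paper intends.
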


\begin{proof}[Proof]
 The proof is similar to \cite[11.3.2]{LodayVallette12}.
\end{proof}

\subsection{Bar-cobar adjunction for algebras over an operad and coalgebras over a cooperad}

Following \cite[Chapter 11]{LodayVallette12}, we call the previous functors respectively the bar construction for $\PPP$-algebras and the cobar construction for $\CCC$-coalgebras.

\begin{defin}[Bar construction and cobar construction relatives to an operadic twisting morphism]
Let $\alpha :\ov \CCC \ra \PPP$ be an operadic twisting morphism. The $\alpha$-bar construction is the functor from $\PPP$-algebras to $\CCC$-coalgebras defined by:
$$
B_\alpha \AAA := \CCC \circ_{\alpha} \AAA\ .
$$
The $\alpha$-cobar construction is the functor from $\CCC$-coalgebras to $\PPP$-algebras defined by:
$$
\Omega_\alpha \DDD := \PPP \circ_\alpha \DDD\ .
$$
\end{defin}

We already know, by Proposition \ref{prop:adjmodcomod} that $\Omega_\alpha$ is left adjoint to $B_\alpha$. Moreover, this adjunction is enriched over cocommutative coalgebras and simplicial sets.

\begin{prop}\label{prop:enrichedadjun}
The functors $\Omega_\alpha$ and $B_\alpha$ induce functorial isomorphisms of counital cocommutative coalgebras and of simplicial sets:
 $$
 \{\Omega_\alpha \DDD , \AAA \} \simeq \{\CC, B_\alpha \AAA \}\ ,\ \ \HOM (\Omega_\alpha \DDD , \AAA ) \simeq \HOM( \DDD, B_\alpha \AAA )\ ,
 $$
 for any $\CCC$-coalgebra $\DDD$ and any $\PPP$-algebra $\AAA$;
\end{prop}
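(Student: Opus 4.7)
My plan is to deduce both isomorphisms from the base adjunction $\Omega_\alpha \dashv B_\alpha$ established in Proposition \ref{prop:adjmodcomod}, combined with the universal properties of the tensoring and cotensoring set up in Section \ref{section:enrich}. For the counital cocommutative coalgebra enrichment, I argue by the Yoneda lemma in $\uCocom$: for every counital cocommutative coalgebra $\EEE$, the definition of the enrichment on $\Palg$ gives
$$
\hom_{\uCocom}(\EEE, \{\Omega_\alpha\DDD, \AAA\}) \simeq \hom_{\Palg}(\Omega_\alpha\DDD \triangleleft \EEE, \AAA),
$$
while the definition on $\Ccog$ together with Proposition \ref{prop:adjmodcomod} gives
$$
\hom_{\uCocom}(\EEE, \{\DDD, B_\alpha\AAA\}) \simeq \hom_{\Ccog}(\DDD \otimes \EEE, B_\alpha\AAA) \simeq \hom_{\Palg}(\Omega_\alpha(\DDD \otimes \EEE), \AAA).
$$
Thus it suffices to construct a natural isomorphism $\Omega_\alpha(\DDD \otimes \EEE) \simeq \Omega_\alpha\DDD \triangleleft \EEE$ in $\Palg$.

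Both sides of this putative iso have underlying graded $\PPP$-algebra $\PP \circ (\DD \otimes \EE)$: for $\Omega_\alpha(\DDD \otimes \EEE)$ this is by definition, while for $\Omega_\alpha\DDD \triangleleft \EEE$ it follows from the computation of the tensoring on the free $\PPP$-algebra $\PP \circ \DD$, whose defining relations precisely allow one to slide the coaction of $\EEE$ past the $\PPP$-multiplication. The heart of the matter is matching the differentials. Evaluated on a generator $x \otimes e \in \DD \otimes \EE$, the twisted derivation of $\Omega_\alpha(\DDD \otimes \EEE)$ equals $d_{\DD\otimes\EE}(x\otimes e) - (\alpha \circ Id)\Delta_{\DDD \otimes \EEE}(x\otimes e)$, and unwinding the formula recalled in Section \ref{section:enrich} for $\Delta_{\DDD \otimes \EEE}$ this reads
$$
d_\DD(x) \otimes e + (-1)^{|x|} x \otimes d_\EE(e) - \sum_{n,c} \alpha(c) \otimes_{\mbs_n} \bigl( (x_1 \otimes e_{(1)}) \otimes \cdots \otimes (x_n \otimes e_{(n)})\bigr).
$$
On the other side, the differential of $\Omega_\alpha\DDD \triangleleft \EEE$ acts on the representative $(1\otimes x)\otimes e$ by $d_{\Omega_\alpha\DDD}(1 \otimes x)\otimes e + (-1)^{|x|}(1\otimes x) \otimes d_\EE(e)$; expanding $d_{\Omega_\alpha\DDD}(1 \otimes x) = d_\DD(x) - (\alpha \circ Id)\Delta(x)$ and applying the defining relation $\gamma_\AAA(p \otimes (y_1, \ldots ,y_n)) \otimes e \sim \sum p \otimes ((y_1 \otimes e_{(1)}) \otimes \cdots \otimes (y_n \otimes e_{(n)}))$ of $\triangleleft$ to the term $(\alpha\circ Id)\Delta(x) \otimes e$ produces exactly the same expression. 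Naturality in $\DDD$, $\EEE$ is automatic.

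For the simplicial enrichment, I extend scalars from $\mbk$ to $\Omega_n$. The operadic twisting morphism $\alpha$ yields an operadic twisting morphism $\Omega_n \otimes \alpha$ between $\Omega_n \otimes \CCC$ and $\Omega_n \otimes \PPP$ in the symmetric monoidal category of $\Omega_n$-modules, hence a bar-cobar adjunction between $\Omega_n \otimes \CCC$-coalgebras and $\Omega_n \otimes \PPP$-algebras. Because bar and cobar are built out of (co)free modules equipped with twisted (co)derivations whose formulas are $\Omega_n$-linear, one has the canonical identifications $\Omega_{\Omega_n \otimes \alpha}(\Omega_n \otimes \DDD) \simeq \Omega_n \otimes \Omega_\alpha \DDD$ and $B_{\Omega_n \otimes \alpha}(\Omega_n \otimes \AAA) \simeq \Omega_n \otimes B_\alpha \AAA$. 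Combining these with restriction-extension of scalars yields
\begin{align*}
\HOM(\Omega_\alpha\DDD, \AAA)_n &= \hom_{\Palg}(\Omega_\alpha\DDD, \Omega_n \otimes \AAA) \\
 &\simeq \hom_{\Omega_n \otimes \Palg}(\Omega_n \otimes \Omega_\alpha\DDD, \Omega_n \otimes \AAA) \\
 &\simeq \hom_{\Omega_n \otimes \Ccog}(\Omega_n \otimes \DDD, \Omega_n \otimes B_\alpha \AAA) \\
 &= \HOM(\DDD, B_\alpha\AAA)_n,
\end{align*}
and each step is natural in $n$ with respect to the structural maps $\Omega(\phi) : \Omega_n \to \Omega_m$, producing the required iso of simplicial sets.

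The main obstacle is the dg-algebra isomorphism $\Omega_\alpha(\DDD \otimes \EEE) \simeq \Omega_\alpha \DDD \triangleleft \EEE$ in the first step: the identification on underlying graded $\PPP$-algebras is essentially formal, but verifying that the two differentials coincide requires tracking the defining relation of $\triangleleft$ through both the $\alpha$-twisting and the comultiplication of $\EEE$ as it distributes across the coaction of $\CCC$ on $\DDD$. Once this is settled, the rest of the proof is pure Yoneda and extension-of-scalars bookkeeping.
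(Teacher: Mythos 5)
Your proof is correct, and its overall skeleton (Yoneda over $\uCocom$ for the coalgebra enrichment, extension of scalars to $\Omega_n$ for the simplicial one) matches the paper's. The one place where you diverge is in how the key compatibility is packaged. The paper stays on the cotensor side: it proves the lemma $\Tw_\alpha(\DDD \otimes \EEE, \AAA) \simeq \Tw_\alpha(\DDD, [\EEE,\AAA])$, which is a statement about plain linear maps (the hom-tensor bijection for graded $\mbk$-modules preserves the $\alpha$-twisting condition), and then chains $\hom_{\uCocom}(\EEE,\{\Omega_\alpha\DDD,\AAA\}) \simeq \hom_{\Palg}(\Omega_\alpha\DDD,[\EEE,\AAA]) \simeq \Tw_\alpha(\DDD,[\EEE,\AAA]) \simeq \Tw_\alpha(\DDD\otimes\EEE,\AAA) \simeq \hom_{\Ccog}(\DDD\otimes\EEE,B_\alpha\AAA)$. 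You instead work on the tensor side and prove the adjoint statement $\Omega_\alpha(\DDD\otimes\EEE) \simeq \Omega_\alpha\DDD \triangleleft \EEE$ as dg $\PPP$-algebras. The two lemmas are equivalent by Yoneda, so nothing is lost; but your route costs a little more, since you must first identify $\Omega_\alpha\DDD \triangleleft \EEE$ with the free algebra $\PP\circ(\DD\otimes\EE)$ at the graded level (a computation of the tensoring on a quasi-free algebra that the paper never needs) before comparing differentials, whereas the paper's lemma requires only checking that a bijection of linear maps preserves a Maurer--Cartan-type equation. In exchange, your version produces the explicit and potentially reusable isomorphism $\Omega_\alpha(\DDD\otimes\EEE) \simeq \Omega_\alpha\DDD \triangleleft \EEE$, which is the coalgebra-enriched strengthening of the adjunction stated at the level of objects rather than hom-sets. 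The simplicial half of your argument is the same as the paper's, just spelled out in more detail.
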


\begin{lemma}
 We have a functorial isomorphism:
 $$
 \Tw_\alpha ( \DDD \otimes \EEE , \AAA) \simeq \Tw_\alpha ( \DDD , [\EEE, \AAA])\ .
 $$
 for any $\CCC$-coalgebra $\DDD$, any $\PPP$-algebra $\AAA$ and any counital cocommutative coalgebra $\EEE$.
\end{lemma}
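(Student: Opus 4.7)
The plan is to exhibit the claimed bijection as the restriction of the standard tensor-hom adjunction for chain complexes. Define
$$
\Phi : \hom_{\dgMod}(\DD \otimes \EE, \Aa) \xrightarrow{\sim} \hom_{\dgMod}(\DD, [\EE, \Aa]),
\qquad \Phi(\phi)(x)(y) = \phi(x \otimes y),
$$
which is a bijection and sends chain maps to chain maps. So the only thing to verify is that $\Phi$ identifies the twisting curvature equations, that is, that for $\phi$ of degree $0$,
$$
\partial(\phi) + \gamma_\AAA(\alpha \circ \phi)\Delta_{\DDD \otimes \EEE} = 0
\quad \Longleftrightarrow \quad
\partial(\Phi(\phi)) + \gamma_{[\EEE,\AAA]}(\alpha \circ \Phi(\phi))\Delta_\DDD = 0.
$$

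The first step is to expand $\gamma_\AAA(\alpha \circ \phi)\Delta_{\DDD \otimes \EEE}$ evaluated on $x \otimes y$ using the formula for the $\CCC$-coalgebra structure on $\DDD \otimes \EEE$ recalled before Theorem~\ref{thm:thmenrich2}: it is a sum over arities $n$ of terms of the form
$$
\gamma_\AAA\big(\alpha \otimes_{\mbs_n}(\phi \otimes \cdots \otimes \phi)\big)\bigl(\Delta_n(x) \otimes \Delta_\EEE^{n-1}(y)\bigr),
$$
where the tensor factors of $\Delta_\EEE^{n-1}(y)$ are distributed to the slots of $\phi^{\otimes n}$ via the braiding. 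The second step is to expand the right-hand side: using the explicit formula for the $\PPP$-algebra structure on $[\EEE, \AAA]$ given just after Theorem~\ref{thm:enrich}, the value of $\gamma_{[\EEE, \AAA]}(\alpha \circ \Phi(\phi)) \Delta_\DDD(x)$ at $y$ equals
$$
\sum_n \gamma_\AAA\big(\alpha \otimes_{\mbs_n}(-)\big)\bigl(\Phi(\phi)(x_{(1)})(y^{(1)}) \otimes \cdots \otimes \Phi(\phi)(x_{(n)})(y^{(n)})\bigr)
$$
with $\Delta_n(x) = \sum x_{(1)} \otimes \cdots \otimes x_{(n)}$ and $\Delta_\EEE^{n-1}(y) = \sum y^{(1)} \otimes \cdots \otimes y^{(n)}$. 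Substituting $\Phi(\phi)(x_{(i)})(y^{(i)}) = \phi(x_{(i)} \otimes y^{(i)})$ yields exactly the expression obtained in the first step. Similarly, the differential term $\partial(\Phi(\phi))(x)(y)$ unfolds to $\partial(\phi)(x \otimes y)$ by the standard computation, so the two curvature equations are equivalent.

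The only delicate part is the sign and symmetry bookkeeping needed to match the two expressions: one must verify that the permutation of the $y^{(i)}$ among the slots of $\phi^{\otimes n}$ induced by the formula for $\Delta_{\DDD \otimes \EEE}$ coincides, on the $\mbs_n$-coinvariants, with the plain tensor ordering used in $\gamma_{[\EEE, \AAA]}$. This is precisely where the cocommutativity of $\EEE$ intervenes, since it guarantees that $\Delta_\EEE^{n-1}(y)$ is $\mbs_n$-invariant and hence well-defined as a factor under $\otimes_{\mbs_n}$. Once this identification is made, the Koszul signs on both sides coincide term by term, yielding the desired functorial bijection $\Tw_\alpha(\DDD \otimes \EEE, \AAA) \simeq \Tw_\alpha(\DDD, [\EEE, \AAA])$.
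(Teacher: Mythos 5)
Your proposal is correct and follows exactly the paper's argument: the paper's proof simply states that the tensor--hom bijection $\hom(\DD \otimes \EE, \Aa) \simeq \hom(\DD, [\EE,\Aa])$ and its inverse preserve $\alpha$-twisting morphisms, which is precisely what you verify. You merely spell out the term-by-term comparison of the two curvature equations (including the role of cocommutativity of $\EEE$ in making both sides well defined on $\mbs_n$-coinvariants), which the paper leaves implicit.
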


\begin{proof}[Proof]
The set of morphisms of graded $\mbk$-modules from $\DD \otimes \EE$ to $\Aa$ is in bijection with the set of morphisms of graded $\mbk$-modules from $\DD$ to $[\EE, \Aa]$. This bijection and its inverse preserve $\alpha$-twisting morphisms.
\end{proof}

\begin{proof}[Proof of Proposition \ref{prop:enrichedadjun}]
 On the one hand, for any cocommutative coalgebra $\EEE$, we have:
\begin{align*}
  \hom_{\uCocom}(\EEE , \{\Omega_\alpha \DDD, \AAA \})  & \simeq \hom_{\Palg} (\Omega_\alpha \DDD, [\EEE, \AAA])\\
  & \simeq \Tw_\alpha ( \DDD, [\EEE, \AAA]) \simeq  \Tw_\alpha ( \DDD \otimes \EEE , \AAA)\\&  \simeq \hom_{\Ccog} (\DDD \otimes \EEE , B_\alpha \AAA)\\& \simeq \hom_{\uCocom} (\EEE , \{ \DDD , B_\alpha \AAA \})\ .
\end{align*}
On the other hand, there is a functorial isomorphism $\HOM (\Omega_\alpha \DDD , \AAA ) \simeq \HOM( \DDD, B_\alpha \AAA )$ because the bar-cobar adjunction still works when we work with $\Omega_n$ as base ring instead of $\mbk$. 

\end{proof}

\section{Homotopy theory of algebras over an operad}

In this section, we recall a result of Hinich stating that for any dg operad $\PPP$, the category of $\PPP$-algebras admits a projective model structure whose weak equivalences are quasi-isomorphisms (see \cite{Hinich97} and \cite{BergerMoerdijk03}). Moreover, we show that the simplicial enrichment of the category of $\PPP$-algebras that we described above gives models for the mapping spaces. Finally, we show that the enrichment over cocommutative coalgebras introduced in Section \ref{section:enrich} encodes deformation of morphisms of $\PPP$-algebras.

\subsection{Model structure on algebras over an operad} 

 We recall here results about model structures on the category of algebras over an operad. 

\begin{defin}[Right induced model structures] Consider the following adjunction.
 $$
\xymatrix{\catC \ar@<1ex>[r]^(0.5){L} & \catD \ar@<1ex>[l]^(0.5){R}}
$$
 Suppose that $\catC$ admits a cofibrantly generated model structure. We say that $\catD$ admits a model structure \textit{right induced} by the adjunction $L \dashv R$ if it admits a model structure whose weak equivalences (resp. fibrations) are the morphisms $f$ such that $R(f)$ is a weak equivalence (resp. a fibration) and whose generating cofibrations (resp. generating acyclic cofibrations) are the images under $L$ of the generating cofibrations (resp. generating acyclic cofibrations) of $\catC$.
 \end{defin}

\begin{defin}[Admissible operad]\leavevmode
 An operad $\PPP$ is said to be \textit{admissible} if the category of $\PPP$-algebras admits a projective model structure, that is a model structure right induced by the adjunction 
 $$
\xymatrix{\dgMod \ar@<1ex>[r]^(0.45){\PPP \circ -} & \Palg \ar@<1ex>[l]}
$$
whose right adjoint is the forgetful functor.
\end{defin}

\begin{thm}\cite{Hinich97}
 Any nonsymmetric operad is admissible. When the characteristic of the field $\mbk$ is zero, then any operad is admissible.
\end{thm}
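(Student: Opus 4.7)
The plan is to invoke the standard transfer theorem for cofibrantly generated model structures (due to Quillen, Crans, or as presented in Hirschhorn) along the free-forgetful adjunction
$$
\xymatrix{\dgMod \ar@<1ex>[r]^(0.45){\PPP \circ -} & \Palg \ar@<1ex>[l]}.
$$
Recall that the projective model structure on $\dgMod$ is cofibrantly generated with generating cofibrations $I = \{S^{n-1} \hookrightarrow D^n\}_{n \in \mbn}$ and generating acyclic cofibrations $J = \{0 \hookrightarrow D^n\}_{n \in \mbn}$. Two hypotheses must be verified: (a) the domains of $\PPP \circ I$ and $\PPP \circ J$ are small in $\Palg$; (b) every relative $\PPP \circ J$-cell complex is a quasi-isomorphism of underlying chain complexes.

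For (a), observe that the Schur functor $\PPP \circ -$ is assembled from direct sums, tensor products, and coinvariants under finite groups, all of which commute with filtered colimits; hence the forgetful functor $\Palg \to \dgMod$ preserves filtered colimits. Together with the presentability of $\Palg$ (Theorem \ref{thm:algpresentable}), this gives smallness for free.

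The heart of the proof is (b). It suffices to show that for every $\PPP$-algebra $A$ and every generating acyclic cofibration $0 \hookrightarrow D^n$, the pushout map $A \to A \vee \PPP(D^n)$ in $\Palg$ is a quasi-isomorphism; transfinite compositions then follow from the fact that quasi-isomorphisms are closed under filtered colimits of chain complexes. The strategy is to equip $A \vee \PPP(D^n)$ with an exhaustive filtration $F_\bullet$ indexed by the number of inputs drawn from the newly added copy of $D^n$, so that $F_0 = A$ and the $k$-th graded piece decomposes as a direct sum of terms of the form
$$
\PP(m+k) \otimes_{\mbs_m \times \mbs_k} \bigl(A^{\otimes m} \otimes (D^n)^{\otimes k}\bigr).
$$
Since $D^n$ is contractible, each tensor power $(D^n)^{\otimes k}$ is contractible; passing to $\mbs_k$-coinvariants in characteristic zero preserves this, so every graded piece in positive filtration degree is acyclic, and a straightforward spectral sequence or telescoping argument concludes.

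The main obstacle is precisely the last step: the passage from acyclicity of $(D^n)^{\otimes k}$ to acyclicity of its $\mbs_k$-coinvariants. In characteristic zero this is immediate from the semi-simplicity of $\mbk[\mbs_k]$ (Maschke's theorem), which makes the coinvariant functor exact; in positive characteristic it fails in general, and indeed admissibility can fail for symmetric operads there. In the nonsymmetric setting, the monoidal structure on $\mbn$-modules involves no symmetric group actions at all, the associated graded pieces become ordinary tensor products $\PP(m+k) \otimes A^{\otimes m} \otimes (D^n)^{\otimes k}$, and the ordinary K\"unneth formula yields the conclusion with no hypothesis on the characteristic of $\mbk$.
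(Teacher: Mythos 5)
The paper offers no proof of this statement—it is recalled from \cite{Hinich97}—and your argument is essentially the standard transfer-theorem proof given there (and in Berger--Moerdijk): transfer along the free--forgetful adjunction, reduce to showing that pushouts of $\PPP\circ(0\to D^n)$ are quasi-isomorphisms, and analyse the coproduct $\AAA\vee\PPP(D^n)$ by the number of inputs from $D^n$, with Maschke's theorem handling the $\mbs_k$-coinvariants in characteristic zero and the nonsymmetric case requiring nothing. One correction: for a general $\PPP$-algebra $\AAA$ the graded pieces of your filtration are \emph{not} direct sums of terms $\PP(m+k)\otimes_{\mbs_m\times\mbs_k}(\Aa^{\otimes m}\otimes(D^n)^{\otimes k})$; the algebra structure of $\AAA$ imposes relations, and the $k$-th piece is $\mathrm{U}_{\PPP}(\AAA)(k)\otimes_{\mbs_k}(D^n)^{\otimes k}$, where $\mathrm{U}_{\PPP}(\AAA)$ is the enveloping operad of $\AAA$ (a quotient of your expression). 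This does not damage the argument—what matters is only that each piece has the form $M_k\otimes_{\mbs_k}(D^n)^{\otimes k}$ for some dg $\mbs_k$-module $M_k$, and acyclicity of $(D^n)^{\otimes k}$ plus exactness of coinvariants in characteristic zero (or the absence of coinvariants in the nonsymmetric case) still yields the conclusion—but the identification of the associated graded is the technical heart of the proof and should be stated correctly.
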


\subsection{Mapping spaces}

The simplicial enrichments of the category of $\PPP$-algebras described above give us models for the mapping spaces.

\begin{prop}\label{prop:enrichhomotalg}
Suppose that the characteristic of the field $\mbk$ is zero. Let $\PPP$ be a dg operad. The assignment $\AAA,\AAA'\mapsto \HOM(\AAA,\AAA')$ (resp. $\AAA,\AAA'\mapsto \HOM(\AAA,\AAA')^{ns}$ in the nonsymmetric context) defines an homotopical enrichment of the category of $\PPP$-algebras over the category of simplicial sets. Moreover, for any cofibrant $\PPP$-algebra $\AAA$ and any fibrant $\PPP$-algebra $\AAA'$, the simplicial set $\HOM(\AAA,\AAA')$ (resp. $\HOM(\AAA,\AAA')^{ns}$ in the nonsymmetric context) is a model of the mapping space $Map(\AAA,\AAA')$.
\end{prop}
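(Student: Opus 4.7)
The plan is to reduce both claims to the Quillen properties of the polynomial de Rham functor $\Omega_\bullet$. By Proposition \ref{prop:sullivanfinitelim}, the enrichment is characterized by the natural bijection
$$\hom_{\sSet}(X, \HOM(\AAA,\AAA')) \simeq \hom_{\Palg}(\AAA, \Omega_X \otimes \AAA'),$$
valid for every finite simplicial set $X$; and by Proposition \ref{prop:sullivanalgadj}, the functor $\Omega_\bullet : \sSet \to \uCom-\mathsf{alg}^{op}$ is left Quillen, so for any (acyclic) monomorphism $i : K \hookrightarrow L$ between finite simplicial sets, $\Omega_i : \Omega_L \twoheadrightarrow \Omega_K$ is an (acyclic) surjection of commutative dg algebras.

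For the homotopical enrichment axiom, fix a cofibration $f : \AAA \to \AAA'$ and a fibration $g : \BBB \to \BBB'$ in $\Palg$. A double application of the adjunction above turns the requirement that the canonical map
$$m :\; \HOM(\AAA',\BBB) \longrightarrow \HOM(\AAA',\BBB') \times_{\HOM(\AAA,\BBB')} \HOM(\AAA,\BBB)$$
have the appropriate lifting property against every (acyclic) monomorphism $i : K \hookrightarrow L$ of finite simplicial sets into the requirement that $f$ lift against
$$G(i,g):\; \Omega_L \otimes \BBB \longrightarrow (\Omega_L \otimes \BBB') \times_{\Omega_K \otimes \BBB'} (\Omega_K \otimes \BBB).$$
A direct computation over the field $\mbk$ identifies the kernel of $G(i,g)$ with $\ker(\Omega_i) \otimes \ker(g)$, producing a short exact sequence of chain complexes whose rightmost term is the target of $G(i,g)$. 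Hence $G(i,g)$ is always surjective, i.e.\ a fibration of $\PPP$-algebras, and the associated long exact sequence, combined with exactness of tensor products over $\mbk$, makes it a quasi-isomorphism as soon as $i$ or $g$ is a weak equivalence. The required lift then exists by the defining lifting property of cofibrations in the projective model structure on $\Palg$.

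For the mapping space identification, note that fibrations in the projective model structure on $\Palg$ are degreewise surjections, so every $\PPP$-algebra is fibrant. It therefore suffices to show that $\Omega_\bullet \otimes \AAA'$ is a simplicial resolution of $\AAA'$, i.e.\ a Reedy fibrant simplicial frame whose structure maps to $\AAA'$ are weak equivalences. Reedy fibrancy of $\Omega_\bullet$ as a simplicial object in $\uCom-\mathsf{alg}$ follows from applying the left Quillen functor of Proposition \ref{prop:sullivanalgadj} to the Reedy cofibrant cosimplicial simplicial set $\Delta[\bullet]$; this property is preserved after tensoring with $\AAA'$ because $- \otimes \AAA'$ commutes with finite limits and preserves surjections over the field $\mbk$. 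Each map $\Omega_n \otimes \AAA' \to \AAA'$ induced by a vertex $\Omega_n \to \mbk$ is a quasi-isomorphism by the acyclicity of $\Omega_n$. Standard framing theory in model categories then identifies $\HOM(\AAA, \AAA') = \hom_{\Palg}(\AAA, \Omega_\bullet \otimes \AAA')$ with the derived mapping space $\Map(\AAA,\AAA')$ whenever $\AAA$ is cofibrant. The nonsymmetric case is entirely analogous, with $\Omega_\bullet$ replaced by the simplicial unital associative algebra $\Lambda_\bullet = DK^c(\Delta[\bullet])^*$ and the Sullivan Quillen adjunction replaced by the Dold--Kan one.

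The main obstacle is the identification of the kernel of the pushout-product map $G(i,g)$ with $\ker(\Omega_i) \otimes \ker(g)$; once this is in hand, exactness over the field $\mbk$ together with the Quillen property of $\Omega_\bullet$ (or $\Lambda_\bullet$) handles everything almost formally.
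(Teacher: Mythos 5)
Your proposal is correct and follows essentially the same route as the paper: transpose the lifting problem through the adjunction of Proposition \ref{prop:sullivanfinitelim}, use the Quillen property of $\Omega_\bullet$ from Proposition \ref{prop:sullivanalgadj} to see that the resulting pullback-product map is an (acyclic) fibration of $\PPP$-algebras, and identify $\Omega_\bullet \otimes \AAA'$ as a Reedy fibrant resolution of the constant simplicial object $\AAA'$ to get the mapping space. The only difference is that you spell out the kernel computation for $G(i,g)$ explicitly, where the paper simply invokes the fibration property of $\Omega_Y \to \Omega_X$; this is a harmless elaboration, not a different argument.
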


\begin{rmk}
 The characteristic zero assumption is not necessary in the nonsymmetric context.
\end{rmk}

\begin{proof}[Proof]
Let $f:\AAA \ra \AAA'$ and $g: \BBB \ra \BBB'$ be respectively a cofibration and a fibration of $\PPP$-algebras. Let $h:X \ra Y$ be a monomorphism of simplicial sets. We suppose that $X$ and $Y$ are colimits of finite diagrams made up of simplicies $\Delta[n]$. Consider a square as follows.
$$
\xymatrix{X \ar[r] \ar[d] & \HOM(\AAA',\BBB) \ar[d]\\ Y  \ar[r]& \HOM(\AAA',\BBB')\times_{\HOM(\AAA,\BBB')} \HOM(\AAA,\BBB)}
$$
By Proposition \ref{prop:sullivanfinitelim}, it induces the following square
$$
\xymatrix{\Aa \ar[r] \ar[d] & \Omega_Y \otimes \BBB \ar[d]\\ \Aa'  \ar[r]&  \Omega_Y \otimes \BBB' \times_{\Omega_X \otimes \BBB'}\Omega_X \otimes \BBB\ ,}
$$
which has a lifting whenever $f$, $g$ or $h$ is a weak equivalence; indeed, by Propostion \ref{prop:sullivanalgadj}, the map $\Omega_Y \to \Omega_X$ is a fibration and it is an acyclic fibration whenever $h$ is an acyclic cofibration. Besides, to prove that $\HOM(\AAA,\AAA')$ is a model of the mapping space $Map(\AAA,\AAA')$, it suffices to notice that $\{\Omega_n \otimes \AAA'\}_{n \in \mbn}$ is a Reedy fibrant resolution of the constant simplicial $\PPP$-algebra $\AAA'$. The result in the nonsymmetric context can be proven in a similar way.
\end{proof}

\subsection{Deformation theory of morphisms of algebras over an operad}
We know that the category of $\PPP$-algebras is enriched over the category $\uCocom$ of cocommutative coalgebras. In this subsection, we show that for any $\PPP$-algebras $\AAA$ and $\BBB$, the cocommutative coalgebra $\{\AAA,\BBB\}$ encodes the deformation theory of morphisms from $\AAA$ to $\BBB$. We suppose in this subsection that the field $\mbk$ is algebraically closed.\\

Any morphism of $\PPP$-algebras $f: \AAA \to \BBB$ defines a defomation problem $\Def(f)$.
\begin{align*}
 \Artinalg &\to \sSet\\
 R &\mapsto \Map (\AAA, \BBB \otimes R)\times^h_{\Map (\AAA, \BBB )} \{f\}\simeq \HOM(\AAA,\BBB \otimes R) \times_{\HOM(\AAA,\BBB)} \{f\} \ .
\end{align*}

The following theorem is a direct consequence of a result by Chuang, Lazarev and Mannan (\cite[Theorem 2.9]{ChuangLazarevMannan14}). It is proven in Appendix.

\begin{thm}\label{thm:decomp}
Suppose that the base field $\mbk$ is algebraically closed and that its characteristic is zero. Let $\CCC= (\CC,\Delta,\epsilon)$ be a dg cocommutative coalgebra over an algebraically closed field of characteristic zero and let $A$ be its set of graded atoms. There exists a unique decomposition $\CCC \simeq \bigoplus_{a \in A} \CCC_a$ where $\CCC_a$ is a sub-coalgebra of $\CCC$ which contains $a$ and which belongs to the category $\uNilCocom$. Moreover, a morphism of dg cocommutative coalgebras $f: \bigoplus_{a \in A} \CCC_a \to \bigoplus_{b \in B} \DDD_b$ is the data of a function $\phi: A \to B$ and of a morphism $f_a: \CCC_a \to \DDD_{\phi(a)}$ for any $a \in A$. 
\end{thm}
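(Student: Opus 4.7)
My plan is to reduce to the underlying graded decomposition and then check that the differential and morphisms respect it. First I would forget the differential and apply the Chuang--Lazarev--Mannan decomposition \cite[Theorem 2.9]{ChuangLazarevMannan14} to the underlying graded cocommutative coalgebra $\CC^{grad}$: over an algebraically closed field, every simple cocommutative sub-coalgebra is one-dimensional, spanned by a graded atom, and the classical (Sweedler-type) decomposition theorem yields a canonical direct sum $\CC^{grad} = \bigoplus_{a \in A} \CC_a$, where each $\CC_a$ is the unique maximal graded sub-coalgebra containing $a$ and conilpotent with $a$ as its only atom.

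The key step will be to show that the differential $d$ preserves this splitting, hence endows each $\CC_a$ with a dg sub-coalgebra structure lying in $\uNilCocom$. For this, given $x \in \CC_a$ I would decompose $dx = \sum_{b \in A} y_b$ with $y_b \in \CC_b$ and compare two expressions for $\Delta(dx)$. On the one hand, since $\Delta(\CC_a) \subset \CC_a \otimes \CC_a$ and $d$ is a coderivation,
$$
\Delta(dx) = (d \otimes 1 + 1 \otimes d)\Delta(x) \in \CC_a \otimes \CC + \CC \otimes \CC_a.
$$
On the other hand $\Delta(dx) = \sum_{b} \Delta(y_b) \in \bigoplus_{b \in A} \CC_b \otimes \CC_b$. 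Under the decomposition $\CC \otimes \CC = \bigoplus_{b,c} \CC_b \otimes \CC_c$, the two loci only overlap in $\CC_a \otimes \CC_a$, forcing $\Delta(y_b) = 0$ for $b \neq a$; applying the counit then gives $y_b = 0$ for $b \neq a$, so $dx \in \CC_a$ as required.

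Uniqueness and the morphism classification should then follow formally. For uniqueness, any dg sub-coalgebra $\CCC' \subset \CCC$ belonging to $\uNilCocom$ with atom $a$ is in particular conilpotent with respect to $a$ as a graded object, hence by the maximality characterisation of $\CC_a$ is contained in $\CCC_a$, so two such decompositions coincide. For a morphism $f: \bigoplus_a \CCC_a \to \bigoplus_b \DDD_b$, each atom $a$ is sent to an element satisfying $\Delta f(a) = f(a) \otimes f(a)$ and $\epsilon(f(a)) = \epsilon(a) = 1$, hence to a unique atom $\phi(a) \in B$; writing $\bar\Delta_a x := \Delta x - a \otimes x - x \otimes a$, the identity $\bar\Delta_{\phi(a)}^n f(x) = f^{\otimes (n+1)} \bar\Delta_a^n x$ implies $f(\CC_a) \subset \DD_{\phi(a)}$, so $f$ is the direct sum of its restrictions $f_a := f|_{\CC_a}: \CCC_a \to \DDD_{\phi(a)}$. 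Conversely, any such data $(\phi, \{f_a\})$ assembles into a morphism of dg cocommutative coalgebras.

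The nontrivial input is the graded decomposition itself: the splitting indexed by atoms genuinely requires algebraic closure of $\mbk$, since otherwise simple cocommutative sub-coalgebras need not be one-dimensional and atoms do not suffice to index the irreducible components. Once this is granted from \cite{ChuangLazarevMannan14}, the remainder of the proof is the short linear-algebraic verification that $d$ commutes with the projections onto the summands, together with the bookkeeping argument for morphisms outlined above.
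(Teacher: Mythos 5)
Your proof is correct, but it takes a genuinely different route from the paper's. The paper works entirely on the dual side: it sets up an antiequivalence between dg cocommutative coalgebras and dg pseudo-compact algebras, invokes \cite[Theorem 2.9]{ChuangLazarevMannan14} to split any such dg algebra into a product of local factors, and then spends the appendix establishing the dictionary local pseudo-compact algebra $\leftrightarrow$ irreducible coalgebra $\leftrightarrow$ conilpotent coalgebra $\leftrightarrow$ coalgebra with a single atom; the only thing verified by hand is that the index set of the product is the set of graded atoms. You instead stay on the coalgebra side: you take the classical Sweedler decomposition of the underlying \emph{graded} coalgebra into pointed irreducible components (using algebraic closure to make simple subcoalgebras one-dimensional) and then prove directly that the differential preserves the splitting, via the coderivation identity $\Delta(dx)=(d\otimes 1+1\otimes d)\Delta(x)$, the observation that $\bigl(\CC\otimes\CC_a+\CC_a\otimes\CC\bigr)\cap\bigoplus_b\CC_b\otimes\CC_b=\CC_a\otimes\CC_a$, and the counit. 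That step is clean and correct, and your treatment of morphisms (atoms go to atoms, conilpotency is preserved, maximality of the components forces $f(\CC_a)\subset\DD_{\phi(a)}$) matches the standard argument. What your approach buys is that the dg content is made explicit and elementary rather than absorbed into the citation; what the paper's approach buys is that the differential comes for free from the dg version of the Chuang--Lazarev--Mannan theorem. One small discrepancy: you attribute all the hypotheses to algebraic closure, whereas the paper notes that the characteristic-zero assumption is what is needed for \cite[Theorem 2.9]{ChuangLazarevMannan14}; since your graded decomposition is the classical one, your argument arguably uses characteristic zero only through the ambient conventions, not essentially.
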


We know from Proposition \ref{prop:atom} that a morphism $f$ of $\PPP$-algebras from $\AAA$ to $\BBB$ is a dg atom of the cocommutative coalgebra $\{\AAA,\BBB\}$. Applying Theorem \ref{thm:decomp} to the cocommutative coalgebra $\{\AAA,\BBB\}$, we obtain the conilpotent cocommutative coalgebra $\{\AAA,\BBB\}_f$. This is in particular an Hinich coalgebra which encodes a deformation problem $R \mapsto \Map (R^*,\{\AAA,\BBB\}_f)$. We show in the next proposition that this deformation problem is $\Def(f)$.

\begin{thm}\label{prop:deformation}
 Suppose that $\AAA$ is a cofibrant $\PPP$-algebra. Then, the deformation problem induced by the conilpotent cocommutative coalgebra $\{\AAA, \BBB\}_f$ is $Def(f)$.
\end{thm}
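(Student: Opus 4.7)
By the criterion recalled in the remark preceding the theorem, it is enough to build, naturally in $R\in\Artinalg$, a morphism of simplicial sets $\eta_R\colon\Map_{\Hinich}(R^*,\{\AAA,\BBB\}_f)\to\Def(f)(R)$ and to verify that each $\eta_R$ is a weak equivalence. I will produce $\eta_R$ as a levelwise bijection, which is then automatically a weak equivalence.

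\textbf{Step 1 ($0$-simplices).} Apply the cocommutative coalgebra enrichment of $\PPP$-algebras (Theorem~\ref{thm:enrich}):
\[
\hom_{\uCocom}(R^*,\{\AAA,\BBB\})\;\cong\;\hom_{\Palg}(\AAA,\langle R^*,\BBB\rangle)\;=\;\hom_{\Palg}(\AAA,[R^*,\BBB])\;\cong\;\hom_{\Palg}(\AAA,R\otimes\BBB),
\]
where the last identification uses the finiteness of $R$ together with the explicit description of the $\PPP$-algebra structure on $[R^*,\BBB]$ from Section~\ref{section:enrich} (one checks directly that it matches the component-wise structure on $R\otimes\BBB$). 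By Theorem~\ref{thm:decomp} and the conilpotency of $R^*$, such a morphism factors through the sub-coalgebra $\{\AAA,\BBB\}_f$ exactly when it sends the unique dg atom of $R^*$ (dual to the augmentation of $R$) to the atom $f$ of $\{\AAA,\BBB\}$; along the chain above, this translates into the condition that the composite $\AAA\to R\otimes\BBB\twoheadrightarrow\BBB$ equals $f$, i.e.\ into an element of $\Def(f)(R)_0$.

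\textbf{Step 2 (higher simplices).} Replacing $\BBB$ by the $\PPP$-algebra $\Omega_n\otimes\BBB$ (a genuine $\PPP$-algebra since $\Omega_n$ is a cdga) and repeating Step~1 produces a natural bijection
\[
\Def(f)(R)_n\;=\;\hom_{\Palg}(\AAA,\Omega_n\otimes R\otimes\BBB)_{f}\;\cong\;\hom_{\uCocom}(R^*,\{\AAA,\Omega_n\otimes\BBB\})_{f_n},
\]
where $f_n$ denotes the $n$-simplex of $\HOM(\AAA,\BBB)$ determined by $f$. The functoriality of $\Omega_\bullet$ in $[n]$ makes these bijections compatible with faces and degeneracies, so they assemble into a simplicial map $\eta_R$. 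Matching this description against the simplicial structure on $\Map_{\Hinich}(R^*,\{\AAA,\BBB\}_f)$ --- obtained either from the Quillen equivalence $\mathcal L\dashv\mathcal C$ of \cite{Hinich01} and the Maurer--Cartan simplicial set over $\Omega_\bullet$, or from tensoring $\{\AAA,\BBB\}_f$ with $\Omega_n$ through its cocommutative coalgebra enrichment --- identifies the two and yields $\eta_R$.

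\textbf{Step 3 and main obstacle.} Since $\AAA$ is cofibrant, Proposition~\ref{prop:enrichhomotalg} ensures that $\HOM(\AAA,\BBB\otimes R)\twoheadrightarrow\HOM(\AAA,\BBB)$ is a Kan fibration, so $\Def(f)(R)$ is a Kan complex modelling the homotopy fibre of $\HOM(\AAA,\BBB\otimes R)\to\HOM(\AAA,\BBB)$ at $f$; together with the levelwise bijection of Step~2 this forces $\eta_R$ to be a weak equivalence. The real work is concentrated in Step~2: reconciling the Sullivan simplicial enrichment used on the algebra side with Hinich's (Lie-theoretic) simplicial structure on the coalgebra side. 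This reconciliation depends crucially on the characteristic-zero hypothesis --- needed both for Theorem~\ref{thm:decomp} and for the Hinich/Quillen equivalence $\mathcal L\dashv\mathcal C$ --- and on the compatibility of the enrichment $\{-,-\}$ with tensoring by unital cdgas, which reduces, via the decomposition theorem, to the essentially trivial $0$-simplex check of Step~1.
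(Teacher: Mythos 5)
Your overall strategy coincides with the paper's: identify the $n$-simplices of the deformation problem attached to $\{\AAA,\BBB\}_f$ with $\hom_{\Hinich}(R^*,\{\AAA,\Omega_n\otimes\BBB\}_f)$, unwind this via the tensor--cotensor--enrichment adjunction and the finite-dimensionality of $R$ into $\hom_{\Palg}(\AAA,R\otimes\Omega_n\otimes\BBB)\times_{\hom_{\Palg}(\AAA,\Omega_n\otimes\BBB)}\{f\}$, and conclude using the fact that this strict fibre models the homotopy fibre because the relevant map of Kan complexes is a fibration. Steps 1 and 3 are essentially the paper's computation and its closing remark.

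However, there is a genuine gap exactly where you locate ``the real work'' and then dismiss it as reducing to ``the essentially trivial $0$-simplex check.'' The deformation problem attached to the Hinich coalgebra $\{\AAA,\BBB\}_f$ is $R\mapsto\Map_{\Hinich}(R^*,\{\AAA,\BBB\}_f)$, a mapping space in Hinich's model structure. To compute it as the \emph{levelwise} simplicial set $n\mapsto\hom_{\Hinich}(R^*,\{\AAA,\Omega_n\otimes\BBB\}_f)$ --- which is what your claimed ``levelwise bijection'' $\eta_R$ presupposes --- you must prove that the simplicial Hinich coalgebra $\{\AAA,\Omega_\bullet\otimes\BBB\}_f$ is a Reedy fibrant replacement of the constant simplicial object $\{\AAA,\BBB\}_f$. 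This is the content of the paper's Lemma~\ref{lemma:cocomreedy}, it is not a formality, and it is precisely where the cofibrancy of $\AAA$ enters: one transposes a lifting problem of Hinich coalgebras against $\{\AAA,\Omega_Y\otimes\BBB\}_f\to\{\AAA,\Omega_X\otimes\BBB\}_f$, via the coaction $[-,-]$, into a lifting problem of $\PPP$-algebras of the form $\emptyset\to\AAA$ against $[\CCC_2,\Omega_Y\otimes\BBB]\to[\CCC_1,\Omega_Y\otimes\BBB]\times_{[\CCC_1,\Omega_X\otimes\BBB]}[\CCC_2,\Omega_X\otimes\BBB]$, which is solvable because $\AAA$ is cofibrant and weak equivalences of Hinich coalgebras are in particular quasi-isomorphisms. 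Your proposal never uses the cofibrancy of $\AAA$ for this purpose (only to make $\Def(f)(R)$ a strict fibre in Step~3), so as written the identification of $\Map_{\Hinich}(R^*,\{\AAA,\BBB\}_f)$ with your simplicial set is unjustified. Supplying this Reedy fibrancy argument would complete the proof and make it coincide with the paper's.
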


\begin{lemma}\label{lemma:cocomreedy}
If $\AAA$ is a cofibrant $\PPP$-algebra, the simplicial Hinich coalgebra
$$
\{\AAA,\Omega_n \otimes \BBB \}_f
$$ 
is a Reedy fibrant replacement of the constant simplicial Hinich coalgebra $\{\AAA,\BBB\}_f$.
\end{lemma}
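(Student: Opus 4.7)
The plan is to verify two separate properties: (i) that the levelwise map from the constant simplicial Hinich coalgebra $\{\AAA,\BBB\}_f$ to $\{\AAA,\Omega_\bullet\otimes\BBB\}_f$, induced by the units $\BBB \to \Omega_n\otimes\BBB$, is a weak equivalence at every level; and (ii) that $\{\AAA,\Omega_\bullet\otimes\BBB\}_f$ is Reedy fibrant in the Hinich model structure. The key external input is the classical fact (Bousfield--Gugenheim) that the Sullivan simplicial commutative dg algebra $\Omega_\bullet$ is Reedy fibrant in the projective model structure on commutative dg algebras and that each unit $\mbk \to \Omega_n$ is a quasi-isomorphism. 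Tensoring with $\BBB$ preserves both properties, so $\Omega_\bullet\otimes\BBB$ is a Reedy fibrant simplicial $\PPP$-algebra with each matching map $\Omega_n\otimes\BBB \twoheadrightarrow M_n(\Omega_\bullet\otimes\BBB)$ a surjection of (automatically fibrant) $\PPP$-algebras, and each $\BBB \xrightarrow{\sim} \Omega_n\otimes\BBB$ a weak equivalence between fibrant objects.

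For the levelwise weak equivalence (i), I would argue via the induced map of deformation problems. For any $R \in \Artinalg$, using the enriched adjunctions of Theorem~\ref{thm:enrich} together with the simplicial enrichment and the relation $\HOM(\AAA, R\otimes\BBB') \simeq N(\{\AAA, R\otimes\BBB'\})$, one obtains a natural identification
$$
\Map_{\Hinich}(R^*, \{\AAA, \BBB'\}_f) \simeq \HOM(\AAA, R\otimes\BBB') \times^h_{\HOM(\AAA,\BBB')} \{f\}\ .
$$
Applying this to $\BBB' = \BBB$ and $\BBB' = \Omega_n\otimes\BBB$, and invoking Proposition~\ref{prop:enrichhomotalg} with the cofibrancy of $\AAA$ and the quasi-isomorphism $R\otimes\BBB \xrightarrow{\sim} R\otimes\Omega_n\otimes\BBB$ between fibrant $\PPP$-algebras, one obtains a weak equivalence of mapping spaces. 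This shows $\{\AAA,\BBB\}_f \to \{\AAA,\Omega_n\otimes\BBB\}_f$ induces an equivalence of deformation problems and is therefore a weak equivalence in $\Hinich$.

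For Reedy fibrancy (ii), I exploit that $\{\AAA,-\}$ is right adjoint to $\AAA\triangleleft -$ (Theorem~\ref{thm:enrich}) and so preserves limits; since the conilpotent decomposition of Theorem~\ref{thm:decomp} is compatible with the structural atoms emanating from $f$, the matching object at level $n$ identifies with $\{\AAA, M_n(\Omega_\bullet\otimes\BBB)\}_f$, and the matching map becomes $\{\AAA, \Omega_n\otimes\BBB\}_f \to \{\AAA, M_n(\Omega_\bullet\otimes\BBB)\}_f$, induced by the $\PPP$-algebra fibration $\Omega_n\otimes\BBB \twoheadrightarrow M_n(\Omega_\bullet\otimes\BBB)$. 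To establish that this is a fibration in $\Hinich$, I check the right lifting property against an arbitrary acyclic monomorphism $\CCC \hookrightarrow \DDD$: transposing through the adjunction gives a lifting problem in $\PPP$-algebras against the above fibration, which is solvable provided $\AAA\triangleleft\CCC \to \AAA\triangleleft\DDD$ is an acyclic cofibration of $\PPP$-algebras.

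The principal obstacle is precisely this last claim: that for cofibrant $\AAA$, the functor $\AAA\triangleleft -$ sends acyclic cofibrations of Hinich coalgebras to acyclic cofibrations of $\PPP$-algebras. This is a Quillen-adjunction-type statement that does not follow formally from the enriched adjunction, since Hinich weak equivalences are strictly finer than quasi-isomorphisms. A reasonable route is to pass through the Quillen equivalence $\mathcal{L} \dashv \mathcal{C}$ between Hinich coalgebras and Lie algebras, where the weak equivalences are just quasi-isomorphisms, combined with a cell-by-cell analysis using the cofibrant structure of $\AAA$ in the projective model structure on $\PPP$-algebras.
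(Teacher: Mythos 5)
Your reduction of Reedy fibrancy to a lifting problem in $\PPP$-algebras goes through the wrong leg of the two-variable adjunction, and this is where the proof breaks. Transposing via $\AAA\triangleleft -$ leaves you needing that $\AAA\triangleleft\CCC\to\AAA\triangleleft\DDD$ is an \emph{acyclic cofibration} of $\PPP$-algebras for every acyclic monomorphism $\CCC\hookrightarrow\DDD$ of Hinich coalgebras --- a statement you correctly flag as the principal obstacle, which you do not prove, and which is nowhere established in the paper (the suggested detour through $\mathcal L\dashv\mathcal C$ is only a plan, not an argument). The paper avoids this entirely by transposing through the cotensor instead: a map $\CCC_i\to\{\AAA,\Omega_Y\otimes\BBB\}_f$ is the same as a map of $\PPP$-algebras $\AAA\to[\CCC_i,\Omega_Y\otimes\BBB]$, so the lifting problem for $h:\CCC_1\to\CCC_2$ against the matching map becomes a lifting problem for $\emptyset\to\AAA$ against
$$
[\CCC_2,\Omega_Y\otimes\BBB]\ \longrightarrow\ [\CCC_1,\Omega_Y\otimes\BBB]\times_{[\CCC_1,\Omega_X\otimes\BBB]}[\CCC_2,\Omega_X\otimes\BBB]\ .
$$
This map is a surjection (hence a fibration of $\PPP$-algebras) because $h$ is injective and $\Omega_Y\to\Omega_X$ is surjective, and it is a quasi-isomorphism whenever $h$ or $X\to Y$ is acyclic, using only that weak equivalences of Hinich coalgebras are quasi-isomorphisms and that $[-,-]$ is exact over a field. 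Cofibrancy of $\AAA$ then produces the lift. This is the entire content of the paper's proof; no left-Quillen property of $\AAA\triangleleft-$ in the coalgebra variable is ever needed.

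Your part (i) is also problematic as stated: the identification of $\Map_{\Hinich}(R^*,\{\AAA,\BBB'\}_f)$ with the homotopy fiber of mapping spaces is essentially Theorem \ref{prop:deformation}, which is proved \emph{using} the present lemma, so invoking it here is circular; and an equivalence of the associated deformation problems does not by itself yield a weak equivalence in the Hinich model structure without importing substantially more machinery. In the paper's argument the levelwise equivalence comes for free: once the face maps $\{\AAA,\Omega_{n+1}\otimes\BBB\}_f\to\{\AAA,\Omega_n\otimes\BBB\}_f$ are shown to be acyclic fibrations (by the same lifting argument with $g$ an acyclic cofibration), the two-out-of-three property applied to the degeneracies identifies every level with level $0$, which is $\{\AAA,\BBB\}_f$.
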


\begin{proof}[Proof]
Let $g:X \to Y$ be a monomorphism of simplicial sets which are finite colimits of standard simplicies $\Delta[n]$. Let $h: \CCC_1 \to \CCC_2$ be a monomorphism of Hinich coalgebras. Consider the following square:
$$
 \xymatrix{\CCC_1 \ar[d] \ar[r]  &  \{\AAA,  \Omega_{Y} \otimes \BBB\}_f \ar[d]^{\{\AAA, \Omega[g] \otimes \BBB\}}\\
 \CCC_2 \ar[r] & \{\AAA,  \Omega_{X} \otimes \BBB \}_f\ .}
 $$
Any morphism of cocommutative coalgebras from a conilpotent cocommutative coalgebra $\CCC$ to $\{\AAA,\BBB\}$ such that the atom of $\CCC$ targets the atom $f$ of $\{\AAA,\BBB\}$ is a morphism from $\CCC$ to $\{\AAA,\BBB\}_f$. So, lifting the previous square amounts to lift the following square of $\PPP$-algebras.
 $$
 \xymatrix{\emptyset \ar[r] \ar[d] & [\CCC_2,  \Omega_{Y} \otimes \BBB] \ar[d]\\ \AAA \ar[r] & [\CCC_1,   \Omega_{Y} \otimes \BBB] \times_{[\CCC_1,  \Omega_{X} \otimes \BBB]}  [\CCC_2,  \Omega_{Y} \otimes \BBB]}
 $$
This is possible whenever, $g$ or $h$ is a weak equivalence, since any weak equivalence of Hinich coalgebras is in particular a quasi-isomorphism. So, in particular, any face map $\{\AAA, \Omega_{n+1} \otimes \BBB\} \to\{\AAA, \Omega_{n} \otimes \BBB\}$ is an acyclic fibration of Hinich coalgebras and, for any integer $n\in \mbn$, the morphism $\{\AAA,  \Omega_{n} \otimes \BBB\} \to \{\AAA, \Omega_{\partial\Delta[n]} \otimes \BBB\}$ is a fibration.  
\end{proof}

\begin{proof}[Proof of Theorem \ref{prop:deformation}]
By Lemma \ref{lemma:cocomreedy}, the deformation problem induced by the Hinich coalgebra $\{\AAA,\BBB\}_f$ is equivalent to the following deformation problem:
$$
R \in \Artinalg \mapsto (\hom_{\Hinich}(R^*, \{\AAA, \Omega_n \otimes \BBB\}_f)_{n \in \mbn}\ .
$$ 
We have:
\begin{align*}
\hom_{\Hinich} (R^*,\{\AAA,  \Omega_n \otimes \BBB\}_f) &\simeq \hom_{\uCocom} (R^*,\{\AAA,  \Omega_n \otimes \BBB\}) \times _{\hom_{\uCocom}(\mbk,\{\AAA, \Omega_n \otimes \BBB\})} \{f\}\\
&\simeq \hom_{\Palg} (\AAA \triangleleft R^*,  \Omega_n \otimes \BBB) \times _{\hom_{\Palg}(\AAA, \Omega_n \otimes \BBB)} \{f\}\\
&\simeq \hom_{\Palg} (\AAA , R \otimes \Omega_n \otimes \BBB) \times _{\hom_{\Palg}(\AAA, \Omega_n \otimes \BBB)} \{f\}\ .
\end{align*} 
Since the simplicial sets $(\hom_{\Palg} (\AAA,  R \otimes  \Omega_n \otimes \BBB))_{n \in \mbn}$ and $(\hom_{\Palg}(\AAA, \Omega_n \otimes \BBB))_{n \in \mbn}$ are Kan complexes and models of respectively $\Map (\AAA,R \otimes\BBB )$ and $\Map(\AAA,\BBB)$ and since the map between them is a fibration, then the simplicial set
$$
(\hom_{\Palg} (\AAA ,  R \otimes \Omega_n \otimes \BBB) \times _{\hom_{\Palg}(\AAA, \Omega_n \otimes \BBB)} \{f\})_{n \in\mbn}
$$
is a model of the homotopy pullback $\Map (\AAA, R \otimes \BBB )\times^h_{\Map (\AAA, \BBB )} \{f\}$.
\end{proof}

%%%%%%%%%%%%%%%%%%%%%%%%%%%%%%%

\section{Model structures on coalgebras over a cooperad}

In this section, we show that, for any operadic twisting morphism $\alpha:\CCC \to \PPP$, the  projective model structure on the category of $\PPP$-algebras can be transferred through the cobar construction functor $\Omega_\alpha$ to the category of $\CCC$-coalgebras. This result is in the vein of similar results by Hinich \cite{Hinich01}, Lefevre-Hasegawa \cite{LefevreHasegawa03}, Vallette \cite{Vallette14} and Positselski \cite{Positselski11}. However, we use a new method for the proof that uses the presentability of the category of algebras over an operad and of the category of coalgebras over a curved conilpotent cooperad; specifically, we use a theorem proved by Bayeh, Hess, Karpova, Kedziorek, Riehl and Shipley in \cite{BHKKRS14} and \cite{HKRS15}. 

\subsection{Model structure induced by a twisting morphism}

\begin{defin}[Left induced model structures]
 Consider the following adjunction.
  $$
\xymatrix{\catC \ar@<1ex>[r]^(0.5){L} & \catD \ar@<1ex>[l]^(0.5){R}}
$$
Suppose that $\catD$ admits a model structure. We say that $\catC$ admits a model structure left induced by the adjunction $L \dashv R$ if it admits a model structure whose weak equivalences (resp. cofibrations) are the morphisms $f$ such that $L(f)$ is a weak equivalence (resp. a cofibration).
\end{defin}

Here is the main theorem of the present article.

\begin{thm}\leavevmode \label{thmprincipal}
Let $\PPP$ be a dg operad, let $\CCC$ be a curved conilpotent cooperad and let $\alpha$ be an operadic twisting morphism between them. Suppose that the characteristic of the base field $\mbk$ is zero. We know that the category of $\PPP$-algebras admits a projective model structure. Then, the category of $\CCC$-coalgebras admits a model structure left induced by the adjunction $\Omega_\alpha \dashv B_\alpha$. We call it the $\alpha$-model structure. In the nonsymmetric context, we can drop the assumption that the characteristic of the field $\mbk$ is zero.
\end{thm}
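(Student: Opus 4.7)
The approach is to invoke the transfer theorem of Bayeh--Hess--Karpova--Kedziorek--Riehl--Shipley \cite{BHKKRS14}, which provides conditions under which a cofibrantly generated model structure on one side of an adjunction between locally presentable categories can be transferred by left induction to the other side. Its hypotheses require first that both $\Palg$ and $\Ccog$ be presentable, which are precisely Theorems \ref{thm:algpresentable} and \ref{thm:cogcompact}, and that the projective model structure on $\Palg$ be accessible (cofibrantly generated), which follows from Hinich's theorem under the stated characteristic hypothesis (or unconditionally in the nonsymmetric setting). The remaining content then reduces to an acyclicity check: that every morphism $f$ of $\Ccog$ with the left lifting property against all candidate fibrations is sent by $\Omega_\alpha$ to a weak equivalence, where candidate fibrations are those with the right lifting property against morphisms whose image under $\Omega_\alpha$ is an acyclic cofibration.

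The heart of the argument is this acyclicity check, which I would dispatch with a short adjunction argument. The key observation is that for any fibration $g: \AAA \to \AAA'$ in $\Palg$, the image $B_\alpha(g)$ is a candidate fibration in $\Ccog$. Indeed, if $i$ is a candidate acyclic cofibration in $\Ccog$, meaning that $\Omega_\alpha(i)$ is both a cofibration and a weak equivalence in $\Palg$, then the lifting axiom in $\Palg$ supplies a lift of $\Omega_\alpha(i)$ against $g$, and by the adjunction $\Omega_\alpha \dashv B_\alpha$ this transposes to a lift of $i$ against $B_\alpha(g)$. Now let $f$ have the left lifting property against every candidate fibration in $\Ccog$. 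Then in particular $f$ lifts against $B_\alpha(g)$ for every fibration $g$ in $\Palg$, so once more by adjunction $\Omega_\alpha(f)$ lifts against every fibration in $\Palg$. This means $\Omega_\alpha(f)$ is an acyclic cofibration in $\Palg$, and in particular a weak equivalence, which is exactly what was required.

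The main obstacle is really a bookkeeping one: ensuring that the hypotheses of the BHKKRS machinery are literally satisfied, and in particular that the candidate classes of cofibrations and acyclic cofibrations in $\Ccog$ are accessibly generated so that the small-object argument produces the needed factorizations. This is precisely where the presentability statement of Theorem \ref{thm:cogcompact} does its work, by guaranteeing that $\Ccog$ carries enough compact objects for such generating sets to exist. The characteristic zero assumption enters only through Hinich's admissibility theorem for operads in the symmetric case; since that theorem holds unconditionally for nonsymmetric operads, the final clause of the statement follows without any further input.
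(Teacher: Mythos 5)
There is a genuine gap, and it lies exactly at the step you call ``the heart of the argument.'' For a \emph{left}-induced model structure along $\Omega_\alpha \dashv B_\alpha$, the acyclicity condition demanded by the theorem of \cite{BHKKRS14,HKRS15} (Theorem \ref{thm:hessandcie} in the paper) is that every morphism of $\CCC$-coalgebras having the \emph{right} lifting property against all left-induced cofibrations is a left-induced weak equivalence; that is, the candidate acyclic fibrations must be shown to be weak equivalences. What you verify instead is the dual condition: that every morphism with the \emph{left} lifting property against the candidate fibrations is a weak equivalence. That is the acyclicity condition for a \emph{right}-induced structure, and it is precisely the one that admits a one-line adjunction proof, because a lifting problem of $i$ against $B_\alpha(g)$ transposes to one of $\Omega_\alpha(i)$ against $g$. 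The condition actually required does not transpose: a map $p$ in $\Ccog$ with the right lifting property against all cofibrations need not be of the form $B_\alpha(q)$, and $\Omega_\alpha$, being a left adjoint, gives no control over right lifting properties in $\Ccog$ in terms of $\Palg$. So your adjunction argument proves a true but irrelevant statement (essentially that $\Omega_\alpha \dashv B_\alpha$ would be a Quillen adjunction once the model structure exists), and the required acyclicity check is left entirely unaddressed.

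The paper closes this gap by the sufficient criterion built into Theorem \ref{thm:hessandcie}: it suffices to exhibit a cofibrant replacement functor and a cylinder for every cofibrant object. Proposition \ref{prop:cofib} identifies the left-induced cofibrations with the monomorphisms, so every object is cofibrant, and Proposition \ref{prop:proptech} constructs a cylinder for an arbitrary $\CCC$-coalgebra $\DDD$ by pulling back $B_\alpha$ of a cylinder of $\Omega_\alpha\DDD$ along $\DDD \to B_\alpha\Omega_\alpha\DDD$ and proving the resulting map to $\DDD$ is a filtered quasi-isomorphism via a contracting homotopy $D_h$ and the coradical filtration. This is where the substantive work lives, and it is also a second, independent entry point for the characteristic-zero hypothesis: the proof uses $\partial(D_h)=k\cdot\mathrm{Id}$ on the $k$-th graded piece, hence division by $k$, as well as the operadic K\"unneth formula for filtered quasi-isomorphisms. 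Your closing claim that characteristic zero ``enters only through Hinich's admissibility theorem'' is therefore inaccurate; in the nonsymmetric case both uses disappear because the cylinder is instead $\DDD \otimes DK^c(\Delta[1])$ (Proposition \ref{prop:proptechns}). The framing of the rest of your proposal (presentability of both categories, accessibility of the projective model structure) is correct and matches the paper.
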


To prove this theorem, we will use the following result.

\begin{thm}\cite{BHKKRS14}\cite{HKRS15} \label{thm:hessandcie}
Consider an adjunction
 $$
\xymatrix{\catC \ar@<1ex>[r]^(0.5){L} & \mathsf M \ar@<1ex>[l]^(0.5){R}}
$$
between presentable categories. Suppose that $\mathsf M$ is endowed with a cofibrantly generated model structure. Then, there exists a left induced model structure on $\mathsf C$ if the morphisms which have the right lifting property with respect to left induced cofibrations are left induced weak equivalences. In particular, this is true if the category $\mathsf C$ has a cofibrant replacement functor, and if any cofibrant object has a cylinder.
\end{thm}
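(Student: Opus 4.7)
The plan is to apply Theorem \ref{thm:hessandcie} to the adjunction $\Omega_\alpha \dashv B_\alpha$. Both categories are presentable by Theorems \ref{thm:algpresentable} and \ref{thm:cogcompact}, and under the stated characteristic hypothesis (vacuous in the nonsymmetric case) the projective model structure on $\Palg$ is cofibrantly generated by Hinich's theorem. It therefore suffices to exhibit (i) a cofibrant replacement functor on $\Ccog$ and (ii) a cylinder object for every cofibrant $\CCC$-coalgebra.

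For (i), I intend to show that every $\CCC$-coalgebra is already cofibrant in the left-induced structure, so that the identity functor will serve. Since $\Omega_\alpha$ is a left adjoint, $\Omega_\alpha(0)$ is the initial $\PPP$-algebra, so the map $0 \to \DDD$ is a cofibration precisely when $\Omega_\alpha \DDD$ is a cofibrant $\PPP$-algebra. I would filter the underlying graded $\PPP$-algebra $\mathcal{P} \circ \DD$ by $F_n := \mathcal{P} \circ F^{rad}_n \DD$. Combining Proposition \ref{prop:cogtech} with the formula for the twisted differential $x \mapsto d_\DD x - (\alpha \circ \id)\Delta_\DDD(x)$, and recalling that $\alpha$ vanishes on the counit piece $\II \subset \CCC$, a direct check shows that this differential sends $F_n$ into itself modulo $F_{n-1}$. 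Transfinite induction along this filtration then presents $\Omega_\alpha \DDD$ as an iterated pushout of generating cofibrations of the form $\mathcal{P} \circ S^{n-1} \to \mathcal{P} \circ D^n$, so it is cofibrant.

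For (ii), I would use the tensoring of $\Ccog$ over $\uCocom$ provided by Theorem \ref{thm:thmenrich2} (or over $\uCog$ in the nonsymmetric setting). Choose an interval coalgebra $\mathcal{I}$ equipped with two dg atoms $0,1$ and a weak equivalence $\mathcal{I} \to \mbk$ identifying them. In the nonsymmetric case one can take $\mathcal{I} := DK^c(\Delta[1])$, which requires no characteristic hypothesis; in the symmetric case, a cocommutative analogue is available when the characteristic of $\mbk$ is zero. Define $\mathrm{Cyl}(\DDD) := \DDD \otimes \mathcal{I}$, with endpoint inclusion $\DDD \sqcup \DDD \hookrightarrow \DDD \otimes \mathcal{I}$ induced by the two atoms and collapse $\DDD \otimes \mathcal{I} \to \DDD$ induced by $\mathcal{I} \to \mbk$.

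The principal obstacle will be verifying that this is indeed a cylinder in the left-induced model structure, i.e., that $\Omega_\alpha$ sends the endpoint inclusion to a cofibration and the collapse to a quasi-isomorphism of $\PPP$-algebras. The cofibration statement is handled by adapting the filtration argument of step (i), now applied to the coradical filtration of $\DDD \otimes \mathcal{I}$ regarded as a $\CCC$-coalgebra. The quasi-isomorphism would be proved by induction along that same filtration: on the associated graded pieces the map reduces, after analyzing the twisted differential, to one of the form $\mathcal{P} \circ (V \otimes \mathcal{I}) \to \mathcal{P} \circ V$, which is a quasi-isomorphism because $\mathcal{I} \to \mbk$ is one and the operadic Künneth formula applies (unconditionally for nonsymmetric operads, and in characteristic zero in the symmetric setting).
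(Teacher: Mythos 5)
Your proposal does not prove the stated theorem. Theorem \ref{thm:hessandcie} is a general, purely category-theoretic existence result for \emph{left-induced} model structures along a left adjoint $L: \catC \to \mathsf M$ between presentable categories; the paper imports it from \cite{BHKKRS14} and \cite{HKRS15} without proof. Your text instead \emph{invokes} this theorem and verifies its hypotheses for the particular adjunction $\Omega_\alpha \dashv B_\alpha$ --- that is, you have sketched a proof of Theorem \ref{thmprincipal}, not of Theorem \ref{thm:hessandcie}. A proof of the actual statement would have to (a) produce the two weak factorization systems (left-induced cofibrations against their right lifting class, and left-induced acyclic cofibrations against theirs) on an arbitrary locally presentable $\catC$, which is where the presentability hypothesis and Garner's algebraic small object argument / accessibility of lifting classes enter, and (b) establish the ``acyclicity condition,'' namely that maps with the right lifting property against all left-induced cofibrations are weak equivalences; the ``in particular'' clause is then the dual of Quillen's path-object argument, using the cofibrant replacement functor and cylinders on cofibrant objects to build the homotopies showing such a map is an equivalence. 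None of these steps appears in your proposal.

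Even read as a proof of Theorem \ref{thmprincipal}, step (ii) has a genuine gap in the symmetric case: you assert that a cocommutative interval coalgebra $\mathcal{I}$ with two atoms and a contraction to $\mbk$ ``is available when the characteristic of $\mbk$ is zero.'' No such strictly cocommutative, coassociative, counital interval exists (the naive symmetrization of $DK^c(\Delta[1])$ fails coassociativity; only an $E_\infty$-version exists). This is precisely why the paper's Proposition \ref{prop:proptech} constructs the cylinder of $\DDD$ differently, as the pullback $\EEE = B_\alpha(\AAA) \times_{B_\alpha \Omega_\alpha \DDD} \DDD$ for $\AAA$ a cylinder of $\Omega_\alpha\DDD$ in $\Palg$, and then proves $\EEE \to \DDD$ is a filtered quasi-isomorphism by exhibiting a contracting coderivation $D_h$ (using characteristic zero to invert the integers appearing in $\partial(D_h) = k\cdot\mathrm{Id}$ on the associated graded). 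Your nonsymmetric cylinder $\DDD \otimes DK^c(\Delta[1])$ and your cofibrancy argument via the coradical filtration do match the paper's Propositions \ref{prop:proptechns} and \ref{prop:cofib} in spirit, but they belong to the proof of the downstream theorem, not to the statement you were asked to prove.
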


From now on, a \textit{weak equivalence} (resp. \textit{cofibration}) of $\CCC$-coalgebras is a morphism whose image under $\Omega_\alpha$ is a weak equivalence (resp. cofibration). An \textit{acyclic cofibration} is a morphism which is both a cofibration and a weak equivalence. A \textit{fibration} is a morphism which has the right lifting property with respect to all acyclic cofibrations and an \textit{acyclic fibration} is a morphism which is both a fibration and a weak equivalence. Here is the proof.

\begin{proof}[Proof of Theorem \ref{thmprincipal}]
 Proposition \ref{prop:cofib} ensures us that the cofibrations of the category of $\CCC$-coalgebras are the monomorphisms. Hence, any object is cofibrant. Then, Propostion \ref{prop:proptech} provides us with a cylinder for any object if the characteristic of $\mbk$ is zero. In the nonsymmetric context, Proposition \ref{prop:proptechns} provides us with a cylinder. We conclude by Theorem \ref{thm:hessandcie}.
\end{proof}

\subsection{Cofibrations}

\begin{prop}\label{prop:cofib}
 The class of cofibrations of $\CCC$-coalgebras is the class of monomorphisms.
\end{prop}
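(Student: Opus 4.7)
The plan is to prove the two inclusions separately.

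\textbf{Cofibrations are monomorphisms.} The generating cofibrations of the projective model structure on $\PPP$-algebras are the free maps $\PPP \circ (S^{n-1} \hookrightarrow D^n)$, which are injective on underlying graded $\mbk$-modules. Since we work over a field, the class of monomorphisms of graded $\mbk$-modules is stable under pushouts, transfinite compositions, and retracts, so every cofibration of $\PPP$-algebras is in particular a monomorphism of graded $\mbk$-modules. Given a morphism $f : \DDD \to \EEE$ of $\CCC$-coalgebras, naturality of the operadic unit yields a commutative square
$$
\xymatrix{\DD \ar@{^{(}->}[r]^(0.45){i_\DD} \ar[d]_{f} & \PP \circ \DD \ar[d]^{\Omega_\alpha f}\\ \EE \ar@{^{(}->}[r]_(0.45){i_\EE} & \PP \circ \EE}
$$
whose horizontal arrows $x \mapsto 1 \otimes x$ are injective because $1 \in \PP(1)$ is nonzero. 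If $\Omega_\alpha f$ is a monomorphism, then the composite $i_\EE\,f = (\Omega_\alpha f)\, i_\DD$ is injective, whence so is $f$.

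\textbf{Monomorphisms are cofibrations.} For the converse, given an injection $f : \DDD \hookrightarrow \EEE$, the plan is to exhibit $\Omega_\alpha f$ as a transfinite composition of pushouts of generating cofibrations. I will filter $\EE$ by a sequence $\DD = F_0 \subset F_1 \subset F_2 \subset \cdots$ of sub-chain-complexes with $\bigcup F_n = \EE$, and let $G_n$ denote the sub-$\PPP$-algebra of $\Omega_\alpha \EEE$ generated by $F_n$, so that $G_n$ coincides with $\PP \circ F_n$ as a graded $\PPP$-algebra. The filtration will be chosen so that, for any graded complement $\VV_n$ of $F_{n-1}$ in $F_n$, the twisted derivation of $\Omega_\alpha \EEE$ sends $\VV_n$ into $G_{n-1} \oplus \VV_n$, with the $G_{n-1}$-component serving as the attaching map. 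Expanding $\VV_n$ as a direct sum of cells $\mbk\cdot e$ (together with $\mbk\cdot d_\EE e$ when appropriate) then identifies $G_{n-1} \hookrightarrow G_n$ as a pushout of a coproduct of generating cofibrations $\PPP \circ (S^{m-1} \hookrightarrow D^m)$, hence a cofibration of $\PPP$-algebras. Taking the transfinite composition yields $\Omega_\alpha f : \Omega_\alpha\DDD = G_0 \hookrightarrow \operatorname{colim}_n G_n = \Omega_\alpha \EEE$ as a transfinite composition of cofibrations.

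\textbf{Main obstacle.} The technical difficulty is the construction of the filtration $\{F_n\}$, which must simultaneously contain $\DD$ as an initial stage, exhaust $\EE$, be stable under $d_\EE$, and satisfy the coradical-type bound
$$
\Delta_\EEE(F_n) \subset \sum_{i_0+\cdots+i_k \leq n} F^{rad}_{i_0}\CC(k) \otimes_{\mbs_k}(F_{i_1} \otimes \cdots \otimes F_{i_k})\, ,
$$
so that the formula $d(v) = d_\EE(v) - (\alpha \circ \id)\Delta_\EEE(v)$ for the twisted differential of $\Omega_\alpha$ restricted to $v \in \VV_n$ yields an element of $G_{n-1}$ modulo $\VV_n$. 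The coradical filtration of $\EEE$ satisfies every requirement except stability under $d_\EE$ (as noted right after its definition in the excerpt), so the plan is to inductively enlarge it: set $F_0 := \DD + F^{rad}_0\EE$ and $F_{n+1} := F_n + F^{rad}_{n+1}\EE + d_\EE(F^{rad}_{n+1}\EE)$, then verify that this enlargement still satisfies the coradical-type bound of Proposition \ref{prop:cogtech}, using Lemma \ref{lem:cooptech} and the compatibility $\Delta_\EEE d_\EE = (d_\CC \circ \id + \id \circ' d_\EE)\Delta_\EEE$. Once such a filtration is in hand, the cell-attachment identification proceeds routinely.
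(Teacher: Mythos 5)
Your proof follows the same route as the paper's: the first half is essentially identical (cofibrations of $\PPP$-algebras are injective on underlying graded modules, and the naturality square for the unit $\DD \to \PP\circ\DD$ forces $f$ to be a monomorphism), and the second half uses the same transfinite filtration of $\EEE$ by the coradical filtration relative to $\DDD$, exhibiting each stage as a pushout of free maps — the paper merely packages the cell attachment as a single pushout of $\PP\circ(s^{-1}\FF)\hookrightarrow\PP\circ(s^{-1}\FF\oplus\FF)$ (Lemma \ref{lem:cofib}) instead of decomposing into individual generating cofibrations. The only substantive difference, your enlargement of the filtration to force $d_\EE$-stability, is unnecessary: since the coderivation of the conilpotent cooperad $\CCC$ kills $1$ and preserves $F^{rad}_n\CC$, the identity $\Delta_\EEE d_\EE = (d_\CC\circ\id + \id\circ' d_\EE)\Delta_\EEE$ already gives $d_\EE(F^{rad}_n\EE)\subset F^{rad}_n\EE$.
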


\begin{lemma}\label{lem:cofib}
Let $f : \DDD \ra \EEE$ be a monomorphism of $\CCC$-coalgebras such that $\Delta (\EE) \subset \CC \circ f(\DD)$. Then, $f$ is a cofibration.
\end{lemma}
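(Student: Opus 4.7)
The plan is to show that $\Omega_\alpha f$ is a projective cofibration of $\PPP$-algebras by exhibiting $\Omega_\alpha\EEE$ as a relative cell $\PPP$-algebra over $\Omega_\alpha\DDD$, that is, as a (possibly transfinite) composition of pushouts of the generating cofibrations $\PPP \circ S^{n-1} \to \PPP\circ D^n$ of the projective model structure on $\Palg$.

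First, since $f$ is a monomorphism, I would pick a graded $\mbk$-module complement $\VV$ of $f(\DD)$ in $\EE$, so that $\EE = f(\DD) \oplus \VV$ as graded modules. At the level of underlying graded $\PPP$-algebras, this yields a splitting
\[
\Omega_\alpha\EEE \;=\; \PP \circ \EE \;\cong\; \Omega_\alpha\DDD \,\sqcup_{\PPP}\, (\PPP \circ \VV),
\]
so that $\Omega_\alpha\EEE$ is freely generated over $\Omega_\alpha\DDD$ by $\VV$ as a graded $\PPP$-algebra.

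Next, I would analyse the twisted differential $D$ on a generator $v \in \VV$. Writing $d_\EE(v) = d'(v) + d''(v)$ with $d'(v) \in f(\DD)$ and $d''(v) \in \VV$, and using the hypothesis $\Delta_\EEE(\EE) \subset \CC \circ f(\DD)$ to conclude that $(\alpha \circ \mathrm{Id})\Delta_\EEE(v)$ lies in $\PP \circ f(\DD) = \Omega_\alpha\DDD$, one obtains
\[
D(v) \;=\; \bigl(d'(v) - (\alpha \circ \mathrm{Id})\Delta_\EEE(v)\bigr) + d''(v),
\]
where the bracketed term lies in $\Omega_\alpha\DDD$. The identity $D^2 = 0$ then forces $(d'')^2 = 0$, so $(\VV, d'')$ is itself a chain complex.

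Because $\mbk$ is a field, $(\VV, d'')$ decomposes as a direct sum of sphere complexes $S^n$ and disk complexes $D^m$. Ordering this decomposition into a well-ordered sequence, each summand produces one elementary cell attachment on $\Omega_\alpha\DDD$: a sphere $S^n$ adds a single generator whose $D$-image is a cycle in $\Omega_\alpha\DDD$, which is exactly a pushout of $\PPP \circ S^{n-1} \to \PPP \circ D^n$, while a disk $D^m$ is realised as two successive such pushouts. The main piece of bookkeeping, and the principal obstacle, is to verify via $D^2 = 0$ that the attaching maps extracted from $D$ really are cycles in $\Omega_\alpha\DDD$ of the correct degree, so that each elementary step is genuinely a pushout along a generating cofibration; once this is taken care of, $\Omega_\alpha f$ is a transfinite composition of pushouts of generating cofibrations in $\Palg$, hence a cofibration.
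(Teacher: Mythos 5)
Your proposal is correct and takes essentially the same route as the paper: split $\EE = f(\DD) \oplus \FF$ as graded modules, observe that the hypothesis $\Delta(\EE) \subset \CC \circ f(\DD)$ forces the $\Omega_\alpha\DDD$-component of the twisted differential of a generator to land in $\Omega_\alpha\DDD$, and conclude that $\Omega_\alpha(f)$ is a relative cell attachment. The only difference is that the paper performs the attachment in a single pushout along the free-algebra cofibration $\PP \circ (s^{-1}\FF) \hookrightarrow \PP \circ (s^{-1}\FF \oplus \FF)$, whereas you decompose the complement into sphere and disk summands and attach cells one at a time --- a purely cosmetic refinement whose remaining bookkeeping (that the attaching maps are cycles of the right degree) does go through by the $D^2=0$ argument you sketch.
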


\begin{proof}[Proof]
 We can decompose the graded $\mbk$-module $\EE$ as $\EE = \DD \oplus \FF$. The coderivation $d_\EE$ corresponds then to the following matrix.
 $$
 \begin{pmatrix}
 d_{\DD} & \phi \\
 0 & d_{\FF}
\end{pmatrix}
 $$
 Consider the following diagram of $\PPP$-algebras:
 $$
 \xymatrix{\PP \circ (s^{-1} \FF) \ar[r] \ar@{^(->}[d]^{\PP(\phi)} & \Omega_{\alpha} \DD \\
 \PP \circ (s^{-1} \FF \oplus \FF)}
 $$
 where the horizontal map sends $ s^{-1}x$ to $\phi (x) + (\alpha \circ Id) \Delta (x)$. The vertical map is a cofibration and $f$ is the pushout of this vertical map along the horizontal map. Hence, $f$ is a cofibration.
\end{proof}

\begin{proof}[Proof of Proposition \ref{prop:cofib}]
 Let $f: \DDD \ra \EEE$ be a cofibration. Then $\Omega_{\alpha} (f)$ it is a monomorphism. Since the following square is commutative, then $f$ is a monomorphism.
 $$
 \xymatrix{\Omega_\alpha \DDD \ar[r] & \Omega_\alpha \EEE\\
 \DDD \ar[u] \ar[r] & \EEE \ar[u]}
 $$
 Conversely, if $f$ is a monomorphism, then, it can be decomposed into the transfinite composition of the maps $f_n = \DDD + F^{rad}_{n-1} \EEE \ra \DDD + F^{rad}_n \EEE$. Since the maps $f_n$ satisfy the conditions of Lemma \ref{lem:cofib}, they are cofibrations. So $f$ is a cofibration.
\end{proof}

\subsection{Filtered quasi-isomorphism}

\begin{defin}[Filtered quasi-isomorphism]
Let $\DDD$ and $\EEE$ be two $\CCC$-coalgebras. A morphism of $\CCC$-coalgebras $f$ from $\DDD$ to $\EEE$ is said to be a \textit{filtered quasi-isomorphism} if the induced morphisms between the graded complexes relative to the coradical filtrations are quasi-isomorphisms, that is, if for any integer $n$, the morphism from $G^{rad}_n \DDD$ to $G^{rad}_n \EEE$ is a quasi-isomorphism.
\end{defin}
 
\begin{prop}\label{prop:filtered:we}
If the characteristic of $\mbk$ is zero, a filtered quasi-isomorphism is a weak equivalence of $\CCC$-coalgebras. The characteristic zero assumption is not necessary in the nonsymmetric context.
\end{prop}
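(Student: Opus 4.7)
The plan is to deduce that $\Omega_\alpha(f)$ is a quasi-isomorphism from a spectral sequence argument. First, I equip $\Omega_\alpha \DDD = \PP \circ \DD$ with the convolution filtration coming from the coradical filtration on $\DDD$, namely
$$
F_n(\Omega_\alpha \DDD) := \bigoplus_k \PP(k) \otimes_{\mbs_k} \Bigl(\,\sum_{j_1+\cdots+j_k\le n} F^{rad}_{j_1}\DD \otimes \cdots \otimes F^{rad}_{j_k}\DD\Bigr).
$$
A morphism of $\CCC$-coalgebras respects the coradical filtration, so $\Omega_\alpha(f)$ becomes a morphism of filtered $\PPP$-modules. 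The first technical task is to verify that this is a filtration by subcomplexes.

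The derivation on $\Omega_\alpha \DDD$ splits into three pieces: the action of $d_\PPP$ on the operadic factor, the action of $d_\DDD$ on a $\DDD$-factor, and the twisting piece, which applies $-(\alpha \circ Id)\Delta_\DDD$ to a $\DDD$-factor and composes the resulting operation into the $\PPP$-factor. The first piece preserves the filtration trivially. The second preserves it because $d_\DDD$ preserves the coradical filtration of $\DDD$, a stability implicit in the definition of filtered quasi-isomorphism (it is required so that $G^{rad}\DDD$ be a chain complex). For the twisting piece, Proposition~\ref{prop:cogtech} gives
$$
\Delta_\DDD(F^{rad}_j \DD) \subset \sum_{i_0+\cdots+i_k \leq j} F^{rad}_{i_0}\CC(k) \otimes_{\mbs_k}\bigl(F^{rad}_{i_1}\DD \otimes \cdots \otimes F^{rad}_{i_k}\DD\bigr);
$$
since $\alpha$ vanishes on $F^{rad}_0 \CC = \II$, only terms with $i_0\ge 1$ survive, whence the twist strictly lowers the filtration, sending $F_n$ into $F_{n-1}$.

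The second task is to identify the associated graded. The calculation above shows that the twisting piece is zero on $G(\Omega_\alpha \DDD)$, so its differential is just $d_\PPP \circ Id + Id \circ' d_{G\DDD}$; the standard behavior of the convolution filtration under tensor product then identifies $G(\Omega_\alpha \DDD)$ with the free $\PPP$-module $\PP \circ G^{rad}\DDD$ (with no twist). Consequently, $G(\Omega_\alpha f)$ is simply the map $\PP \circ G^{rad}(f)$. Since $f$ is a filtered quasi-isomorphism, $G^{rad}(f)$ is a quasi-isomorphism of $\mbs$-modules, and the operadic K\"unneth formula---valid in characteristic zero in the symmetric case and unconditionally in the nonsymmetric case---implies that $\PP \circ G^{rad}(f)$ is a quasi-isomorphism as well.

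Finally, the filtration $F_\bullet$ is exhaustive with $F_{-1}=0$, so the associated spectral sequence is strongly convergent. The comparison theorem, applied to the map of spectral sequences induced by $\Omega_\alpha f$ (an isomorphism at $E^1$ by the previous step), then yields that $\Omega_\alpha f$ is a quasi-isomorphism, hence a weak equivalence of $\PPP$-algebras. I expect the main obstacle to be the bookkeeping around the convolution filtration: showing the $\alpha$-twist strictly lowers it (via Proposition~\ref{prop:cogtech} and the vanishing of $\alpha$ on $\II$) and correctly identifying the associated graded as a free $\PPP$-module so that the operadic K\"unneth formula can be applied.
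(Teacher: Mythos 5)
Your proposal is correct and takes essentially the same route as the paper: the same convolution filtration on $\Omega_\alpha \DDD$, the identification $G(\Omega_\alpha \DDD) \simeq \PP \circ G^{rad}\DD$, the operadic K\"unneth formula, and the comparison theorem for exhaustive, bounded-below filtered complexes (the paper invokes Mac Lane XI.3.4 rather than phrasing it as a spectral sequence, but this is the same statement). The only difference is that you make explicit a detail the paper leaves implicit, namely that the $\alpha$-twist strictly lowers the filtration because $\alpha$ vanishes on $F_0^{rad}\CC = \II$.
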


We will use the following classical result.

\begin{thm}\cite[XI.3.4]{MacLane95}\label{maclane-homology}
 Let $f: A \rightarrow B$ be a map of filtered chain complexes. Suppose that the filtrations are bounded below and exhaustive. If for any integer $n$, the map $G_n A \rightarrow G_n B$ is a quasi-isomorphism, then $f$ is a quasi-isomorphism.
\end{thm}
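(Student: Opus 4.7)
A morphism of $\CCC$-coalgebras is, by definition, a weak equivalence iff $\Omega_\alpha f$ is a quasi-isomorphism of $\PPP$-algebras, hence of chain complexes. My plan is to filter $\Omega_\alpha \DDD = \PP\circ \DD$ by a filtration induced by the coradical filtration on $\DDD$, show that the twisted differential preserves it with the $\alpha$-twist strictly decreasing filtration degree, identify the associated graded with an ``untwisted'' composite, and conclude via the operadic K\"unneth formula together with Theorem~\ref{maclane-homology}.

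Concretely, set
$$
F'_n(\PP\circ\DD) := \bigoplus_{k} \PP(k)\otimes_{\mbs_k} \sum_{i_1+\cdots+i_k\le n} F^{rad}_{i_1}\DD\otimes\cdots\otimes F^{rad}_{i_k}\DD \ .
$$
This is exhaustive (because $\DDD$ is conilpotent) and bounded below ($F'_{-1}=0$). The first thing to check is that the coderivation $d_\DDD$ preserves the coradical filtration of $\DDD$; this follows from the standard Leibniz identity for the iterated decomposition $\Delta_\DDD^{(n)}$. Granted this, I will inspect the three summands of the twisted derivation $D$ on $\PP\circ\DD$ on a typical element $p\otimes(x_1\otimes\cdots\otimes x_k)$ with $x_j\in F^{rad}_{i_j}\DD$ and $\sum i_j\le n$. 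The piece coming from $d_\PPP$ preserves $F'_n$ trivially, the piece coming from $d_\DDD$ preserves it by the claim just stated, and the piece coming from $-(\alpha\circ Id)\Delta$ applied to a factor $x_l$ produces, by Proposition~\ref{prop:cogtech}, a sum in $F^{rad}_{i_0}\CC\otimes(F^{rad}_{j_1}\DD\otimes\cdots\otimes F^{rad}_{j_m}\DD)$ with $i_0+j_1+\cdots+j_m\le i_l$; since $\alpha$ vanishes on $F^{rad}_0\CC=\II$ we must have $i_0\ge 1$, so the total coradical weight drops to at most $n-1$. Therefore $D$ preserves $F'_n$, and the $\alpha$-twist dies on the associated graded, which I can then identify with $\PP\circ G^{rad}\DDD$ equipped with the natural differential $d_\PPP\circ Id + Id\circ' \bar d_\DDD$.

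With this in place, the morphism $\Omega_\alpha f$ is filtration-preserving and induces $Id_\PP\circ G^{rad}f$ on the associated graded. By the operadic K\"unneth formula recalled in Section~2 (which requires characteristic zero in the symmetric case but holds unconditionally in the nonsymmetric case), $H(\PP\circ G^{rad}\DDD)\simeq \PP\circ H(G^{rad}\DDD)$ and similarly for $\EEE$; the hypothesis that $f$ is a filtered quasi-isomorphism provides an isomorphism $H(G^{rad}\DDD)\simeq H(G^{rad}\EEE)$, and so the induced map on each $G'_n$ is a quasi-isomorphism. Theorem~\ref{maclane-homology} then yields that $\Omega_\alpha f$ is itself a quasi-isomorphism, i.e.\ $f$ is a weak equivalence. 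The main obstacle is bookkeeping in the verification that $D$ preserves $F'_n$, which rests on two points: that coderivations preserve the coradical filtration, and that $\alpha$ vanishes on the coaugmentation, the latter being exactly what allows the operadic twist to strictly decrease filtration degree. Everything else (K\"unneth, MacLane, and the dichotomy between the symmetric and nonsymmetric cases) is formal.
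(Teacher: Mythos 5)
You have proved the wrong statement. The theorem you were asked to establish is the classical comparison lemma for filtered chain complexes (cited from MacLane, XI.3.4): it concerns an arbitrary map $f: A \to B$ of filtered chain complexes with bounded below, exhaustive filtrations, and asserts that $f$ is a quasi-isomorphism as soon as each $G_n A \to G_n B$ is. There are no operads, cooperads, or coalgebras in its statement. What you have written is instead a proof of Proposition \ref{prop:filtered:we} (that a filtered quasi-isomorphism of $\CCC$-coalgebras is a weak equivalence), and your argument explicitly \emph{invokes} Theorem \ref{maclane-homology} in its final step. As a proof of the theorem itself it is therefore circular: you assume the conclusion you are meant to derive. (The paper does not prove this theorem either --- it cites it --- but a blind proof attempt should still address the stated result, not a downstream application of it.)

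A correct argument for the actual statement would stay entirely within homological algebra: for instance, show by induction on $n$ (using that the filtrations are bounded below, the long exact sequences of the pairs $(F_n, F_{n-1})$, and the five lemma) that each $F_n A \to F_n B$ is a quasi-isomorphism, and then pass to the colimit using exhaustiveness together with the fact that homology commutes with filtered colimits of chain complexes over a field. Alternatively one can quote the spectral sequence comparison theorem for the spectral sequences of the two filtrations, which converge under the stated boundedness and exhaustiveness hypotheses. Your filtration of $\Omega_\alpha \DDD$ by coradical weight, the verification that the twist strictly drops filtration degree, and the K\"unneth step are all fine --- but they belong to the proof of Proposition \ref{prop:filtered:we}, where the paper carries out essentially the same bookkeeping.
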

 
\begin{proof}[Proof of Proposition \ref{prop:filtered:we}]
 Consider the following filtration on $\Omega_\alpha \DDD$ (resp. $\Omega_\alpha \EEE$)
 $$
 F_n \Omega_\alpha \DDD= \PP (0) \oplus \sum_{k\geq 1 \atop p_1 + \cdots + p_k = n} \PP(k) \otimes_{\mbs_k} \big( F^{rad}_{p_1} \DD \otimes \cdots  \otimes F^{rad}_{p_k} \DD\big)\ .
 $$
It is clear that $\Omega_\alpha(f)$ sends $F_n \Omega_\alpha \DDD$ to $F_n\Omega_\alpha \EEE$ for any integer $n$. Moreover, we have
$$
G_n \Omega_\alpha \DDD=\sum_{k\geq 1 \atop p_1 + \cdots + p_k = n} \PP (k) \otimes_{\mbs_k} \big( G^{rad}_{p_1} \DD \otimes \cdots  \otimes G^{rad}_{p_k} \DD \big)\ .
$$
Then, by the operadic Kunneth formula, $G_n (\Omega_\alpha(f)): G_n \Omega_\alpha \DDD \rightarrow G_n \Omega_\alpha \EEE$ is a quasi-isomorphism for any $n \in \mbn$. Hence, by Theorem \ref{maclane-homology}, $\Omega_u(f)$ is a quasi-isomorphism.
\end{proof}

\begin{rmk}\label{rmk:admissiblefilt}
The coradical filtration is not the only filtration whose notion of filtered quasi-isomorphism gives us weak equivalences. An exhaustive filtration $(F_n \DD)_{n \in  \mbn}$ is called \textit{admissible} if:
 $$
\begin{cases}
\Delta (F_n \DD) \subset  \sum_{p_0 + p_1 + \cdots + p_k = n}  F_0^{rad}\CC \otimes_{\mbs_k} \big( F_{p_1} \DD \otimes \cdots \otimes F_{p_k} \DD \big)\ ,\\
  d (F_n \DD) \subset F_n \DD\ ,\\
  d^2 (F_n \DD) \subset F_{n-1} \DD\ .
\end{cases}
 $$
 Using similar arguments as in the proof just above, we can prove that a filtered quasi-isomorphism with respect to two admissible filtrations is a weak equivalence.
\end{rmk}

\subsection{Cylinder object}

\begin{prop}\label{prop:proptech}
Let $\DDD=(\DD,\Delta_\DDD)$ be a $\CCC$-coalgebra. Let $\AAA=(\Aa,\gamma_\AAA)$ be a cylinder of $\Omega_\alpha (\DDD)$ such that the structural map $p: \AAA \ra \Omega_\alpha (\DDD)$ is an acyclic fibration. The following diagram
$$
\xymatrix{B_\alpha \Omega_\alpha (\DDD \oplus  \DDD) \ar[r] & B_\alpha (\AAA) \ar[r]^(0.4){B_\alpha p} & B_\alpha \Omega_\alpha (\DDD) \\
\DDD \oplus \DDD \ar[u] \ar[r] &\EEE \ar[u] \ar[r] & \DDD \ar[u]}
$$
where $\EEE:=  B_\alpha (\AAA)\times_{B_\alpha (\Omega_\alpha \DDD)} \DDD$, provides us with a cylinder $\EEE=(\EE,\Delta_\EEE)$ for the $\CCC$-coalgebra $\DDD$. 
\end{prop}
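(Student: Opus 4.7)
The goal is to check that $\EEE$ is a cylinder object for $\DDD$ in the $\alpha$-model structure, which decomposes into three conditions: (i) the composition $\DDD \oplus \DDD \to \EEE \to \DDD$ equals the fold map, (ii) the first map is a cofibration, and (iii) the second map is a weak equivalence.

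Point (i) is immediate from the construction, since by adjunction each map of the bottom row is the adjoint of the corresponding map of the top row, so the composite is adjoint to $\Omega_\alpha(\DDD \oplus \DDD) \to \AAA \xrightarrow{p} \Omega_\alpha\DDD$, which is the fold map. For (ii), by Proposition \ref{prop:cofib} cofibrations are exactly monomorphisms, so it suffices to exhibit $\DDD \oplus \DDD \to \EEE$ as injective. It factors as $\DDD \oplus \DDD \to B_\alpha \AAA$, which is the adjoint of the cofibration $\Omega_\alpha(\DDD \oplus \DDD) \to \AAA$; since cofibrations of $\PPP$-algebras are injective on underlying chain complexes and both the functor $B_\alpha = \CC \circ_\alpha -$ and the unit of adjunction preserve injectivity (in characteristic zero for the symmetric case, unconditionally in the nonsymmetric case), the conclusion follows.

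The heart of the proof is step (iii), and the plan is to use the filtered quasi-isomorphism criterion of Proposition \ref{prop:filtered:we} combined with Remark \ref{rmk:admissiblefilt}. Since $\CC \circ -$ is exact on graded $\mbk$-modules (using characteristic zero for the symmetric case) and $p$ is surjective with acyclic kernel $K$, computing the pullback $\EEE$ at the graded module level yields a short exact sequence $0 \to L \to \EE \to \DD \to 0$ where $L = \ker(B_\alpha p)$. Intersecting with the coradical filtration $F_n^{rad} B_\alpha \AAA$ and using that the unit $\iota : \DDD \to B_\alpha\Omega_\alpha \DDD$ reflects the coradical filtration, one obtains a short exact sequence on associated gradeds
\[
0 \to G_n^{rad} L \to G_n^{rad}\EE \to G_n^{rad}\DD \to 0.
\]

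It remains to show that $G_n^{rad} L$ is acyclic. On the associated graded, the $\alpha$-twisting part of the coderivation of $B_\alpha \AAA$ vanishes (it strictly lowers coradical filtration), leaving only the differentials coming from $d_\CC$ and from $d_\Aa$. Then $L$ decomposes as $\bigoplus_{k \geq 1} \CC(k) \otimes_{\mbs_k} \ker(p^{\otimes k})$, and since the splitting $\Aa \simeq K \oplus \Omega_\alpha \DDD$ (as graded $\mbk$-modules) makes $\ker(p^{\otimes k})$ a direct sum of tensor products $K^{\otimes I} \otimes (\Omega_\alpha \DDD)^{\otimes [k]\setminus I}$ with $I \neq \emptyset$, the ordinary Künneth formula shows each summand is acyclic (one acyclic factor $K$ suffices). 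The operadic Künneth formula then gives $H(G_n^{rad} L) = 0$, and hence $G_n^{rad} \EEE \to G_n^{rad}\DDD$ is a quasi-isomorphism for every $n$, so Proposition \ref{prop:filtered:we} yields the desired weak equivalence. The main obstacle is precisely this associated-graded analysis: verifying that the induced filtration on $\EEE$ is admissible, that the twisted coderivation simplifies on associated gradeds, and that the Künneth argument can be pushed through the $\mbs_k$-coinvariants (which is where characteristic zero enters in the symmetric case).
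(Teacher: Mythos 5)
Your steps (i) and (ii) are fine, and your observation that the $\alpha$-part of the coderivation strictly lowers the coradical filtration is correct (the paper uses it implicitly as well). The gap is in step (iii), at its very first move: you assume that the pullback $\EEE = B_\alpha (\AAA)\times_{B_\alpha (\Omega_\alpha \DDD)} \DDD$ is computed on underlying graded $\mbk$-modules, so that $\EE=(B_\alpha p)^{-1}(\DD)$ and $\ker(\EE\to\DD)=\ker(B_\alpha p)=\bigoplus_{k}\CC(k)\otimes_{\mbs_k}\ker(p^{\otimes k})$. This is false: the forgetful functor from $\CCC$-coalgebras to graded $\mbk$-modules preserves colimits, not limits, and the module-level preimage $(B_\alpha p)^{-1}(\DD)$ is not a sub-$\CCC$-coalgebra of $B_\alpha \AAA$ in general. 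Indeed, for $x=c\otimes(a_1\otimes\cdots\otimes a_k)$ with some $a_i\in\ker(p)$ one has $B_\alpha p(x)=0$, yet a factor $c_j\otimes a_{S_j}$ of $\Delta_{B_\alpha\AAA}(x)$ that carries none of the $\ker(p)$-tensorands need not map into the image of $\DDD$, so $\Delta(x)$ need not lie in $\CC\circ (B_\alpha p)^{-1}(\DD)$. The actual pullback is the \emph{largest} sub-$\CCC^{grad}$-coalgebra of $B_\alpha\AAA$ contained in that preimage — this is exactly the content of Lemma \ref{lemma:pullback}, whose proof consists in showing that this maximal sub-coalgebra is stable under the coderivation. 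Consequently $L=\EE\cap\ker(B_\alpha p)$ admits no explicit decomposition, and your K\"unneth computation of $H(G^{rad}_n L)$ does not get off the ground.

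The paper's argument is built precisely around this obstacle: since $\EE$ is only characterized by a maximality property, one needs an acyclicity argument that works without an explicit description of $\EE$. Choosing a section $q$ of $p$ (cofibrancy of $\Omega_\alpha\DDD$), writing $\Aa=\Omega_\alpha\DD\oplus K$ with $K$ acyclic and picking a contracting homotopy $h$ of $K$, the paper extends $h$ to the degree $1$ coderivation $D_h=Id_\CCC\circ' h$ on all of $B_\alpha\AAA$ and proves, by the same maximality argument as in Lemma \ref{lemma:pullback}, that $\EE$ is stable under $D_h$. The secondary filtration by the number of $K$-factors then gives $\partial(D_h)=k\cdot Id$ on $G'_k G^{rad}_n\EE$ for $k\geq 1$, whence acyclicity in characteristic zero. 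Your splitting $\Aa\simeq\Omega_\alpha\DD\oplus K$ and the role of the acyclicity of $K$ are the right ingredients, but they must be packaged as an operator defined on all of $B_\alpha\AAA$ that restricts to $\EE$, rather than as a direct sum decomposition of a kernel you cannot identify.
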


\begin{lemma}\label{lemma:pullback}
 The pullback $\EEE$ is the final sub-graded $\CCC^{grad}$-coalgebra of $B_\alpha \AAA$ whose image in $B_\alpha \Omega_\alpha \DDD$ is in the image of the morphism $\DDD \ra B_\alpha \Omega_\alpha \DDD$.
\end{lemma}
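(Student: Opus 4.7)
The plan is to identify the pullback $\EEE$ with the largest sub-$\CCC$-coalgebra $\EEE^{\max}$ of $B_\alpha \AAA$ whose image under $B_\alpha(p)$ is contained in the image of the unit $\eta_\DDD : \DDD \to B_\alpha \Omega_\alpha \DDD$ of the bar-cobar adjunction, and then to check by hand that $\EEE^{\max}$ satisfies the universal property of the pullback.

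The first step is to verify that $\eta_\DDD$ is a split monomorphism on underlying graded modules. At the graded level $B_\alpha \Omega_\alpha \DDD$ is the graded module $\CC \circ (\PP \circ \DD)$, and $\eta_\DDD$ corresponds via the adjunction $\Omega_\alpha \dashv B_\alpha$ to the identity of $\Omega_\alpha \DDD$. Unwinding definitions, it is the embedding $x \mapsto 1 \otimes 1_\PPP \otimes x$, which is split by the projection through the counit $\epsilon_\CCC$ and the augmentation of $\PPP$. In particular $\eta_\DDD$ is injective and its image $\eta_\DDD(\DDD)$ is a sub-$\CCC$-coalgebra of $B_\alpha \Omega_\alpha \DDD$, since $\eta_\DDD$ is itself a $\CCC$-coalgebra morphism.

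I would then observe that sums of sub-$\CCC$-coalgebras are again sub-$\CCC$-coalgebras: stability under the decomposition follows from $\Delta(\sum_i \FFF_i) \subset \sum_i \CC \circ \FFF_i \subset \CC \circ (\sum_i \FFF_i)$, and stability under the coderivation is immediate. The class of sub-$\CCC$-coalgebras $\FFF \subset B_\alpha \AAA$ satisfying $(B_\alpha p)(\FFF) \subset \eta_\DDD(\DDD)$ is evidently closed under sums, so it admits a maximum $\EEE^{\max}$.

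The final and main step is to check the universal property of the pullback for $\EEE^{\max}$. By construction $\EEE^{\max}$ comes with an inclusion $\iota : \EEE^{\max} \hookrightarrow B_\alpha \AAA$ and, because $\eta_\DDD$ is a monomorphism with image a sub-$\CCC$-coalgebra, with a unique $\CCC$-coalgebra map $\pi : \EEE^{\max} \to \DDD$ satisfying $\eta_\DDD \pi = B_\alpha(p) \iota$. Given any cone $(f : \FFF \to B_\alpha \AAA,\ g : \FFF \to \DDD)$ with $\eta_\DDD g = B_\alpha(p) f$, the image $f(\FFF)$ is a sub-$\CCC$-coalgebra of $B_\alpha \AAA$ whose further image under $B_\alpha(p)$ equals $\eta_\DDD(g(\FFF)) \subset \eta_\DDD(\DDD)$; by maximality it is contained in $\EEE^{\max}$, yielding the desired factorization. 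Uniqueness is automatic since $\iota$ is mono and since $\eta_\DDD$ being mono forces the $\DDD$-component to be determined by the $B_\alpha \AAA$-component. The only mild technicality to keep in mind is that the image of a $\CCC$-coalgebra morphism is a sub-$\CCC$-coalgebra of the target (a standard consequence of the linearity of the decomposition and the coderivation); this is the only place where one should check that passing from a cone to a subobject of $B_\alpha \AAA$ is legitimate.
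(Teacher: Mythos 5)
Your argument identifies the pullback with the largest sub-$\CCC$-coalgebra of $B_\alpha\AAA$ mapping into the image of $\DDD$ — where ``sub-$\CCC$-coalgebra'' implicitly means a subobject stable under both the decomposition \emph{and} the coderivation — and your verification that this object satisfies the universal property of the pullback is reasonable. But the lemma asserts something finer: $\EEE$ is the final sub-\emph{graded} $\CCC^{grad}$-coalgebra with this property, where no stability under the coderivation $D$ of $B_\alpha\AAA$ is imposed. The entire content of the statement, and the entirety of the paper's proof, is that the maximal graded subcoalgebra $\FFF$ is automatically stable under $D$. The paper proves this by induction on the coradical filtration of $\FF$: for $x\in F^{rad}_{n+1}\FF$ one checks that $B_\alpha(p)D(x)=D(B_\alpha(p)(x))$ still lands in the image of $\DDD$ (which is $D$-stable), that
$$
\Delta(D(x)) - 1_\CCC\otimes D(x) = (d_\CC\circ Id + Id\circ' D)\big(\Delta(x)-1_\CCC\otimes x\big) \in \CC\circ(F^{rad}_n\FF)
$$
by the induction hypothesis, and then invokes maximality of $\FFF$ to conclude $D(x)\in\FF$. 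Your proposal never engages with this graded-versus-dg discrepancy, so the step that the lemma actually requires is missing. Note also that if you only had the graded-maximal object in hand, you could not even state the universal property in the category of $\CCC$-coalgebras until you knew it carried a coderivation, which again requires this stability.

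The graded form is not a cosmetic strengthening: it is used immediately afterwards, where one must show $\EEE$ is stable under a \emph{second} coderivation $D_h=Id_\CCC\circ' h$ built from a contracting homotopy, which has nothing to do with $D$; the argument there reruns the same induction against the purely graded characterization of $\EEE$, using $B_\alpha(p)D_h=0$. With only your dg characterization of $\EEE$, that step would not go through. On the positive side, your explicit check that the maximal subcoalgebra contained in the preimage really is the categorical pullback (via the splitting of $\eta_\DDD$ and closure of the admissible subcoalgebras under sums) is a point the paper leaves implicit, so it is a worthwhile complement — it just is not the half of the argument that carries the weight.
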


\begin{proof}[Proof]
 Let $\FFF=(\FF,\Delta_\FFF)$ be the final sub-graded $\CCC^{grad}$-coalgebra of $B_\alpha \AAA$ whose image in $B_\alpha \Omega_\alpha \DDD$ is in the image of the morphism $\DDD \ra B_\alpha \Omega_\alpha \DDD$. Proving that $\FFF$ is the underlying $\CCC^{grad}$-coalgebra of $\EEE$ amounts to prove that $\FF$ is stable under the coderivation $D$ of $B_\alpha \AAA$. We prove it by induction on the coradical filtration of $\FF$. First, by the maximality property of $\FFF$, $F^{rad}_0\FF$ is stable under $D$. Then suppose that $F^{rad}_n\FF$ is stable under $D$ for an integer $n \geq 0$. Let $x$ be an element of $F^{rad}_{n+1} \FF$. On the one hand $B_\alpha(p)D(x) = D (B_\alpha(p)(x))$. Since $B_\alpha(p)(x)$ is in the image of $\DDD$ and since this image is stable under the coderivation of $B_\alpha \Omega_\alpha \DDD$, then $B_\alpha(p)D(x)$ is the image of $\DDD$. On the other hand, we have
 $$
 \Delta (D(x)) = (d_\CC \circ Id + Id \circ' D)\Delta(x)\ .
 $$
 So, since $\Delta(x) - 1_\CCC \otimes x \in \CC\circ (F_n^{rad} \FF)$, and since $F_n^{rad} \FF$ is stable under $D$ by the inductivity assumption, then
 $$
 \Delta (D(x)) - 1_\CCC \otimes D(x) = (d_\CC \circ Id + Id \circ' D)(\Delta(x) -1_\CCC \otimes x) \in \CCC\circ (F_n^{rad} \FF)\ . 
 $$
 By these two points, $\FF + \mbk \cdot D(x)$ is a sub- graded $\CCC^{grad}$-coalgebra of $B_\alpha (\AAA)$ whose image in $B_\alpha \Omega_\alpha \DDD$ is in the image of $\DDD$. By the maximality property of $\FFF$, then $D(x) \in \FF$. So, $F^{rad}_{n+1} \FF$ is stable under $D$. Hence, by induction $\FF$ is stable under $D$.
\end{proof}

To prove Proposition \ref{prop:proptech}, we will show that the pullback map $\EEE \ra \DDD$ is a filtered quasi-isomorphism. Since $\Omega_\alpha \DDD$ is a cofibrant $\PPP$-algebra, there exists a right inverse $q: \Omega_\alpha \DDD \ra \AAA$ to the acyclic fibration $p:\AAA \ra \Omega_\alpha\DDD$. Then, let us decompose $\Aa$ as $\Aa= \Omega_\alpha \DDD \oplus K$. The chain complex $K$ is acyclic. So, let $h:K \rightarrow K$ be a degree $1$ map such that $\partial (h)=Id_K$. It can be extended to a map
\begin{align*}
 B_\alpha \AAA \twoheadrightarrow \Aa \twoheadrightarrow K &\ra \Aa \\
 x &  \mapsto h(x)\ .
\end{align*}
The zero map is a coderivation on the graded cooperad $\CCC^{grad}$. Then, let $D_h$ be the degree $1$ coderivation of $(B_\alpha  \AAA)^{grad} $ relative to the zero coderivation on $\CCC^{grad}$ and whose projection on $\Aa$ is $h$. In other words, $
D_h = Id_\CCC \circ' h$.

\begin{lemma}
 The sub-$\CCC$-coalgebra $\EEE$ of $B_\alpha \AAA$ is stable under $D_h$.
\end{lemma}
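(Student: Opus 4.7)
The plan is to invoke the maximality property of $\EEE$ established in Lemma~\ref{lemma:pullback}. It suffices to show that the graded sub-$\mbk$-module $\EE + D_h(\EE) \subset B_\alpha \AAA$ is a sub-$\CCC^{grad}$-coalgebra whose image under $B_\alpha(p)$ lies inside the image of $\DDD \to B_\alpha \Omega_\alpha \DDD$; then $D_h(\EE) \subset \EE$ will follow from the maximality of $\EEE$.

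For the subcoalgebra condition, I would use that $D_h = Id_\CCC \circ' h$ is, by construction, a coderivation of $(B_\alpha \AAA)^{grad}$ relative to the zero coderivation on $\CCC^{grad}$. This means $\Delta\, D_h = (Id \circ' D_h)\, \Delta$. Since $\EE$ is a sub-$\CCC^{grad}$-coalgebra, $\Delta(\EE) \subset \CC \circ \EE$, so
$$
\Delta(D_h(\EE)) = (Id \circ' D_h)(\CC \circ \EE) \subset \CC \circ (\EE + D_h(\EE))\ .
$$
Combined with $\Delta(\EE) \subset \CC \circ \EE \subset \CC \circ (\EE + D_h(\EE))$, this shows $\EE + D_h(\EE)$ is a sub-$\CCC^{grad}$-coalgebra of $B_\alpha \AAA$.

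For the image condition, I would observe that $h$ takes values in $K$ by construction, and $K = \ker(p)$. Hence the composite $p \circ h = 0$, which implies that $B_\alpha(p) \circ D_h = 0$ at the level of the cofree comodules, since $B_\alpha(p) = Id_\CCC \circ p$ and the projection of $D_h$ onto the cogenerators is $h$, which is killed by $p$. Therefore
$$
B_\alpha(p)(\EE + D_h(\EE)) = B_\alpha(p)(\EE) \subset \mathrm{Im}(\DDD \to B_\alpha \Omega_\alpha \DDD)\ ,
$$
the last inclusion being the defining property of $\EEE$. By the maximality of $\EE$ given by Lemma~\ref{lemma:pullback}, we conclude $\EE + D_h(\EE) \subset \EE$, i.e.\ $\EE$ is stable under $D_h$. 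No serious obstacle is expected; the only subtle point is the bookkeeping to confirm that $B_\alpha(p) \circ D_h = 0$ from the fact that $D_h$ acts only on the cogenerator slot through $h$, which is killed by $p$.
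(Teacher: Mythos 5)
Your proof is correct and follows essentially the same route as the paper's: identify $\EEE$, via Lemma~\ref{lemma:pullback}, with the maximal sub-graded $\CCC^{grad}$-coalgebra of $B_\alpha\AAA$ whose image under $B_\alpha(p)$ lies in the image of $\DDD$, and combine this maximality with the identity $B_\alpha(p)\,D_h=0$. Your only (harmless) deviation is that you verify the subcoalgebra condition for $\EE + D_h(\EE)$ in a single step rather than by induction on the coradical filtration as the paper does, which is legitimate precisely because $D_h$ is a coderivation relative to the \emph{zero} coderivation on $\CCC^{grad}$ and therefore satisfies $\Delta\, D_h=(Id\circ' D_h)\Delta$ with no extra $d_\CC$ term.
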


\begin{proof}[Proof]
By Lemma \ref{lemma:pullback}, it suffices to prove that the final sub-graded $\CCC^{grad}$-coalgebra of $B_\alpha \AAA$ whose image in $B_\alpha \Omega_\alpha \DDD$ lies inside $\DDD$, is stable under $D_h$. To that purpose, we use the same arguments as in the proof of Lemma \ref{lemma:pullback} and the fact that $B_\alpha (p) D_h= 0$.
\end{proof}

\begin{proof}[Proof of Proposition \ref{prop:proptech}]
Since the map $\DDD \oplus \DDD \ra \EEE$ is a monomorphism and so a cofibration, it suffices to show that the map $\EEE \ra \DDD$ is a weak equivalence. We show that it is a filtered quasi-isomorphism. Let $n\in \mbn$ ; let us show that the map $G_n \EE \ra G_n \DD$ is a quasi-isomorphism. To that purpose, consider the following filtration on $B_\alpha \AAA$ : 
$$
F'_k B_\alpha \AAA := \sum_{i \leq k} \CC \otimes_{\mbs} (K^{\otimes i} \otimes (\Omega_\alpha \DDD)^{\otimes j})
$$
This filtration is stable under the coderivations $d$ and $D_h$ and it induces a filtration on $G^{rad}_n \EE$. It is clear that the morphism $G'_0 G^{rad}_n \EE \ra G^{rad}_n \DD$ is an isomorphism. Moreover, for any integer $k \geq 1$, $\partial(D_h)= k . Id$ on $G'_k G^{rad}_n \EE$. Since the characteristic of $\mbk$ is zero, $G'_k G^{rad}_n \EE$ is acyclic. By Theorem \ref{maclane-homology}, the map $G_n \EE \ra G_n \DD$ is a quasi-isomorphism.
\end{proof}

\begin{prop}\label{prop:proptechns}
In the nonsymmetric context, $\DDD \otimes DK^c(\Delta[1])$ provides us with a cylinder for the $\CCC$-coalgebra $\DDD$.
\end{prop}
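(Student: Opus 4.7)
The plan is to verify that $\DDD \otimes DK^c(\Delta[1])$, together with the structural maps induced by $\partial\Delta[1] \hookrightarrow \Delta[1] \to \Delta[0]$, is a cylinder object for $\DDD$. First I would factor the fold map $\DDD \sqcup \DDD \to \DDD$ as
\[
\DDD \sqcup \DDD \xrightarrow{i} \DDD \otimes DK^c(\Delta[1]) \xrightarrow{p} \DDD ,
\]
where $i$ and $p$ arise from $\partial\Delta[1] \hookrightarrow \Delta[1]$ and $\Delta[1] \to \Delta[0]$ respectively. Since both $DK^c : \sSet \to \uCog$ and $\DDD \otimes -$ are left adjoints, we have $\DDD \otimes DK^c(\partial\Delta[1]) = \DDD \otimes (\mbk \sqcup \mbk) = \DDD \sqcup \DDD$. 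The underlying chain-complex map $DK^c(\partial\Delta[1]) \hookrightarrow DK^c(\Delta[1])$ is a monomorphism, and tensoring over a field with $\DD$ preserves monomorphisms, so $i$ is a monomorphism in $\CCC$-coalgebras, hence a cofibration by Proposition \ref{prop:cofib}.

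For the weak equivalence of $p$, I would invoke the admissible filtration machinery of Remark \ref{rmk:admissiblefilt}. Equip $\DDD$ with its coradical filtration $F_n^{rad}\DD$ and $\DDD \otimes DK^c(\Delta[1])$ with the induced filtration $F_n := F_n^{rad}\DD \otimes DK^c(\Delta[1])$. To verify admissibility of these two filtrations, exhaustiveness is immediate, compatibility with $\Delta$ follows from Proposition \ref{prop:cogtech}, and stability under $d$ reduces to the corresponding stability for the coradical filtration of $\CCC$, itself a consequence of the fact that any coderivation on a cofree conilpotent cooperad $\Tfree^c(\VV)$ preserves the filtration by number of vertices and this transports along the canonical embedding $\delta:\CCC \to \Tfree^c(\ov\CC)$. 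The remaining point $d^2(F_n) \subset F_{n-1}$ uses the identity $d^2 = (\theta \circ Id)\Delta$, Proposition \ref{prop:cogtech}, and the vanishing $\theta|_{F_0^{rad}\CC} = 0$: this is automatic for degree reasons, since $\theta$ has degree $-2$ whereas $1_\CC$ lies in $\CC(1)_0$, so $\theta(1_\CC) \in \mbk_{-2} = 0$.

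The map $p$ respects these filtrations and on associated graded pieces becomes $Id \otimes \epsilon : G_n^{rad}\DD \otimes DK^c(\Delta[1]) \to G_n^{rad}\DD$, where $\epsilon : DK^c(\Delta[1]) \to \mbk$ is the augmentation. A direct calculation on the three-generator complex with basis $v_0, v_1, e$ and $de = v_1 - v_0$ shows that $\epsilon$ is a quasi-isomorphism of chain complexes; tensoring with $G_n^{rad}\DD$ over the field $\mbk$ preserves quasi-isomorphisms, so each induced map is a quasi-isomorphism. Hence $p$ is a filtered quasi-isomorphism with respect to admissible filtrations, and by Remark \ref{rmk:admissiblefilt} it is a weak equivalence. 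Finally, the composite $p \circ i$ is induced by $\partial\Delta[1] \to \Delta[0]$, which yields the fold map $\mbk \sqcup \mbk \to \mbk$ after $DK^c$ and therefore the fold $\DDD \sqcup \DDD \to \DDD$ after tensoring, completing the cylinder diagram.

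The main obstacle is the $d^2$-strictness verification, where the degree argument giving $\theta(1_\CC) = 0$ is the key ingredient that makes the coradical filtration admissible on $\DDD \otimes DK^c(\Delta[1])$; the rest of the proof parallels the computation in Proposition \ref{prop:filtered:we} mutatis mutandis. The nonsymmetric hypothesis intervenes only insofar as it permits using the coassociative (but not cocommutative) coalgebra $DK^c(\Delta[1])$ as the tensoring factor, placing us in the enrichment of $\CCC$-coalgebras over $\uCog$.
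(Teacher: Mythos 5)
Your proof is correct and follows essentially the same route as the paper: the paper's argument is the one-line observation that $G^{rad}_n(\DD \otimes DK^c(\Delta[1])) = G^{rad}_n(\DD)\otimes DK^c(\Delta[1])$ together with the quasi-isomorphism $DK^c(\Delta[1]) \to \mbk$, so the projection is a filtered quasi-isomorphism and hence a weak equivalence by Proposition \ref{prop:filtered:we}. Your only deviation is to run the filtered-quasi-isomorphism step through the admissible filtration $F^{rad}_n\DD \otimes DK^c(\Delta[1])$ of Remark \ref{rmk:admissiblefilt} rather than identifying it with the coradical filtration of the tensor product --- a harmless, slightly more careful variant --- and you make explicit the cofibration half that the paper leaves implicit.
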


\begin{proof}[Proof]
Since $G^{rad}_n (\DD \otimes DK^c(\Delta[1])) = G^{rad}_n(\DD) \otimes DK^c(\Delta[1])$ and since the map $DK^c(\Delta[1]) \ra \mbk$ is a quasi-isomorphism, then $\DD \otimes DK^c(\Delta[1]) \ra \DD$ is a filtered quasi-isomorphism and so a weak equivalence.
\end{proof}

\subsection{Simplicial enrichment in the nonsymmetric context}

\begin{prop}\label{prop:enrichhomotop}
 In the nonsymmetric context, the assignment $\DDD, \DDD' \mapsto \HOM^{ns}(\DDD,\DDD')$ defines an homotopical enrichment of the category of $\CCC$-coalgebras together its $\alpha$-model structure over the category of simplicial sets. Moreover, if $\DDD'$ is fibrant $ \HOM^{ns}(\DDD,\DDD')$ provides a model for the mapping space $\Map(\DDD,\DDD')$.
\end{prop}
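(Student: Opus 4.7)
The plan is to reduce everything to the adjunction that governs the enrichment and then invoke the filtered-quasi-isomorphism criterion of Proposition~\ref{prop:filtered:we}. By Proposition~\ref{prop:nsenrichsimp}, or equivalently by the tensored-cotensored-enriched structure of $\Ccog$ over $\uCog$ combined with the Dold--Kan adjunction, one has for any simplicial set $X$ which is a finite colimit of standard simplices
$$
\hom_\sSet\bigl(X,\HOM^{ns}(\DDD,\DDD')\bigr)\simeq\hom_{\Ccog}\bigl(\DDD\otimes DK^c(X),\DDD'\bigr).
$$
Therefore the homotopical enrichment condition of Definition~\ref{defin:almosthomotop} is equivalent, via adjunction, to the following pushout-product property: for every monomorphism $i:X\hookrightarrow Y$ of simplicial sets and every cofibration $f:\DDD\hookrightarrow\DDD'$ in $\Ccog$, the corner map
$$
f\square i\ :\ \DDD\otimes DK^c(Y)\ \sqcup_{\DDD\otimes DK^c(X)}\ \DDD'\otimes DK^c(X)\ \longrightarrow\ \DDD'\otimes DK^c(Y)
$$
is a cofibration in $\Ccog$, which is acyclic whenever $f$ is an acyclic cofibration or $i$ is an acyclic monomorphism.

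First I would observe that cofibrations in $\Ccog$ are exactly monomorphisms (Proposition~\ref{prop:cofib}) and $DK^c$ carries monomorphisms to monomorphisms. Since the right action $-\otimes-$ of $\uCog$ on $\Ccog$ is built from the tensor product of graded $\mbk$-modules, the pushout-product of two monomorphisms in $\Ccog$ is a monomorphism (it suffices to check this on underlying graded modules, where it is classical), hence a cofibration. For acyclicity, equip each coalgebra $\DDD\otimes DK^c(Y)$ with the filtration obtained by tensoring the coradical filtration of $\DDD$ with all of $DK^c(Y)$; this is an admissible filtration in the sense of Remark~\ref{rmk:admissiblefilt}. If $f$ is an acyclic cofibration, the associated graded of $f\square i$ is, in each level, a tensor product of an acyclic monomorphism of complexes with a monomorphism, hence a quasi-isomorphism by the Künneth formula, so Proposition~\ref{prop:filtered:we} applies. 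Symmetrically, if $i$ is an acyclic monomorphism of simplicial sets, then $DK^c(i)$ is an acyclic monomorphism of coalgebras (Dold--Kan), and the same filtration argument concludes. The main obstacle here is bookkeeping a filtration that is stable under both coderivations involved; the admissible-filtration notion from Remark~\ref{rmk:admissiblefilt} is tailored exactly for that.

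For the second assertion, I need to show that when $\DDD'$ is fibrant, $\HOM^{ns}(\DDD,\DDD')$ models $\Map(\DDD,\DDD')$. Since every object of $\Ccog$ is cofibrant (Proposition~\ref{prop:cofib}), it suffices to exhibit $\DDD\otimes DK^c(\Delta[\bullet])$ as a cosimplicial frame on $\DDD$, i.e.\ a Reedy cofibrant cosimplicial resolution. Reedy cofibrancy reduces to showing that each latching map $\DDD\otimes DK^c(\partial\Delta[n])\hookrightarrow\DDD\otimes DK^c(\Delta[n])$ is a cofibration, which follows as above from the fact that $DK^c$ turns monomorphisms into monomorphisms. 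That every coface $\DDD\otimes DK^c(\Delta[n])\to\DDD\otimes DK^c(\Delta[m])$ is a weak equivalence follows from the filtered-quasi-isomorphism argument already used in Proposition~\ref{prop:proptechns}: the associated graded reduces to tensoring a quasi-isomorphism of acyclic complexes of simplicial chains with $G^{rad}_n\DDD$. Once this cosimplicial frame is in place, the standard formula for mapping spaces in a model category gives
$$
\Map(\DDD,\DDD')_n\ \simeq\ \hom_{\Ccog}\bigl(\DDD\otimes DK^c(\Delta[n]),\DDD'\bigr)\ =\ \HOM^{ns}(\DDD,\DDD')_n,
$$
which finishes the proof.
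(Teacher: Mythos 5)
Your overall architecture matches the paper's: translate the enrichment condition through the adjunction $\hom_\sSet(X,\HOM^{ns}(\DDD,\DDD'))\simeq\hom_{\Ccog}(\DDD\otimes DK^c(X),\DDD')$, reduce to a pushout-product statement, verify that the corner map is a monomorphism (hence a cofibration by Proposition \ref{prop:cofib}), handle acyclicity case by case, and finish the mapping-space claim by exhibiting $(\DDD\otimes DK^c(\Delta[n]))_{n}$ as a Reedy cofibrant resolution of the constant cosimplicial object. All of that is sound and is essentially what the paper does.

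There is, however, a genuine gap in the case where the cofibration $f:\DDD\hookrightarrow\DDD'$ of $\CCC$-coalgebras is \emph{acyclic}. You claim that the associated graded of $f\square i$ for the coradical-type filtration is then ``a tensor product of an acyclic monomorphism of complexes with a monomorphism.'' But an acyclic cofibration in the $\alpha$-model structure is by definition a monomorphism $f$ such that $\Omega_\alpha(f)$ is a quasi-isomorphism; it need not be a filtered quasi-isomorphism, nor even a quasi-isomorphism of underlying complexes. Proposition \ref{prop:filtered:we} only gives the implication in the other direction (filtered quasi-isomorphisms are weak equivalences), so there is no acyclic map of complexes available at the associated-graded level and your K\"unneth argument does not start. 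The paper circumvents this by a completely different mechanism for this case: it uses the identity $\Omega_\alpha(\DDD\otimes DK^c(X))=(\Omega_\alpha\DDD)\triangleleft DK^c(X)$ together with the fact that $[DK^c(X),-]$ preserves fibrations and acyclic fibrations of $\PPP$-algebras, so that $-\triangleleft DK^c(X)$ preserves acyclic cofibrations of $\PPP$-algebras; this shows directly that $\DDD\otimes DK^c(X)\to\DDD'\otimes DK^c(X)$ is an acyclic cofibration, after which the $2$-out-of-$3$ argument you use elsewhere closes the case. Your treatment of the case where $i$ is an acyclic monomorphism of simplicial sets, and of the Reedy cofibrancy and coface weak equivalences for the second assertion, is correct and agrees with the paper; you should just replace the filtration argument in the $f$-acyclic case by the adjunction argument above.
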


\begin{proof}[Proof]
 Let $f: \DDD \ra \DDD'$ be a cofibration of $\CCC$-coalgebras, let $g: \EEE \ra \EEE'$ be a fibration of $\CCC$-coalgebras and let $h:X \ra Y$ be a cofibration (i.e. a monomorphism) of simplicial sets. Consider the following square.
 $$
 \xymatrix{X \ar[r] \ar[d] & \HOM^{ns}(\DDD', \EEE) \ar[d]\\
  Y \ar[r] & \HOM^{ns}(\DDD', \EEE') \times_{\HOM^{ns}(\DDD,\EEE')}  \HOM^{ns}(\DDD,\EEE)}
 $$
 It induces a square 
 $$
 \xymatrix{ \DDD' \otimes DK^c(X) \coprod_{\DDD \otimes DK^c(X)}  \DDD \otimes DK^c(Y) \ar[r] \ar[d] & \EEE \ar[d] \\
 \DDD' \otimes DK^c(Y)  \ar[r] & \EEE'\ .}
 $$
 The left vertical map is a monomorphism and so a cofibration.
\begin{itemize}
 \itemt If the morphism $g:\EEE \ra \EEE'$ is an acyclic fibration, then the square has a lifting.
 \itemt Suppose that the morphism $h:X \ra Y$ is an acyclic cofibration. Then the morphism $\DDD \otimes DK^c(X) \ra \DDD \otimes DK^c(Y)$ is a filtered quasi-isomorphism and a cofibration, so it is an acyclic cofibration. Hence, its pushout $\DDD' \otimes DK^c(X) \ra \DDD' \otimes DK^c(X) \coprod_{\DDD \otimes DK^c(X)} \DDD \otimes DK^c(Y)$ is also an acyclic cofibration. Moreover, the map $\DDD' \otimes DK^c(X) \ra \DDD' \otimes DK^c(Y)$ is a filtered quasi-isomorphism and so a weak equivalence. So, by the 2-out-of-3 rule, the morphism $ \DDD' \otimes DK^c(X) \coprod_{ \DDD\otimes DK^c(X)}  \DDD \otimes DK^c(Y) \ra  \DDD' \otimes DK^c(Y)$ is a weak equivalence. Since, it is a cofibration, it is an acyclic cofibration and the square has a lifting.
 \itemt Suppose that the morphism $f:\DDD \ra \DDD'$ is an acyclic cofibration. Then, the morphism $\DDD \otimes DK^c(X) \ra \DDD' \otimes DK^c(X)$ is an acyclic cofibration. This is a consequence of the fact that, $\Omega_\alpha (\DDD \otimes DK^c(X)) =  (\Omega_\alpha \DDD) \triangleleft  DK^c(X)$, and that for any fibration of $\PPP$-algebras $\AAA \ra \AAA'$, the morphism $[DK^c(X), \AAA] \ra [DK^c(X), \AAA']$ is also a fibration. Then, the same arguments as in the previous point show us that the morphism $\DDD' \otimes DK^c(X) \coprod_{\DDD \otimes DK^c(X)} \DDD \otimes DK^c(Y) \ra \DDD' \otimes DK^c(Y)$ is an acyclic cofibration and so the square has a lifting.
\end{itemize}
In particular, we have proved that any coface map $\DDD \otimes DK^c(\Delta[n]) \to \DDD \otimes DK^c(\Delta[n+1])$ is an acyclic cofibration. Moreover, for any integer $n$, the morphism $\DDD \otimes DK^c(\partial\Delta[n]) \to \DDD \otimes DK^c(\Delta[n])$ is a cofibration. So, the cosimplicial $\CCC$-coalgebra $(\DDD \otimes DK^c(\Delta[n]))_{n \in \mbn}$ is a Reedy cofibrant  replacement of the constant cosimplicial $\CCC$-coalgebra $\DDD$. So, if $\DDD'$ is fibrant the simplicial set $\left(\hom_{\Ccog}(\DDD \otimes DK^c(\Delta[n]), \DDD')\right)_{n \in \mbn}$ which is isomorphic to $\HOM^{ns}(\DDD,\DDD')$ is a model for the mapping space $\Map(\DDD,\DDD')$.
\end{proof}

\subsection{Changing operads and cooperads} In this subsection, we explore how the left induced model structure on coalgebras over a curved conilpotent cooperad is modified when we change the underlying operadic twisting morphism. This is inspired by \cite{DrummondColeHirsh14} where a similar study is done in the context augmented dg operads and dg conilpotent cooperads. \\

Recall first that a morphism of dg operads $f:\PPP \ra \QQQ$ induces an adjunction between their categories of algebras
 $$
\xymatrix{\Palg \ar@<1ex>[r]^(0.5){f_!} & \Qalg \ar@<1ex>[l]^(0.5){f^*}}
$$
whose right adjoint $f^*$ sends a $\QQQ$-algebra $\AAA$ to the same underlying chain complex. This adjunction is a Quillen adjunction with respect to the projective model structures; see \cite{BergerMoerdijk03}. Similar things happen for coalgebras over curved conilpotent cooperads.

\begin{prop}
 Let $f:\CCC \ra \DDD$ be a morphism of curved conilpotent cooperads. It induces an adjunction between their categories of coalgebras
 $$
\xymatrix{\Ccog \ar@<1ex>[r]^(0.5){f_*} & \Dcog \ar@<1ex>[l]^(0.5){f^!}}
$$
whose left adjoint $f_*$ sends a $\CCC$-coalgebra $\EEE$ to the same underlying graded $\mbk$-module.
\end{prop}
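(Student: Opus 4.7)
The plan is to construct the left adjoint $f_*$ explicitly, then obtain the right adjoint $f^!$ abstractly via the adjoint functor theorem for presentable categories, namely Proposition \ref{prop:presentfolk}.

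First, I define $f_*$ on objects: given a $\CCC$-coalgebra $\EEE=(\EE,\Delta_\EEE,d_\EEE)$, set $f_*\EEE$ to be the same underlying graded $\mbk$-module $\EE$ equipped with the coderivation $d_\EEE$ and the decomposition map
$$
\Delta^f_\EEE := (f \circ Id_\EE) \Delta_\EEE : \EE \ra \DD \circ \EE\ .
$$
I then need to verify that this indeed defines a $\DDD$-coalgebra. The coassociativity of $\Delta^f_\EEE$ follows from the coassociativity of $\Delta_\EEE$ together with $\Delta_\DDD f = (f \circ f)\Delta_\CCC$; the counit axiom from $\epsilon_\DDD f = \epsilon_\CCC$; the coderivation identity $\Delta^f_\EEE d_\EEE = (d_\DDD \circ Id + Id \circ' d_\EEE)\Delta^f_\EEE$ from $f d_\CCC = d_\DDD f$; and finally the curvature identity $d_\EEE^2 = (\theta_\DDD \circ Id)\Delta^f_\EEE$ from $\theta_\DDD f = \theta_\CCC$. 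These are all immediate from the definition of a morphism of curved cooperads. On morphisms $f_*$ is the identity map of underlying graded modules, which is clearly a morphism of $\DDD$-coalgebras; functoriality is obvious.

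To obtain the right adjoint, I invoke Proposition \ref{prop:presentfolk}: the categories $\Ccog$ and $\Dcog$ are both presentable by Theorem \ref{thm:cogcompact}, so it suffices to show that $f_*$ preserves colimits. But this is immediate: as recalled in the proof of the cocompleteness lemma for coalgebras, colimits in $\Ccog$ (and in $\Dcog$) are computed at the level of graded $\mbk$-modules with the induced decomposition and coderivation. Since $f_*$ leaves the underlying graded module untouched and the induced decomposition $(f \circ Id)\Delta_\EEE$ is natural in $\EEE$, the functor $f_*$ commutes with arbitrary colimits. Hence $f_*$ admits a right adjoint $f^!$.

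The only step requiring any thought is checking that the pushed-forward decomposition still defines a $\DDD$-coalgebra, and this is a routine diagram chase using the defining properties of a morphism of curved cooperads; there is no analytic or homotopical difficulty, since both the existence of $f^!$ and the colimit computation in $\Ccog$ are already in hand from earlier in the paper.
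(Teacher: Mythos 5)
Your proposal is correct and follows essentially the same route as the paper: define $f_*$ by postcomposing the decomposition map with $f \circ Id$, then deduce the existence of the right adjoint $f^!$ from presentability of both categories and the fact that $f_*$ preserves colimits, via Proposition \ref{prop:presentfolk}. The only difference is that you spell out the verification of the $\DDD$-coalgebra axioms, which the paper leaves implicit.
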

 
\begin{proof}[Proof]
Let $\EEE=(\EE,\Delta, d)$ be a $\CCC$-coalgebra. It has a structure of $\DDD$-coalgebra defined by the composite map
$$
\EE \xrightarrow{\Delta} \CCC \circ \EE \xrightarrow{f \circ Id} \DDD \circ \EE\ .
$$ 
This defines the functor $f_*$. Since it preserves colimits and since the category of $\CCC$-coalgebras and the category of $\DDD$-coalgebras are presentable, then $f_*$ has a right adjoint by Proposition \ref{prop:presentfolk}.
\end{proof}

Besides, let us fix a dg operad $\PPP$. The canonical operadic twisting morphism $\pi: B_c\PPP \ra \PPP$ is universal in the sense that any operadic twisting morphism $\alpha$ from a curved conilpotent cooperad $\CCC$ to $\PPP$ is equivalent to a morphism of curved cooperads $f$ from $\CCC$ to $B_c \PPP$; then, $\alpha=\pi f$. In that context, the cobar functor $\Omega_\alpha$ can be decomposed as $
\Omega_\alpha = \Omega_\pi\  f_*\ ,
$ and the $\alpha$-model structure on the category of $\CCC$-coalgebras is the model structure left induced by the $\pi$-model structure on the category of $B_c \PPP$-coalgebras.\\

On the other hand, let us fix a curved conilpotent cooperad $\CCC$. The canonical operadic twisting morphism $\iota: \CCC \to \Omega_u \CCC$ is universal in the sense that any operadic twisting morphism $\alpha: \CCC \ra \PPP$ is equivalent to the data of a morphism of operads $f$ from $\Omega_u \CCC$ to $\PPP$; then $\alpha =f \iota$. A direct consequence of the following proposition is that the model structure on $\CCC$-coalgebras induced by the universal operadic twisting morphism $\iota: \CCC \ra \Omega_u \CCC$ is universal in the sense that any $\alpha$-model structure is a left Bousfield localization of this $\iota$-model structure.

\begin{prop}\label{prop:bousfieldloc}
 Let $\alpha: \CCC \to \PPP$ be an operadic twisting morphism and let $f: \PPP \to \QQQ$ be a morphism of dg operads. The $(f \alpha)$-model structure on the category of $\CCC$-coalgebras is the left Bousfield localization of the $\alpha$-model structure with respect to $(f \alpha)$-weak equivalences. Moreover, if the Quillen adjunction $f_! \dashv f^*$ is a Quillen equivalence, the $(f \alpha)$-model structure coincides with the $\alpha$-model structure. 
\end{prop}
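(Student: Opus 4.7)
The plan rests on the identity $\Omega_{f\alpha} = f_! \circ \Omega_\alpha$ as functors $\Ccog \to \Qalg$. To establish this I would observe that $\Omega_\alpha \DDD = (\PP \circ \DD, d_\alpha)$ is free as a graded $\PPP$-algebra, so $f_!$ sends its underlying graded $\PPP$-algebra to the free graded $\QQQ$-algebra $\mathcal{Q} \circ \DD$. The twisted derivation $d_\alpha$ is determined by its restriction $x \mapsto d_\DD x - (\alpha \circ Id)\Delta(x)$, and pushing this forward along $f$ replaces $\alpha$ by $f\alpha$, yielding exactly $d_{f\alpha}$. This is the only non-formal calculation in the argument, and I expect it to be the most delicate step.

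Next I would note that by Proposition \ref{prop:cofib} both the $\alpha$- and $(f\alpha)$-model structures on $\Ccog$ have monomorphisms of $\CCC$-coalgebras as cofibrations, so they share the same class of cofibrations and every $\CCC$-coalgebra is cofibrant in both. Moreover, $\Omega_\alpha \DDD$ is always cofibrant as a $\PPP$-algebra: the left adjoint $\Omega_\alpha$ preserves the initial object, the initial $\PPP$-algebra is cofibrant, and the unique map from the initial $\CCC$-coalgebra to $\DDD$ is a monomorphism, hence an $\alpha$-cofibration.

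For the first assertion I would use the fact that $f_! \dashv f^*$ is a Quillen adjunction on the projective model structures (see \cite{BergerMoerdijk03}), apply Ken Brown's lemma to conclude that $f_!$ preserves weak equivalences between cofibrant $\PPP$-algebras, and combine this with the previous paragraph to deduce that every $\alpha$-weak equivalence is an $(f\alpha)$-weak equivalence. Thus the $(f\alpha)$-structure shares its cofibrations with the $\alpha$-structure while having a (possibly strictly) larger class of weak equivalences, which is precisely what it means to be the left Bousfield localization at the $(f\alpha)$-weak equivalences.

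For the final sentence, if $f_! \dashv f^*$ is a Quillen equivalence then $\mathbb{L}f_! = f_!$ on cofibrant objects is an equivalence of homotopy categories, so $f_!$ additionally reflects weak equivalences between cofibrant objects. Consequently $\Omega_\alpha(g)$ is a weak equivalence if and only if $\Omega_{f\alpha}(g) = f_! \Omega_\alpha(g)$ is, so the $\alpha$- and $(f\alpha)$-model structures have the same cofibrations and the same weak equivalences, and therefore coincide.
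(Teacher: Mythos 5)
Your proof is correct and follows essentially the same route as the paper: both arguments rest on the identity $\Omega_{f\alpha}=f_!\,\Omega_\alpha$, the fact that both model structures have the monomorphisms as cofibrations, and the fact that the left Quillen functor $f_!$ preserves weak equivalences between cofibrant objects such as $\Omega_\alpha\DDD$. The only (immaterial) divergence is in the last step, where the paper deduces reflection of weak equivalences from the derived unit $\Omega_\alpha\EEE\to f^*f_!\Omega_\alpha\EEE$ being a quasi-isomorphism together with $f^*$ being the identity on underlying chain complexes, whereas you invoke the equivalent general fact that a left Quillen equivalence reflects weak equivalences between cofibrant objects.
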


\begin{proof}[Proof]
 The cofibrations of the $\alpha$-model structure and the cofibrations of the $(f \alpha)$-model structure are both the monomorphisms. Moreover, the functor $f_!$ is a left Quillen adjoint functor. So, for any $\alpha$-weak equivalence $g$, since $\Omega_\alpha (g)$ is a weak equivalence between cofibrant objects, then $\Omega_{(f\alpha)}(g) = f_! \Omega_\alpha(g)$ is a weak equivalence. So the $\alpha$-weak equivalences are in particular $(f\alpha)$-weak equivalences. So is proven the fact that the $(f \alpha)$-model structure is a left Bousfield localization of the $\alpha$-model structure. Suppose now that the adjunction $f_! \dashv f^*$ is a Quillen equivalence. Then for any $\CCC$-coalgebra $\EEE$, the morphism
 $$
 \Omega_\alpha \EEE \ra f^* f_! \Omega_\alpha \EEE = f^* \Omega_{f\alpha} \EEE 
 $$
 is a quasi-isomorphism. Since the functor $f^*$ is the identity on the underlying chain complexes, the following commutative square ensures us that a morphism $g:\EEE \ra \EEE'$ of $\CCC$-coalgebras is a $\alpha$-weak equivalence if and only if it is an $f\alpha$-weak equivalence. 
$$
\xymatrix{f^* \Omega_{(f\alpha)} \EEE  \ar[rr]^{f^*\Omega_{(f\alpha)} (g)} && f^* \Omega_{(f\alpha)} \EEE' \\
 \Omega_\alpha \EEE \ar[u] \ar[rr]_{\Omega_\alpha (g)}  &&\Omega_\alpha \EEE'\ . \ar[u]}
$$
\end{proof}

\section{The universal model structure}

In the previous section, we studied model structures on categories of coalgebras over a curved conilpotent cooperad which are induced by an operadic twisting morphism $\alpha$. In this section, we investigate the particular case where the operadic twisting morphism is the universal twisting morphism $\iota: \CCC \ra \Omega_u \CCC$ for any curved conilpotent cooperad $\CCC$. This model structure is universal in the sense that, for any operadic twisting morphism $\alpha: \CCC \to \PPP$, the $\alpha$-model structure on the category of $\CCC$-coalgebras is obtained from the $\iota$-model structure by Bousfield localization. We will show that the adjunction $\Omega_\iota \dashv B_\iota$ is a Quillen equivalence, that the fibrant $\CCC$-coalgebras in the $\iota$-model structure are the images of the $\Omega_u \CCC$-algebras under the functor $B_\iota$, and we will describe the cofibrations, the weak equivalences and the fibrations between them. Moreover, we will prove that the enrichment of $\CCC$-coalgebras over simplicial sets that we described above computes the mapping spaces expected by the model structure.\\

We suppose here that the characteristic of the field $\mbk$ is zero. This assumption is not necessary in the nonsymmetric context.

\subsection{Quillen equivalence}

\begin{thm}\label{thm:quilleneq}
The adjunction $\Omega_\iota \dashv B_\iota$ relating $\CCC$-coalgebras to $\Omega_u \CCC$-algebras is a Quillen equivalence.
\end{thm}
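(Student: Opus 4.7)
The plan is to apply the standard criterion for a Quillen adjunction between model categories to be a Quillen equivalence. Since Proposition \ref{prop:cofib} identifies the cofibrations of $\Ccog$ with the monomorphisms, every $\CCC$-coalgebra is cofibrant (it receives an injection from the zero coalgebra). Consequently it suffices to prove that for every fibrant $\Omega_u \CCC$-algebra $\AAA$ the counit $\varepsilon_\AAA : \Omega_\iota B_\iota \AAA \to \AAA$ is a weak equivalence of $\Omega_u \CCC$-algebras: the Quillen equivalence criterion will then follow by a formal 2-out-of-3 argument in $\Omega_u \CCC$-algebras.

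The first key step is to establish acyclicity of $\varepsilon_\AAA$ for \emph{every} $\Omega_u \CCC$-algebra $\AAA$. Unwinding the definitions, $\Omega_\iota B_\iota \AAA$ has underlying graded object the free $\Omega_u \CCC$-algebra on $\CCC \circ \Aa$, and a differential built from $d_\PPP \otimes \Id$, $\Id \otimes d_{B_\iota \AAA}$, and the twisting contribution governed by $\iota$. I would filter this object by the total coradical degree of the two occurrences of $\CCC$ (recall that $\Omega_u \CCC = \Tfree(s^{-1} \ov\CC)$), and verify that this filtration is admissible in the sense of Remark \ref{rmk:admissiblefilt}. On the associated graded, the mixed differentials coming from the internal differential $d_\CCC$ and the curvature term drop out, leaving only the ``Koszul'' piece. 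The graded complex then identifies with $\bigl((\Omega_u \CCC) \circ_\iota \CCC\bigr) \circ \AAA$. The acyclicity of the operadic bar-cobar resolution $(\Omega_u \CCC) \circ_\iota \CCC \simeq \II$, obtained from an explicit contracting homotopy extending $s : \ov\CC \to s^{-1} \ov\CC \hookrightarrow \Omega_u \CCC$, combined with the operadic Kunneth formula (which is available in characteristic zero in the symmetric case, and unconditionally in the nonsymmetric case), then yields that each graded piece of $\varepsilon_\AAA$ is a quasi-isomorphism. Theorem \ref{maclane-homology} concludes that $\varepsilon_\AAA$ itself is a quasi-isomorphism.

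Granting Step 1, the rest is formal. For any $\CCC$-coalgebra $\DDD$ and any fibrant $\Omega_u \CCC$-algebra $\AAA$, a morphism $f : \Omega_\iota \DDD \to \AAA$ corresponds to its adjoint $g : \DDD \to B_\iota \AAA$ through the triangle
$$
\Omega_\iota \DDD \xrightarrow{\ \Omega_\iota g\ } \Omega_\iota B_\iota \AAA \xrightarrow{\ \varepsilon_\AAA\ } \AAA,
\qquad f = \varepsilon_\AAA \circ \Omega_\iota g.
$$
Since $\varepsilon_\AAA$ is a quasi-isomorphism, the 2-out-of-3 property in $\Omega_u \CCC$-algebras implies that $f$ is a quasi-isomorphism if and only if $\Omega_\iota g$ is, which by definition of the $\iota$-model structure on $\Ccog$ is equivalent to $g$ being a weak equivalence. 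This is exactly the condition for $\Omega_\iota \dashv B_\iota$ to be a Quillen equivalence (under the simplification that every object of the source is cofibrant).

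The main obstacle is Step 1: one must choose the filtration so that the full twisted differential preserves it, so that the associated graded genuinely isolates the Koszul cone, and so that the contracting homotopy on $(\Omega_u \CCC) \circ_\iota \CCC$ lifts to a contracting homotopy on the graded pieces of $\Omega_\iota B_\iota \AAA$ compatibly with the operadic composition with $\AAA$. Once this bookkeeping is settled, the symmetric-case hypothesis on characteristic zero enters only through the Kunneth formula, which explains why the theorem holds unconditionally in the nonsymmetric context.
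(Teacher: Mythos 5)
Your proposal is correct and follows essentially the same route as the paper: both reduce the Quillen equivalence to showing the counit $\Omega_\iota B_\iota \AAA \to \AAA$ is a quasi-isomorphism, prove this by filtering via the coradical filtration of $\CCC$ so that the associated graded reduces to the acyclic operadic Koszul complex $\Omega_u(G\CCC)\circ_{G\iota} G\CCC \to \II$ (the paper cites Lemma 6.5.14 of Loday--Vallette where you invoke the contracting homotopy) together with the operadic Kunneth formula and Theorem \ref{maclane-homology}, and then conclude by the standard 2-out-of-3 argument using that weak equivalences of $\CCC$-coalgebras are created by $\Omega_\iota$.
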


\begin{proof}[Proof]
 Let us show that for any $\Omega_u \CCC$-algebra $\AAA=(\Aa,\gamma_\AAA)$, the map $\Omega_\iota B_\iota \AAA = \Omega_u \CCC \circ_\iota \CCC \circ_\iota \AAA  \ra \AAA$ is a quasi isomorphism. The coradical filtration of $ \CCC$ induces a filtration on $\Omega_u \CCC$:
\begin{align*}
& F_0 \Omega_u \CCC:= \mbk . 1 \\
& F_n \Omega_u \CCC := \mbk . 1 \oplus \sum_{i_1 + \ldots + i_k=n\atop k \geq 1} s^{-1}F^{rad}_{i_1}\ov \CC \otimes \cdots \otimes s^{-1}F^{rad}_{i_k}\ov \CC\ ,\text{ for }n\geq 1\ .
\end{align*}
It induces a filtration on $\Omega_u \CCC \circ_\iota \CCC $ and on $\Omega_u \CCC \circ_\iota \CCC \circ_\iota \AAA$:
\begin{align*}
 &F_n (\Omega_u \CCC \circ_\iota \CCC):= F_n \Omega_u \CCC (0) \oplus \sum_{i_0+ \cdots + i_k=n\atop k\geq 1} F_{i_0} (\Omega_u \CCC) (k) \otimes_{\mbs_k} \big( F^{rad}_{i_1}  \CC \otimes \cdots \otimes F^{rad}_{i_k}  \CC  \big)\ ,\\
& F_n (\Omega_u \CCC \circ_\iota \CCC \circ_\iota \AAA) := F_n(\Omega_u \CCC \circ_\iota \CCC ) \circ \Aa\ .
\end{align*}
Then $G (\Omega_u \CCC \circ_\iota  \CCC) = \Omega_u (G\CCC) \circ_{G \iota}  G\CCC$. By \cite[Lemma 6.5.14]{LodayVallette12}, the map $\Omega_u (G\CCC) \circ_{G \iota}  G\CCC \ra \II$ is a quasi-isomorphism. So, the map
$$
G (\Omega_u \CCC \circ_\iota \CCC \circ_\iota \Aa ) \ra G \Aa
$$
is a quasi-isomorphism (here $G\Aa$ is the graded complex corresponding to the constant filtration $F_n \Aa = \Aa$). Hence, by Theorem \ref{maclane-homology}, the map $\Omega_u \CCC \circ_\iota \CCC \circ_\iota \AAA \ra \AAA$ is a quasi-isomorphism. Besides, for any $\CCC$-coalgebra $\DDD$, the morphism $\DDD \ra B_\iota \Omega_\iota \DDD$ is a weak equivalence by the 2-out-of-3 rule and by the fact that the map $\Omega_\iota B_\iota \Omega_\iota \DDD \ra \Omega_\iota \DDD$ is a quasi-isomorphism. Hence, the adjunction $\Omega_\iota \dashv B_\iota$ is a Quillen equivalence.
\end{proof}

\subsection{Fibrant objects} The purpose of this subsection is to describe the fibrant objects of the $\iota$-model structure.

\begin{defin}[Quasi-cofree $\CCC$-coalgebras]
 A $\CCC$-coalgebra is said to be \textit{quasi-cofree} if its underlying $\CCC^{grad}$-coalgebra is cofree, that is isomorphic to a coalgebra of the form $\CCC^{grad} \circ \VV$. A morphism of quasi-cofree $\CCC$-coalgebras $F: \CCC \circ \VV \ra \CCC \circ \WW$ (together with choices of cogenerators $\VV$ and $\WW$) is said to be \textit{strict} if there exists a map $f: \VV \ra \WW$ such that $F= Id \circ f$.
\end{defin}

\begin{prop}\label{prop:ccoghpalg}
The functor $B_\iota$ is an embedding of the category of $\Omega_u \CCC$-algebras into the category of $\CCC$-coalgebras whose essential image is spanned by quasi-cofree $\CCC$-coalgebras. Moreover, a morphism of $\CCC$-coalgebras $B_\iota \AAA = \CCC \circ_\iota \AAA \to B_\iota \AAA' = \CCC \circ_\iota \AAA'$ is in the image of $B_\iota$ if and only if it is strict.
\end{prop}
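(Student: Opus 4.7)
The plan is to exploit Proposition~\ref{prop:curvedshortcut}, which encodes $\CCC$-coalgebra structures on a cofree graded coalgebra $\CCC^{grad} \circ \VV$ in terms of degree $-1$ maps $\phi: \CC \circ \VV \to \VV$ satisfying $\phi D_\phi = \theta \circ Id_\VV$, together with the universal property of the cobar construction from Proposition~\ref{prop:barcobarcurvedaslv}. The argument splits into three parts: (i) essential surjectivity of $B_\iota$ onto quasi-cofree coalgebras, (ii) the formula $B_\iota(f) = Id_\CC \circ f$ showing that bar constructions of algebra morphisms are strict, and (iii) the converse, that every strict morphism arises from a unique morphism of $\Omega_u \CCC$-algebras. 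Faithfulness of $B_\iota$ is automatic, since one recovers $f$ from $B_\iota(f): \CCC \circ \Aa \to \CCC \circ \Aa'$ by projecting the codomain onto its cogenerators $\Aa'$.

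First I would note that $B_\iota \AAA = \CCC \circ_\iota \AAA$ is by construction quasi-cofree, and that its coderivation corresponds, via Proposition~\ref{prop:curvedshortcut}, to the map $\phi_\AAA = d_\Aa (\epsilon_\CCC \circ Id) + \gamma_\AAA (\iota \circ Id)$. Conversely, starting from a quasi-cofree $\CCC$-coalgebra $(\CCC^{grad} \circ \VV, D_\phi)$, I would decompose $\phi$ along $\CC = \II \oplus \ov\CC$ as $\phi = d_\VV (\epsilon_\CCC \circ Id) + \bar\phi$, with $d_\VV: \VV \to \VV$ of degree $-1$ and $\bar\phi: \ov \CC \circ \VV \to \VV$ corresponding by adjunction to a degree $-1$ map $\bar \alpha : \ov \CC \to \End_\VV$ of $\mbs$-modules. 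Unpacking the identity $\phi D_\phi = \theta \circ Id_\VV$ and separating the contributions coming from the $\II$-part versus the $\ov\CC$-part of $\Delta$ then yields precisely $d_\VV^2 = 0$ and the operadic twisting morphism equation $\partial(\bar \alpha) + \gamma_{\End_\VV}(\bar\alpha \otimes \bar\alpha) \Delta_2 = \Theta$ for $\bar\alpha$. By Proposition~\ref{prop:barcobarcurvedaslv}, $\bar\alpha$ then lifts to a morphism of dg operads $\Omega_u \CCC \to \End_\VV$, i.e., to an $\Omega_u \CCC$-algebra structure on $(\VV, d_\VV)$, and a direct check shows that the bar construction of this algebra is identified with the given coalgebra under the identity of the underlying graded coalgebras.

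For the morphism part, the formula $B_\iota(f) = Id_\CC \circ f$ makes one direction transparent: bar constructions of algebra morphisms are strict. Conversely, given a strict morphism $F = Id_\CC \circ f : \CCC \circ_\iota \AAA \to \CCC \circ_\iota \AAA'$ that commutes with the coderivations, projecting the identity $D_{\AAA'} F = F D_\AAA$ onto the cogenerators $\Aa'$ reduces the commutation to the single equation $f \circ \phi_\AAA = \phi_{\AAA'} \circ (Id_\CC \circ f)$. Restricting to $\II \circ \Aa$ yields $f d_\Aa = d_{\Aa'} f$, while restricting to $\ov \CC \circ \Aa$ expresses that $f$ intertwines the two twisting morphisms $\bar \alpha_\AAA$ and $\bar \alpha_{\AAA'}$; since $\Omega_u \CCC$ is generated as an operad by (the desuspension of) $\CC$, this is equivalent to $f$ being a morphism of $\Omega_u \CCC$-algebras. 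The main obstacle throughout is the bookkeeping in the essential surjectivity step, where one has to split $D_\phi$ according to the arity of the decomposition map and the coderivation on $\CCC$, and carefully track which compositions project to $\II \circ \VV$ versus $\ov \CC \circ \VV$ after applying $\phi$; once this is done the identifications are natural and functorial, yielding the desired embedding onto the subcategory of quasi-cofree $\CCC$-coalgebras with strict morphisms.
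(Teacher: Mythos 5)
Your proposal is correct and follows essentially the same route as the paper: identify the coderivation of a quasi-cofree $\CCC$-coalgebra with a map $d_\Aa \oplus \gamma$ on cogenerators, read off from the curvature condition that $\gamma$ determines an operadic twisting morphism $\ov\CC \to \End_\Aa$ and hence, via Proposition \ref{prop:barcobarcurvedaslv}, an $\Omega_u\CCC$-algebra structure, and then check that strict morphisms commuting with coderivations are exactly bar constructions of algebra morphisms. Your version merely spells out the bookkeeping (in particular the derivation of $d_\VV^2=0$ and the splitting of $\phi D_\phi = \theta \circ Id_\VV$) that the paper leaves implicit.
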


\begin{proof}[Proof]
It is straightforward to prove that the functor $B_\iota$ is faithful and conservative. Moreover it is clear that the images of the functor $B_\iota$ are in particular quasi-cofree $\CCC$-coalgebras and strict morphisms. Conversely, let $\DDD:= \CCC \circ \Aa$ be a quasi-cofree $\CCC$-coalgebra. Its coderivation extends the degree $-1$ map $d_\Aa \oplus \gamma: \Aa \oplus \ov \CC \circ \Aa \ra \Aa$. The map $\gamma$ gives us a degree $-1$ map from $\ov \CC$ to the operad $\End_\Aa$. Since, the coderivation which extends $d_\Aa \oplus \gamma$ squares to $(\theta \circ Id)\Delta$, then $\alpha$ is a twisting morphism and so induces a morphism of operads from $\Omega_u \CCC$ to $\End_\Aa$, which is an $\Omega_u \CCC$-algebra structure on $\Aa$. Then, $\DDD \simeq B_\iota \Aa$. Besides, let $F= Id \circ f$ be a strict morphism from $B_\iota \AAA$ to $B_\iota \BBB$. Since $F$ commutes with the coderivations, then $f$ is a morphism of $\Omega_u\CCC$-algebras.
\end{proof}

\begin{thm}
The fibrant $\CCC$-coalgebras in the $\iota$-model structure are the quasi-cofree $\CCC$-coalgebras (and so the objects in the essential image of the functor $B_\iota$).
\end{thm}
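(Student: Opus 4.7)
The plan is to establish both inclusions. The easy direction, that every quasi-cofree $\CCC$-coalgebra is fibrant, follows directly from the Quillen adjunction: in the projective model structure on $\Omega_u\CCC$-algebras the fibrations are the degreewise surjections, so every object is fibrant, and by Theorem \ref{thmprincipal} the right adjoint $B_\iota$ preserves fibrant objects. Combined with Proposition \ref{prop:ccoghpalg}, which identifies the essential image of $B_\iota$ with the quasi-cofree $\CCC$-coalgebras, this gives the easy inclusion.

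For the converse, let $\DDD$ be fibrant and consider the adjunction unit $\eta_\DDD \colon \DDD \to B_\iota\Omega_\iota \DDD$. I would first check that $\eta_\DDD$ is an acyclic cofibration. It is a weak equivalence by Theorem \ref{thm:quilleneq}: since every $\CCC$-coalgebra is cofibrant and every $\Omega_u\CCC$-algebra is fibrant, the unit of a Quillen equivalence is a weak equivalence at every object. It is also a monomorphism, since on underlying graded modules it factors as the coaction $\Delta_\DDD \colon \DD \to \CC \circ \DD$, injective because $(\epsilon \circ \mathrm{Id}) \circ \Delta_\DDD = \mathrm{Id}_\DD$, followed by the inclusion $\CC \circ \DD \hookrightarrow \CC \circ (\Omega_u\CCC \circ \DD)$ induced by the unit $\II \hookrightarrow \Omega_u\CCC$ of the operad. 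Monomorphisms are cofibrations by Proposition \ref{prop:cofib}.

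Using the fibrancy of $\DDD$, one lifts $\mathrm{Id}_\DDD$ along the acyclic cofibration $\eta_\DDD$ to produce a retraction $r \colon B_\iota \Omega_\iota \DDD \to \DDD$ with $r \circ \eta_\DDD = \mathrm{Id}_\DDD$. This exhibits $\DDD$ as a retract of the quasi-cofree coalgebra $B_\iota \Omega_\iota \DDD$, and it remains to conclude that $\DDD$ itself is quasi-cofree.

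This last step is the main obstacle. The subtlety is that arbitrary morphisms between quasi-cofree coalgebras need not be strict (Proposition \ref{prop:ccoghpalg}), so the idempotent $\eta_\DDD \circ r$ on $B_\iota \Omega_\iota \DDD$ does not a priori come from an idempotent of the underlying $\Omega_u\CCC$-algebra and cannot be split on the algebra side. My plan to overcome this is to examine the coradical filtration: the primitives $F_0^{rad}\DD$ embed under $\eta_\DDD$ into $F_0^{rad}(B_\iota \Omega_\iota \DDD) \cong \Omega_\iota \DDD$, producing a graded submodule $\WW \subset \Omega_\iota \DDD$, and then to argue by induction on the coradical filtration, using the retraction $r$ together with the cofree $\CCC^{grad}$-coalgebra structure of $B_\iota \Omega_\iota \DDD$, to construct an isomorphism $\DDD \simeq \CCC^{grad} \circ \WW$ of $\CCC^{grad}$-coalgebras. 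By Proposition \ref{prop:ccoghpalg} the resulting quasi-cofree structure is equivalent to an $\Omega_u\CCC$-algebra structure on $\WW$ with $\DDD \cong B_\iota \WW$, completing the proof.
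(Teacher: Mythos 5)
Your proposal follows essentially the same route as the paper: quasi-cofree coalgebras are fibrant because they lie in the image of the right Quillen functor $B_\iota$ applied to (automatically fibrant) $\Omega_u\CCC$-algebras, and conversely a fibrant $\DDD$ is exhibited as a retract of $B_\iota\Omega_\iota\DDD$ by lifting the identity against the acyclic cofibration given by the adjunction unit. The ``main obstacle'' you identify --- that a retract of a quasi-cofree coalgebra is quasi-cofree --- is exactly the content of the paper's Lemma~\ref{lemma:cofreeretract}, which carries out precisely the induction on the coradical filtration you sketch (taking $F_0^{rad}\DD$ as cogenerators and checking the map $\DDD\to\CCC\circ F_0^{rad}\DD$ is an isomorphism on associated graded pieces), so your plan is sound and complete in outline.
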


\begin{proof}[Proof]
Let $\DDD$ be a fibrant object. Since the morphism $\DDD \ra B_\iota \Omega_\iota \DDD$ is an acyclic cofibration, the following square has a lifting\ .
 $$
 \xymatrix{\DDD \ar[r]^{Id} \ar[d] &  \DDD \ar[d] \\ B_{\alpha} \Omega_{\alpha} \DDD \ar[r] & {*}\ .}
 $$
Hence, $\DDD$ is a retract of a quasi-cofree $\CCC$-coalgebra. By Lemma \ref{lemma:cofreeretract}, it is a quasi-cofree $\CCC$-coalgebra. Conversely, a quasi-cofree $\CCC$-coalgebra is fibrant since it is isomorphic to the image under $B_\iota$ of a $\Omega_u\CCC$-algebra which is fibrant.
\end{proof}

\begin{lemma}\label{lemma:cofreeretract}
 A retract of a cofree graded $\CCC^{grad}$-coalgebra is a cofree graded $\CCC^{grad}$-coalgebra.
\end{lemma}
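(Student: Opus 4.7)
The strategy is to construct an explicit isomorphism $\DDD \simeq \CCC^{grad}\circ\WW$, where $\WW := F_0^{rad}\DDD$ is the coradical of $\DDD$. Write $i : \DDD \hookrightarrow \CCC^{grad}\circ\VV$ and $p : \CCC^{grad}\circ\VV \twoheadrightarrow \DDD$ for the retract data (so $p\circ i = \mathrm{Id}_\DDD$), set $e := i\circ p$ and $\phi := q_\VV\circ e$, where $q_\VV : \CCC^{grad}\circ\VV \to \VV$ is the canonical projection. Since coalgebra morphisms preserve the coradical filtration and $F_0^{rad}(\CCC^{grad}\circ\VV) = \VV$, restricting $i$ and $p$ exhibits $\WW$ as a linear retract of $\VV$; in particular $e|_\VV$ is an idempotent endomorphism of $\VV$ with image $\WW$. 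Working over a field, the inclusion $\WW \hookrightarrow \DD$ also admits a linear retraction $r : \DD \to \WW$; by the universal property of the cofree coalgebra, $r$ induces a morphism of graded $\CCC^{grad}$-coalgebras
$$
\Phi_r : \DDD \to \CCC^{grad}\circ\WW
$$
characterized by $q_\WW\circ\Phi_r = r$.

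Injectivity of $\Phi_r$ is immediate: $\Phi_r|_\WW = \mathrm{Id}_\WW$ by construction, so $\ker\Phi_r$ is a subcoalgebra with vanishing coradical, and conilpotency of $\CCC$ forces $\ker\Phi_r = 0$.

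For surjectivity, I analyze $\Phi_r$ on the associated graded for the coradical filtration. Since both filtrations are exhaustive and $\Phi_r$ preserves them, it suffices to show $G_n^{rad}\Phi_r$ is surjective for every $n$. The idempotent $e$, as a coalgebra endomorphism of the cofree coalgebra, is determined by $\phi$ via the universal-property formula
$$
e = (\mathrm{Id}_\CC \circ \phi)\circ(\Delta_\CC\circ\mathrm{Id}_\VV).
$$
Combined with the cooperad counit identity $(\epsilon\circ\mathrm{Id}_\CC)\Delta_\CC = \mathrm{Id}_\CC$ and with the fact that $\phi$ takes values in $\VV = F_0^{rad}(\CCC^{grad}\circ\VV)$, a filtration analysis (using Lemma \ref{lem:cooptech} to control $\Delta_\CC$ on $F_n^{rad}\CC$) shows that only the outer-top terms of $\Delta_\CC$ survive modulo $F_{n-1}^{rad}$ and yield the identification
$$
\bar e_n \;=\; \mathrm{Id}_{G_n^{rad}\CC}\circ (e|_\VV)
$$
of the induced idempotent on $G_n^{rad}(\CCC^{grad}\circ\VV) = G_n^{rad}\CC\circ\VV$. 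Since $e|_\VV$ is idempotent with image $\WW$, this identifies $G_n^{rad}\DDD = \mathrm{image}(\bar e_n)$ canonically with $G_n^{rad}\CC\circ\WW = G_n^{rad}(\CCC^{grad}\circ\WW)$. An entirely parallel analysis of $\Phi_r = (\mathrm{Id}_\CC\circ r)\circ\Delta_\DDD$, combined with $r|_\WW = \mathrm{Id}_\WW$, shows that $G_n^{rad}\Phi_r$ coincides with the identity of $G_n^{rad}\CC\circ\WW$, concluding the proof.

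The main technical obstacle is the filtration-level computation identifying $\bar e_n$ with $\mathrm{Id}_{G_n^{rad}\CC}\circ(e|_\VV)$: it crucially uses both the cooperad axioms and the fact that $\phi$ lands in the coradical $\VV$ of the ambient cofree coalgebra, ensuring that inner contributions drop to lower coradical degree.
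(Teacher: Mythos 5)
Your proof follows essentially the same route as the paper's: take the coradical $\WW=F_0^{rad}\DDD$ as cogenerators, build a coalgebra morphism $\DDD\to\CCC^{grad}\circ\WW$ out of a linear projection onto $\WW$, and verify it is an isomorphism on the associated graded of the coradical filtration, where the retract/idempotent identifies $G_n^{rad}\DD$ with $(G_n^{rad}\CC)\circ\WW$. One caveat: the kernel of a morphism of conilpotent coalgebras is a coideal, not a subcoalgebra (e.g.\ one can have $\ov\Delta c=a\otimes b$ with $a,c\in\ker f$ but $b\notin\ker f$), so your injectivity argument as stated is not valid; this is harmless here, since your own identification $G_n^{rad}\Phi_r=\mathrm{Id}$ already yields injectivity (or one argues by induction up the coradical filtration, as conilpotency permits).
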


\begin{proof}[Proof]
 Let $\DDD= (\DD,\Delta_\DDD)$ be a graded $\CCC^{grad}$-coalgebra which is a retract of $\CCC \circ \VV$. On the one hand, the following diagram is a retract, that is the compositions of the horizontal maps give the identity on the bottom and on the top
 $$
 \xymatrix{G^{rad}_n\DD \ar[r] \ar[d] & G^{rad}_n (\CC \circ \VV) \ar[d] \ar[r] & G^{rad}_n\DD \ar[d]\\
 (G^{rad}_n \CC) \circ F^{rad}_0 \DD \ar[r] & (G^{rad}_n \CC) \circ F^{rad}_0 (\CC \circ \VV) \ar[r]& (G^{rad}_n \CC) \circ F^{rad}_0 \DD\ .}
 $$
 Since the middle vertical map is an isomorphism, then all the vertical maps are isomorphisms. On the other hand, the map $\epsilon \circ Id: \CC \circ \VV \ra \VV = F^{rad}_0 \CC \circ \VV$ gives us a map $\DD \ra F_0 \DD$ and hence a morphism of graded $\CCC$-coalgebras $f:\DDD \ra \CCC \circ F_0 \DD$. Let us show that $f$ is an isomorphism. It is clear that the map $F_0 \DD \ra F_0 (\CC \circ F_0 \DD)$ is an isomorphism. For any integer $n \geq 1$, the following diagram is commutative
 $$
 \xymatrix{G_n(\DD) \ar[rr]^f \ar[d]_{\Delta} && G_n (\CC \circ F_0 \DD) \ar[d]^{\Delta}\\
 (G_n \CC) \circ F_0 \DD \ar[rr]_(0.4){Id \circ f} && (G_n \CC) \circ F_0 (\CC \circ F_0 \DD) =  (G_n \CC) \circ F_0 \DD\ . }
 $$
 Since the vertical maps are isomorphisms and since the bottom horizontal map is an isomorphism, then the top horizontal map is also an isomorphism. Hence, the map $Gf:G \DD \ra G(\CC \circ F_0 \DD)$ is an isomorphism. By Theorem \ref{maclane-homology}, $f$ is an isomorphism.
 \end{proof}

\subsection{Cofibrations, fibrations and weak equivalences between fibrant objects} We show here that cofibrations, weak equivalences and fibrations between fibrant $\CCC$-coalgebras are easily characterized.

\begin{prop}\leavevmode\label{thm:cofibwefib}
Let $\AAA=(\Aa,\gamma_\AAA)$ and $\BBB=(\BB,\gamma_\BBB)$ be two $\Omega_u \CCC$-algebras and let $F: B_\iota \AAA \ra B_\iota \BBB$ be a morphism between their bar constructions. We denote by $f:B_\iota \AAA \ra \BB$ its projection $f= \pi_\BB F$ on $\BB$.
\begin{itemize}
 \itemt The morphism $F$ is a cofibration if and only if the restriction $f_{|\Aa}$ is a monomorphism.
 \itemt The morphism $F$ is a weak equivalence if and only $f_{|\Aa}$ is a quasi-isomorphism.
 \itemt The morphism $F$ is a fibration if and only if $f_{|\Aa}$ is an epimorphism.
\end{itemize}
\end{prop}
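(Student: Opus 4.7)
My plan is to derive all three characterizations from a single computation of the associated graded of $F$ with respect to the coradical filtration. I first observe that the coradical filtration of the cofree graded $\CCC^{grad}$-coalgebra $B_\iota \AAA = \CCC \circ_\iota \AAA$ coincides with $F_n^{rad}(\CC \circ \Aa) = F_n^{rad}\CC \circ \Aa$, and then use Lemma \ref{lem:cooptech} to isolate the dominant piece of $\Delta_\CCC(c) \bmod F_{n-1}^{rad}\CC \circ \CC$: only terms with outer arity in $G_n^{rad}\CC$ and inner factors in $F_0^{rad}\CC = \II$ survive. Since $F$ is determined by $f = \pi_\BB F$ via $F = (Id_\CC \circ f)(\Delta_\CCC \circ Id_\Aa)$ as a map of underlying graded modules, a direct calculation will then show
\[G_n^{rad}F \;=\; Id_{G_n^{rad}\CC} \circ f|_\Aa \;:\; G_n^{rad}\CC \circ \Aa \longrightarrow G_n^{rad}\CC \circ \BB,\]
and in particular $G_0^{rad}F = f|_\Aa$.

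The cofibration statement is then immediate, since cofibrations are exactly the monomorphisms by Proposition \ref{prop:cofib} and any morphism preserving an exhaustive filtration is mono iff each of its graded pieces is. For the weak equivalence statement, the forward direction follows from Proposition \ref{prop:filtered:we} combined with the operadic Künneth formula applied to $G_n^{rad} F$. For the converse, I plan to pass through the adjoint morphism $\widetilde F := \varepsilon_\BBB \circ \Omega_\iota F : \Omega_\iota B_\iota \AAA \to \BBB$: since $\varepsilon_\BBB$ is a quasi-isomorphism by Theorem \ref{thm:quilleneq}, $\widetilde F$ is a quasi-isomorphism if and only if $F$ is a weak equivalence. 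The map $i_\AAA : \AAA \to \Omega_\iota B_\iota \AAA$, $a \mapsto 1 \otimes 1 \otimes a$, is a chain map (a direct check using $d_\CC(1)=0$ and $\iota(1)=0$) and a section of the quasi-isomorphism $\varepsilon_\AAA$, hence itself a quasi-isomorphism. Because $\widetilde F$ is the unique $\Omega_u\CCC$-algebra morphism extending the twisting morphism $f$, one has $\widetilde F \circ i_\AAA = f|_\Aa$, and the 2-out-of-3 property closes the argument.

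The fibration statement is where I expect the main obstacle. The easy implication, $f|_\Aa$ surjective $\Rightarrow$ $F$ a fibration, proceeds from the observation that $\widetilde F|_\Aa = f|_\Aa$ surjective forces $\widetilde F$ to be a surjection of $\Omega_u\CCC$-algebras, hence a fibration in the projective model structure; given any acyclic cofibration $i : \DDD \to \DDD'$ of $\CCC$-coalgebras, $\Omega_\iota i$ is an acyclic cofibration of algebras by the very definition of the $\iota$-model structure, so the adjoint square in $\Omega_u\CCC$-algebras admits a lift $\phi : \Omega_\iota \DDD' \to \Omega_\iota B_\iota \AAA$. I plan to transport $\phi$ back to a lift of the original square through the unit $\eta_{\DDD'}$, starting from the candidate $B_\iota(\varepsilon_\AAA \phi) \circ \eta_{\DDD'}$ and correcting it inductively along the coradical filtration of $\DDD'$ to ensure the full identity $F \ell = h$ and not merely its image under $\varepsilon_\BBB$. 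The converse direction is the hard part: for each $b \in \BB$, I will construct a specific acyclic cofibration $\DDD_b \hookrightarrow \DDD'_b$ (built from the cylinder construction of Proposition \ref{prop:proptech} applied to a free $\Omega_u\CCC$-algebra detecting $b$ and $db$) together with a compatible square whose lift in $B_\iota \AAA$, guaranteed by $F$ being a fibration, projects to a preimage of $b$ under $f|_\Aa$. The delicate point will be to choose $\DDD'_b$ precisely so that the obstruction to $f|_\Aa$-surjectivity translates into an obstruction to this lift, which is what will require the most work.
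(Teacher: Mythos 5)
Your treatment of the first two bullets is sound and essentially coincides with the paper's: the associated-graded computation $G^{rad}_n F = Id_{G^{rad}_n\CC}\circ f_{|\Aa}$ is correct and gives the cofibration statement together with the implication ``$f_{|\Aa}$ quasi-isomorphism $\Rightarrow$ $F$ weak equivalence'' via Proposition \ref{prop:filtered:we}, and your converse via the section $i_\AAA : \Aa \to \Omega_\iota B_\iota \AAA$ of $\varepsilon_\AAA$ is exactly the paper's Lemma \ref{lemma:acycliccofibchain} argument. (One small imprecision: ``mono iff each graded piece is mono'' is false in the ``only if'' direction for general filtered maps; but you do not need it, since $f_{|\Aa}=F_{|\Aa}$ is a restriction of $F$.)

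The fibration bullet, however, has a genuine gap in both directions, and you have the relative difficulty inverted. For ``$F$ fibration $\Rightarrow$ $f_{|\Aa}$ epi'', your plan to build bespoke acyclic cofibrations $\DDD_b \hookrightarrow \DDD'_b$ out of cylinders is unnecessary and is left unresolved at exactly the ``delicate point'' you flag: the needed observation is simply that any chain complex, in particular $D^n$, is a $\CCC$-coalgebra via $\Delta x = 1_\CCC\otimes x$, that $0\to D^n$ is then an acyclic cofibration, and that any coalgebra map from $D^n$ into a quasi-cofree coalgebra lands in $F_0^{rad}=\BB$; lifting against $0\to D^n$ then detects surjectivity of $f_{|\Aa}$ directly. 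For the converse, which is the genuinely hard direction, your adjoint-square argument produces a lift $\phi$ in $\Omega_u\CCC$-algebras, but the transported map $B_\iota(\varepsilon_\AAA\phi)\circ\eta_{\DDD'}$ satisfies the required identities only after applying $\Omega_\iota$ and composing with counits, not on the nose; the promised ``inductive correction along the coradical filtration'' is precisely where all the work lies and is not carried out. Moreover it must work against \emph{arbitrary} acyclic cofibrations $i$, which are not given by an explicit generating set in a left-induced model structure. The paper circumvents this entirely: Lemma \ref{lemma:lefevrehasegawa} uses the surjectivity of $f$ to build, by induction on the coradical filtration, an \emph{isomorphism} $G: B_\iota\AAA' \to B_\iota\AAA$ with $FG$ strict, i.e.\ $FG = B_\iota(fg)$ with $fg$ a surjection of $\Omega_u\CCC$-algebras; since $B_\iota$ is right Quillen, $FG$ and hence $F$ is a fibration. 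Without an argument of this strength (or a completed version of your correction procedure, which would amount to reproving that lemma), the implication ``$f_{|\Aa}$ epi $\Rightarrow$ $F$ fibration'' is not established.
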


\begin{lemma}\label{lemma:acycliccofibchain}
The morphism of chain complexes $\Aa \to \Omega_\iota B_\iota \AAA$ which is the restriction to $\Aa$ of the canonical morphism $B_\iota \AAA \to B_\iota \Omega_\iota B_\iota \AAA $ is a quasi-isomorphism.
\end{lemma}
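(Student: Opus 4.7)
The plan is to deduce the lemma from the fact, already established in the proof of Theorem \ref{thm:quilleneq}, that the counit $\varepsilon_\AAA : \Omega_\iota B_\iota \AAA \to \AAA$ of the bar-cobar adjunction is a quasi-isomorphism for any $\Omega_u\CCC$-algebra $\AAA$. Denote by $\phi: \Aa \to \Omega_\iota B_\iota \AAA$ the map of the lemma. Unwinding the definition, $\phi$ is the restriction to $\Aa \subset B_\iota \AAA = \CCC \circ_\iota \Aa$ (via the coaugmentation $a \mapsto 1_\CCC \otimes a$) of the unit $\eta_{B_\iota \AAA}: B_\iota \AAA \to B_\iota \Omega_\iota B_\iota \AAA = \CCC \circ_\iota \Omega_\iota B_\iota \AAA$, postcomposed with the canonical projection $\epsilon_\CCC \circ \mathrm{Id}$ of $B_\iota \Omega_\iota B_\iota \AAA$ onto its cogenerators $\Omega_\iota B_\iota \AAA$.

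The key observation is that the triangle identity for the adjunction $\Omega_\iota \dashv B_\iota$ gives
$$
B_\iota \varepsilon_\AAA \circ \eta_{B_\iota \AAA} = \mathrm{Id}_{B_\iota \AAA}.
$$
Since $B_\iota \varepsilon_\AAA = \mathrm{Id}_\CCC \circ_\iota \varepsilon_\AAA$ commutes with the projection $\epsilon_\CCC \circ \mathrm{Id}$, restricting this identity to $\Aa \subset B_\iota \AAA$ and then projecting back to $\Aa$ via $\epsilon_\CCC \circ \mathrm{Id}$ shows that the composite
$$
\Aa \xrightarrow{\phi} \Omega_\iota B_\iota \AAA \xrightarrow{\varepsilon_\AAA} \Aa
$$
equals $\mathrm{Id}_\Aa$.

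Combining this with the quasi-isomorphism property of $\varepsilon_\AAA$ from Theorem \ref{thm:quilleneq} and applying the two-out-of-three rule for quasi-isomorphisms of chain complexes yields that $\phi$ is a quasi-isomorphism, as claimed. The only subtle step is the first one, namely the correct identification of $\phi$ as the unit followed by the cogenerator projection; once that is in place, the rest is a formal consequence of the triangle identity and of Theorem \ref{thm:quilleneq}, so there is no serious obstacle beyond carefully unpacking the natural maps.
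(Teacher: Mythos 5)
Your proof is correct and follows essentially the same route as the paper: the paper's one-line argument is precisely that the map is a right inverse of the counit $\Omega_\iota B_\iota \AAA \to \AAA$, which is a quasi-isomorphism by the proof of Theorem \ref{thm:quilleneq}, so two-out-of-three applies. You merely spell out, via the triangle identity, why the section property holds, which the paper asserts without comment.
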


\begin{proof}[Proof]
It is a right inverse of the canonical morphism of $\Omega_u \CCC$-algebras $\Omega_\iota B_\iota \AAA \to \AAA$ which is a quasi-isomorphism.
\end{proof}

\begin{proof}[Proof of Proposition \ref{thm:cofibwefib}]
Note first that $f_{|\Aa}=F_{|\Aa}$. 
\begin{itemize}
 \itemt Suppose that $F$ is a cofibration, i.e. a monomorphism. Then, its restriction $F_{|\Aa}$ is also a monomorphism. Conversely, suppose that the map $f_{|\Aa}$ is a monomorphism. We can prove by induction that, for any integer $n$, the map $F: F^{rad}_n B_\iota \AAA \ra F^{rad}_n B_\iota \BBB$ is a monomorphism.
 \itemt By Lemma \ref{lemma:acycliccofibchain}, the maps $\Aa \ra \Omega_\iota B_\iota\AAA$ and $\BB \ra \Omega_\iota  B_\iota\BBB$ are quasi-isomorphisms. Consider the following diagram.
 $$
 \xymatrix{\Omega_\iota  B_\iota\AAA \ar[r]^{\Omega_\iota F}&   \Omega_\iota  B_\iota\BBB\\
 \Aa \ar[r]_{f_{|\Aa}} \ar[u] &  \BB \ar[u] }
 $$
It ensures us that $f_{|\Aa}$ is a quasi-isomorphism if and only if $\Omega_\iota F$ is a quasi-isomorphism, that is, if and only if $F$ is a weak equivalence.
\itemt Suppose that $F$ is a fibration. Since any chain complex can be considered as a $\CCC$-coalgebra (with $\Delta x = 1_\CCC \otimes x$), then any square of $\CCC$-coalgebras as follows has a lifting.
$$
\xymatrix{0 \ar[r] \ar[d] &  B_\iota\AAA \ar[d]\\ D^n \ar[r] &  B_\iota\BBB\ .}
$$
This ensures us that the map $f_{|\Aa}$ is an epimorphism. Conversely, suppose that $f_{|\Aa}$ is an epimorphism. By Lemma \ref{lemma:lefevrehasegawa}, there exists an isomorphism $G:B_\iota \AAA' \ra B_\iota \AAA$ such that $FG$ is in the image of the functor $B_\iota$. If we denote by $g$ the map from $\Aa'$ to $\Aa$ which underlies $G$, then $g$ is an isomorphism by Lemma \ref{lemma:iso}. Then $fg$ is a fibration of $\Omega_u \CCC$-algebras and so $FG= B_\iota (fg)$ is a fibration. Since $G$ is an isomorphism, then $F$ is a fibration.
\end{itemize}
\end{proof}

\begin{lemma}\label{lemma:lefevrehasegawa}
 Let $F:B_\iota\AAA \to B_\iota \BBB$ be a morphism of $\CCC$-coalgebras such that the underlying morphism $f: \Aa \ra \BB$ is surjective. Then, there exists an $\Omega_u \CCC$-algebra $\AAA'$ and an isomorphism of $\CCC$-coalgebras $G:B_\iota\AAA' \to B_\iota \AAA$ such that $FG$ is a strict morphism, that is in the image of the functor $B_\iota$.
\end{lemma}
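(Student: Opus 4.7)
The strategy is to build $G$ as a ``change of coordinates'' on the quasi-cofree coalgebra $B_\iota \AAA$. Write $\phi := \pi F: \CCC \circ \Aa \to \BB$ for the cogenerator projection, so that $f = \phi|_{\Aa}$ is surjective by assumption. Any morphism of cofree $\CCC^{grad}$-coalgebras $G: \CCC \circ \Aa' \to \CCC \circ \Aa$ is uniquely determined by its cogenerator projection $g := \pi G: \CCC \circ \Aa' \to \Aa$ through the formula $G = (Id_{\CC} \circ g)(\Delta_{\CC} \circ Id_{\Aa'})$. I set $\Aa' := \Aa$ as a graded $\mbk$-module and build $g$ with $g|_{\Aa} = Id_{\Aa}$; the resulting $G$ is then the identity modulo lower coradical filtration, and is therefore a graded $\CCC^{grad}$-coalgebra automorphism. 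The remaining freedom---the restriction $g|_{\ov\CCC \circ \Aa}$---will be spent to enforce the strictness condition $\phi G|_{\ov\CCC \circ \Aa} = 0$, which is precisely the assertion that $FG$ lies in the image of $B_\iota$.

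I construct $g|_{\ov\CCC \circ \Aa}$ by induction on the coradical filtration of $\CCC$. For $c \in F^{rad}_n \CC$ with $n \geq 1$, the decomposition
$$\Delta c \;=\; 1 \otimes c \;+\; c \otimes (1, \ldots, 1) \;+\; \ov\Delta c,$$
combined with Lemma \ref{lem:cooptech}, guarantees that $\ov\Delta c$ has all its tensor factors in strictly lower filtration (the arity zero case $c \in \CCC(0)$ is analogous, with $c \otimes (1, \ldots, 1)$ replaced by $c$). Substituting into the formula for $G$ and applying $\phi$, for $x = c \otimes (a_1, \ldots, a_k)$ one gets
$$\phi G(x) \;=\; f(g(x)) \;+\; \ov\phi(x) \;+\; \psi(x),$$
where $\ov\phi := \phi|_{\ov\CCC \circ \Aa}$ and $\psi(x) \in \BB$ is an expression involving only values of $g$ on elements of filtration strictly less than $n$, hence already defined. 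The strictness condition $\phi G(x) = 0$ reduces to $f(g(x)) = -\ov\phi(x) - \psi(x)$, which admits a solution $g(x) \in \Aa$ by the surjectivity of $f$; picking any such lift extends $g$ up to filtration $n$ and completes the inductive step.

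Once $g$ is fully defined, I transport the coderivation of $B_\iota \AAA$ along $G$ by setting $d' := G^{-1} \circ d_{B_\iota \AAA} \circ G$ on $\CCC \circ \Aa$. A direct verification, using that $G$ is a $\CCC^{grad}$-coalgebra morphism and that $\theta$ is supported in arity one, shows that $(\theta \circ Id) \Delta$ commutes with $G$, so $(d')^2 = (\theta \circ Id)\Delta$. Hence $(\CCC \circ \Aa, d')$ is a quasi-cofree $\CCC$-coalgebra, which by Proposition \ref{prop:ccoghpalg} equals $B_\iota \AAA'$ for a unique $\Omega_u \CCC$-algebra $\AAA'$ on the graded module $\Aa$. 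The map $G: B_\iota \AAA' \to B_\iota \AAA$ is then a $\CCC^{grad}$-coalgebra isomorphism intertwining the two coderivations, thus an isomorphism of $\CCC$-coalgebras, and $FG$ is strict by construction. The main obstacle is the bookkeeping of the second paragraph: identifying precisely the two ``degenerate'' contributions $f(g(x))$ and $\ov\phi(x)$ and verifying, via Lemma \ref{lem:cooptech}, that every other summand of $\phi G(x)$ is controlled by the inductive hypothesis.
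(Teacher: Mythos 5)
Your proposal is correct and follows essentially the same route as the paper: you build $G$ as the identity on cogenerators plus a correction $g$ on $\ov\CCC\circ\Aa$ constructed by induction on the coradical filtration (using Lemma \ref{lem:cooptech} to isolate the two extreme terms of the decomposition and the surjectivity of $f$ to solve the resulting equation at each stage), then invoke the cogenerator criterion for isomorphisms and transport the coderivation along $G$ to define $\AAA'$. The paper packages the inductive step as the surjectivity of a map onto a fibre product of hom-spaces rather than a pointwise lifting equation, but this is only a difference in presentation.
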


\begin{proof}[Proof]
We build an isomorphism of graded $\CCC^{grad}$-coalgebras $G: \CCC \circ \Aa \ra \CCC \circ \Aa$ such that $FG$ is a strict morphism, that is of the from $Id_\CCC \circ h$. To that purpose we define inductively maps $g_n: F^{rad}_n \CCC \circ \Aa \ra \Aa$ such that $g_{n-1}$ is the restriction of $g_n$ to $F^{rad}_{n-1} \CCC \circ \Aa$ and such that we have the following equality between maps from $F^{rad}_n \CCC \circ \Aa$ to $\Aa$:
\begin{equation}\label{eqrec}
fg_n +   f (Id \circ g_{n-1}) (\ov\Delta \circ Id)=f\pi_\Aa\ ,
\end{equation}
where $\pi_\Aa = \epsilon \circ Id$ is the projection of $\CCC \circ \Aa$ on $\Aa$. First, let us choose $g_0= Id_\Aa$. Then, suppose that we have built $g_n$ satisfying Equation \ref{eqrec}. The map $f: \Aa \ra \BB$ and the injection of $F_n^{rad}\CCC \circ \Aa$ into $F_{n+1}^{rad}\CCC \circ \Aa$ give us the following square
 $$
 \xymatrix{\hom_\gMod (F_{n+1}^{rad} \CCC \circ \Aa,\Aa) \ar[r] \ar[d] & \hom_\gMod (F_{n+1}^{rad} \CCC \circ \Aa, \BB) \ar[d]\\
 \hom_\gMod (F_{n}^{rad} \CCC \circ \Aa, \Aa) \ar[r] & \hom_\gMod (F_{n}^{rad} \CCC \circ \Aa, \BB)\ .}
 $$
 The following map is surjective:
 $$
 \hom_\gMod (F_{n+1}^{rad} \CCC \circ \Aa, \Aa) \ra  \hom_\gMod (F_{n}^{rad} \CCC \circ \Aa, \Aa) \times_{ \hom_\gMod (F_{n}^{rad} \CCC \circ \Aa, \BB)}  \hom_\gMod (F_{n+1}^{rad} \CCC \circ \Aa, \BB)\ .
 $$
 So there exists an element of $\hom_\gMod (F_{n+1}^{rad} \CCC \circ \Aa,\Aa)$ whose image under this map is the pair $(g_n,f\pi_\Aa - f_{n+1}(Id \circ g_n) ( \ov \Delta \circ Id) )$. We can choose this element to be $g_{n+1}$. Thus, let $g$ be the map from $\CCC \circ \Aa$ to $\Aa$ whose restriction to $F^{rad}_n \CCC \circ \Aa$ is $g_n$ for any $n$. Let $G$ be the map of graded $\CCC^{grad}$-coalgebras which extends $g$. By Lemma \ref{lemma:iso}, the map $G$ is an isomorphism. Let us transfer the coderivation of $B_\iota \AAA$ to $\CCC \circ \Aa$ along the isomorphism $G$. This gives us a new $\Omega_u \CCC$-algebra structure on the chain complex $\Aa$ that we denote $\AAA'$. Finally, the morphism $FG$ is the image under the functor $B_\iota$ of the morphism of $\Omega_u \CCC$-algebras $fg_0: \AAA' \to \BBB'$. 
 \end{proof}

\begin{lemma}\label{lemma:iso}
 Let $F: \DDD = \CCC \circ \VV \ra  \EEE =\CCC \circ \WW$ be a morphism of quasi-cofree $\CCC$-coalgebras. Then, $F$ is an isomorphism if and only if its underlying map $f: \VV \ra \WW$ is an isomorphism.
\end{lemma}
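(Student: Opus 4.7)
The plan is to use an associated-graded argument with respect to a natural filtration on the cofree coalgebras. On each cofree $\CCC^{grad}$-coalgebra I equip $\DDD = \CCC \circ \VV$ with the filtration $G_n \DDD := F_n^{rad}\CC \circ \VV$ (and similarly on $\EEE$), which is exhaustive and satisfies $G_{-1} \DDD = 0$ and $G_0 \DDD = \VV$. The first step is to recall that any morphism $F : \CCC \circ \VV \to \CCC \circ \WW$ between cofree $\CCC^{grad}$-coalgebras is determined by the composite $g := \pi_\WW \circ F : \CC \circ \VV \to \WW$ via the formula $F = (Id_\CC \circ g)\circ \Delta_\DDD$, where $\Delta_\DDD = \Delta_\CCC \circ Id_\VV$ is the comodule structure map of the cofree coalgebra; and $f$ is the further restriction $g|_\VV$. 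Combining this formula with Lemma \ref{lem:cooptech} (which controls how $\Delta_\CCC$ interacts with the coradical filtration) immediately yields that $F$ preserves the filtration: $F(G_n \DDD) \subset G_n \EEE$ for every $n$.

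The second and crucial step is to identify the induced map $\mathrm{gr}_n F : G_n \DDD / G_{n-1} \DDD \to G_n \EEE / G_{n-1} \EEE$ on associated gradeds. For $c \in F_n^{rad}\CC(k)$, the only term in $\Delta(c)$ contributing to the piece of outer coradical weight exactly $n$ is supported in $F_n^{rad}\CC(k) \otimes (\II^{\otimes k})$, because $F_0^{rad}\CC = \II$ forces all inner arities to equal $1$ and all the $p_i$ for $i \geq 1$ to vanish. By the counit axiom $(Id \circ \epsilon)\Delta = Id$, this top term is exactly $c \otimes (1, \ldots, 1)$. Applying $(Id \circ g)$ and pairing with $(v_1, \ldots, v_k) \in \VV^{\otimes k}$ yields $c \otimes (f(v_1), \ldots, f(v_k))$, so $\mathrm{gr}_n F = Id \circ f$.

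The conclusion is then immediate. If $f$ is an isomorphism, then every $\mathrm{gr}_n F$ is an isomorphism, and by induction on $n$ together with the five-lemma the restriction $F|_{G_n \DDD} : G_n \DDD \to G_n \EEE$ is an isomorphism; since the filtration is exhaustive, $F$ itself is an isomorphism. Conversely, if $F$ is an isomorphism, then $F^{-1}$ is again a morphism of cofree $\CCC^{grad}$-coalgebras and the same argument shows that it preserves the filtration; in particular $F|_{G_0 \DDD}$ is an isomorphism $\VV \to \WW$, and under our identifications this map is exactly $f$. The main point to check carefully is Step 2, namely identifying the unique top term of $\Delta(c)$ via the counit axiom and tracking it through the formula for $F$; everything else is a formal consequence of the filtration being exhaustive and bounded below.
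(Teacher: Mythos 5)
Your proof is correct and takes essentially the same route as the paper's: the paper treats the forward direction by observing that the underlying map of $F^{-1}$ inverts $f$, and dismisses the converse with ``a straightforward induction shows that $F$ is both injective and surjective,'' which is precisely the induction on the coradical filtration $F^{rad}_n\CC \circ \VV$ that you carry out. Your identification of the associated graded map as $\id \circ f$ via the counit axiom is the content that the paper leaves implicit.
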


\begin{proof}[Proof]
 Suppose first that $F$ is an isomorphism with inverse $G$. Let us denote by $g: \WW \to \VV$ the map underlying $G$. Then the map $g$ is inverse to $f$ and so $f$ is an isomorphism. Conversely, suppose that $f$ is an isomorphism. A straightforward induction shows that $F$ is both injective and surjective. 
\end{proof}

\subsection{Mapping spaces and deformation theory}

\begin{prop}\label{prop:enrichcoalggen}
For any cofibrant $\CCC$-coalgebra $\DDD$ and any fibrant $\CCC$-coalgebra $\EEE$, the simplicial set  $\HOM(\DDD,\EE)$ is a Kan complex and is a model for the mapping space $\Map(\CC,\DD)$ expected by the $\iota$-model structure.
\end{prop}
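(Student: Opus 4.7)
The plan is to reduce the statement to the analogous result for $\Omega_u \CCC$-algebras (Proposition \ref{prop:enrichhomotalg}) via the Quillen equivalence $\Omega_\iota \dashv B_\iota$ together with the enriched bar-cobar adjunction of Proposition \ref{prop:enrichedadjun}. The key observation is that the combined hypotheses (cofibrant source, fibrant target) on the coalgebra side translate directly into cofibrant-fibrant on the algebra side.

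First I would use the description of fibrant objects established earlier in this section: since $\EEE$ is fibrant in the $\iota$-model structure, it is quasi-cofree, so by Proposition \ref{prop:ccoghpalg} we have an isomorphism $\EEE \simeq B_\iota \AAA$ for some $\Omega_u \CCC$-algebra $\AAA$. Since $\DDD$ is cofibrant and $\Omega_\iota$ is a left Quillen functor, $\Omega_\iota \DDD$ is a cofibrant $\Omega_u \CCC$-algebra. Moreover, in the projective model structure on $\Omega_u \CCC$-algebras every object is fibrant (the map to the terminal algebra is always a surjection), so $(\Omega_\iota \DDD, \AAA)$ is a cofibrant-fibrant pair in $\Omega_u \CCC\text{-}\mathsf{alg}$.

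Next, Proposition \ref{prop:enrichedadjun} gives a natural isomorphism of simplicial sets
\[
\HOM(\DDD, \EEE) \;\simeq\; \HOM(\DDD, B_\iota \AAA) \;\simeq\; \HOM(\Omega_\iota \DDD, \AAA).
\]
By Proposition \ref{prop:enrichhomotalg} the rightmost simplicial set is a Kan complex and is a model for the mapping space $\Map(\Omega_\iota \DDD, \AAA)$ in the projective model structure on $\Omega_u \CCC$-algebras. Consequently $\HOM(\DDD, \EEE)$ is itself a Kan complex.

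Finally, Theorem \ref{thm:quilleneq} asserts that $\Omega_\iota \dashv B_\iota$ is a Quillen equivalence, hence induces a weak equivalence of derived mapping spaces
\[
\Map(\Omega_\iota \DDD, \AAA) \;\simeq\; \Map(\DDD, B_\iota \AAA) \;=\; \Map(\DDD, \EEE),
\]
identifying $\HOM(\DDD, \EEE)$ with a model of $\Map(\DDD, \EEE)$ as required. The only subtle point is to confirm that the identification from Proposition \ref{prop:enrichedadjun} is natural in the simplicial variable; this is automatic because at simplicial level $n$ both sides are defined after base change along $\mbk \to \Omega_n$, and the bar-cobar adjunction is compatible with this base change, so the face and degeneracy maps on both sides come from the simplicial structure of $\Omega_\bullet$.
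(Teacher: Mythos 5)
Your proposal is correct and follows essentially the same route as the paper: identify the fibrant $\EEE$ with $B_\iota\AAA$, transfer across the enriched adjunction to $\HOM(\Omega_\iota\DDD,\AAA)$, apply Proposition \ref{prop:enrichhomotalg}, and come back via the derived adjunction $\Map(\Omega_\iota\DDD,\AAA)\simeq\Map(\DDD,B_\iota\AAA)$. The only (harmless) difference is that you invoke the full Quillen equivalence for the last identification, whereas the Quillen adjunction with cofibrant source and fibrant target already suffices.
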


\begin{proof}[Proof]
Any fibrant $\CCC$-coalgebra $\EEE$ is isomorphic to the image under $B_\iota$ of a $\Omega_u \CCC$-algebra $\AAA$. So, we have:
 $$
 \HOM(\DDD, \EEE) \simeq \HOM (\DDD, B_\iota \AAA) \simeq \HOM (\Omega_\iota \DDD, \AAA) \simeq \Map (\Omega_\iota \DDD, \AAA) \simeq \Map (\DDD, B_\iota \AAA) \ . 
 $$
 Besides, we know from Proposition \ref{prop:enrichhomotalg} that $\HOM (\Omega_\iota \DDD, \AAA)$ is a Kan complex.
\end{proof}

\begin{cor}
Let $\alpha:\CCC \ra \PPP$ be an operadic twisting morphism. Let us endow the category of $\CCC$-coalgebras with the $\alpha$-model structure. For any cofibrant $\CCC$-coalgebra $\DDD$ and any fibrant $\CCC$-coalgebra $\EEE$, the simplicial set  $\HOM(\DDD,\EEE)$ is a Kan complex and is a model for the mapping space $\Map(\DDD,\EEE)$.
\end{cor}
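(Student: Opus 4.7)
The plan is to reduce the statement to Proposition \ref{prop:enrichcoalggen} by comparing the $\alpha$-model structure with the universal $\iota$-model structure. Factoring $\alpha = f\iota$ along the universal operadic twisting morphism $\iota: \CCC \to \Omega_u \CCC$, Proposition \ref{prop:bousfieldloc} tells us that the $\alpha$-model structure on $\Ccog$ is the left Bousfield localization of the $\iota$-model structure with respect to the $\alpha$-weak equivalences.

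First I would observe that left Bousfield localization preserves cofibrations and restricts the class of fibrations. Hence any $\alpha$-cofibrant object $\DDD$ is automatically $\iota$-cofibrant (indeed, every $\CCC$-coalgebra is cofibrant in both structures, since the cofibrations are the monomorphisms in both cases by Proposition \ref{prop:cofib}), and any $\alpha$-fibrant object $\EEE$ is in particular $\iota$-fibrant. Proposition \ref{prop:enrichcoalggen} then directly yields that $\HOM(\DDD,\EEE)$ is a Kan complex and is a model for the $\iota$-derived mapping space $\Map_\iota(\DDD,\EEE)$.

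Second, I would invoke the standard property of left Bousfield localizations: between a cofibrant source and a target which is fibrant in the localization, the derived mapping space computed in the localization coincides with the derived mapping space computed in the original model category (see, e.g., Hirschhorn's book on model categories and their localizations). Applying this to $\DDD$ and $\EEE$ gives a natural weak equivalence $\Map_\iota(\DDD,\EEE) \simeq \Map_\alpha(\DDD,\EEE)$, and combining this with the previous step proves the corollary.

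The only step that is not purely formal is the invocation of the mapping-space comparison for Bousfield localizations; this is however a textbook fact and does not need to be reproved here. All the other ingredients (the existence of the localization, the characterization of cofibrations, and the identification of $\HOM$ with the $\iota$-mapping space) are already established earlier in the paper.
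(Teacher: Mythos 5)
Your proof is correct and follows essentially the same route as the paper: the paper's own argument simply observes that fibrations and acyclic fibrations in the $\alpha$-model structure are in particular fibrations and acyclic fibrations in the $\iota$-model structure (so an $\alpha$-fibrant object is $\iota$-fibrant, and every object is cofibrant in both) and then invokes Proposition \ref{prop:enrichcoalggen}. The only difference is that you make explicit the comparison of derived mapping spaces under left Bousfield localization, a step the paper leaves implicit; this is a reasonable and harmless addition.
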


\begin{proof}[Proof]
 It suffices to notice that fibrations and acyclic fibrations in the $\alpha$-model structure are in particular fibrations and acyclic fibrations in the $\iota$-model structure. Then, we can conclude by Proposition \ref{prop:enrichcoalggen}.
\end{proof}

Let $f: \DDD \to B_\iota\AAA$ be a morphism of $\CCC$-coalgebras. We know from Proposition \ref{prop:atom} that it is a dg atom of the cocommutative coalgebra $\{\DDD,B_\iota\AAA\}$. Consider the Hinich coalgebra $\{\DDD,B_\iota\AAA\}_f$ that appears from the decomposition described in Theorem \ref{thm:decomp}.

\begin{prop}
 The deformation problem induced by $\{\DDD,B_\iota\AAA\}_f$ is equivalent to the deformation problem
 $$
 R \in \Artinalg \mapsto \left(\hom_{R \otimes \Omega_n \otimes \Ccog} (R \otimes \Omega_n \otimes \CCC, R \otimes \Omega_n \otimes B_\iota\AAA)\right)_{n \in \mbn}\ .
 $$
\end{prop}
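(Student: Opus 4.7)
The plan is to reduce the proposition to Theorem \ref{prop:deformation} using the Quillen equivalence $\Omega_\iota \dashv B_\iota$ of Theorem \ref{thm:quilleneq}. By Proposition \ref{prop:enrichedadjun} there is a natural isomorphism of cocommutative coalgebras $\{\DDD, B_\iota\AAA\} \simeq \{\Omega_\iota\DDD, \AAA\}$, under which the dg atom $f$ on the left corresponds to its adjoint $\tilde f : \Omega_\iota\DDD \to \AAA$ on the right. By the functoriality of the atoms decomposition (Theorem \ref{thm:decomp}), this restricts to an isomorphism of Hinich coalgebras $\{\DDD, B_\iota\AAA\}_f \simeq \{\Omega_\iota\DDD, \AAA\}_{\tilde f}$, so the deformation problems induced by these two coalgebras coincide.

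Since every $\CCC$-coalgebra is cofibrant in the $\iota$-model structure, $\Omega_\iota\DDD$ is a cofibrant $\Omega_u\CCC$-algebra. Applying Theorem \ref{prop:deformation} to the pair $(\Omega_\iota\DDD, \AAA)$ with basepoint $\tilde f$ identifies the deformation problem of $\{\Omega_\iota\DDD, \AAA\}_{\tilde f}$ with the simplicial functor $R \mapsto \left(\hom_{\Palg}(\Omega_\iota\DDD, R \otimes \Omega_n \otimes \AAA)\right)_{n\in\mbn}$, where one takes the fiber over $\tilde f$. The extension-of-scalars adjunction along $\mbk \to R \otimes \Omega_n$ on the algebra side rewrites this as $\left(\hom_{R\otimes\Omega_n\otimes\Palg}(R\otimes\Omega_n\otimes\Omega_\iota\DDD,\ R\otimes\Omega_n\otimes \AAA)\right)_{n\in\mbn}$. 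Combining the natural commutations $R \otimes \Omega_n \otimes \Omega_\iota\DDD \simeq \Omega_\iota(R \otimes \Omega_n \otimes \DDD)$ and $R \otimes \Omega_n \otimes B_\iota\AAA \simeq B_\iota(R \otimes \Omega_n \otimes \AAA)$, which follow from the explicit formulas $\Omega_\iota = \PP \circ_\iota -$ and $B_\iota = \CCC \circ_\iota -$ and from the flatness of $R \otimes \Omega_n$, with the bar--cobar adjunction $\Omega_\iota \dashv B_\iota$ over the new base ring $R \otimes \Omega_n$ yields exactly the expected expression
\[
R \mapsto \left(\hom_{R\otimes\Omega_n\otimes\Ccog}(R\otimes\Omega_n\otimes\DDD,\ R\otimes\Omega_n\otimes B_\iota\AAA)\right)_{n\in\mbn},
\]
with the basepoint $f$ automatically recovered as the fiber over the augmentation $R \otimes \Omega_n \twoheadrightarrow \mbk$.

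The main obstacle will be a clean bookkeeping check that base change commutes with both the cobar and bar constructions, and that each step is strictly natural in $R$ and in the simplicial variable $n$. The commutations $R \otimes \Omega_n \otimes \Omega_\iota\DDD \simeq \Omega_\iota(R \otimes \Omega_n \otimes \DDD)$ and $R \otimes \Omega_n \otimes B_\iota\AAA \simeq B_\iota(R \otimes \Omega_n \otimes \AAA)$ are formal consequences of the operadic composition being compatible with tensoring by a commutative dg algebra, but one must verify that the relevant coderivation, curvature, and twisting-morphism data all transport correctly under scalar extension so that the universal properties of $\Omega_\iota$ and $B_\iota$ remain intact in the new ground. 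Once this is established, the proof is a direct unwinding of adjunctions and universal properties.
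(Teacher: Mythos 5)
Your proof is correct and takes essentially the same route as the paper: the paper's own proof is literally the one-line citation of the enriched bar--cobar adjunction (Proposition \ref{prop:enrichedadjun}) and Theorem \ref{prop:deformation}, and your write-up just unwinds those two references (transport $f$ to its adjoint $\tilde f$ on the algebra side, use that $\Omega_\iota\DDD$ is cofibrant, then base-change back along the bar--cobar adjunction over $R\otimes\Omega_n$). No gap to report.
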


\begin{proof}[Proof]
 This is a direct consequence of Proposition \ref{prop:enrichedadjun} and Theorem \ref{prop:deformation}.
\end{proof}

\subsection{Algebras of the operad $\Omega_u \CCC$}

We have shown above that the adjunction $\Omega_\iota \dashv B_\iota$ is a Quillen equivalence. Moreover, in Proposition \ref{prop:ccoghpalg}, we have shown that fibrant $\CCC$-coalgebras are $\Omega_u \CCC$-algebras. So switching from the model category of $\Omega_u \CCC$-algebras to the model category of $\CCC$-coalgebras amounts to add new morphisms between any two $\Omega_u \CCC$-algebras. The weak equivalences and the fibrations of $\Omega_u \CCC$-algebras remain respectively weak equivalences and fibrations under this embedding but, in the category of $\CCC$-coalgebras, any monomorphism is a cofibration. In particular, any object is cofibrant. Subsequently, $\CCC$-coalgebras provide a convenient framework to study the homotopy theory of $\Omega_u \CCC$-algebras. For instance, the following proposition provides a tool to decide whether or not two $\Omega_u \CCC$-algebras are equivalent.

\begin{prop}
 Let $\AAA$ and $\BBB$ be two $\Omega_u \CCC$-algebras. There exists a chain of weak equivalences of $\Omega_u \CCC$-algebras between $\AAA$ and $\BBB$
 $$
 \xymatrix{\AAA = \AAA_0 \ar[r]^\sim & \AAA_1 & \cdots \ar[l]_\sim \ar[r]^\sim & \AAA_{n-1} & \AAA_n =\BBB \ar[l]_\sim }
 $$
 if and only if there exists a weak equivalence of $\CCC$-coalgebras between $B_\iota \AAA$ and $B_\iota \BBB$.
\end{prop}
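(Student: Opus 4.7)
The plan is to deduce both implications from the Quillen equivalence $\Omega_\iota \dashv B_\iota$ of Theorem \ref{thm:quilleneq}, combined with two features of the $\iota$-model structure on $\Ccog$: every object is cofibrant (Proposition \ref{prop:cofib}), and every coalgebra of the form $B_\iota \AAA$ is fibrant. In particular each $B_\iota \AAA$ is bifibrant.

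For the direction ($\Leftarrow$), suppose that $w : B_\iota \AAA \to B_\iota \BBB$ is a weak equivalence of $\CCC$-coalgebras. Since its source and target are cofibrant, the left Quillen functor $\Omega_\iota$ sends $w$ to a quasi-isomorphism $\Omega_\iota w$ of $\Omega_u\CCC$-algebras. By the proof of Theorem \ref{thm:quilleneq}, the counit $\epsilon_\AAA : \Omega_\iota B_\iota \AAA \to \AAA$ is a quasi-isomorphism for every $\Omega_u \CCC$-algebra $\AAA$. Splicing these maps produces the zigzag
\[
\AAA \xleftarrow{\epsilon_\AAA} \Omega_\iota B_\iota \AAA \xrightarrow{\Omega_\iota w} \Omega_\iota B_\iota \BBB \xrightarrow{\epsilon_\BBB} \BBB
\]
of quasi-isomorphisms of $\Omega_u\CCC$-algebras.

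For the direction ($\Rightarrow$), I would apply $B_\iota$ to the given zigzag $\AAA = \AAA_0 \to \AAA_1 \leftarrow \cdots \to \AAA_n = \BBB$. By Proposition \ref{thm:cofibwefib}, the image under $B_\iota$ of a quasi-isomorphism between $\Omega_u \CCC$-algebras is a weak equivalence, so we obtain a zigzag of weak equivalences in $\Ccog$ connecting $B_\iota \AAA$ and $B_\iota \BBB$ through the bifibrant objects $B_\iota \AAA_i$. It remains to contract this zigzag into a single weak equivalence. This uses the standard fact that for bifibrant objects $X$ and $Y$ in a model category the set $\mathrm{Ho}(X, Y)$ is the set of homotopy classes of morphisms $X \to Y$, and that any isomorphism in $\mathrm{Ho}$ between bifibrant objects is represented by a genuine weak equivalence. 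Applying this principle to $B_\iota\AAA$ and $B_\iota \BBB$ extracts the desired single weak equivalence.

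The only slightly delicate step is the concluding one in the forward direction, namely the contraction of a zigzag into a single weak equivalence; this, however, is a general feature of bifibrant objects in any model category and does not rely on any structure specific to $\CCC$-coalgebras. The rest of the argument is formal, combining the counit quasi-isomorphism from Theorem \ref{thm:quilleneq} with the explicit characterization of weak equivalences given in Proposition \ref{thm:cofibwefib}.
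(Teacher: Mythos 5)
Your proof is correct and follows essentially the same route as the paper: the backward direction uses the zigzag $\AAA \leftarrow \Omega_\iota B_\iota \AAA \rightarrow \Omega_\iota B_\iota \BBB \rightarrow \BBB$ built from the counit quasi-isomorphisms of Theorem \ref{thm:quilleneq}, and the forward direction applies $B_\iota$ and contracts the resulting zigzag using the fact that the objects $B_\iota \AAA_i$ are fibrant and cofibrant, so every weak equivalence between them admits a homotopy inverse. The only cosmetic difference is that you invoke Ken Brown's lemma to see that $\Omega_\iota w$ is a quasi-isomorphism, whereas this holds directly by the definition of weak equivalences in the left-induced $\iota$-model structure.
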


\begin{proof}[Proof]
 Suppose that there exists a chain of weak equivalences from $\AAA$ to $\BBB$. Then, there exists a chain of weak equivalences between $B_\iota \AAA$ and $B_\iota \BBB$. Moreover, the objects of this chain are fibrant and cofibrant. So any morphism of this chain has an homotopical inverse. So there exists a weak equivalence from $B_\iota \AAA$ to $B_\iota \BBB$. Conversely, consider a weak equivalence $F$ from $B_\iota \AAA$ to $B_\iota \BBB$. Then, the following chain of weak equivalences of $\Omega_u \CCC$-algebras links $\AAA$ to $\BBB$.
 $$
 \xymatrix{\AAA & \Omega_\iota B_\iota \AAA \ar[l]_\sim \ar[r]^{\Omega_\iota (F)} & \Omega_\iota B_\iota \BBB \ar[r]^\sim & \BBB\ .}
 $$
\end{proof}

\subsection{Koszul morphisms}

In this subsection, we study the operadic twisting morphisms $\alpha: \CCC \ra \PPP$ such that the $\alpha$-model structure on the category of $\CCC$-coalgebras coincides with the universal $\iota$-model structure that we described above. Let $\alpha: \CCC \ra \PPP$ be an operadic twisting morphism. We denote by $\phi: \Omega_u(\CCC) \ra \PPP$ the morphism of operads induced by $\alpha$.

\begin{thm}\label{prop:quillengeneral}
The following assertions are equivalent.
\begin{enumerate}
\item The adjunction
$$
\xymatrix{\Omega_u(\CCC)-\mathsf{alg} \ar@<1ex>[r]^(0.55){\phi_!} & \Palg \ar@<1ex>[l]^(0.45){\phi^*}}
$$
is a Quillen equivalence.
\item The morphism of operads $\phi:\Omega_u(\CCC) \ra \PPP$ is a quasi-isomorphism.
\item The $\alpha$-model structure coincides with the $\iota$-model structure and $\Omega_\alpha \dashv B_\alpha$ is a Quillen equivalence.
\item For any $\PPP$-algebra $\AAA$, the map $\PPP \circ_\alpha \CCC \circ_\alpha \AAA \ra \AAA$ is a quasi-isomorphism, and for any $\CCC$-coalgebra $\DDD$, the morphism $\DDD \ra \CCC \circ_\alpha \PPP \circ_\alpha \DDD$ is a $\iota$-equivalence (it is the case if, for instance, it is a filtered quasi-isomorphism).

\item The morphisms of $\mbs$-modules $\Omega_u(\CCC) \circ_\iota \CCC \circ_\iota\Omega_u(\CCC) \ra \PPP \circ_\alpha \CCC \circ_\alpha\PPP$ and $\PPP \circ_\alpha \CCC \circ_\alpha\PPP \ra \PPP$ are quasi-isomorphisms.
\end{enumerate} 
\end{thm}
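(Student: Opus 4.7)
The strategy is to prove the cyclic chain $(2) \Leftrightarrow (1) \Rightarrow (3) \Rightarrow (4) \Rightarrow (5) \Rightarrow (2)$. The equivalence $(1) \Leftrightarrow (2)$ is the classical fact that a morphism $\phi$ between admissible dg operads induces a Quillen equivalence on algebra categories if and only if it is a quasi-isomorphism: the right adjoint $\phi^{*}$ is the identity on underlying chain complexes and hence preserves and reflects quasi-isomorphisms, reducing the Quillen equivalence to the derived unit being a weak equivalence on cofibrant objects. Evaluating on the free algebra $\Omega_u\CCC \circ V$ for a cofibrant chain complex $V$ produces the map $\Omega_u\CCC \circ V \to \PPP \circ V$, which by the operadic Künneth formula (valid in characteristic zero, or unconditionally in the nonsymmetric setting) is a quasi-isomorphism for every $V$ if and only if $\phi$ is one.

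For $(2) \Rightarrow (3)$, Proposition \ref{prop:bousfieldloc} gives the coincidence of the $\alpha$-model structure and the $\iota$-model structure whenever the adjunction $\phi_! \dashv \phi^*$ is a Quillen equivalence. The adjunction $\Omega_\alpha \dashv B_\alpha$ then factors as the composite of $\Omega_\iota \dashv B_\iota$ (a Quillen equivalence by Theorem \ref{thm:quilleneq}) with $\phi_! \dashv \phi^*$ (a Quillen equivalence by the first step), so it is itself a Quillen equivalence. For $(3) \Rightarrow (4)$, note that every $\CCC$-coalgebra is cofibrant by Proposition \ref{prop:cofib} and every $\PPP$-algebra is fibrant, so both the unit $\DDD \to B_\alpha \Omega_\alpha \DDD$ and the counit $\Omega_\alpha B_\alpha \AAA \to \AAA$ of the Quillen equivalence are weak equivalences on every object; the coincidence of model structures rephrases the former as a $\iota$-equivalence.

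The key step $(4) \Rightarrow (5)$ proceeds by applying (4)(a) to the free $\PPP$-algebra $\PPP \circ V$ on a chain complex $V$: since $B_\alpha(\PPP \circ V)$ has underlying graded $\mbs$-module $\CCC \circ \PPP \circ V$, the counit yields a quasi-isomorphism $(\PPP \circ_\alpha \CCC \circ_\alpha \PPP) \circ V \to \PPP \circ V$ for every $V$, and varying $V$ together with the operadic Künneth formula gives the second assertion of (5). The first assertion of (5) follows from applying the same argument to the universal twisting morphism $\iota$ (using Theorem \ref{thm:quilleneq} in place of (4)(a)) to obtain the quasi-isomorphism $\Omega_u\CCC \circ_\iota \CCC \circ_\iota \Omega_u\CCC \to \Omega_u\CCC$, combined with (4)(b) applied to cofree objects. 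Finally, $(5) \Rightarrow (2)$ follows from the commutative square
$$
\xymatrix{\Omega_u\CCC \circ_\iota \CCC \circ_\iota \Omega_u\CCC \ar[r] \ar[d] & \PPP \circ_\alpha \CCC \circ_\alpha \PPP \ar[d] \\ \Omega_u\CCC \ar[r]^{\phi} & \PPP}
$$
in which all the maps other than $\phi$ are quasi-isomorphisms (by (5) and the universal case), so that $\phi$ is a quasi-isomorphism by two-out-of-three. The main obstacle is step $(4) \Rightarrow (5)$: converting the algebra- and coalgebra-level hypotheses into $\mbs$-module-level statements requires careful use of the operadic Künneth formula, together with the identification of the bar and cobar constructions on free and cofree objects with the relevant twisted composite products.
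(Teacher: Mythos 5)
Your proposal follows the same cycle of implications as the paper and relies on the same key inputs (Proposition \ref{prop:bousfieldloc}, the $\iota$-Quillen equivalence of Theorem \ref{thm:quilleneq}, evaluation on free algebras and on chain complexes viewed as trivial coalgebras combined with the operadic K\"unneth formula in the guise of Lemma \ref{lemma:schur}, and the final two-out-of-three square), so the overall route is the paper's. Two steps are thinner in your sketch than in the paper. For $(2)\Rightarrow(1)$ you invoke the classical transfer theorem and then justify it by checking the derived unit on free algebras only; that check by itself proves $(1)\Rightarrow(2)$ but not the converse, since one must still pass from free algebras to all cofibrant ones. The paper instead argues directly, resolving an arbitrary cofibrant $\Omega_u\CCC$-algebra $\AAA$ by $\Omega_\iota B_\iota\AAA \xrightarrow{\sim} \AAA$ and showing that $\Omega_\iota B_\iota \AAA \to \phi^*\Omega_\alpha B_\iota\AAA$ is a filtered quasi-isomorphism. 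For the first map in $(4)\Rightarrow(5)$, beware of a circularity: one cannot compare $\Omega_u\CCC\circ_\iota\CCC\circ_\iota\Omega_u\CCC \to \Omega_u\CCC$ with $\PPP\circ_\alpha\CCC\circ_\alpha\PPP\to\PPP$ via a naturality square, as that presupposes that $\phi$ is a quasi-isomorphism, which is precisely what the cycle is in the process of establishing. The paper avoids this by inserting the intermediate $\mbs$-module $\Omega_u\CCC\circ_\iota\CCC\circ_\alpha\PPP$ and proving separately that $\Omega_u\CCC\circ_\iota\CCC\circ_\iota\Omega_u\CCC\to\Omega_u\CCC\circ_\iota\CCC\circ_\alpha\PPP$ is a quasi-isomorphism (from $(4)$(b) on trivial coalgebras, two-out-of-three against the $\iota$-unit, and Lemma \ref{lemma:schur}) and that $\Omega_u\CCC\circ_\iota\CCC\circ_\alpha\PPP\to\PPP\circ_\alpha\CCC\circ_\alpha\PPP$ is a quasi-isomorphism (from the counit comparison triangle on free $\PPP$-algebras); your phrase ``combined with (4)(b) applied to cofree objects'' needs to be unpacked into exactly this factorization for the argument to close.
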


\begin{lemma}\label{lemma:schur}
 Let $f:\VV \ra \VV'$ be a morphism of dg $\mbs$-modules. Suppose that, for any chain complex $\WW$ (that is a $\mbs$-module concentrated in arity zero), the morphism $\VV \circ \WW \ra \VV' \circ \WW$ is a quasi-isomorphism. Then, $f$ is a quasi-isomorphism.
\end{lemma}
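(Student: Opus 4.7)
The plan is to reconstruct each component $\VV(n)$ as a direct summand (up to quasi-isomorphism) of $\VV\circ\WW$ for a well-chosen test chain complex $\WW$, and then to invoke the hypothesis at this $\WW$.

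For arity zero, I would take $\WW=0$: every summand $\VV(k)\otimes_{\mbs_k} 0^{\otimes k}$ with $k\geq 1$ vanishes, so $\VV\circ 0 = \VV(0)$, and the hypothesis directly forces $f(0)\colon \VV(0)\to \VV'(0)$ to be a quasi-isomorphism.

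For arity $n\geq 1$, the key step — the only one requiring real thought — is to choose $\WW$ so that $\VV(n)$ appears as a direct summand without information being lost to $\mbs_n$-coinvariants. My choice is a polarization: take $\WW=\mbk^n=\bigoplus_{i=1}^n \mbk\cdot t_i$ in homological degree $0$ with zero differential, but carrying an auxiliary $\mbn^n$-multigrading in which $t_i$ has multiweight $\delta_i$. This multigrading is preserved by the $\mbs_k$-action on $\WW^{\otimes k}$ and, since the internal differential of $\WW$ is trivial, extends to a direct-sum decomposition of the chain complex $\VV\circ\WW$ indexed by $\mbn^n$. The morphism $f\circ \mathrm{Id}_\WW$ respects this decomposition, so the hypothesis implies that each multiweight summand map is a quasi-isomorphism.

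Finally I would read off the summand of multiweight $(1,\ldots,1)$: a tensor $t_{i_1}\otimes\cdots\otimes t_{i_k}$ has total weight $(1,\ldots,1)$ precisely when $k=n$ and $(i_1,\ldots,i_n)$ is a permutation of $(1,\ldots,n)$, and these tensors span a copy of the regular representation $\mbk[\mbs_n]$ inside $\WW^{\otimes n}$. Hence the $(1,\ldots,1)$-piece of $\VV(n)\otimes_{\mbs_n}\WW^{\otimes n}$ is canonically $\VV(n)\otimes_{\mbs_n}\mbk[\mbs_n]\simeq \VV(n)$, and similarly for $\VV'$; the induced map between these pieces is exactly $f(n)\colon \VV(n)\to \VV'(n)$, so it is a quasi-isomorphism. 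Because the extraction uses a free $\mbs_n$-module, no characteristic-zero hypothesis enters, which is consistent with the lemma being stated without such a restriction.
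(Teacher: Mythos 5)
Your proof is correct, and it rests on the same key idea as the paper's: test the hypothesis on $\WW=\mbk^n$ and recover $\VV(n)$ from the free $\mbs_n$-orbit of $e_1\otimes\cdots\otimes e_n$ inside $(\mbk^n)^{\otimes n}$. The execution differs in one respect worth noting. The paper first invokes the operadic K\"unneth formula $H(\VV\circ\WW)\simeq H(\VV)\circ H(\WW)$ to convert the quasi-isomorphism hypothesis into an isomorphism $H(\VV)\circ\mbk^n\to H(\VV')\circ\mbk^n$, and then extracts $H(\VV)(n)\to H(\VV')(n)$ by the map $p\mapsto p\otimes(e_1\otimes\cdots\otimes e_n)\mapsto f_n(p)$ --- implicitly the same multiweight projection you make explicit. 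You instead decompose the chain complex $\VV\circ\mbk^n$ itself as a direct sum over $\mbn^n$-multiweights (legitimate since $d_\WW=0$) and identify the $(1,\ldots,1)$-summand with $\VV(n)\otimes_{\mbs_n}\mbk[\mbs_n]\simeq\VV(n)$ before taking homology. This bypasses the K\"unneth formula entirely, and hence the characteristic-zero hypothesis that the paper's version of K\"unneth carries in the symmetric context; in the paper this costs nothing because the lemma is only applied in a section where $\operatorname{char}\mbk=0$ is already assumed, but your route makes the lemma genuinely characteristic-free, matching its unrestricted statement.
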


\begin{proof}[Proof]
By the operadic Kunneth formula, for any graded $\mbk$-module $\WW$, the map $H(\VV) \circ \WW \ra H(\VV') \circ \WW$ is an isomorphism. So, for any integer $n$, the map $f_n:H(\VV)(n) \otimes_{\mbs_n} \mbk^n \ra H(\VV')(n) \otimes_{\mbs_n} \mbk^n$ is an isomorphism. Let $(e_i)_{i=1}^n$ be a basis of $\mbk^n$. The map
$$
p \in H(\VV)(n) \mapsto p \otimes (e_1 \otimes \cdots \otimes e_n)  \mapsto f_n(p) \otimes (e_1 \otimes \cdots \otimes e_n) \mapsto f_n(p) \in H(\VV')(n)
$$
is an isomorphism. So, the morphism $H(\VV) \ra H(\VV')$ is an isomorphism.
\end{proof}

\begin{proof}[Proof of Theorem \ref{prop:quillengeneral}]\leavevmode
\begin{itemize}
 \itemt Let us first prove the equivalence between $(1)$ and $(2)$. Suppose $(2)$. Let $\AAA$ be a cofibrant $\Omega_u \CCC$-algebra and let $\BBB$ be a fibrant $\PPP$-algebra. Consider a map $f:\phi_! (\AAA) \ra \BBB$ and its adjoint map $g: \AAA \ra \phi^* (\BBB)$. The following diagram of $\Omega_u\CCC$-algebras is commutative.
 $$
 \xymatrix{\Omega_\iota B_\iota \AAA \ar[d]\ar[r] & \phi^*\phi_! \Omega_\iota B_\iota \AAA \ar[d] \\
 \AAA \ar[r] \ar@/_2pc/[rr]_{g}& \phi^*\phi_! \AAA \ar[r]^{\phi^* (f)} & \phi^* \BBB}
 $$  
 The left vertical map is a quasi-isomorphism. Since a left Quillen functor preserves weak equivalences between cofibrant objects and since $\phi^*$ preserves quasi-isomorphisms, then the right vertical map is a quasi-isomorphism. Besides $\phi_! \Omega_\iota B_\iota \AAA$ is actually $\Omega_\alpha B_\iota \AAA$. Since the morphism $\phi$ is a quasi-isomorphism, we can prove that the map $\Omega_\iota B_\iota \AAA \ra \phi^* \Omega_\alpha B_\iota \AAA$ is a filtered quasi-isomorphism for a well chosen filtration, and so is a quasi-isomorphism. So, by the 2-out-of-3 rule, the map $\AAA \ra \phi^* \phi_! \AAA$ is a quasi-isomorphism. Hence, $f$ is a quasi-isomorphism if and only if $\phi^* (f)$ is a quasi-isomorphism, if and only if $g$ is a quasi-isomorphism. So the assertion $(1)$ is true. Conversely, suppose $(1)$. Then, for any chain complex (considered as a $\CCC$-coalgebra) $\VV$, the map $\Omega_{\iota} \VV \ra \Omega_{\alpha} \VV$ is a quasi-isomorphism. So, by Lemma \ref{lemma:schur}, $(2)$ is true.
 \itemt Suppose $(1)$ and let us show $(3)$. By Proposition \ref{prop:bousfieldloc}, the $\alpha$-model structure coincides with the $\iota$-model structure. Moreover, since the adjunctions $\phi_! \dashv \phi^*$ and $\Omega_\iota \dashv B_\iota$ are both Quillen equivalences, then the adjunction $\phi_! \Omega_\iota \dashv B_\iota \phi^*$ which is $\Omega_\alpha \dashv B_\alpha$ is a Quillen equivalence.

\itemt Suppose $(3)$ and let us show $(4)$. Since $\Omega_\alpha \dashv B_\alpha$ is a Quillen equivalence, then $\Omega_\alpha B_\alpha \AAA \ra \AAA$ is a quasi-isomorphism for any $\PPP$-algebra $\AAA$ and $\DDD \ra B_\alpha \Omega_\alpha \DDD$ is an $\alpha$-weak equivalence for any $\CCC$-coalgebra $\DDD$. Since the $\alpha$-model structure coincides with the $\iota$-model structure, then $\DDD \ra B_\alpha \Omega_\alpha \DDD$ is a $\iota$-weak equivalence. So $(4)$ is true.
 \itemt Suppose $(4)$ and let us show $(5)$. For any $\PPP$-algebra $\AAA$, the morphism $\Omega_\alpha B_\alpha \AAA \ra \AAA$ is a quasi-isomorphism. Applying this to free $\PPP$-algebras and using Lemma \ref{lemma:schur}, we conclude that the map $\PPP \circ_\alpha \CCC \circ_\alpha\PPP \ra \PPP$ is a quasi-isomorphism. Moreover, for any $\PPP$-algebra $\AAA$, the following diagram commutes
 $$
 \xymatrix{\Omega_u(\CCC) \circ_\iota \CCC \circ_\iota \AAA \ar[r] \ar[rd] & \PPP \circ_\alpha \CCC \circ_\alpha \AAA \ar[d]\\&\AAA\ .}
 $$
 Since the composite map and the vertical map are quasi-isomorphisms (because $\Omega_\iota \dashv B_\iota$ and $\Omega_\alpha \dashv B_\alpha$ are Quillen equivalences), then, by the 2-out-of-3 rule, the horizontal map is a quasi-isomorphism. Applying this to free $\PPP$-algebras and using Lemma \ref{lemma:schur}, we conclude that the map $\Omega_u(\CCC) \circ_\iota \CCC \circ_\alpha\PPP \ra \PPP \circ_\alpha \CCC \circ_\alpha \PPP$ is a quasi-isomorphism. Besides, for any $\CCC$-coalgebra $\DDD$ the following diagram commutes.
 $$
 \xymatrix{\DDD \ar[r] \ar[rd] &\CCC \circ_\iota \Omega_u(\CCC) \circ_\iota \DDD  \ar[d]\\  &\CCC \circ_\alpha \PPP \circ_\alpha \DDD}
 $$ 
By the 2-out-of-3 rule, the vertical map is a $\iota$-weak equivalence. So the map $\Omega_u(\CCC) \circ_\iota \CCC \circ_\iota \Omega_u(\CCC) \circ_\iota \DDD  \ra \Omega_u(\CCC) \circ_\iota \CCC \circ_\alpha \PPP \circ_\alpha \DDD $ is a quasi-isomorphism. Applying this for $\CCC$-coalgebras which are just chain complexes and using Lemma \ref{lemma:schur}, we obtain that the map $\Omega_u(\CCC) \circ_\iota \CCC \circ_\iota \Omega_u(\CCC)  \ra \Omega_u(\CCC) \circ_\iota \CCC \circ_\alpha \PPP $ is a quasi-isomorphism.
 \itemt Suppose $(5)$ and let us show $(2)$. The following square of $\mbs$-modules is commutative.
 $$
 \xymatrix{\Omega_u(\CCC) \circ_\iota \CCC \circ_\iota \Omega_u(\CCC) \ar[r] \ar[d] & \Omega_u(\CCC) \ar[d]\\
  \PPP \circ_\alpha \CCC \circ_\alpha \PPP \ar[r] & \PPP}
 $$
Since the left vertical map and the horizontal maps are quasi-isomorphisms, then the right vertical map is also a quasi-isomorphism.
\end{itemize}
\end{proof}

\begin{defin}[Koszul morphisms]\label{defin:koszul}
 An operadic twisting morphism $\alpha: \CCC \ra \PPP$ satisfying the properties of Theorem \ref{prop:quillengeneral} is called a \textit{Koszul morphism}.
\end{defin}

In the next section, we will explore Koszul duality which is a method to produce Koszul morphisms from a presentation of an operad.

\section{Examples}

The purpose of this section is to apply the general framework described in the previous sections to the case of common nonaugmented operads like the operads $\uAs$ and $\uCom$ whose algebras are respectively the unital associative algebras and the unital commutative algebras. So, for any of these operads $\PPP$, one looks after a curved conilpotent cooperad $\CCC$ together with an operadic twisting morphism $\alpha$ from $\CCC$ to $\PPP$ such that the induced morphism of operads from $\Omega_u\CCC$ to $\PPP$ is a quasi-isomorphism; that is, $\alpha$ is a Koszul morphism. One can use the universal twisting morphism $B_c \PPP \ra \PPP$. However, the bar construction is always very big. Instead, one usually tries to produce a sub-cooperad of $B_c\PPP$ whose cobar construction will be a resolution of $\PPP$. The Koszul duality theory is a way to produce such a sub-cooperad when the operad $\PPP$ has a quadratic presentation or a quadratic-linear presentation. This construction has been extended to quadratic-linear-constant presentations by Hirsh and Mill\`es in \cite{HirshMilles12}, generalizing to operads the curved Koszul duality of algebras developed by Polishchuk and Positselski in \cite{PolischukPositselski05}.

\subsection{Koszul duality}
%%%%%%%%%%%%%%
Koszul duality is a way to build a cooperad $\PPP^\ac$ together with a canonical operadic twisting morphism from $\PPP^\ac$ to $\PPP$,  out of an operad $\PPP$ which has a "nice enough" presentation $\PPP= \Tfree(\VV)/(\RR)$. Here, we present the construction of Hirsh and Mill\`es in \cite{HirshMilles12}.\\

Let $\PPP$ be a graded operad equipped with a presentation $\PPP= \Tfree (\VV)/ (\RR)$, where $\VV$ is a graded $\mbs$-module and where $(\RR)$ is the operadic ideal generated by a sub-graded-$\mbs$-module $\RR$ of $  \Tfree^{\leq 2} (\VV) $ such that 
$$
\begin{cases}
\RR \cap (\II \oplus \VV) = \{0\}\ ,\\
(\RR) \cap  \Tfree^{\leq 2} (\VV) = \RR\ .
\end{cases}
$$
We denote by $q\RR$ the projection of $\RR \subset \Tfree^{\leq 2} (\VV)$ onto $\Tfree^{2} (\VV)$ along $\II \oplus \VV$. Moreover, let  $q\PPP$ be the following operad:
$$
q\PPP:= \Tfree (\VV)/(q\RR)\ .
$$
This is a quadratic operad. The condition $\RR \cap (\II \oplus \VV) = \{0\}$ induces a function $\phi=(\phi_0,\phi_1) : q\RR \to \II \oplus \VV$.

\begin{defin}[Curved cooperad Koszul dual of an operad]\label{defin:curvedkoszulduality}\cite[\S 4.1]{HirshMilles12}
The Koszul dual cooperad $\PPP^\ac$ of $\PPP$, associated to the presentation $\PPP=\Tfree(\VV)/(\RR)$, is the following curved conilpotent cooperad. The underlying graded cooperad is the final graded sub-cooperad of $\Tfree^c (s\VV )$ such that the composition
$$
\PPP^\ac \ra \Tfree^c(s\VV) \ra \Tfree^2(s\VV) / s^2q\RR
$$
is zero. It is equipped with the unique coderivation which extends the following map
\begin{align*}
 \PPP^\ac \twoheadrightarrow s^2q\RR &\ra s\VV \\
 sx \otimes sy & \mapsto (-1)^{|x|}s\phi_1 (x \otimes y)\ .
\end{align*}
Its curvature is the following degree $-2$ map:
\begin{align*}
\theta : \PPP^\ac \twoheadrightarrow s^2q\RR &\ra \mbk \\
 sx \otimes sy & \mapsto (-1)^{|x|}s\phi_0 (x \otimes y)\ .
\end{align*}
Moreover, the map
$$
\kappa: \PPP^\ac \twoheadrightarrow s\VV \to \VV \hookrightarrow \PPP\ ,
$$
is an operadic twisting morphism which induces both a morphism of operads $\Omega_u \PPP^\ac\to \PPP$ and a morphism of curved conilpotent cooperads $\PPP^\ac \to B_c \PPP$.
\end{defin}

\begin{rmk}
 The coherence of the above definition is proven in \cite[\S 4.1]{HirshMilles12}.
\end{rmk}

%%%%%%%%%%%%%%%%

\begin{defin}[Koszul operad]
 The operad $\PPP$ (together with the presentation $\PPP= \Tfree (\VV)/(\RR)$) is said to be Koszul if the twisting morphism $\kappa: \PPP^\ac \ra \PPP$ is Koszul, that is if the map $\Omega_c \PPP^\ac \ra \PPP$ is a quasi-isomorphism.
\end{defin}

The following theorem is a powerful tool to show that an operad is Koszul.

\begin{thm}\cite[Theorem 4.3.1]{HirshMilles12}\label{thm:hm}
Suppose that the canonical morphism
 $$
 q\PPP \circ_\kappa q\PPP^\ac \ra \II
 $$
 is a quasi-isomorphism. Then $\PPP$ is Koszul.
\end{thm}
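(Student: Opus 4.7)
The plan is to prove that the morphism of dg operads $\varphi : \Omega_u \PPP^\ac \to \PPP$ induced by $\kappa$ is a quasi-isomorphism; by condition $(2)$ of Theorem \ref{prop:quillengeneral}, this is exactly what it means for $\kappa$ to be Koszul. I will compare this morphism to its classical counterpart for the quadratic operad $q\PPP$ through compatible weight filtrations on both sides, concluding by the graded complex comparison principle of Theorem \ref{maclane-homology}.

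First I record that $\PPP^\ac$ carries a canonical weight grading inherited from its embedding $\PPP^\ac \hookrightarrow \Tfree^c(s\VV)$, namely the number of $s\VV$-labels in a tree monomial. A direct inspection of Definition \ref{defin:curvedkoszulduality} shows that the coderivation of $\PPP^\ac$ (which extends the map $s^2 q\RR \to s\VV$ built from $\phi_1$) strictly lowers weight by $1$, while the curvature $\theta$ (built from $\phi_0$) strictly lowers weight by $2$. In particular, the underlying graded cooperad of $\PPP^\ac$ coincides with the classical Koszul dual $(q\PPP)^\ac$ of the purely quadratic operad $q\PPP$, which carries zero coderivation and zero curvature.

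Next I filter $\Omega_u \PPP^\ac = \Tfree(s^{-1}\ov{\PPP^\ac})$ by $F_n$, spanned by trees whose cogenerators $s^{-1}z_i$ satisfy $\sum_i \text{weight}(z_i) \leq n$. The three terms of the cobar differential behave as follows: the internal derivation $s^{-1}d_{\PPP^\ac}$ lowers total weight by $1$, the curvature piece $s^{-1}z \mapsto \theta(z) \cdot 1_\PPP$ lowers it by $2$, and the decomposition piece $s^{-1}z \mapsto -\sum s^{-1}z_{(1)}\otimes s^{-1}z_{(2)}$ preserves it (since $\Delta_2$ respects the weight grading). Hence $F_\bullet$ is an exhaustive, bounded-below filtration of chain complexes, and the associated graded $E^0 \Omega_u \PPP^\ac$ is canonically identified with the classical cobar complex $\Omega\bigl((q\PPP)^\ac\bigr)$ equipped with its standard decomposition-based differential. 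Dually, I filter $\PPP$ by $F_n\PPP := \text{image}\bigl(\Tfree^{\leq n}(\VV) \to \PPP\bigr)$; the two conditions on the presentation ($\RR \cap (\II \oplus \VV) = \{0\}$ and $(\RR) \cap \Tfree^{\leq 2}(\VV) = \RR$) produce a canonical surjection $q\PPP \twoheadrightarrow \text{gr}\,\PPP$, which is an isomorphism under the Koszul hypothesis on $q\PPP$ (the inhomogeneous PBW property). The morphism $\varphi$ respects both filtrations (a weight-$1$ cogenerator $s^{-1}sx$ maps to $x \in F_1\PPP$, while higher-weight cogenerators are killed by $\kappa$), and the induced map on associated gradeds is precisely the classical cobar-to-operad morphism $\Omega((q\PPP)^\ac) \to q\PPP$.

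Finally, the hypothesis that $q\PPP \circ_\kappa (q\PPP)^\ac \to \II$ is a quasi-isomorphism is equivalent, by the classical Koszul duality of quadratic operads (\cite{LodayVallette12}), to $\Omega((q\PPP)^\ac) \to q\PPP$ being a quasi-isomorphism. Applying Theorem \ref{maclane-homology} to $\varphi$, viewed as a morphism of exhaustively and boundedly filtered chain complexes, yields that $\varphi$ itself is a quasi-isomorphism, which is the desired conclusion. The delicate point is to keep careful track of the weight shifts of the three parts of the curved cobar differential and to justify the PBW-type identification $\text{gr}\,\PPP \simeq q\PPP$ from the stated conditions; once both are in hand, the result reduces in one stroke to the classical quadratic case.
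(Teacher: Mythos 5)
The paper does not actually prove this statement: it is imported verbatim from \cite[Theorem 4.3.1]{HirshMilles12}, so there is no internal proof to compare yours with, and I am judging your argument against the one in that reference. Your overall architecture --- filter $\Omega_u \PPP^\ac$ by the total weight of the cogenerators, observe that the internal coderivation and the curvature lower weight by $1$ and $2$ respectively so that the associated graded is the classical quadratic cobar construction $\Omega\big((q\PPP)^\ac\big)$, filter $\PPP$ by the number of generators, and conclude by Theorem \ref{maclane-homology} --- is indeed the strategy of Hirsh and Mill\`es, and your weight bookkeeping is correct, as is the translation of the hypothesis into the statement that $\Omega\big((q\PPP)^\ac\big) \to q\PPP$ is a quasi-isomorphism.

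However, there is a genuine gap exactly where you flag ``the delicate point,'' and it is not a detail one may defer. To apply Theorem \ref{maclane-homology} to $\varphi\colon \Omega_u \PPP^\ac \to \PPP$ you need the map of associated graded complexes to be a quasi-isomorphism; since the hypothesis identifies the homology of the source with $q\PPP$, what remains is precisely that the canonical surjection $q\PPP \twoheadrightarrow \mathrm{gr}\,\PPP$ is an isomorphism. This is not a formal consequence of the two conditions $\RR \cap (\II \oplus \VV) = \{0\}$ and $(\RR)\cap \Tfree^{\leq 2}(\VV) = \RR$: those only control weights $\leq 2$, and in higher weight composites of the inhomogeneous relations could a priori produce new leading terms in $\mathrm{gr}\,\PPP$. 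The assertion that they do not is the operadic Poincar\'e--Birkhoff--Witt theorem (the analogue of Braverman--Gaitsgory and Polishchuk--Positselski, proved as a separate theorem in \cite{HirshMilles12} under the same Koszulity hypothesis), and its proof is the real content of the statement you are trying to establish --- it is not easier than the theorem itself. Without it, the associated graded of $\varphi$ is only known to induce a surjection on homology, Theorem \ref{maclane-homology} cannot be invoked, and the argument does not close. So the proposal correctly identifies the right skeleton but leaves its load-bearing step unproved.
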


\subsection{Coalgebras over a Koszul dual}

In this subsection, we describe the category of $\PPP^\ac$-coalgebras, where $\PPP^\ac$ is the Koszul dual of the "quadratic-linear-homogeneous operad" $\PPP$ defined above. We will need the following definition.

\begin{defin}[Pre-coradical filtration]
 Let $\WW$ be a graded $\mbs$-module and let $\CC$ be a graded $\mbk$-module equipped with a map $\Delta^{(1)}: \CC \ra \WW \circ \CC$. We define $(F_n^{prad}\CC)_{n \in \mbn}$ to be the following (non-necessarily exhaustive) filtration on $\CC$ called the pre-coradical filtration.
 $$
\begin{cases}
 F_0^{prad}(\CC) :=ker(\Delta^{(1)})\ ,\\
 F_n^{prad}(\CC) :=(\Delta^{(1)})^{-1} \big( \WW(0) \oplus \sum_{i_1 + \cdots + i_k = n-1\atop k \geq 1} \WW(k) \otimes_{\mbs_k} (F_{i_1}^{prad}\CC \otimes \cdots \otimes F_{i_k}^{prad}\CC )\big)\text{ , if }n \geq 1\ .
\end{cases}
 $$
\end{defin}

\begin{lemma}\label{lemma:precoalg}
Consider a cofree graded conilpotent cooperad $ \Tfree^c (\WW)$. The category of graded coalgebras over $\Tfree^c (\WW)$ is equivalent to the category of graded-$\mbk$-modules $\CC$ equipped with a map $\Delta^{(1)}:\CC \ra \WW \circ \CC $ such that the pre-coradical filtration $(F_n^{prad}\CC)_{n \in \mbn}$ is exhaustive. Moreover, under this equivalence, the coradical filtration coincides with the pre-coradical filtration.
\end{lemma}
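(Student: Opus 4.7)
The plan is to construct a pair of mutually inverse functors. In one direction, from a $\Tfree^c(\WW)$-coalgebra $(\CC,\Delta)$, I would obtain $\Delta^{(1)}$ by composing with the canonical projection $\pi_\WW: \Tfree^c(\WW) \twoheadrightarrow \WW$, namely $\Delta^{(1)} := (\pi_\WW \circ Id_\CC)\Delta$. The first thing to check is that the pre-coradical filtration of $(\CC,\Delta^{(1)})$ so defined is exhaustive. For this I would apply Proposition \ref{prop:cogtech} to $\CC$: since $F_n^{rad}\Tfree^c(\WW)$ consists (essentially) of trees with at most $n$ vertices, projecting onto $\WW$ (the one-vertex piece) forces
$$\Delta^{(1)}(F_n^{rad}\CC) \subset \WW(0) \oplus \bigoplus_{k \geq 1} \WW(k) \otimes_{\mbs_k} (F_{n-1}^{rad}\CC)^{\otimes k}.$$
A straightforward induction then yields $F_n^{rad}\CC \subseteq F_n^{prad}\CC$, and since the coradical filtration is exhaustive by conilpotency, so is the pre-coradical filtration.

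In the other direction, given $(\CC,\Delta^{(1)})$ with exhaustive pre-coradical filtration, I would build the coaction $\Delta:\CC \to \Tfree^c(\WW) \circ \CC$ by induction on $n$ in $F_n^{prad}\CC$. On $F_0^{prad}\CC = \ker \Delta^{(1)}$, set $\Delta(x) := 1 \otimes x$, using the coaugmentation $\II \hookrightarrow \Tfree^c(\WW)$. For $x \in F_n^{prad}\CC$ with $n \geq 1$, note that $\Delta^{(1)}(x)$ lies in $\WW(0) \oplus \bigoplus_{k\geq 1}\WW(k) \otimes_{\mbs_k}(F_{n-1}^{prad}\CC)^{\otimes k}$, so the expression
$$\Delta(x) := 1 \otimes x + \tilde{\gamma}\bigl((Id_\WW \circ \Delta)\Delta^{(1)}(x)\bigr)$$
is well defined, where $\tilde{\gamma}$ denotes the composite $\WW \circ (\Tfree^c(\WW) \circ \CC) \cong (\WW \circ \Tfree^c(\WW)) \circ \CC \hookrightarrow \Tfree^c(\WW) \circ \CC$ given by the associator and by grafting of trees. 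Exhaustivity of the pre-coradical filtration ensures that this defines $\Delta$ on all of $\CC$. Counitality is immediate from the formula, and coassociativity follows from an induction on the pre-coradical filtration, using that both sides of the coassociativity axiom satisfy the same recursive formula once restricted by $(\pi_\WW \circ Id)$.

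To show the two constructions are mutually inverse, one direction is tautological: $(\pi_\WW \circ Id)\Delta = \Delta^{(1)}$ by the recursive definition. For the other, one must show that any $\Tfree^c(\WW)$-coalgebra structure $\Delta$ on $\CC$ is uniquely determined by $(\pi_\WW \circ Id)\Delta$. This is a variant of the universal property of the cofree cooperad: the composite $\Tfree^c(\WW) \to \Tfree^c(\ov{\Tfree^c(\WW)}) \to \Tfree^c(\WW)$ via the successive projections on $\WW$ is the identity, which translated to the coalgebra setting gives the desired uniqueness. Finally, for the "moreover" part, the inclusion $F_n^{rad}\CC \subseteq F_n^{prad}\CC$ was already shown; the reverse $F_n^{prad}\CC \subseteq F_n^{rad}\CC$ follows inductively from the explicit formula for $\Delta$, which shows $\Delta(F_n^{prad}\CC) \subset \Tfree^{\leq n}(\WW) \circ \CC = F_n^{rad}\Tfree^c(\WW) \circ \CC$.

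The main obstacle will be verifying coassociativity of the constructed $\Delta$: this requires tracking the combinatorics of iterated grafting against the cooperad decomposition (degrafting) of $\Tfree^c(\WW)$, and showing that the two procedures match at every level of the pre-coradical filtration. The bookkeeping is essentially equivalent to the well-known fact that a coalgebra structure on $\CC$ over a cofree conilpotent cooperad is equivalent to a single "corestriction" map to the cogenerators, but the presence of elements of arity zero in $\WW$ (which can produce new pieces at each iteration) requires care to avoid double-counting.
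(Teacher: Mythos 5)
Your proposal is correct and follows essentially the same route as the paper: the paper defines $\Delta_\CC$ by the identical recursion $\Delta_\CC(x) := 1 \otimes x + (Id \circ \Delta_\CC)\Delta^{(1)}(x)$ on the pre-coradical filtration and recovers $\Delta^{(1)}$ by projecting onto $\WW$, leaving the coassociativity check and the comparison of filtrations implicit. Your write-up simply makes explicit the verifications (exhaustivity via Proposition \ref{prop:cogtech}, coassociativity, uniqueness from the corestriction) that the paper's one-paragraph proof omits.
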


\begin{proof}[Proof]
 Let $\CC$ be a graded-$\mbk$-module with a map $\Delta^{(1)}:\CC \ra \WW \circ \CC $ such that the pre-coradical filtration $(F_n^{prad}\CC)_{n \in \mbn}$ is exhaustive. Then, let us define $\Delta_\CC: \CC \ra \Tfree (\WW) \circ \CC$ by induction as follows.
 $$
\begin{cases}
 \Delta_\CC (x) := 1 \otimes  x\text{ if }x \in F^{prad}_0\CC\ ,\\
 \Delta_\CC (x):= 1 \otimes x + (Id \circ \Delta_\CC)\Delta^{(1)}(x)\text{ if } x \in F^{prad}_n\CC\ .
\end{cases}
$$
 This defines a structure of $\Tfree^c (\WW)$-coalgebra on $\CC$. Conversely, let $(\CC,\Delta)$ be a graded $\Tfree^c(\WW)$-coalgebra. We obtain a map $\Delta^{(1)}: \CC \to \WW \circ \CC$ by composing $\Delta$ with the projection of $\Tfree^c(\WW)$ onto $\WW$. Then, the construction we just described recovers $\Delta$ from $\Delta^{(1)}$.
\end{proof}

\begin{thm}\label{thm:coalgkoszul}
 Suppose that the characteristic of the field $\mbk$ is zero (this assumption is not necessary in the nonsymmetric context). The category of $\PPP^\ac$-coalgebras is equivalent to the category of graded $\Tfree^c(s\VV)$-coalgebras (that is graded-$\mbk$-modules $\CC$ equipped with a map $\Delta^{(1)}\CC:\ \ra s\VV \circ \CC $  such that the pre-coradical filtration $(F_n^{prad}\CC)_{n \in \mbn}$ is exhaustive) such that the composite map
 $$
 \xymatrix{\CC \ar[rrr]^(0.35){\Delta_2=(Id \circ' \Delta^{(1)})\Delta^{(1)} } &&&  \Tfree^2 (s\VV)  \circ \CC \ar@{->>}[r] & \left(\Tfree^2 (s\VV) /s^2q\RR \right) \circ \CC}
 $$
 is zero, together with a degree $-1$ map $d_\CC: \CC\ra \CC$ such that
 $$
\begin{cases}
 d^2_\CC  = (\theta \circ Id)\Delta_2\ ,\\
 \Delta^{(1)} d_\CC =(d_{\PPP^\ac} \circ Id)\Delta_2 + (Id \circ' d_\CC) \Delta^{(1)}\ .
\end{cases}
 $$

\end{thm}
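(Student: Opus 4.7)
The plan is to unpack a $\PPP^\ac$-coalgebra structure on $\CC$ into two independent layers and analyze each separately: the underlying graded $(\PPP^\ac)^{grad}$-coalgebra structure, and the degree $-1$ coderivation $d_\CC$ satisfying the curvature equation.

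For the graded layer, I would first identify $(\PPP^\ac)^{grad}$ explicitly as
\[
\{T \in \Tfree^c(s\VV) : \text{every depth-2 subtree of } T \text{ belongs to } s^2q\RR\}.
\]
This subset is a sub-cooperad, since the cooperadic decomposition (degrafting) breaks $T$ into pieces whose depth-2 subtrees are a fortiori depth-2 subtrees of $T$; it intersects $\Tfree^2(s\VV)$ in exactly $s^2q\RR$, so the maximality in Definition \ref{defin:curvedkoszulduality} yields the claimed equality. Combined with Lemma \ref{lemma:precoalg}, a graded $(\PPP^\ac)^{grad}$-coalgebra structure on $\CC$ is then the datum $(\CC, \Delta^{(1)})$ with exhaustive pre-coradical filtration whose iterated decomposition $\Delta : \CC \to \Tfree^c(s\VV) \circ \CC$ factors through $(\PPP^\ac)^{grad} \circ \CC$.

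The key step, which I would prove by induction on the pre-coradical filtration, is that this factorization is equivalent to the depth-2 condition that $\Delta_2$ has image in $s^2q\RR \circ \CC$. One direction is immediate from $(\PPP^\ac)^{grad} \cap \Tfree^2(s\VV) = s^2q\RR$. For the converse, writing the reconstruction $\Delta(c) = 1 \otimes c + (Id \circ \Delta)\Delta^{(1)}(c)$ for $c \in F_n^{prad}\CC$, each coefficient of $\Delta(c)$ has the form $v \circ (p_1, \ldots, p_k)$ with $v \otimes (c_1, \ldots, c_k)$ a summand of $\Delta^{(1)}(c)$ and $p_j$ a coefficient of $\Delta(c_j)$, belonging to $(\PPP^\ac)^{grad}$ by the inductive hypothesis. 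The depth-2 subtrees of $v \circ (p_1, \ldots, p_k)$ split into two types: those of the form $v \circ_j v_j$ where $v_j$ is the top node of $p_j$ (which are exactly the coefficients of $\Delta_2(c)$, hence in $s^2q\RR$ by hypothesis), and those contained entirely in some $p_j$ (which lie in $s^2q\RR$ by the inductive description of $(\PPP^\ac)^{grad}$).

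For the coderivation layer, I would use that both $d_{(\PPP^\ac)^{grad}}$ and $\theta$ are concentrated in depth $2$, equal respectively to $\phi_1$ and $\phi_0$ on $s^2q\RR$ and vanishing on the arity-one cogenerators $s\VV$. Projecting the global coderivation compatibility $\Delta d_\CC = (d_{(\PPP^\ac)^{grad}} \circ Id + Id \circ' d_\CC)\Delta$ onto the arity-one factor $s\VV \circ \CC$ produces exactly the stated equation for $\Delta^{(1)} d_\CC$, while the curvature equation $d_\CC^2 = (\theta \circ Id)\Delta$ collapses to $d_\CC^2 = (\theta \circ Id)\Delta_2$ for the same reason. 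Conversely, any pair $(\Delta^{(1)}, d_\CC)$ satisfying these two equations extends inductively along the pre-coradical filtration to a coderivation satisfying the full compatibility and curvature, since a coderivation on $\CC$ is determined by its arity-one projection in the cofree-graded context provided by Lemma \ref{lemma:precoalg}.

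The main obstacle is the explicit identification of $(\PPP^\ac)^{grad}$ via the depth-2 condition together with the careful bookkeeping of the two types of depth-2 subtrees in the induction step of the converse. The characteristic-zero hypothesis enters implicitly through the operadic structure and the well-behavedness of sub-cooperads of $\Tfree^c(s\VV)$ (via the operadic Kunneth formula), while in the nonsymmetric context the argument reduces to a transparent manipulation of planar trees and the hypothesis is unnecessary.
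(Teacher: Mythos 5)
Your proposal is correct in outline but takes a genuinely different route from the paper. The paper never identifies $(\PPP^\ac)^{grad}$ explicitly: it reduces to a finite-dimensional sub-$\Tfree^c(s\VV)$-coalgebra $\CC(x)$ containing a given element, chooses a basis $(e_i)$, extracts structure coefficients $p_{i,k,s}\in\Tfree(s\VV)$, and shows (Lemma \ref{lemma:keypanitshriakcoalg}) that the sub-$\mbs$-module $\DDD$ these coefficients generate is a sub-cooperad of $\Tfree^c(s\VV)$ --- coassociativity of $\Delta$ gives $\Delta(p_{i,k,s})\in\DDD\circ\DDD$ --- on which the projection to $\Tfree^2(s\VV)/s^2q\RR$ vanishes; the finality of $\PPP^\ac$ then yields $\DDD\subset\PPP^\ac$ with no further work. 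Your route instead characterizes $(\PPP^\ac)^{grad}$ by the two-vertex-subtree condition and inducts on the pre-coradical filtration. That description is correct and the induction can be made to work, but note that you only argue one inclusion: maximality gives that your intersection is \emph{contained in} $\PPP^\ac$, whereas the step of your induction that handles the depth-2 subtrees lying entirely inside some $p_j$ uses the \emph{reverse} inclusion $\PPP^\ac\subseteq\{\text{all two-vertex subtrees in }s^2q\RR\}$, which requires a separate degrafting argument (or, more economically, you could strengthen the induction hypothesis to ``all two-vertex subtrees of all coefficients of $\Delta(c)$ lie in $s^2q\RR$'' and never invoke the reverse inclusion). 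What the paper's approach buys is that it sidesteps the explicit description of $\PPP^\ac$ entirely; what yours buys is that it avoids the reduction to finite-dimensional subcoalgebras.

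The one genuine misstep is your account of where characteristic zero enters. The operadic K\"unneth formula is irrelevant here: no homology is being taken in the graded layer. The hypothesis is needed because the composite product $\circ$ involves coinvariants $\otimes_{\mbs_k}$, so the ``coefficients'' $p_{i,k,s}$ of $\Delta(c)$ --- and likewise the ``summands'' $v\otimes(c_1,\dots,c_k)$ on which your induction is built --- are only well defined after an equivariant choice; the paper produces such a choice by averaging over the isotropy group $Inv(s)$, i.e.\ by dividing by $\# Inv(s)$. Relatedly, exactness of coinvariants in characteristic zero is what makes $s^2q\RR\circ\CC\to\Tfree^2(s\VV)\circ\CC$ injective, so that the vanishing of $\Delta_2$ in the quotient is equivalent to $\Delta_2$ landing in $s^2q\RR\circ\CC$, a translation your induction uses tacitly. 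Your argument is repairable by the same averaging, but the ``careful bookkeeping'' you defer is precisely where the hypothesis does its work, and in positive characteristic (outside the nonsymmetric setting) it genuinely fails.
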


\begin{proof}
 Let $\CC$ be a graded $\Tfree^c(s\VV)$-coalgebra together with a degree $-1$ map $d_\CC: \CC \to\CC$ satisfying the conditions of Theorem \ref{thm:coalgkoszul}. For any $x \in \CC$, let $\CC (x)$ be a finite dimensional sub-$\Tfree^c (s\VV )$-coalgebra of $\CC$ which contains $x$. By Lemma \ref{lemma:keypanitshriakcoalg}, the map $\Delta_{\CC(x)}: \CC(x) \ra \Tfree^c (s\VV) \circ \CC(x)$ factorizes through a unique map $\CC(x) \ra \PPP^\ac \circ \CC(x)$. Hence, $\CC$ has a structure of graded $(\PPP^\ac)^{grad}$-coalgebra. Moreover, we can prove by induction on the coradical filtration of $\CC$ that $d_\CC$ is a coderivation.
 \end{proof}

\begin{lemma}\label{lemma:keypanitshriakcoalg}
 Let $\CC(x)$ be the graded $ \Tfree^c (s\VV)$-coalgebra defined in the proof of Theorem \ref{thm:coalgkoszul}. Then, $\CC(x)$ is a graded $ \PPP^\ac$-coalgebra.
\end{lemma}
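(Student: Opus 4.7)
The plan is to exhibit a sub-cooperad $\DDD$ of $\Tfree^c(s\VV)$ such that (i) $\Delta_{\CC(x)}: \CC(x) \to \Tfree^c(s\VV) \circ \CC(x)$ factors through the inclusion $\DDD \circ \CC(x) \hookrightarrow \Tfree^c(s\VV) \circ \CC(x)$, and (ii) the composition $\DDD \hookrightarrow \Tfree^c(s\VV) \twoheadrightarrow \Tfree^2(s\VV)/s^2q\RR$ is zero. By the maximality of $\PPP^\ac$ among sub-cooperads with this vanishing property (Definition \ref{defin:curvedkoszulduality}), this will force $\DDD \subseteq \PPP^\ac$, and (i) then supplies the desired factorization $\Delta_{\CC(x)}: \CC(x) \to \PPP^\ac \circ \CC(x)$.

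To build $\DDD$, I first define $\DDD_0(n) \subseteq \Tfree^c(s\VV)(n)$ as the smallest $\mbs_n$-submodule such that the arity-$n$ component of $\Delta_{\CC(x)}$ lands in $\DDD_0(n) \otimes_{\mbs_n} \CC(x)^{\otimes n}$. In characteristic zero, the semisimplicity of $\mbk[\mbs_n]$ (Maschke) renders $-\otimes_{\mbs_n} \CC(x)^{\otimes n}$ exact, so it preserves intersections of $\mbs_n$-submodules, and the minimum above is well-defined; in the nonsymmetric context no symmetric action occurs and this is automatic. I then let $\DDD$ be the sub-cooperad of $\Tfree^c(s\VV)$ generated by $\DDD_0$, so that (i) is built in. For (ii), the defining hypothesis of Theorem \ref{thm:coalgkoszul}, which $\CC(x)$ inherits as a sub-coalgebra of $\CC$, states that $\Delta_2(\CC(x)) \subseteq s^2q\RR \circ \CC(x)$, and by minimality of $\DDD_0$ its weight-$2$ component lies in $s^2q\RR$. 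The remaining point is to show that this weight-$2$ containment propagates under taking the cooperad closure: this follows from coassociativity of $\Delta_{\CC(x)}$, since every weight-$2$ piece appearing in the iterated decomposition of an element of $\DDD_0$ arises, via $(\Delta_{\Tfree^c(s\VV)} \circ \mathrm{Id})\Delta_{\CC(x)} = (\mathrm{Id} \circ \Delta_{\CC(x)}) \Delta_{\CC(x)}$, from an application of $\Delta_2$ at some intermediate level, and hence still lies in $s^2q\RR$.

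The main obstacle will be making rigorous the two types of ``minimality/descent'' arguments used above: the well-definedness of the smallest $\mbs_n$-submodule $\DDD_0(n)$, and the propagation of the weight-$2$ containment from $\DDD_0$ to its cooperad closure $\DDD$. Both reduce to the exactness of $-\otimes_{\mbs_n} \CC(x)^{\otimes n}$, which is guaranteed in characteristic zero by the semisimplicity of $\mbk[\mbs_n]$, consistent with the standing hypothesis of Theorem \ref{thm:coalgkoszul}. In the nonsymmetric context the argument is purely linear-algebraic and the characteristic plays no role, as reflected in the corresponding clause of the parent theorem.
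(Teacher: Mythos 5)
Your overall strategy coincides with the paper's: exhibit a sub-cooperad $\DDD \subset \Tfree^c(s\VV)$ through which $\Delta_{\CC(x)}$ factors and on which the projection to $\Tfree^2(s\VV)/s^2q\RR$ vanishes, then invoke the finality of $\PPP^\ac$. Where you differ is in how $\DDD$ is produced. The paper chooses a basis $(e_i)$ of $\CC(x)$, writes out the structure constants $p_{i,k,s}$ of $\Delta(e_i)$, averages them over the isotropy groups $Inv(s)$ (this is where characteristic zero enters concretely), and then shows in \emph{one} application of coassociativity that the $\mbs$-module spanned by $1$ and the $p_{i,k,s}$ is already a sub-cooperad, since the components of $(Id \circ \Delta)\Delta(e_i)$ are again expressible through the $p_{j,k',s'}$; the vanishing of $\pi$ on the generators is then read off directly from the hypothesis $(\pi \circ Id)\Delta(e_i)=0$. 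You instead take the minimal $\mbs$-submodule $\DDD_0$ (well-defined by flatness of $-\otimes_{\mbs_n}\CC(x)^{\otimes n}$ over the semisimple ring $\mbk[\mbs_n]$ --- the abstract counterpart of the paper's averaging) and then pass to its cooperad closure, which forces you to propagate the weight-two containment through an iterated closure process. That propagation is the one step you leave essentially unproved: the coassociativity identity you cite does show that every weight-two piece occurring at any level of the iterated decomposition is governed by $\Delta_2$ of some element of $\CC(x)$, but to conclude that the individual components spanning the closure have weight-two part in $s^2q\RR$ you must extract them from coinvariant tensors over $\CC(x)^{\otimes k}$ (and over the further $\Tfree$-factors), which requires repeating your exactness/minimality argument at each stage of the closure and setting up the induction; this is precisely the work the paper's symmetrized structure constants and one-step closure are designed to avoid. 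The argument can be completed along the lines you indicate, so I would not call it wrong, but as written the heart of the proof is compressed into the sentence beginning ``The remaining point,'' and your route ends up needing the same two ingredients as the paper's (component extraction in characteristic zero, plus coassociativity) with an extra induction on top.
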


\begin{proof}[Proof]
 Remember that $\CC(x)$ is a finite dimensional sub-graded-$\Tfree^c (s\VV)$-coalgebra of $\CC$. Let $(e_i)_{i=1}^m$ be a basis of $\CC(x)$. Then, for any $i\in \{1,\ldots,m\}$, let $p_{i,0}\in \ov \Tfree (s\VV) (0)$, and for any integer $k \geq 1$ and for any nondecreasing function $s$ from $\{1,\ldots,k\}$ to $\{1,\ldots,m\}$, let $p_{i,k,s} \in \ov \Tfree (s\VV) (k)$ such that
 $$
 \Delta (e_i)= 1 \otimes e_i + p_{i,0} + \sum_{k=0}^\infty \sum_s p_{i,k,s} \otimes_{\mbs_k} (e_{s(1)} \otimes \cdots \otimes e_{s(k)})
 $$
 For any nondecreasing function $s$ from $\{1,\ldots,k\}$ to $\{1,\ldots,m\}$ and for any $\sigma \in \mbs_k$, let $\epsilon (s, \sigma )$ be the element of $\mathbb{Z}/2\mathbb{Z}$ such the structural action of $\sigma$ on $\CC^{\otimes k}$ sends $e_{s(1)} \otimes \cdots \otimes e_{s(k)}$ to $(-1)^{\epsilon (s,\sigma)} e_{s\sigma^{-1}(1)} \otimes \cdots \otimes e_{s\sigma^{-1}(k)}$. Besides, let $Inv(s)$ be the subgroup of $\mbs_k$ of permutation $\sigma$ such that $s= s\sigma^{-1}$. Then, we can choose $p_{i,k,s}$ such that $p_{i,k,s}^\sigma = (-1)^{\epsilon (s, \sigma)} p_{i,k,s}$ for any $\sigma \in Inv(s)$. Indeed, if it is not the case, we can replace $p_{i,k,s}$ by
 $$
\frac{1}{\# Inv(s)} \sum_{\sigma \in Inv (s)} (-1)^{\epsilon (s, \sigma)} p_{i,k,s}^\sigma\ .
 $$
Let $\DDD$ be the sub-graded $\mbs$-module of $\Tfree (s\VV)$ generated by $1$ and the elements $p_{i,k,s}$. Since $(\Delta \circ Id)\Delta(e_i) = (Id \circ \Delta)\Delta(e_i)$ for any $i$, then there exists an element of $q_{i,k,s}\in (\DDD \circ \DDD)(k)$ such that
 $$
 \Delta(p_{i,k,s}) \otimes_{\mbs_k} (e_{s(1)} \otimes \cdots \otimes e_{s(k)}) = q_{i,k,s} \otimes_{\mbs_k} (e_{s(1)} \otimes \cdots \otimes e_{s(k)})\ .
 $$
 Since $p_{i,k,s}^\sigma = (-1)^{\epsilon (s, \sigma)} p_{i,k,s}$ for any $\sigma \in Inv(s)$, then
 $$
 \Delta(p_{i,k,s}) = \frac{1}{\# Inv(s)} \sum_{\sigma \in Inv (s)} (-1)^{\epsilon (s, \sigma)} q_{i,k,s}^\sigma\ .
 $$
 So, $ \Delta(p_{i,k,s}) \in \DDD \circ \DDD$. Hence $\DDD$ is a sub-graded cooperad of $\Tfree^c (s\VV)$. Moreover, for any $i$, $(\pi \circ Id)\Delta(e_i)=0$ where $\pi$ is the projection of $\Tfree (s\VV)$ onto $ \Tfree^2 (s\VV ) / s^2q\RR$. So, $\pi(p_{i,k,s})=0$ for any 3-tuple $(i,k,s)$ and $\pi(p_{i,0})=0$ for any $i$ ;  so $\pi_{|\DDD}=0$. Hence $\DDD \subset \PPP^\ac$.
\end{proof}

\subsection{Unital associative algebras up to homotopy}\label{barcobaralg}

\subsubsection{A presentation of the operad $\uAs$}
Let $\uAs$ be the nonsymmetric operad defined by the presentation $\uAs:=\Tfree (\mbk \cdot \mu \oplus \mbk \cdot \xi)/(\RR)$ where $\mu$ is an arity two element and $\xi$ is an arity zero element. The nonsymmetric module $\RR \subset \II \oplus \Tfree^2 (\mbk \cdot \mu \oplus \mbk \cdot \xi)$ is made up of the following relations
$$
\begin{cases}
 \mu \otimes_{ns} (\xi \otimes 1) -1\ ,\\ 
 \mu \otimes_{ns} (1 \otimes \xi) - 1\ ,\\
  \mu \otimes_{ns} (\mu \otimes 1) - \mu \otimes_{ns} (1 \otimes \mu)\ .
\end{cases}
$$

\begin{rmk}
 Here, the symbol $ns$ stands for the composition product of nonsymmetric modules.
\end{rmk}

Given this presentation, the Koszul dual $\uAs^\ac$ is a nonsymmetric curved conilpotent cooperad whose underlying graded cooperad is the final subcooperad of $\Tfree^c (\mbk \cdot s\mu \oplus \mbk \cdot \xi)$ such that 
$$
\uAs^\ac \cap \Tfree^2 (\mbk \cdot s\mu )= \mbk \cdot \left( s\mu \otimes_{ns} (s\mu \otimes 1) -  s\mu \otimes_{ns} (1 \otimes s\mu) \right)\ .
$$
The coderivation of $\uAs^\ac$ is zero and the curvature is given by
$$
\theta \left( s\mu \otimes_{ns} (s\xi \otimes 1) \right)= \theta\left(  s\mu \otimes_{ns} (1 \otimes s\xi)\right)=-1\ .
$$

\begin{rmk}
 The Koszul dual curved cooperad $\uAs^\ac$ of the operad $\uAs$ is described in details in \cite{HirshMilles12}.
\end{rmk}

\subsubsection{Coalgebras over $\uAs^\ac$}

\begin{prop}
The endofunctor of the category of graded $\mbk$-modules $\VV \mapsto s \VV$ induces an equivalence between the category of $\uAs^\ac$-coalgebras and the category of non-counital curved conilpotent coassociative coalgebras.
\end{prop}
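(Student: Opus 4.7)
The plan is to deduce the equivalence directly from Theorem \ref{thm:coalgkoszul} specialized to $\PPP = \uAs$, by matching the structure through the suspension.

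First, I unpack Theorem \ref{thm:coalgkoszul} in this case. A $\uAs^\ac$-coalgebra consists of a graded $\mbk$-module $\CC$ equipped with a degree $0$ map
$$\Delta^{(1)}: \CC \longrightarrow s\VV \circ \CC = \mbk \cdot s\xi \ \oplus\ s\mu \otimes (\CC \otimes \CC),$$
together with a degree $-1$ endomorphism $d_\CC$, satisfying the axioms of that theorem. I split the structural map into its two components,
$$\Delta^{(1)}(x) = u(x)\cdot s\xi + s\mu \otimes \bar{\Delta}(x),$$
obtaining a degree $-1$ pre-comultiplication $\bar{\Delta}: \CC \to \CC \otimes \CC$ and a degree $-1$ pre-curvature $u : \CC \to \mbk$.

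Second, I identify the axioms explicitly. The quadratic projection $q\RR$ of the defining relations of $\uAs$ is the $\mbk$-span of the associator $\mu \otimes_{ns}(\mu,1) - \mu \otimes_{ns}(1,\mu)$ and the two unit relations $\mu \otimes_{ns}(\xi,1)$, $\mu \otimes_{ns}(1,\xi)$, and in the notation of Definition \ref{defin:curvedkoszulduality} the map $\phi_0$ vanishes on the associator and equals $1$ on the other two generators. Computing $\Delta_2 = (Id \circ' \Delta^{(1)}) \Delta^{(1)}$ explicitly and projecting to $\Tfree^2(s\VV)/s^2 q\RR \circ \CC$, only the associator contribution survives in the quotient, so the vanishing condition expresses exactly coassociativity of $\bar{\Delta}$. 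Once coassociativity holds, the curvature equation $d_\CC^2 = (\theta_{\uAs^\ac} \circ Id) \Delta_2$ reduces, using $\theta_{\uAs^\ac}(s\mu \otimes (s\xi,1)) = \theta_{\uAs^\ac}(s\mu \otimes (1, s\xi)) = -1$, to
$$d_\CC^2 = -\bigl((u \otimes Id) + (Id \otimes u)\bigr)\,\bar{\Delta}.$$
Since the coderivation of $\uAs^\ac$ vanishes, the remaining axiom $\Delta^{(1)} d_\CC = (Id \circ' d_\CC)\Delta^{(1)}$ reduces to the joint compatibility of $d_\CC$ with $\bar\Delta$ (as a coderivation) and with $u$. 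Finally, exhaustivity of the pre-coradical filtration translates to the vanishing of some iterate of $\bar{\Delta}$ on every element.

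Third, I transfer the data across the suspension. To a $\uAs^\ac$-coalgebra $\CC$ I associate the graded $\mbk$-module $D := s\CC$ equipped with
$$d_D(sx) := -s\,d_\CC(x),\quad \Delta_D(sx) := (s \otimes s)\,\bar{\Delta}(x),\quad \theta_D(sx) := u(x),$$
where $s \otimes s$ carries the usual Koszul sign. The degrees become $|d_D| = -1$, $|\Delta_D| = 0$, $|\theta_D| = -2$; coassociativity of $\Delta_D$, the coderivation identity for $d_D$, the identity $\theta_D d_D = 0$, and the curvature equation $d_D^2 = (\theta_D \otimes Id + Id \otimes \theta_D)\Delta_D$ all follow from the corresponding identities for $(\CC, \Delta^{(1)}, d_\CC)$ by a direct sign check; and conilpotency of $D$ is precisely the pre-coradical filtration being exhaustive. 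The inverse functor $D \mapsto s^{-1}D$ is obtained by reading this dictionary backwards and invoking Theorem \ref{thm:coalgkoszul} to promote $s^{-1}D$ to a genuine $\uAs^\ac$-coalgebra. Mutual inverseness and naturality are then immediate.

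The main obstacle is the careful bookkeeping of Koszul signs when transporting the differential, comultiplication, and curvature through the suspension, in order to verify that the signs appearing in Definition \ref{defin:curvedkoszulduality} combine, via the suspension, into the standard sign convention of a non-counital curved coassociative coalgebra. Once this dictionary is pinned down, all the axioms translate into one another on the nose.
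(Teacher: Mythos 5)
Your proof is correct and follows essentially the same route as the paper: the paper's own proof of this proposition simply defers to the proof of Proposition \ref{propkoszulduallie} (the $\uCom^\ac$/curved Lie case), which runs through Theorem \ref{thm:coalgkoszul} to unpack the structure maps $(\bar\Delta, u, d_\CC)$ and then transfers them across the suspension with a sign check, exactly as you do for $\uAs^\ac$. One small caveat on the sign bookkeeping you rightly flag as the crux: with the paper's conventions (cf.\ the curved cooperad axiom and the definition of curved Lie coalgebras) the curvature identity in the target category should come out as $d_D^2 = (\theta_D \otimes \id - \id \otimes \theta_D)\Delta_D$, not with a plus sign.
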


\begin{proof}[Proof]
The proof relies on the same arguments as the proof of Proposition \ref{propkoszulduallie} that will be detailed.
\end{proof}

\begin{rmk}
The map $\VV \to s\VV$ also induces an equivalence between graded $(\uAs^\ac)^{grad}$-coalgebras and graded non-counital conilpotent coassociative coalgebras $\CCC$ equipped with a degree $-2$ map $\CCC \to \mbk$. Moreover, this equivalence sends a cofree graded $(\uAs^\ac)^{grad}$-coalgebra $\uAs^\ac \circ \VV$ to the cofree conilpotent coalgebra $\ov \Tfree (\VV \oplus \mbk \cdot v)$ where $|v|=2$ with the degree $-2$ map:
\begin{align*}
 \ov \Tfree (\VV \oplus \mbk \cdot v) \twoheadrightarrow \mbk \cdot v &\to \mbk \\
 v & \mapsto 1\ . 
\end{align*}
\end{rmk}

\begin{nota}
We denote the category of curved conilpotent coassociative coalgebras by $\cCog$. Moreover, we denote the operad $\Omega_u \uAs^\ac$ by $u\mathscr{A}_\infty$.
\end{nota}

\subsubsection{The bar-cobar adjunction and $u\mathscr{A}_\infty$-algebras}

On the one hand, there exists an adjunction relating $\uAs$-algebras to $\uAs^\ac$-coalgebras which is induced by the operadic twisting morphism $\alpha: \uAs^\ac \ra \uAs$. On the other hand the category of $\uAs^\ac$-coalgebras is equivalent to the category $\cCog$ of curved conilpotent coalgebras. Thus, we obtain a bar-cobar adjunction between unital associative algebras and curved conilpotent coalgebras which is the restriction to arity one of the operadic bar-cobar adjunction described in Section \ref{subsection:operadicbarcobar} (with the exception that we can consider noncounital coalgebras instead of coaugmented counital coalgebras). For this reason, we denote this adjunction using the same symbols as in the operadic context, that is $\Omega_u \dashv B_c$. So we have:
$$
\Omega_u \CCC := \ov \Tfree (s^{-1} \CCC)
$$
$$
B_c\AAA :=\ov \Tfree (s\AAA \oplus \mbk \cdot v)
$$
for any curved conilpotent coalgebra $\CCC$ and for any unital algebra $\AAA$. The derivation of $\Omega_u (\CCC)$ and the coderivation of $B_c (\AAA)$ are defined as in Section \ref{subsection:operadicbarcobar}.\\

The adjunction $\Omega_u \dashv B_c$ is part of a larger picture:
$$
\xymatrix{\cCog \ar@<1ex>[r]^(0.42){\Omega_\iota} & u\mathscr{A}_\infty-\mathsf{alg} \ar@<1ex>[l]^(0.58){B_\iota} \ar@<1ex>[r]^(0.5){\phi_!} & \uAs-\mathsf{alg}\ , \ar@<1ex>[l]^(0.5){\phi^*}}
$$
where the adjunction $\phi_! \dashv \phi^*$ is induced by the morphism of operads $\phi: u\mathscr{A}_\infty \to \uAs$ and where $\Omega_u = \phi_!\Omega_\iota$ and $B_c= B_\iota \phi^*$. We know that a $u\mathscr{A}_\infty$-algebra $\AAA=(\Aa,\gamma)$ is the data of a chain complex $\Aa$ together with a coderivation on the cofree graded $(\uAs^\ac)^{grad}$-coalgebra $\uAs^\ac \circ \Aa$ so that it becomes a $\uAs$-coalgebra. Equivalently, it is the data of a chain complex together with a coderivation on the cofree conilpotent coassociative coalgebra $\ov\Tfree (s\Aa \oplus \mbk \cdot v)$ so that it becomes a curved conilpotent coalgebra whose curvature $\theta$ is given by
\begin{align*}
 \ov \Tfree (s\Aa \oplus \mbk \cdot v) \twoheadrightarrow \mbk \cdot v &\to \mbk \\
 v & \mapsto 1\ . 
\end{align*}
By Lemma \ref{lemma:curvcofree}, this is equivalent to a degree $-1$ map
$$
\gamma: \ov \Tfree (s\Aa \oplus \mbk \cdot v) \to \Aa\ ,
$$
such that for any $x_1,\cdots,x_n \in (s\Aa \oplus \mbk \cdot v)$
$$
\sum_{0 \leq i \leq j \leq n} (-1)^{|x_1|+ \cdots +|x_{i-1}|} \gamma (x_1 \otimes \cdots  \otimes \gamma (x_i \otimes \cdots \otimes x_j) \otimes \cdots x_n)=
\begin{cases}
 0 \text{ if }n\neq 2\ ,\\
 \theta(x_1)x_2 -  \theta(x_2)x_1 \text{ if }n= 2\ .
\end{cases}
$$
In particular, we have the following.
\begin{itemize}
 \itemt A degree zero product
 $$
 \gamma_2: \Aa \otimes \Aa \to \Aa\ .
 $$
 \itemt A degree $1$ map
 $$
 \gamma_3: \Aa\otimes \Aa \otimes \Aa \to \Aa\ ,
 $$ 
 whose boundary is the associator of $\gamma_2$, that is 
 $$
 \partial (\gamma_3)= \gamma_2 (Id \otimes \gamma_2) - \gamma_2 (\gamma_2 \otimes Id)\ .
 $$
 \itemt An element $1_\AAA$ defined by $\gamma (v) = s 1_\AAA$.
 \itemt Maps $\gamma_{1,l}: \Aa \to \Aa$ and $\gamma_{1,r}:  \Aa \to \Aa$ of degree $1$ which make $1_\AAA$ a unit up to homotopy, that is
 $$
 \partial (\gamma_{1,l}) = \gamma_2 (1_\AAA \otimes Id) -Id\ ,
 $$
 $$
 \partial (\gamma_{1,l}) = \gamma_2 (Id \otimes 1_\AAA) -Id\ .
 $$
\end{itemize}

\subsubsection{The Koszul property and the infinity category of $u\mathscr{A}_\infty$-algebras}

\begin{prop}\cite[6.1.8]{HirshMilles12}\label{prop:uaskoszul}
The operad $\uAs$ is Koszul. 
\end{prop}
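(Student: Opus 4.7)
The plan is to invoke Theorem \ref{thm:hm}, which reduces Koszulness of the inhomogeneous operad $\uAs$ to the quadratic Koszul complex condition on its purely quadratic shadow $q\uAs$. First I would identify $q\uAs$ explicitly: projecting each of the three generating relations onto $\Tfree^{2}(\mathbb{k}\cdot \mu\oplus \mathbb{k}\cdot\xi)$ kills the constant term $1$ in the two unit axioms, so $q\RR$ is the span of $\mu\otimes_{ns}(\xi\otimes 1)$, $\mu\otimes_{ns}(1\otimes\xi)$, and the associator $\mu\otimes_{ns}(\mu\otimes 1)-\mu\otimes_{ns}(1\otimes\mu)$. Consequently $q\uAs(0)=\mathbb{k}\cdot\xi$, $q\uAs(n)\cong \As(n)$ for $n\geq 1$, and in $q\uAs$ the element $\xi$ is a two-sided \emph{absorbing} element rather than a unit. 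Next I would extract the Koszul dual cooperad $q\uAs^{\ac}$ from its defining universal property inside $\Tfree^c(s\mu\oplus s\xi)$: its arity $\geq 1$ part recovers, up to suspension, the classical self-Koszul-dual $\As$ (spanned by the planar corollas), while its arity-$0$ part is assembled from the two allowed arity-one quadratic elements $s\mu\otimes_{ns}(s\xi\otimes 1)$ and $s\mu\otimes_{ns}(1\otimes s\xi)$, iteratively grafted.

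The core step is then to verify that the canonical morphism
\[
q\uAs\circ_{\kappa}q\uAs^{\ac}\longrightarrow \II
\]
is a quasi-isomorphism. I would check this arity by arity. For each arity $n\geq 1$, I would show that the complex splits off a copy of the classical Koszul complex $\As\circ_{\kappa}\As^{\ac}\to \II$, which is acyclic by the textbook Koszulness of $\As$. For arity $0$, the complex is infinite-dimensional because of the repeated insertions of $\xi$, and this is where the real work lies.

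To handle the arity-zero case, I would introduce a grading by the number of $\xi$-leaves and show that the differential of $q\uAs\circ_\kappa q\uAs^{\ac}(0)$ decomposes into an associativity piece (governed by $\As\circ_\kappa\As^{\ac}$) and a piece that removes or adjoins a $\xi$ at an outer leaf. The second piece has an obvious contracting homotopy: fix a distinguished leaf position and split trees according to whether that leaf carries a $\xi$ or lies inside an $\As^{\ac}$-block. Combining the two homotopies via a standard spectral sequence / perturbation argument yields acyclicity in positive degree and the expected identification in degree zero.

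The principal obstacle is exactly this arity-$0$ bookkeeping: choosing a distinguished leaf in a symmetric-looking complex is delicate, and in the nonsymmetric operadic setting one must be careful that the splitting is compatible with the planar tree combinatorics and with the coderivation on $q\uAs^\ac$. If a direct contracting homotopy proves too fiddly, a cleaner fallback would be to exhibit a quadratic Gröbner (PBW) basis for $q\uAs$ with respect to an admissible ordering that makes $\mu(\xi,-)$, $\mu(-,\xi)$ and the associator into leading monomials, which yields the Koszul property abstractly and finishes the proof.
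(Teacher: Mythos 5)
The paper does not actually prove this proposition: it is imported verbatim from Hirsh--Mill\`es, and the only argument of this type written out in the paper is the one for $\uCom$ at the end of Section~8. Your high-level route agrees with that model proof and with the cited reference: invoke Theorem~\ref{thm:hm}, identify $q\uAs$ (your computation of $q\RR$ and the conclusion $q\uAs(0)=\mbk\cdot\xi$, $q\uAs(n)\simeq\As(n)$ for $n\geq 1$ are correct), and reduce acyclicity of $q\uAs\circ_{\kappa}q\uAs^{\ac}$ to the classical Koszulness of $\As$. So the criterion and the reduction target are the right ones.

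The gap is in the decisive acyclicity step, which you only sketch. The clean way to close it --- the one the paper uses for $\uCom$ --- is structural rather than arity-by-arity: identify $q\uAs^{\ac}\simeq\As^{\ac}\circ(\II\oplus\mbk\cdot s\xi)$, filter the whole Koszul complex by the total number of $\xi$ and $s\xi$, identify the associated graded with $\mbk\cdot\xi\oplus(\As\circ_{\kappa}\As^{\ac})\circ(\II\oplus\mbk\cdot s\xi)$, and conclude by the (nonsymmetric) operadic K\"unneth formula and Theorem~\ref{maclane-homology}. Measured against this, two of your claims are off. First, localizing ``the real work'' in arity $0$ is misleading: summands decorated by $\xi$ and $s\xi$ occur in \emph{every} arity, so the distinguished-leaf homotopy you defer to arity $0$ is in fact the entire proof, and you never construct or verify it; note moreover that in $q\uAs$ the composite of $\xi$ into any operation of arity $\geq 2$ vanishes (it creates a subterm $\mu(\xi,-)$ or $\mu(-,\xi)$), so the differential behaves quite differently from the naive ``remove a $\xi$ at an outer leaf'' picture on which your homotopy is based. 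Second, your description of $q\uAs^{\ac}(0)$ as iterated graftings of the two arity-one quadratic elements cannot be right, since grafting arity-one elements never lowers arity; the correct description is the $\As^{\ac}(n)$-corollas with all $n$ leaves replaced by $s\xi$. Your Gr\"obner/PBW fallback is a legitimate independent route (the three relations do admit an admissible order making them a quadratic Gr\"obner basis), but as written it is only named, not carried out. In short: right strategy, but the key step is a sketch resting on some inaccurate combinatorics; the filtration argument is the way to finish.
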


\begin{rmk}
 The model structure on curved conilpotent coalgebras that we get by transfer along the adjunction $\Omega_u \dashv B_c$ is the model structure that Positselski described in \cite{Positselski11}. 
\end{rmk}

There are several ways to describe the infinity-category of $\uAs$-algebras.
\begin{itemize}
 \itemt One can take the Dwyer--Kan simplicial localization of the category of $\uAs$-algebras with respect to quasi-isomorphisms as described in \cite{DwyerKan80a} and \cite{DwyerKan80b}.
 \itemt One can take the simplicial category whose objects are cofibrant-fibrant $\uAs$-algebras and whose spaces of morphisms are 
 $$
\Map(\AAA,\BBB)_n := \HOM^{ns}_{\uAs-\mathsf{alg}}(\AAA,\BBB )\ .
 $$
 \itemt One can also take the simplicial category whose objects are all $\uAs$-algebras and whose spaces of morphisms are
 $$
 \Map(\AAA,\BBB):= \HOM^{ns}_{\uAs-\mathsf{alg}}(\Omega_u B_c \AAA,\BBB )\simeq \HOM^{ns}_{\cCog}(B_c \AAA,B_c \BBB )\ .
 $$
\end{itemize}

\begin{prop}\label{uasinfinitycat}
 The three simplicial categories described above are equivalent. Moreover, the two last ones are fibrant in the sense that for any objects $\AAA$ and $\BBB$, the simplicial set $\Map(\AAA,\BBB)$ is a Kan complex.
\end{prop}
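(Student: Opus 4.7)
First, I note that every $\uAs$-algebra is fibrant: fibrations in the projective model structure are surjections in chain complexes, and any $\AAA$ surjects onto the terminal $\uAs$-algebra $\{0\}$ (the zero algebra where $1=0$). Second, by Proposition \ref{prop:uaskoszul} the operad $\uAs$ is Koszul, so Theorem \ref{prop:quillengeneral} applied to the operadic twisting morphism $\alpha : \uAs^\ac \to \uAs$ tells us that both adjunctions in
$$\cCog \stackrel{\Omega_\iota}{\underset{B_\iota}{\rightleftarrows}} u\mathscr{A}_\infty\text{-}\mathsf{alg} \stackrel{\phi_!}{\underset{\phi^*}{\rightleftarrows}} \uAs\text{-}\mathsf{alg}$$
are Quillen equivalences. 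Thus the counit $\Omega_u B_c \AAA = \phi_! \Omega_\iota B_\iota \phi^* \AAA \to \AAA$ is a quasi-isomorphism, and since every object of $\cCog$ is cofibrant in the $\iota$-model structure, its image $\Omega_u B_c \AAA$ is cofibrant in $\uAs\text{-}\mathsf{alg}$. Hence $\AAA \mapsto \Omega_u B_c \AAA$ is a functorial cofibrant replacement landing in cofibrant-fibrant objects.

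Next I would apply Proposition \ref{prop:enrichhomotalg} in its nonsymmetric version (which does not require characteristic zero, cf.\ the remark after its statement): the bifunctor $\HOM^{ns}$ makes $\uAs\text{-}\mathsf{alg}$ homotopically enriched over $\sSet$, and for any cofibrant $\AAA'$ and any fibrant $\BBB$ the simplicial set $\HOM^{ns}(\AAA',\BBB)$ is a Kan complex modelling $\Map(\AAA',\BBB)$. For the simplicial category (2) this gives fibrancy immediately, since every object is cofibrant-fibrant. For (3) this applies with $\AAA' := \Omega_u B_c \AAA$ and simultaneously identifies $\HOM^{ns}(\Omega_u B_c \AAA, \BBB)$ with the Dwyer--Kan mapping space. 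The second isomorphism $\HOM^{ns}(\Omega_u B_c \AAA, \BBB) \simeq \HOM^{ns}(B_c \AAA, B_c \BBB)$ is the enriched statement of the bar-cobar adjunction, and follows from Proposition \ref{prop:enrichedadjun} by applying the normalized Moore functor $N$ to the coassociative-coalgebra enrichment and invoking Proposition \ref{prop:nsenrichsimp}.

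Finally, for the equivalence of the three simplicial categories I would invoke the classical Dwyer--Kan theorem (\cite{DwyerKan80a, DwyerKan80b}): for a model category equipped with a homotopical simplicial enrichment, the hammock localization at the weak equivalences is DK-equivalent to the full simplicial sub-category on cofibrant-fibrant objects, which is exactly the content of (1) $\simeq$ (2). The comparison (2) $\simeq$ (3) then follows from the functorial cofibrant replacement $\Omega_u B_c$: on mapping spaces the relevant natural map $\HOM^{ns}(\Omega_u B_c \AAA, \Omega_u B_c \BBB) \to \HOM^{ns}(\Omega_u B_c \AAA, \BBB)$ is induced by the weak equivalence $\Omega_u B_c \BBB \to \BBB$ between fibrant objects with cofibrant source, hence is a weak equivalence of Kan complexes by homotopical enrichment. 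The main obstacle is the careful invocation of the Dwyer--Kan comparison in the present setting, where the enrichment $\HOM^{ns}$ is homotopical but does not automatically make $\uAs\text{-}\mathsf{alg}$ a simplicial model category in the strict sense; this can be handled either by passing through a right Quillen equivalent simplicial model (e.g.\ via Proposition \ref{prop:enrichsimplgen}) or by invoking a suitable generalization of the Dwyer--Kan theorem to homotopically enriched model categories.
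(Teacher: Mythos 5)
Your proposal is correct and follows essentially the same route as the paper's (very terse) proof: Dwyer--Kan localization theory for the equivalence of the first two simplicial categories, the Koszulness of $\uAs$ together with the cofibrant replacement $\Omega_u B_c$ and the enriched bar--cobar adjunction for the equivalence with the third, and Proposition \ref{prop:enrichhomotalg} for fibrancy of the mapping spaces. Your version simply spells out the details (fibrancy of all $\uAs$-algebras, cofibrancy of $\Omega_u B_c \AAA$, the identification of mapping spaces via Propositions \ref{prop:enrichedadjun} and \ref{prop:nsenrichsimp}) that the paper leaves implicit.
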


\begin{proof}
It follows from \cite{DwyerKan80c} that the two first simplicial categories are equivalent and it follows from Proposition \ref{prop:uaskoszul} that the two last simplicial categories are equivalent. Moreover, the two last simplicial categories are fibrant by Proposition \ref{prop:enrichhomotalg}.
\end{proof}

\subsection{Unital commutative algebras up to homotopy}

In this section, we assume that the characteristic of the base field $\mbk$ is zero.

\subsubsection{A presentation of the operad $\uCom$}

Let $\uCom$ be the operad defined by the presentation $\uCom:=\Tfree (\mbk \cdot \mu \oplus \mbk \cdot \xi)/(\RR)$ where $\mu$ is an arity two element such that $\mu^{(1,2)}=\mu$ and $\xi$ is an arity zero element. The $\mbs$-module $\RR \subset \II \oplus \Tfree^2 (\mbk \cdot \mu \oplus \mbk \cdot \xi)$ is generated by the elements:
$$
\begin{cases}
 \mu \otimes_{\mbs_2} (\mu \otimes 1) - \mu \otimes_{\mbs_2} (1 \otimes \mu)\ ,\\
 \mu \otimes_{\mbs_2} (\xi \otimes 1) -1\ . 
\end{cases}
$$

\begin{rmk}\leavevmode
\begin{itemize}
 \itemt Since the action of $\mbs_2$ on $\mu$ is trivial, then we have
 $$
 \mu \otimes_{\mbs_2} (1 \otimes \mu) = \left( \mu \otimes_{\mbs_2} (\mu \otimes 1) \right)^{(132)}\ .
 $$
\itemt The element $\mu \otimes_{\mbs_2} (\mu \otimes 1) - \mu \otimes_{\mbs_2} (1 \otimes \mu)$ is a generator of the $\mbs_3$-module $\RR(3)$. However, it is not a generator of $\RR(3)$ as a $\mbk$-module ; one needs to add the element $ \mu \otimes_{\mbs_2} (\mu \otimes 1) -\big( \mu \otimes_{\mbs_2} (\mu \otimes 1)\big)^{(2,3)}$.
\end{itemize}
\end{rmk}

Given this presentation, the Koszul dual $\uCom^\ac$ is a curved conilpotent cooperad whose underlying graded cooperad is the final subcooperad of $\Tfree^c (\mbk \cdot s\mu \oplus \mbk \cdot \xi)$ such that 
$$
\uAs^\ac (3) \cap \Tfree^2 (\mbk \cdot s\mu ) (3)= \mbk [\mbs_3] \cdot \left( s\mu \otimes_{\mbs_2} (s\mu \otimes 1) -  s\mu \otimes_{\mbs_2} (1 \otimes s\mu) \right)\ .
$$
The coderivation of $\uCom^\ac$ is zero and the curvature is given by
$$
\theta \left( s\mu \otimes_{\mbs_2} (s\xi \otimes 1) \right)=-1\ .
$$

\begin{nota}
We denote by $\uCom_\infty$ the operad $\Omega_u \uCom^\ac$.
\end{nota}

\subsubsection{Coalgebras over $\uCom^\ac$}

We will show that the category of $\uCom^\ac$-coalgebras is equivalent to the category of curved conilpotent Lie coalgebras.

\begin{defin}[Curved Lie coalgebra]
 A \textit{curved Lie coalgebra} $\CCC=(\CC,\delta, d, \theta)$ is a graded $\mbk$-module $\CC$ equipped with an antisymmetric map $\delta : \CC \ra \CC \otimes \CC$ such that 
 $$
(\delta \otimes Id) \delta  = (Id \otimes \delta) \delta + (Id\otimes \tau )(\delta \otimes Id) \delta\ ,
 $$
 where $\tau$ is the exchange map $\tau(x \otimes y)= (-1)^{|x||y|}y \otimes x$. It is also equipped with a degree $-1$ map $d: \CC \ra \CC$ which is a coderivation, that is
 $$
 \delta d = (d \otimes Id + Id \otimes d) \delta\ ,
 $$
 and with a degree $-2$ map $\theta: \CC \ra \mbk$ which is a curvature, that is
 $$
 d^2 = (\theta \otimes Id- Id \otimes \theta) \delta\ .
 $$
 A curved Lie coalgebra $\CC$ is said to be \textit{conilpotent} if for any $x\in \CC$, there exists an integer $n$ such that the element

 $$
 (Id \otimes \cdots \otimes \delta \otimes \cdots \otimes Id ) \cdots \delta (x)
 $$
 is zero whenever $\delta$ appears $n$ times. We denote by $\cLieCog$ the category of curved conilpotent Lie coalgebras.
\end{defin}

\begin{prop}\label{propkoszulduallie}
  The endofunctor of the category of graded $\mbk$-modules $\VV \mapsto s \VV$ induces an equivalence between the category of $\uCom^\ac$-coalgebras and the category $\cLieCog$ of curved conilpotent Lie coalgebras.
\end{prop}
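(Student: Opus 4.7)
The plan is to apply Theorem \ref{thm:coalgkoszul} to unpack the definition of a $\uCom^\ac$-coalgebra and then match each piece of structure, via the shift $\VV \mapsto s\VV$, with the corresponding piece of a curved conilpotent Lie coalgebra. Concretely, a $\uCom^\ac$-coalgebra is a graded $\mbk$-module $\CC$ equipped with a map $\Delta^{(1)}: \CC \to s\VV \circ \CC$, which in view of $\VV = \mbk\cdot \mu \oplus \mbk\cdot \xi$ decomposes into a degree $1$ binary map $\delta': \CC \to s\mu \otimes_{\mbs_2}(\CC\otimes\CC)$ and a degree $1$ scalar $\theta': \CC \to \mbk$, together with a degree $-1$ coderivation $d_\CC$. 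The defining vanishing condition (killing $\Tfree^2(s\VV)/s^2 q\RR$) becomes the co-Jacobi identity for $\delta'$, while the curvature and coderivation equations of Theorem \ref{thm:coalgkoszul} are already in the right form.

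The first step is to translate this data onto $s\CC$. Because $\mu$ carries the trivial $\mbs_2$-action and the suspension introduces a Koszul sign, the generator $s\mu$ carries the sign representation; hence $s\mu \otimes_{\mbs_2}(\CC\otimes\CC)$ is the coinvariants of $\CC\otimes\CC$ under the signed swap, and after desuspending one factor on each side the map $\delta'$ is equivalent to an antisymmetric degree $0$ cobracket $\delta: s\CC \to s\CC \otimes s\CC$. This is precisely the classical Koszul duality identification $\Com^\ac \simeq s^{-1}\mathcal{L}ie^*$ at the level of quadratic pieces, which I would verify by a direct inspection of the relation defining $\uCom^\ac(3) \cap \Tfree^2(s\mu)(3)$ and the familiar sign rule that converts a symmetric cooperation with a shift into an antisymmetric cooperation. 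The scalar $\theta'$ shifts to a degree $-2$ curvature $\theta: s\CC \to \mbk$, and $d_\CC$ shifts to a degree $-1$ coderivation $d$ of $(s\CC,\delta)$.

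The second step is to check that the three equations of Theorem \ref{thm:coalgkoszul} correspond exactly to the three axioms of a curved Lie coalgebra: (i) coassociativity of $\Delta^{(1)}$ modulo the quadratic relation becomes co-Jacobi for $\delta$; (ii) the curvature equation $d_\CC^2 = (\theta \circ Id)\Delta_2$ unpacks, using that the only element of $\uCom^\ac$ where $\theta$ is nonzero is $s\mu \otimes_{\mbs_2}(s\xi \otimes 1)$ with value $-1$, into the identity $d^2 = (\theta \otimes Id - Id\otimes \theta)\delta$ on $s\CC$; (iii) coderivation compatibility for $d_\CC$ becomes coderivation compatibility for $d$. Finally, conilpotency is preserved: the pre-coradical filtration of Lemma \ref{lemma:precoalg} applied to $\Delta^{(1)}$ agrees, via the shift, with the iterated-cobracket filtration used to define conilpotency for Lie coalgebras. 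A quasi-inverse is then given by the desuspension $s^{-1}$, running the same correspondence backwards.

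The main obstacle is bookkeeping the Koszul signs coming from the suspension on both the symmetric generator $\mu$ and the arity-zero generator $\xi$, and verifying that the constant $-1$ appearing in the curvature of $\uCom^\ac$ produces precisely the sign needed for the curvature axiom $d^2 = (\theta \otimes Id - Id\otimes \theta)\delta$ rather than $d^2 = (\theta\otimes Id + Id\otimes \theta)\delta$ or some other variant. Once these sign conventions are fixed consistently, the equivalence of categories is formal, and the analogous statement for $\uAs^\ac$ and curved conilpotent coassociative coalgebras follows by the same method (replacing $\mbs_2$-coinvariants with plain tensor products and omitting the antisymmetrization step).
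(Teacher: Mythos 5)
Your proposal follows essentially the same route as the paper: apply Theorem \ref{thm:coalgkoszul} to reduce a $\uCom^\ac$-coalgebra to a triple $(\delta',\theta',d)$ on $\CC$, identify the vanishing condition modulo $s^2q\RR$ with the (cyclic-sum) co-Jacobi relation, and then transfer the whole package along the suspension with the appropriate Koszul signs; the paper carries out exactly this, isolating the intermediate category as a separate lemma and doing the arity-$3$ computation explicitly via the three generators $\nu_I,\nu_{II},\nu_{III}$ of $\Tfree(s\mu)(3)$. One caveat on the sign bookkeeping you rightly flag as the main obstacle: the $\mbs_2$-action on $s\mu$ is \emph{trivial} (the paper says so explicitly), so $s\mu\otimes_{\mbs_2}(\CC\otimes\CC)$ is the $\tau$-coinvariants, i.e.\ the symmetric square, and $\delta'$ first unpacks to a \emph{symmetric} degree $-1$ cobracket on $\CC$; the antisymmetry of the degree $0$ cobracket on $s\CC$ then comes from the Koszul sign in the identification $s\CC\otimes s\CC\simeq s^{2}(\CC\otimes\CC)$, not from a sign representation on $s\mu$. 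The end result is the same, but getting this mechanism right matters for the verification you defer (likewise $\theta'$ and the unshifted cobracket have degree $-1$, not $+1$).
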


\begin{lemma}\label{lemma:Liecog}
The category of $\uCom^\ac$-coalgebras is equivalent to the category whose objects are graded $\mbk$-modules $\CC$ equipped with three maps.
\begin{itemize}
 \itemt A degree $-1$ map $\delta: \CC \to \CC \otimes \CC$ which is symmetric is the sense that $\tau \delta = \delta'$, which satisfy the following equation
 $$
 (\delta \otimes Id) \delta (x) + \left((\delta \otimes Id) \delta (x)\right)^{(2,3)} + \left((\delta \otimes Id) \delta (x)\right)^{(1,3)}=0\ ,
 $$
 and such that for any $x \in \CC$, there exists an integer $n$ such that the element
 $$
 (Id \otimes \cdots \otimes \delta \otimes \cdots \otimes Id) \cdots (Id \otimes \delta) \delta(x)
 $$
 is zero whenever $\delta$ appears at least $n$ times.
 \itemt A degree $-1$ map $\theta: \CC \to \mbk$.
 \itemt A degree $-1$ map $d: \CC \to \CC$ such that $\theta d=0$, such that $\delta d = -(d \otimes Id + Id \otimes d) \delta$ and such that $d^2=-(\theta \otimes Id + Id \otimes \theta) \delta= -2(\theta \otimes Id) \delta$. 
\end{itemize}
The morphisms of this category are the morphisms of graded $\mbk$-modules which commute with these structure maps.
\end{lemma}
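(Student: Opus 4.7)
The plan is to apply Theorem \ref{thm:coalgkoszul} with $\PPP = \uCom$ and $\VV = \mbk\cdot\mu \oplus \mbk\cdot\xi$, which reduces the problem to carefully unpacking the resulting structure. By that theorem a $\uCom^\ac$-coalgebra is a graded $\mbk$-module $\CC$ equipped with (a) a degree-$0$ map $\Delta^{(1)}:\CC \to s\VV \circ \CC$ whose associated pre-coradical filtration is exhaustive, (b) a vanishing condition for the composite $\CC \to \Tfree^2(s\VV)\circ\CC \twoheadrightarrow (\Tfree^2(s\VV)/s^2q\RR)\circ\CC$, (c) a degree-$-1$ endomorphism $d_\CC$ compatible with the curved coderivation structure of $\uCom^\ac$. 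The task is to translate each item into the data $(\delta,\theta,d)$ of the statement.

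First, I would decompose $\Delta^{(1)}$ along the arity decomposition $s\VV(0) = \mbk\cdot s\xi$, $s\VV(2) = \mbk\cdot s\mu$ (the arity-$1$ part being zero). The arity-$2$ component is a map to $(s\mu)\otimes_{\mbs_2}\CC^{\otimes 2}$; since $\mu$ has trivial $\mbs_2$-action, so does $s\mu$, and this component corresponds exactly to a map $\delta:\CC\to\CC\otimes\CC$ such that $\tau\delta=\delta$, of degree $-|s\mu|=-1$. The arity-$0$ component gives $\theta:\CC\to\mbk$ of degree $-|s\xi|=-1$. Conilpotence of $\delta$ is then precisely the exhaustiveness of the pre-coradical filtration.

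Next I would handle the quadratic relation. The submodule $s^2q\RR(3)\subset\Tfree^2(s\VV)(3)$ is the $\mbs_3$-orbit of the suspension of $\mu\circ_1\mu - \mu\circ_2\mu$. Vanishing of the composite in Theorem \ref{thm:coalgkoszul}, when restricted to the $\delta$-part of $\Delta^{(1)}$, translates into the statement that $(\delta\otimes Id)\delta(x)$, viewed as an element of $\CC^{\otimes 3}$, is invariant under the three-cycle action up to signs, i.e.\ it satisfies
\[
(\delta\otimes Id)\delta(x) + \bigl((\delta\otimes Id)\delta(x)\bigr)^{(2,3)} + \bigl((\delta\otimes Id)\delta(x)\bigr)^{(1,3)} = 0,
\]
which is the required coJacobi-type identity. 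The coderivation condition $\Delta^{(1)}d_\CC=(d_{\uCom^\ac}\circ Id)\Delta + (Id\circ' d_\CC)\Delta^{(1)}$ becomes $\delta d=-(d\otimes Id + Id\otimes d)\delta$ and $\theta d=0$ after projecting onto $s\mu$ and $s\xi$, since $d_{\uCom^\ac}=0$. Finally, the curvature relation $d_\CC^2=(\theta_{\uCom^\ac}\circ Id)\Delta_2$ is computed by iterating $\Delta^{(1)}$ once more: only the mixed terms $\delta$-then-$\theta$ reach a region where $\theta_{\uCom^\ac}$ is nonzero, namely the $s\mu\otimes_{\mbs_2}(s\xi\otimes 1)$-summand of $s^2q\RR(1)$; the two possible placements of $\theta$ into the $\mbs_2$-symmetric $\delta$ contribute equally, yielding the factor $-2$ in $d_\CC^2=-2(\theta\otimes Id)\delta$. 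Functoriality is immediate: morphisms of $\uCom^\ac$-coalgebras are morphisms of graded $\mbk$-modules commuting with $\Delta$ and $d_\CC$, which unpacks to commuting with $\delta$, $\theta$, $d$.

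The main obstacle is keeping track of Koszul signs throughout step two and the curvature calculation of step three, in particular pinning down the exact sign and numerical factor in the curvature identity and verifying that the $\mbs_3$-module generated by the suspension of $\mu\circ_1\mu-\mu\circ_2\mu$ produces exactly the cyclic coJacobi relation and no extra constraints (for which the characteristic-zero assumption, used to identify invariants with coinvariants, is essential).
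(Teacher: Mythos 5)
Your proposal is correct and follows essentially the same route as the paper: apply Theorem \ref{thm:coalgkoszul}, split $\Delta^{(1)}$ by arity into a symmetric $\delta$ and a $\theta$ (using the trivial $\mbs_2$-action on $\mu$), identify the vanishing modulo $s^2q\RR(3)$ with the cyclic coJacobi relation, and read off the coderivation and curvature identities, with the symmetry of $\delta$ producing the factor $-2$. The step you flag as the main obstacle is exactly what the paper's proof makes explicit, by exhibiting $\mbk\cdot(\nu_I+\nu_{II}+\nu_{III})$ as the $\mbs_3$-stable complement of $s^2q\RR(3)$ in $\Tfree^2(s\mu)(3)$ and averaging over the symmetric group, which is where the characteristic-zero hypothesis enters as you anticipated.
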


\begin{proof}[Proof]
We apply Theorem \ref{thm:coalgkoszul}. A graded $\uCom^\ac$-coalgebra is a graded $\mbk$-module $\CC$ equipped with maps
$$
\begin{cases}
\delta': \CC \to (\mbk \cdot s\mu) \otimes_{\mbs_2} (\CC \otimes \CC)\ ,\\
\theta': \CC \to \mbk \cdot s\xi\ , \\ 
d': \CC \to \CC\ ,
\end{cases}
$$
such that the corresponding pre-coradical filtration is exhaustive, such that
\begin{equation}\label{equationkoszullie}
(Id_{s\mu} \circ' \delta') \delta'(\CC) \subset s^2\RR (3) \otimes_{\mbs_3} \CC^{\otimes 3}\ . 
\end{equation} 
and such that 
$$
\begin{cases}
\delta'd'= (Id \circ' d' ) \delta'\ ,\\
\theta'd'=0\ , \\ 
d'^2= (\theta_{\uCom^\ac} \circ Id)(Id \circ' \delta')  \delta'\ .
\end{cases}
$$
These maps induce new maps
$$
\begin{cases}
\delta: \CC \xrightarrow{\delta'} (\mbk \cdot s\mu) \otimes_{\mbs_2} (\CC \otimes \CC) \to \CC \otimes \CC\ ,\\
\theta: \CC \xrightarrow{\theta'} \mbk \cdot s\xi \to \mbk\ , \\ 
d=d': \CC \to \CC\ ,
\end{cases}
$$
where the degree $-1$ map $(\mbk \cdot s\mu) \otimes_{\mbs_2} (\CC \otimes \CC) \to \CC \otimes \CC$ sends $s\mu\otimes_{\mbs_2} (x \otimes y)$ to $\frac{1}{2}(x \otimes y + (-1)^{|x||y|} y \otimes x)$. Then, for any $x \in \CC$
$$
\delta'(x)= s\mu \otimes_{\mbs_2} \delta(x)\ .
$$
We know from \cite[\S 7.6.3]{LodayVallette12} that the $\mbk$-module $\Tfree(s\mu)(3)$ has three generators $\nu_I$, $\nu_{II}$ and $\nu_{III}$ which are obtained from the composite $s\mu \otimes_{\mbs_2} (s\mu \otimes 1)$ by applying respectively the permutations $Id \in \mbs_3$, $(2,3)$ and $(1,3)$. Moreover, $s^2\RR (3)$ is spanned by $\nu_I- \nu_{II}$ and $\nu_I - \nu_{III}$. Besides, $\mbk \cdot (\nu_{I} + \nu_{II} + \nu_{III})$ is a complementary sub-$\mbk[\mbs_3]$-module of $s^2\RR (3)$ in $\Tfree(s\mu)(3)$. Let us denote by $\pi$ the projection of $\Tfree(s\mu)(3)$ onto $\mbk \cdot (\nu_{I} + \nu_{II} + \nu_{III})$ along $s^2\RR (3)$. Since the action of the group $\mbs_2$ on $s\mu$ is trivial, we have for any $x \in \CC$
$$
(Id_{s\mu} \circ' \delta') \delta' (x) = 2\nu_I \otimes_{\mbs_3} \left((\delta \otimes Id) \delta (x)\right)\ .
$$
Then
$$
(\pi \circ Id)(Id_{s\mu} \circ' \delta') \delta' (x) = \frac{2}{3} (\nu_I +  \nu_{II} + \nu_{III})\otimes_{\mbs_3} \left((\delta \otimes Id) \delta (x)\right)\ .
$$
The above condition (\ref{equationkoszullie}) is equivalent to the fact that $(\nu_I +  \nu_{II} + \nu_{III})\otimes_{\mbs_3} \left((\delta \otimes Id) \delta (x)\right)$ is zero which is equivalent to 
$$
(\delta \otimes Id) \delta (x) + \left((\delta \otimes Id) \delta (x)\right)^{(2,3)} + \left((\delta \otimes Id) \delta (x)\right)^{(1,3)}=0\ .
$$
The other conditions are equivalent to the following
$$
\begin{cases}
 \delta d= -(d \otimes Id + Id \otimes d ) \delta\ ,\\
\theta d=0\ , \\ 
d^2= -(\theta\otimes  Id + Id \otimes \theta)\delta\ .
\end{cases}
$$
Conversely, from the maps $\delta$, $\theta$ and $d$, one can reconstruct $\delta'$, $\theta'$ and $d'$ in the obvious way.
\end{proof}

\begin{proof}[Proof of Proposition \ref{propkoszulduallie}]
We show that the category described in Lemma \ref{lemma:Liecog} is equivalent to the category of curved conilpotent Lie coalgebras. Let $\CCC=(\CC, \delta, \theta,d)$ be a curved conilpotent Lie coalgebra. Then, we can define the maps $(\delta',\theta',d')$ on $s^{-1} \CC$ where $\delta'$ is the composite
\begin{align*}
 s^{-1} \CC \simeq \mbk \cdot s^{-1} \otimes \CC &\to  \mbk \cdot s^{-1} \otimes \mbk \cdot s^{-1} \otimes \CC \otimes \CC \simeq \mbk \cdot s^{-1} \otimes \CC \otimes \mbk \cdot s^{-1} \otimes \CC \\
 s^{-1} \otimes x &\mapsto s^{-1} \otimes s^{-1} \otimes \delta (x)\ ,
\end{align*}
and where $\theta' (s^{-1}x)=\theta( x)$ and $d'(s^{-1}x)=-s^{-1}dx$ for any $x \in \CC$. It is straightforward to prove that these maps satisfy the conditions of Lemma \ref{lemma:Liecog}. Conversely, from a graded $\mbk$-module $\DD$ and maps $(\delta,\theta, d)$ as in Lemma \ref{lemma:Liecog}, one can build a structure of curved conilpotent Lie coalgebra $(\delta',\theta',d')$ on $s\DD$, where $\delta'$ is the composite
\begin{align*}
 s \DD \simeq \mbk \cdot s \otimes \DD &\to  \mbk \cdot s \otimes \mbk \cdot s \otimes \DD \otimes \DD \simeq \mbk \cdot s \otimes \DD \otimes \mbk \cdot s \otimes \DD \\
 s \otimes x &\mapsto - s \otimes s \otimes \delta (x)\ ,
\end{align*}
and where $\theta' (sx)= \theta (x)$ and $d'(sx)=-s dx$ for any $x \in \DD$. It is again straightforward to prove that these maps define actually a structure of curved conilpotent Lie coalgebra. Moreover, these two constructions are inverse one to another.
\end{proof}

\subsubsection{The bar-cobar adjunction}
If we compose the bar-cobar adjunction between $\uCom$-algebras and $\uCom^\ac$-coalgebras with the equivalence between $\uCom^\ac$-coalgebras and curved conilpotent Lie coalgebras, then we obtain an adjunction $\Omega_C \dashv B_L$ between unital commutative algebras and curved conilpotent Lie coalgebras which is as follows.\\

\begin{defin}[Curved Lie bar construction]
Let $\AAA=(\Aa,\gamma_\AAA,1)$ be a unital commutative algebra. Its \textit{curved Lie bar construction} $B_L  (\AAA)$ is the following curved conilpotent Lie coalgebra. The underlying graded Lie coalgebra of $B_L (\AAA)$ is 
$$
B_L  (\Aa) := \Lie^c \circ (s \Aa \oplus \mbk \cdot v)
$$
where $\Lie^c$ denotes the Lie cooperad which is the linear dual of the Lie operad and where $|v|=2$. The coderivation of $B_L(\AAA)$ extends the map
\begin{align*}
 \Lie^c (s\Aa \oplus \mbk v) \twoheadrightarrow s\Aa \wedge s\Aa \oplus s\Aa \oplus \mbk v &\ra s\Aa\\
 sx \wedge  sy &\mapsto (-1)^{|x|}s \gamma_\Aa (x \otimes y) \\
 v & \mapsto s1\\
 sx & \mapsto -sdx\ .
\end{align*}
The curvature is the map
\begin{align*}
 \Lie^c (s \Aa \oplus \mbk \cdot v) \twoheadrightarrow \mbk \cdot v &\ra \mbk\\
 v & \mapsto 1\ .
\end{align*} 
\end{defin}

\begin{defin}[Unital commutative cobar construction]
Let $\CCC=(\CC,\delta, d_\CC, \theta)$ be a curved Lie coalgebra. Its \textit{unital commutative cobar construction} $\Omega_C (\CCC)$ is the free unital commutative algebra
$$
\Omega_C \CC := S(s^{-1} \CC):= \bigoplus_{n \in \mbn} (s^{-1}\CC)^{\otimes n}/\mbs_n\ ,
$$
whose coderivation extends the map
\begin{align*}
 s^{-1} \CC &\ra S(s^{-1} \CC)\\
 s^{-1}x &\mapsto \theta(x) 1 -s^{-1}d_\CC x - \sum (-1)^{|x_1|} s^{-1} x_1 \otimes_{\mbs_2} s^{-1} x_2\ ,
\end{align*}
where $\sum x_1 \wedge x_2 = \delta (x)$. 
\end{defin}

\begin{defin}[Twisting morphisms]
A twisting morphism from a curved conilpotent Lie coalgebra $\CCC$ to a unital commutative algebra $\AAA$ is a degree $-1$ map $\alpha: \CC \ra \Aa$ such that
$$
\partial \alpha + \gamma_\AAA (\alpha \otimes \alpha) \delta_\CCC = \theta (-)1_\AAA\ .
$$
We denote by $\Tw_L (\CCC,\AAA)$ the set of twisting morphisms from $\CCC$ to $\AAA$.
\end{defin}

\begin{prop}
We have functorial isomorphisms
$$
\hom_{\uCom-\mathsf{alg}}(\Omega_C \CCC, \AAA) \simeq \Tw_L (\CCC,\AAA) \simeq \hom_{\cLieCog}(\CCC, B_L\AAA)
$$
for any unital commutative algebra $\AAA$ and any curved conilpotent Lie coalgebra $\CCC$.
\end{prop}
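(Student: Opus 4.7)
The plan is to deduce this proposition as a specialization of the general bar-cobar adjunction for operadic twisting morphisms (Proposition \ref{prop:adjmodcomod}) applied to the Koszul twisting morphism $\kappa : \uCom^\ac \to \uCom$, combined with the equivalence of categories $\Phi : \cLieCog \to \uCom^\ac-\mathsf{cog}$ of Proposition \ref{propkoszulduallie}, which sends a curved conilpotent Lie coalgebra $\CCC=(\CC, \delta, d, \theta)$ to the $\uCom^\ac$-coalgebra structure on $s^{-1}\CC$. Applied to $\kappa$ and $\DDD = \Phi(\CCC)$, Proposition \ref{prop:adjmodcomod} gives, for any unital commutative algebra $\AAA$, functorial isomorphisms
$$
\hom_{\uCom-\mathsf{alg}}(\Omega_\kappa \Phi(\CCC), \AAA) \simeq \Tw_\kappa(\Phi(\CCC), \AAA) \simeq \hom_{\uCom^\ac-\mathsf{cog}}(\Phi(\CCC), B_\kappa\AAA),
$$
so the task reduces to identifying each of these three sets with its Lie-theoretic counterpart in the statement.

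The unital commutative algebra $\Omega_\kappa\Phi(\CCC) = \uCom \circ_\kappa s^{-1}\CC$ has underlying free graded commutative algebra $S(s^{-1}\CC) = \Omega_C\CCC$, and its derivation is determined by its restriction to $s^{-1}\CC$. Unfolding this restriction from the twisted composition product and from the definition of $\Phi$: the internal differential contributes $s^{-1}x \mapsto -s^{-1}d_\CC x$; the image under $\kappa$ of the arity-$2$ cooperation $s\mu$ in $\Delta_{\Phi(\CCC)}$ produces the quadratic term $-\sum(-1)^{|x_1|} s^{-1}x_1 \otimes_{\mbs_2} s^{-1}x_2$ with $\sum x_1 \wedge x_2 = \delta(x)$; and the image under $\kappa$ of the arity-$0$ generator $s\xi$, which under $\Phi$ carries the curvature $\theta$, produces the constant term $\theta(x) 1_\AAA$. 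This recovers exactly the defining formula of $\Omega_C\CCC$. For the right-hand term, the operadic Koszul self-duality between $\Com$ and $\Lie$ identifies the arity-$\geq 1$ part of $\uCom^\ac$ (generated by $s\mu$) with an operadic suspension of $\Lie^c$, while the arity-$0$ generator $s\xi$ of $\uCom^\ac$ corresponds to the degree-$2$ insertion $v$; combined with the shift in $\Phi$, this yields a natural isomorphism of graded Lie coalgebras $\Phi^{-1}(B_\kappa\AAA) \cong \Lie^c \circ (s\Aa \oplus \mbk \cdot v)$. Under this isomorphism, the coderivation of $B_\kappa\AAA$ induced by $\kappa$ and $d_\AAA$ matches the generating map $sx \wedge sy \mapsto (-1)^{|x|}s\gamma_\AAA(x \otimes y)$, $v \mapsto s1_\AAA$, $sx \mapsto -sdx$ of $B_L\AAA$, and the curvature $\theta_{B_\kappa\AAA}$ transports to the projection $v \mapsto 1$.

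The identification of twisting morphisms is parallel: a $\kappa$-twisting morphism $\Phi(\CCC) \to \AAA$ is a degree-$0$ map $f : s^{-1}\CC \to \Aa$, which desuspends to a degree-$(-1)$ map $\alpha : \CC \to \Aa$. Expanding the defining equation $\partial f + \gamma_\AAA(\kappa \circ f)\Delta_{\Phi(\CCC)} = 0$ along the two generators $s\mu$ and $s\xi$ of $\uCom^\ac$ gives respectively the contribution $\gamma_\AAA(\alpha \otimes \alpha)\delta_\CCC$ and the contribution $-\theta(-)1_\AAA$, so the equation becomes $\partial\alpha + \gamma_\AAA(\alpha \otimes \alpha)\delta_\CCC = \theta(-)1_\AAA$, which is precisely the equation defining $\Tw_L(\CCC,\AAA)$. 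The main obstacle will be the sign bookkeeping throughout these three identifications — both the suspension and desuspension signs coming from $\Phi$, and the signs in the operadic suspension relating $\uCom^\ac$ to $\Lie^c$ and $v$ — but the signs in the definitions of $\Omega_C$, $B_L$ and $\Tw_L$ have manifestly been fixed to produce this clean correspondence, so the verification amounts to systematic unfolding of generators.
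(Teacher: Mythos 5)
Your proposal is correct, but it is organized differently from the proof the paper actually gives. The paper's proof is the one-line instruction to repeat the arguments of Proposition \ref{prop:barcobarcurvedaslv}: that is, a direct verification that a morphism of unital commutative algebras out of the free algebra $S(s^{-1}\CC)$ commuting with the derivations is the same as a degree $-1$ map $\alpha:\CC\to\Aa$ satisfying $\partial\alpha+\gamma_\AAA(\alpha\otimes\alpha)\delta_\CCC=\theta(-)1_\AAA$, and dually that such an $\alpha$ extends uniquely to a morphism of curved conilpotent Lie coalgebras into $\Lie^c\circ(s\Aa\oplus\mbk\cdot v)$. You instead specialize the already-established adjunction of Proposition \ref{prop:adjmodcomod} to the Koszul twisting morphism $\kappa:\uCom^\ac\to\uCom$ and transport everything along the equivalence $\cLieCog\simeq\uCom^\ac\text{-}\mathsf{cog}$ of Proposition \ref{propkoszulduallie}; this matches the strategy announced in the paragraph of the paper immediately preceding the statement (``if we compose the bar-cobar adjunction \ldots with the equivalence \ldots''), and it has the advantage of not re-proving the universal properties of free algebras and cofree comodules. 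What it costs is exactly the three identifications you list: $\Omega_\kappa\Phi(\CCC)\cong\Omega_C\CCC$, $\Phi^{-1}(B_\kappa\AAA)\cong B_L\AAA$ (which uses the Koszul self-duality identification of the positive-arity part of $\uCom^\ac$ with an operadic suspension of $\Lie^c$ and of $s\xi$ with the degree-$2$ element $v$, in parallel with the paper's remark in the $\uAs$ case), and $\Tw_\kappa(\Phi(\CCC),\AAA)\cong\Tw_L(\CCC,\AAA)$, including the observation that the arity-zero cooperation $s\xi$ is what produces the curvature term $\theta(-)1_\AAA$. These identifications carry the same computational content (unfolding of generators and signs) as the paper's direct route, and your sketch of them is accurate, so the argument goes through.
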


\begin{proof}
 The proof uses the same arguments as the proof of Proposition \ref{prop:barcobarcurvedaslv}.
\end{proof}

The adjunction $\Omega_C \dashv B_L$ is part of a larger picture
$$
\xymatrix{\cLieCog \ar@<1ex>[r]^(0.42){\Omega_\iota} & \uCom_\infty-\mathsf{alg} \ar@<1ex>[l]^(0.58){B_\iota} \ar@<1ex>[r]^(0.5){\psi_!} & \uCom-\mathsf{alg}\ , \ar@<1ex>[l]^(0.5){\psi^*}}
$$
where the adjunction $\psi_! \dashv \psi^*$ is induced by the morphism of operads $\psi: \uCom_\infty \to \uCom$ and where $\Omega_C = \psi_!\Omega_\iota$ and $B_L= B_\iota \psi^*$. We know that a $\uCom_\infty$-algebra $\AAA=(\Aa,\gamma)$ is the data of a chain complex $\Aa$ together with a degree $-1$ map
$$
\gamma: \Lie^c (s\Aa \oplus \mbk \cdot v) \to s\Aa\ .
$$
such that the coderivation of the curved Lie coalgebra $\Lie^c (s\Aa \oplus \mbk \cdot v)$ which extends $\gamma$ squares to $(\theta \otimes Id) \delta$, where $\theta$ is given by
\begin{align*}
 \Lie^c (s \Aa \oplus \mbk \cdot v) \twoheadrightarrow \mbk \cdot v &\ra \mbk\\
 v & \mapsto 1\ .
\end{align*}
 In particular, we have the following.
\begin{itemize}
 \itemt A degree zero symmetric product
 $$
 \gamma_2: \Aa \otimes \Aa \to \Aa\ .
 $$
 \itemt A degree $1$ map
 $$
 \gamma_{I,II}: \Aa\otimes \Aa \otimes \Aa \to \Aa
 $$ 
 whose boundary is the associator of $\gamma_2$, that is 
 $$
 \partial (\gamma_{I,II})= \gamma_2 (Id \otimes \gamma_2) - \gamma_2 (\gamma_2 \otimes Id)\ .
 $$
 \itemt A degree $1$ map
 $$
 \gamma_{I,III}: \Aa\otimes \Aa \otimes \Aa \to \Aa
 $$ 
 whose boundary is 
 $$
 \partial (\gamma_{I,III})= \gamma_2 (Id \otimes \gamma_2) -\gamma_2 (Id \otimes \gamma_2)(\tau \otimes Id)\ .
 $$
 \itemt An element $1_\AAA$ defined by $\gamma (v) = s 1_\AAA$.
 \itemt A degree $1$ map $\gamma_{u}: \Aa \to \Aa$ which makes $1_\AAA$ a unit up to homotopy:
 $$
 \partial (\gamma_{u}) = \gamma_2 (1_\AAA \otimes Id) -Id\ .
 $$
\end{itemize}

\subsubsection{The Koszul property and the infinity category of $u\mathscr{C}om_\infty$-algebras}

\begin{thm}
 The operad $\uCom$ is Koszul.
\end{thm}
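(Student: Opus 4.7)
My plan is to apply Theorem \ref{thm:hm} (the Hirsh--Mill\`es criterion, Theorem 4.3.1 of \cite{HirshMilles12}), which reduces the Koszulness of $\uCom$ to showing that the associated \emph{quadratic} Koszul complex
$$
q\uCom \circ_\kappa q\uCom^\ac \longrightarrow \II
$$
is a quasi-isomorphism, where $q\uCom$ and $q\uCom^\ac$ are the quadratic analogues obtained from the presentation of $\uCom$ by discarding the constant and linear parts of the relations.

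The first step is to identify $q\uCom$ and $q\uCom^\ac$ explicitly. Since the only purely quadratic relation in $\uCom$ is the associativity of $\mu$, we have $q\uCom = \Tfree(\mbk\cdot\mu \oplus \mbk\cdot\xi)/(\mu\circ_1\mu - \mu\circ_2\mu)$. Its algebras are triples $(A,\cdot,c)$ with $(A,\cdot)$ a commutative associative (nonunital) algebra and $c\in A$ an arbitrary element with no constraint. Accordingly, $q\uCom(n)$ is spanned by commutative monomials in $n$ ordered variables together with an arbitrary number of insertions of $\xi$. Dually, $q\uCom^\ac$ is cogenerated by $s\mu$ and $s\xi$ with corelations orthogonal to associativity; as a graded cooperad it contains the suspended Lie cooperad $\mbs^{-1}\Com^\ac = (s\Lie^c,\ldots)$ as the part supported on trees with no $s\xi$-leaves, and the remaining components are controlled by insertions of $s\xi$.

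The second step is to prove acyclicity of the Koszul complex $q\uCom\circ_\kappa q\uCom^\ac$ in positive arity. I would introduce the increasing filtration given by the total number of $\xi$- and $s\xi$-generators appearing in a tensor, and examine the associated spectral sequence. The differential $d_\kappa$ preserves this count (the twisting morphism $\kappa$ sends $s\mu\mapsto \mu$ and $s\xi\mapsto\xi$), so on the $E^0$-page one obtains a direct sum, indexed by the number $k$ of $\xi$-decorations, of complexes of the form $\Com(n+k)\otimes_{\mbs_k} \Com^\ac(n+k)^{\mbs_k}$-like pieces (after suitable bookkeeping with symmetrizations). These are built out of the classical Koszul complex $\Com\circ_\kappa \Com^\ac \to \II$, which is known to be acyclic in positive arity by the Koszulness of $\Com$; hence the spectral sequence collapses to $\II$ in arity $1$ and to zero elsewhere.

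The main obstacle will be making the identifications of $q\uCom^\ac$ as a graded cooperad sufficiently precise and verifying that the filtration by number of $\xi$/$s\xi$ generators really does reduce the $E^0$-page to a direct sum of Koszul complexes for $\Com$ with the expected combinatorics of $\mbs_k$-coinvariants/invariants. Once this identification is in place, acyclicity follows from the classical Koszulness of $\Com$ (in characteristic zero); alternatively, the whole statement is already contained in \cite[Section 6]{HirshMilles12} and the proof reduces to citing that reference.
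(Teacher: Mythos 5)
Your overall strategy is exactly the one the paper follows: invoke the Hirsh--Mill\`es criterion (Theorem \ref{thm:hm}), filter the quadratic Koszul complex $q\uCom\circ_\kappa q\uCom^\ac$ by the total number of $\xi$'s and $s\xi$'s, and reduce to the classical Koszulness of $\Com$. However, your identification of $q\uCom$ contains a genuine error that would derail the computation. The operator $q$ discards only the components of the relations lying in $\II\oplus\VV$; the tree $\mu\otimes_{\mbs_2}(\xi\otimes 1)$ has two vertices and therefore lies in $\Tfree^2(\VV)$, so the quadratic part of the unit relation $\mu\otimes_{\mbs_2}(\xi\otimes 1)-1$ is $\mu\otimes_{\mbs_2}(\xi\otimes 1)$ itself, not zero. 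Consequently $q\uCom$-algebras are commutative algebras equipped with a distinguished element $c$ satisfying $c\cdot a=0$ for all $a$ (not ``an arbitrary element with no constraint''), so that $q\uCom(n)=\mbk$ for $n\geq 1$ and $q\uCom(0)=\mbk\cdot\xi$. With your version of $q\uCom$ each arity would be infinite dimensional (arbitrarily many insertions of $\xi$), the Koszul dual $q\uCom^\ac$ --- defined by orthogonality to $q\RR$ --- would be a different and larger cooperad, and the associated graded of your filtration would not be built out of copies of $\Com\circ_\kappa\Com^\ac$ as you claim.

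Once $q\uCom$ and $q\uCom^\ac$ are identified correctly (the paper quotes $q\uCom^\ac\simeq\Com^\ac\circ(\II\oplus\mbk\cdot s\xi)$ from \cite{HirshMilles12}), the rest of your outline coincides with the paper's proof: the associated graded of the filtration by the number of $\xi$'s and $s\xi$'s is $\mbk\cdot\xi\oplus(\Com\circ_\kappa\Com^\ac)\circ(\II\oplus\mbk\cdot s\xi)$, which is quasi-isomorphic to $\II$ by the Koszulness of $\Com$ together with the operadic K\"unneth formula, and one concludes with Theorem \ref{maclane-homology} and then Theorem \ref{thm:hm}. Be careful with your fallback of simply citing \cite{HirshMilles12}: the paper uses that reference only for the general criterion and for the computation of $q\uCom^\ac$, and supplies its own acyclicity argument for $\uCom$, so the citation does not by itself discharge the statement.
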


\begin{proof}[Proof]
We know from \cite{HirshMilles12} that $q\uCom^\ac  \simeq \Com^\ac \circ (\II \oplus \mbk \cdot s\xi)$. So, we have:
$$
q\uCom \circ q\uCom^\ac  \simeq \mbk \cdot \xi \oplus  \Com \circ \Com^\ac \circ (\II \oplus \mbk \cdot s\xi)\ .
$$ 
We can filter $q\uCom \circ_\kappa q\uCom^\ac$ by the number of $\xi$ and $s\xi$ appearing in the trees. Then, the induced graded complex have the following form:
$$
G(q\uCom \circ_\kappa q\uCom^\ac)\simeq \mbk \cdot \xi \oplus  (\Com \circ_\kappa \Com^\ac) \circ (\II \oplus \mbk \cdot s\xi)\ .
$$
We already know by \cite[Theorems 7.4.6 and 13.1.7]{LodayVallette12} that the the canonical morphism $\Com \circ_\kappa \Com^\ac \to \II$ is a weak equivalence. Then, the map $G(q\uCom \circ_\kappa q\uCom^\ac) \to \II$ may be decomposed as follows.
$$
G(q\uCom \circ_\kappa q\uCom^\ac) \simeq \mbk \cdot \xi \oplus  (\Com \circ_\kappa \Com^\ac) \circ (\II \oplus \mbk \cdot s\xi) \to \II \oplus \mbk \cdot \xi \oplus \mbk \cdot s\xi \to \II\ .
$$
All the maps of this composition are quasi-isomorphisms. So, by Theorem \ref{maclane-homology}, the canonical map $q\uCom \circ_\kappa q\uCom^\ac \to \II$ is a quasi-isomorphism. We conclude by Theorem \ref{thm:hm}.
\end{proof}

There are several ways to describe the infinity category of $\uCom$-algebras.
\begin{itemize}
 \itemt One can take the Dwyer--Kan simplicial localization of the category of $\uCom$-algebras with respect to quasi-isomorphisms as described in \cite{DwyerKan80a} and \cite{DwyerKan80b}.
 \itemt One can take the simplicial category whose objects are cofibrant-fibrant $\uCom$-algebras and whose spaces of morphisms are 
 $$
\Map(\AAA,\BBB)_n := \HOM_{\uCom-\mathsf{alg}}(\AAA,\BBB )\ .
 $$
 \itemt One can also take the simplicial category whose objects are all $\uCom$-algebras and whose spaces of morphisms are
 $$
 \Map(\AAA,\BBB):= \HOM_{\uCom-\mathsf{alg}}(\Omega_C B_L \AAA,\BBB )\simeq \HOM_{\cLieCog}(B_L \AAA,B_L \BBB )\ .
 $$
\end{itemize}

\begin{prop}
 The three simplicial categories described above are equivalent. Moreover, the two last ones are fibrant in the sense that for any two objects $\AAA$ and $\BBB$, the simplicial set $\Map(\AAA,\BBB)$ is a Kan complex.
\end{prop}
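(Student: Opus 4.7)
The plan is to mirror the proof of Proposition \ref{uasinfinitycat} (the analogous statement in the associative setting), replacing the Koszulness of $\uAs$ by the Koszulness of $\uCom$ which has just been established, and replacing the nonsymmetric simplicial enrichment $\HOM^{ns}$ by the symmetric simplicial enrichment $\HOM$ of Proposition \ref{prop:enrichsimplgen}. The characteristic zero assumption made throughout this subsection ensures that the cocommutative/symmetric machinery applies.

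First I would handle the equivalence between the first two simplicial categories. By \cite{DwyerKan80c}, for any simplicial model category, the simplicial subcategory on cofibrant-fibrant objects provides a model for the Dwyer--Kan simplicial localization. Since Proposition \ref{prop:enrichhomotalg} exhibits $\HOM$ as a homotopical enrichment of $\uCom-\mathsf{alg}$ over simplicial sets which computes the correct mapping spaces between cofibrant and fibrant objects, this comparison goes through verbatim as in the proof of Proposition \ref{uasinfinitycat}.

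Next I would establish the equivalence between the second and third simplicial categories. Since $\uCom$ is Koszul, Theorem \ref{prop:quillengeneral} implies that the canonical morphism of operads $\uCom_\infty = \Omega_u\uCom^\ac \to \uCom$ is a quasi-isomorphism, that the adjunction $\Omega_\iota \dashv B_\iota$ is a Quillen equivalence, and that the $\kappa$-model structure on $\cLieCog$ coincides with the universal $\iota$-model structure. Via the equivalence $\cLieCog \simeq \uCom^\ac-\mathsf{cog}$ this unfolds into the statement that $\Omega_C B_L \AAA \to \AAA$ is a natural quasi-isomorphism with $\Omega_C B_L \AAA$ cofibrant, and that $B_L \AAA$ is always fibrant in $\cLieCog$. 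Consequently $\HOM(\Omega_C B_L \AAA, \BBB)$ computes the mapping space $\Map(\AAA,\BBB)$ by Proposition \ref{prop:enrichhomotalg}, and the isomorphism
\[
\HOM_{\uCom-\mathsf{alg}}(\Omega_C B_L \AAA,\BBB) \simeq \HOM_{\cLieCog}(B_L\AAA, B_L\BBB)
\]
is precisely the adjunction isomorphism of Proposition \ref{prop:enrichedadjun} applied levelwise with the Sullivan algebras $\Omega_n$. The functor $\AAA \mapsto \Omega_C B_L \AAA$ is a cofibrant replacement functor, giving a Dwyer--Kan equivalence between the third simplicial category and the simplicial subcategory of cofibrant-fibrant objects, hence with the first two.

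Finally I would establish fibrancy. For the second simplicial category, the mapping spaces $\HOM(\AAA,\BBB)$ with $\AAA$ cofibrant and $\BBB$ fibrant are Kan complexes by Proposition \ref{prop:enrichhomotalg}. For the third, $\HOM(\Omega_C B_L \AAA,\BBB)$ is such a mapping space since $\Omega_C B_L \AAA$ is cofibrant and every $\uCom$-algebra is fibrant, so it is again a Kan complex. I expect the main obstacle to be a bookkeeping issue rather than a genuine difficulty: one must check that the equivalences assembled above are compatible as simplicial functors, not merely pointwise weak equivalences of mapping spaces, and that the symmetric Sullivan enrichment is compatible with the Koszul equivalence $\Omega_C B_L \AAA \to \AAA$ after tensoring with $\Omega_n$. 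This last point reduces to the fact that the Koszul morphism $\kappa: \uCom^\ac \to \uCom$ remains Koszul after extension of scalars along $\mbk \to \Omega_n$, which follows from flatness of $\Omega_n$ and the operadic Künneth formula.
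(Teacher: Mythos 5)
Your proposal follows essentially the same route as the paper: the paper's proof simply invokes the argument of Proposition \ref{uasinfinitycat}, namely the Dwyer--Kan comparison of \cite{DwyerKan80c} for the first two simplicial categories, the Koszulness of $\uCom$ (via Theorem \ref{prop:quillengeneral}) for the last two, and Proposition \ref{prop:enrichhomotalg} for fibrancy. The extra compatibility checks you flag are reasonable diligence but are not treated separately in the paper.
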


\begin{proof}
The proof uses the same arguments as the proof of Proposition \ref{uasinfinitycat}. 
\end{proof}

%%%%%%%%%%%%%%%%%%%%%%%%%%%%%%%%%%%%%%%
\vspace{1cm}
\appendix
\section*{Appendix}

The purpose of this appendix is to describe the category of dg counital cocommutative coalgebras over an algebraically closed field of characteristic zero in the vein of the article \cite{ChuangLazarevMannan14}. In the sequel, dg counital cocommutative coalgebras are simply called cocommutative coalgebras. We suppose that the base field $\mbk$ is algebraically closed field and of characteristic zero.

\begin{rmk}
The characteristic zero assumption is needed in the theorem 2.9 of the article \cite{ChuangLazarevMannan14}.
\end{rmk}

We know that the linear dual of a cocommutative coalgebra is a commutative algebra. Moreover, for any cocommutative coalgebra $\CCC$, the sub-coalgebras of $\CCC$ are in correspondence with the ideals of $\CCC^*$.

\begin{defin}[Orthogonal ideals and sub-coalgebras]
 Let $\DDD=(\DD,\Delta,\epsilon)$ be a sub-coalgebra of $\CCC$. The orthogonal of $\DDD$ is the sub-chain complex $
\DDD^\perp:= \{f \in \CCC^*|\ \forall x\in \DD,\ f(x)=0\} \subset \CCC^*
$ which is an ideal of $\CCC^*$. Let $I$ be an ideal of the commutative algebra $\CCC^*$. The orthogonal of $I$ is the sub-chain complex $
I^\perp:= \{x \in \CCC|\ \forall f\in I,\ f(x)=0\}\ \subset \CCC
$ which is a sub-coalgebra of $\CCC$.
\end{defin}

\begin{defin}[Pseudo-compact algebras]
A pseudo-compact algebra is a dg unital commutative algebra $\AAA$ together with a set $\{I_u\}_{u \in U}$ of ideals of finite codimension, which is stable finite intersections and such that
$$
\AAA \simeq \lim \AAA/I_u\ .
$$
A morphism of pseudo-compacts algebras from $(\AAA, \{I_u\}_{u\in U})$ to $(\BBB, \{J_v\}_{v\in V})$ is a morphism of algebras $f: \AAA \to \BBB$ which is continuous with respect to the induced topologies, that is such that for any $v \in V$, there exists an $u \in U$ such that the composite morphism $\AAA \to \BBB \to \BBB/ J_v$ factors through $\AAA \to \AAA/I_u$. A pseudo-compact algebra $\AAA$ is called local if its underlying graded algebra is local.
\end{defin}

\begin{prop}
 The linear dual of a cocommutative coalgebra is a pseudo-compact algebra. Moreover, the linear dual functor is an antiequivalence between the category of cocommutative coalgebras and the category of pseudo-compact algebras.
\end{prop}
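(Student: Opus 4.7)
The plan is to use the fundamental theorem of coalgebras to set up an explicit inverse functor, and then verify the two functors are inverse by reducing everything to finite-dimensional linear duality.

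First, for any (dg counital) cocommutative coalgebra $\CCC=(\CC,\Delta,\epsilon)$, I will invoke the classical fundamental theorem of coalgebras to show that every element of $\CC$ lies in a finite-dimensional sub-coalgebra. More precisely, for $x\in \CC$ the Sweedler subcoalgebra generated by $x$ is finite-dimensional; a small dg adaptation (closing under the differential, which lands in the same finitely generated sub-$\mbk[\Delta]$-module) produces a finite-dimensional dg sub-coalgebra containing $x$. Consequently $\CCC$ is the filtered colimit of its finite-dimensional sub-coalgebras $\{\DDD_i\}_{i\in I}$ along the inclusions. Taking linear duals, $\CCC^\ast = \lim_i \DDD_i^\ast$ where each $\DDD_i^\ast$ is a finite-dimensional dg commutative algebra, and I set $I_i := \DDD_i^\perp \subset \CCC^\ast$. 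Each $I_i$ is an ideal of finite codimension equal to $\dim \DDD_i$, the family $\{I_i\}$ is stable under finite intersections (since sums of finite-dimensional sub-coalgebras are again finite-dimensional sub-coalgebras, and $(\DDD_i+\DDD_j)^\perp = I_i\cap I_j$), and $\bigcap_i I_i = 0$ because $\bigcup_i \DDD_i = \CC$. This gives $\CCC^\ast$ the structure of a pseudo-compact commutative algebra, functorially in $\CCC$ (a coalgebra map $f:\CCC\to\CCC'$ sends each finite-dimensional sub-coalgebra into a finite-dimensional sub-coalgebra, so $f^\ast$ is automatically continuous).

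For the inverse construction, given a pseudo-compact commutative algebra $(\AAA,\{I_u\}_{u\in U})$, I define
\[
\AAA^\vee := \mathrm{colim}_{u\in U}\, (\AAA/I_u)^\ast,
\]
a filtered colimit of finite-dimensional cocommutative coalgebras (each $(\AAA/I_u)^\ast$ being the linear dual of a finite-dimensional commutative algebra, hence a cocommutative coalgebra). Continuity of a morphism $\phi:\AAA\to\BBB$ of pseudo-compact algebras is precisely what is needed to factor the duals $(\BBB/J_v)^\ast \to \AAA^\vee$ compatibly, producing a well-defined coalgebra morphism $\phi^\vee: \BBB^\vee\to\AAA^\vee$.

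The two functors are mutually inverse by reducing to finite-dimensional linear duality. For a cocommutative coalgebra $\CCC = \mathrm{colim}_i \DDD_i$, one computes
\[
(\CCC^\ast)^\vee = \mathrm{colim}_i\, (\CCC^\ast/\DDD_i^\perp)^\ast = \mathrm{colim}_i\, \DDD_i^{\ast\ast} \simeq \mathrm{colim}_i\, \DDD_i = \CCC,
\]
using that each $\DDD_i$ is finite-dimensional so $\DDD_i \simeq \DDD_i^{\ast\ast}$ canonically as a coalgebra. Conversely, for a pseudo-compact algebra $(\AAA,\{I_u\})$, since $\AAA \simeq \lim_u \AAA/I_u$ and linear dualization exchanges filtered colimits of finite-dimensional objects with cofiltered limits,
\[
(\AAA^\vee)^\ast = \bigl(\mathrm{colim}_u (\AAA/I_u)^\ast\bigr)^\ast = \lim_u (\AAA/I_u)^{\ast\ast} \simeq \lim_u \AAA/I_u = \AAA,
\]
and the defining family of ideals of $(\AAA^\vee)^\ast$, namely the orthogonals of the finite-dimensional sub-coalgebras of $\AAA^\vee$, is cofinal with the original $\{I_u\}_{u\in U}$ (the images of the $(\AAA/I_u)^\ast$ are finite-dimensional sub-coalgebras, and any finite-dimensional sub-coalgebra is contained in such an image by the filtered colimit description). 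These isomorphisms are natural in an evident way, giving the claimed antiequivalence.

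The main obstacle I anticipate is ensuring the fundamental theorem of coalgebras goes through in the \emph{dg} setting: the classical statement is graded, so one must argue that the finite-dimensional graded sub-coalgebra generated by $x$ can be enlarged by finitely many iterated applications of the differential while remaining a sub-coalgebra, which works because $d$ is a coderivation and hence $\Delta(dx) = (d\otimes \mathrm{id} + \mathrm{id}\otimes d)\Delta(x)$ stays inside the tensor square of the enlarged finite-dimensional piece. Once this is in place, the rest of the argument is formal Pontryagin-style duality between filtered colimits of finite-dimensional coalgebras and cofiltered limits of finite-dimensional algebras.
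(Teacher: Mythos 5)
Your proposal is correct and follows essentially the same route as the paper: the paper's proof consists of observing that linear duality is an antiequivalence between finite-dimensional cocommutative coalgebras and finite-dimensional commutative algebras, and then invoking the local finiteness of coalgebras (Proposition \ref{prop:indfinitecoalg}) to pass to filtered colimits on one side and cofiltered limits on the other. You have simply written out in full the formal colimit/limit bookkeeping and the dg adaptation of the fundamental theorem of coalgebras that the paper leaves implicit.
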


\begin{proof}
 It is clear that linear duality induces an antiequivalence between finite dimensional cocommutative coalgebras and finite dimensional commutative algebras. The rest is a consequence of the following proposition \ref{prop:indfinitecoalg}.
\end{proof}

\begin{prop}\cite{GetzlerGoerss99}\label{prop:indfinitecoalg}
 Let $\CCC$ be a cocommutative coalgebra and let $x$ be an element of $\CCC$. There exists a finite dimensional sub-coalgebra of $\CCC$ which contains $x$. Then, $\CCC$ is the colimit of the filtered diagram of its finite dimensional sub-coalgebras.
\end{prop}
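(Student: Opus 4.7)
The plan is to reduce to the classical Sweedler argument for the fundamental theorem of coalgebras, and then upgrade from a finite-dimensional graded sub-coalgebra to a dg one via a one-line trick using $d^2 = 0$. The colimit statement will then follow formally from the first one.

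First I would treat a homogeneous element $x$ (the general case follows by splitting $x$ into homogeneous components and summing the resulting sub-coalgebras). Writing $\Delta(x) = \sum_{i=1}^{n} y_i \otimes z_i$ as a minimal representation — that is, with both $\{y_i\}_i$ and $\{z_i\}_i$ linearly independent — set $Y := \mathrm{span}\{y_i\}$ and $Z := \mathrm{span}\{z_i\}$. Coassociativity yields
$$\sum_i \Delta(y_i) \otimes z_i = \sum_i y_i \otimes \Delta(z_i),$$
and pairing with dual functionals to $\{z_i\}$ on the third factor gives $\Delta(y_i) \in Y \otimes \CC$; symmetrically $\Delta(z_i) \in \CC \otimes Z$. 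Cocommutativity then upgrades the first inclusion via $\Delta(y_i) = \tau\Delta(y_i) \in \CC \otimes Y$, so that
$$\Delta(y_i) \in (Y \otimes \CC) \cap (\CC \otimes Y) = Y \otimes Y,$$
and symmetrically $\Delta(z_i) \in Z \otimes Z$. Consequently $D_0 := Y + Z$ is a finite-dimensional graded sub-coalgebra containing $x = (\epsilon \otimes \mathrm{Id})\Delta(x) \in Z$.

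To accommodate the differential, I would pass to $D := D_0 + d(D_0)$, which is still finite-dimensional. Since $d$ is a coderivation,
$$\Delta\bigl(d(D_0)\bigr) = (d \otimes \mathrm{Id} + \mathrm{Id} \otimes d)\Delta(D_0) \subseteq d(D_0) \otimes D_0 + D_0 \otimes d(D_0) \subseteq D \otimes D,$$
so $D$ is closed under $\Delta$; and since $d^2 = 0$, we have $d(D) = d(D_0) \subseteq D$, so $D$ is also closed under $d$. Thus $D$ is the desired finite-dimensional dg sub-coalgebra of $\CCC$ containing $x$.

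For the second assertion, the poset $\mathscr{F}$ of finite-dimensional dg sub-coalgebras of $\CCC$, ordered by inclusion, is filtered: the sum of two members of $\mathscr{F}$ is again finite-dimensional, closed under $\Delta$ (since $\Delta(D_1 + D_2) \subseteq D_1 \otimes D_1 + D_2 \otimes D_2$), and closed under $d$. By the preceding construction every element of $\CCC$ lies in some member of $\mathscr{F}$, whence $\CCC = \colim_{\mathscr{F}} D$. The only genuinely delicate step in the whole argument is the cocommutativity move, which is precisely what promotes the asymmetric inclusion $\Delta(y_i) \in Y \otimes \CC$ to the symmetric $\Delta(y_i) \in Y \otimes Y$ needed to close $Y + Z$ under $\Delta$; everything else is routine linear algebra and a one-line use of $d^2 = 0$.
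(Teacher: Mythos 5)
Your proof is correct. Note that the paper itself offers no proof of this proposition --- it is stated with a bare citation to \cite{GetzlerGoerss99} --- so the natural comparison is with the classical ``fundamental theorem of coalgebras'' that the citation points to. That argument (Sweedler's) works for an arbitrary coassociative coalgebra: one writes $(\Delta \otimes \mathrm{Id})\Delta(x) = \sum_{i,j} c_{ij} \otimes y_i \otimes z_j$ with $\{y_i\}$ and $\{z_j\}$ linearly independent and checks that the span of $x$ together with the $c_{ij}$ is a finite-dimensional sub-coalgebra. Your route is genuinely different and shorter: you take a minimal representation of $\Delta(x)$ itself and use cocommutativity to promote the one-sided containments $\Delta(y_i) \in Y \otimes \CC$ and $\Delta(y_i) \in \CC \otimes Y$ to $\Delta(y_i) \in (Y \otimes \CC) \cap (\CC \otimes Y) = Y \otimes Y$. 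What this buys is a cleaner argument; what it costs is generality --- it would not prove the analogous statement for $\uCog$ (used elsewhere in the paper via \cite{AnelJoyal13}), but the proposition at hand is only about cocommutative coalgebras, so the hypothesis is legitimately available. The dg upgrade $D := D_0 + d(D_0)$ using the coderivation identity and $d^2 = 0$, and the passage to the filtered colimit (which is computed on underlying chain complexes, as the paper records for $\Ccog$), are both sound. Two points you dismiss as routine deserve one line each in a written version: (i) for homogeneous $x$ the minimal tensor factors $y_i, z_i$ can indeed be chosen homogeneous, since $\Delta(x)$ splits along the bigrading of $\CC \otimes \CC$ and the minimal rank is additive over that splitting, so $Y$ and $Z$ are graded subspaces; (ii) it is the counit axiom that identifies $x$ with $\sum_i \epsilon(y_i) z_i \in Z$, which is why $x$ lands in $D_0$ at all.
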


Chuang, Lazarev and Mannan showed that any pseudo-compact algebra can be decomposed into a product of local pseudo-compact algebras.

\begin{thm}\cite[2.9]{ChuangLazarevMannan14}
Any pseudo-compact algebra $\AAA$ is isomorphic to the product of local pseudo-compact algebras $\AAA \simeq \prod_{i \in I} \AAA_i$. Moreover, a morphism of products of local pseudo-compact algebras $f: \prod_{i \in I} \AAA_i \to \prod_{j \in J} \BBB_j$ is the data of a function $\phi:J \to I$ and a morphism $f_j:\AAA_{\phi(j)} \to \BBB_j$ for any $j \in J$, where $\pi_j f = f_j\pi_{\phi(j)}$ (here $\pi_j$ and $\pi_{\phi(j)}$ denote respectively the projection of $\prod_{j \in J} \BBB_j$ onto $\BBB_j$ and the projection of $\prod_{i \in I} \AAA_i$ onto $\AAA_{\phi(j)}$).
\end{thm}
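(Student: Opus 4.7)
The plan is to reduce the decomposition to the classical structure theorem for finite-dimensional commutative algebras, lift idempotents coherently through the pseudo-compact topology, and then deduce the morphism classification from how primitive idempotents behave under continuous maps.

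For the decomposition, I would write $\AAA \simeq \lim_u \AAA/I_u$ with each $\AAA/I_u$ a finite-dimensional dg commutative $\mbk$-algebra. The degree zero part of $\AAA/I_u$ is an Artinian commutative algebra, hence decomposes canonically as a finite product of local Artinian pieces, producing a complete set of primitive orthogonal idempotents in $(\AAA/I_u)_0$. The projections $\AAA/I_v \twoheadrightarrow \AAA/I_u$ send primitive idempotents to primitive idempotents (possibly with collisions), so the inverse system of primitive idempotent sets admits a limit $\{e_i\}_{i \in I}$ of degree zero orthogonal idempotents in $\AAA$ whose images in each finite quotient are finite in number and sum to the unit. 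A short graded-commutative calculation then shows $e_i \cdot d(e_i) = 0$, whence $d(e_i) = 2\, e_i\, d(e_i) = 0$ since $\mathrm{char}\,\mbk \neq 2$; each factor $\AAA_i := e_i \AAA$ is therefore a sub-dg pseudo-compact algebra, local by construction, and $\AAA \simeq \prod_i \AAA_i$ in the category of pseudo-compact algebras.

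For the morphism statement, let $f: \prod_i \AAA_i \to \prod_j \BBB_j$ be a continuous unital morphism with primitive idempotents $\{e_i\}$ and $\{e'_j\}$. For each $j$, the element $f(e_i)\cdot e'_j$ is an idempotent of the local pseudo-compact algebra $\BBB_j$. A local pseudo-compact algebra has only the trivial idempotents $0$ and $1$, since any idempotent must project to an idempotent of the residue field $\mbk$ and idempotents are determined modulo the nilradical. The relation $\sum_i f(e_i)\cdot e'_j = f(1)\cdot e'_j = e'_j$ then forces exactly one value $i = \phi(j)$ to satisfy $f(e_{\phi(j)})\cdot e'_j = e'_j$, producing the function $\phi: J \to I$. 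The components $f_j : \AAA_{\phi(j)} \to \BBB_j$ are defined by $a \mapsto f(a)\cdot e'_j$ and satisfy $\pi_j f = f_j \pi_{\phi(j)}$; the converse assembly of such data into a continuous morphism follows from the universal property of the product in the category of pseudo-compact algebras.

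The main obstacle will be the claim that a local pseudo-compact algebra contains only the trivial idempotents: it requires a careful topological analysis showing that compatible families of nontrivial primitive idempotents across the inverse system of finite-dimensional quotients cannot exist, which one reduces to the fact that in a local Artinian algebra the unique maximal ideal absorbs every non-unit. A secondary technicality is making the idempotent lifting of the first step coherent, since primitive decompositions of finite-dimensional commutative algebras are unique only up to permutation; the canonical choice is forced by requiring compatibility with the transition maps of the inverse system, and it is precisely this compatibility that enables the passage to the limit.
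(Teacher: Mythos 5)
The paper does not actually prove this statement: it is imported verbatim from \cite[Theorem 2.9]{ChuangLazarevMannan14}, so there is no internal proof to compare against. Judged on its own, your argument is the standard one and is essentially correct: decompose each finite-dimensional quotient $\AAA/I_u$ via its primitive idempotents, pass to the limit to obtain a complete orthogonal family $\{e_i\}$ in $\AAA_0$, kill the differential on the idempotents via the Leibniz rule, and classify morphisms by observing that a local ring admits only the trivial idempotents. Two points deserve tightening. First, the transition maps do not send primitive idempotents to primitive idempotents ``with collisions''; rather, a surjection $B \twoheadrightarrow A$ of finite-dimensional commutative algebras sends each primitive idempotent either to a primitive idempotent or to $0$, and is \emph{injective} on the idempotents it does not kill (if $e \neq e'$ are primitive with equal nonzero images, then $\pi(e) = \pi(e)\pi(e') = \pi(ee') = 0$, a contradiction), with every primitive idempotent of $A$ hit exactly once. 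This is precisely what guarantees that the threads in the inverse limit are pairwise orthogonal and that each finite stage is fully accounted for, so it should be stated rather than waved at. Second, the ``main obstacle'' you identify is not one: a local ring has no nontrivial idempotents for purely algebraic reasons --- $e(1-e)=0$ and one of $e$, $1-e$ lies outside the maximal ideal, hence is a unit, forcing the other to vanish --- so no topological analysis of the inverse system is required, and the phrase ``idempotents are determined modulo the nilradical'' is not the relevant mechanism. Likewise your worry about the primitive decomposition being unique only up to permutation is moot: the \emph{set} of primitive idempotents of an Artinian commutative ring is canonical, so no choices need to be made coherent. With these repairs the proof goes through.
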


We show that local pseudo-compact algebras are linear duals of conilpotent cocommutative coalgebras.

\begin{defin}[Irreducible coalgebras]
 A nonzero graded cocommutative coalgebra is said to be \textit{irreducible} if any two nonzero sub-coalgebras have a nonzero intersection.
\end{defin}

\begin{prop}\label{prop:irrlocal}
 A graded cocommutative coalgebra is irreducible if and only if its dual algebra is local.
\end{prop}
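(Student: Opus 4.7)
The plan is to translate the statement through the linear duality antiequivalence between cocommutative coalgebras and pseudo-compact commutative algebras, and then to characterize both sides via their simple (sub)objects.

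First, I would record that the orthogonality construction sets up a bijection between sub-coalgebras of $\CCC$ and closed ideals of the pseudo-compact algebra $\CCC^*$, reversing inclusions: $\DDD \mapsto \DDD^\perp$ and $I \mapsto I^\perp$. Under this correspondence, the intersection of a family of sub-coalgebras corresponds to the closure of the sum of the dual ideals, and, in particular, two sub-coalgebras intersect trivially if and only if the sum of their orthogonal ideals is dense in $\CCC^*$, that is, equals $\CCC^*$ (since any proper closed ideal is contained in a closed ideal of finite codimension by Proposition~\ref{prop:indfinitecoalg} applied on the coalgebra side).

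Next, I would use Proposition~\ref{prop:indfinitecoalg}: every nonzero sub-coalgebra $\DDD \subset \CCC$ contains a nonzero element, hence a finite-dimensional nonzero sub-coalgebra, and any such finite-dimensional sub-coalgebra contains a minimal nonzero (i.e.\ \emph{simple}) sub-coalgebra. Over an algebraically closed field, a simple cocommutative coalgebra is one-dimensional and spanned by a (necessarily degree zero) graded atom; this follows by dualizing and noting that a finite-dimensional simple commutative algebra over $\mbk$ is $\mbk$ itself. Two distinct simple sub-coalgebras must have zero intersection, because their intersection is a sub-coalgebra of each and neither admits a proper nonzero sub-coalgebra.

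These two observations give the key equivalence: $\CCC$ is irreducible if and only if it possesses exactly one simple sub-coalgebra, that is, exactly one graded atom. Via the orthogonality correspondence, simple sub-coalgebras of $\CCC$ biject with maximal closed ideals of $\CCC^*$ of codimension one. Since $\mbk$ is algebraically closed, every maximal closed ideal of the pseudo-compact algebra $\CCC^*$ has codimension one (its residue algebra is a finite-dimensional simple commutative $\mbk$-algebra, hence $\mbk$), and the radical of $\CCC^*$ as a graded algebra coincides with the intersection of these. Thus the condition that there be a unique graded atom of $\CCC$ is equivalent to the condition that $\CCC^*$ possesses a unique maximal (graded, closed) ideal, which is precisely the statement that $\CCC^*$ is local.

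The main obstacle I anticipate is the bookkeeping around the difference between closed ideals of pseudo-compact algebras and arbitrary ideals of the underlying (graded) algebra, together with verifying that simplicity in the cocommutative-coalgebra sense matches codimension-one maximal closed ideals in the pseudo-compact-algebra sense. This is where the algebraically closed hypothesis enters crucially, and once it is pinned down the equivalence falls out formally from the antiequivalence of categories.
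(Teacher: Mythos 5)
Your argument is correct, but it routes through a different intermediate characterization than the paper does. You reduce irreducibility to the existence of a \emph{unique simple sub-coalgebra}, identify simple sub-coalgebras with graded atoms (via the algebraically closed hypothesis) and, by orthogonality, with maximal closed ideals of finite codimension; in effect you prove Proposition \ref{prop:irrlocal} and the Corollary following it (``irreducible iff a single atom'') in one pass. The paper argues more directly: for the forward direction it takes two maximal ideals $M_1, M_2$ of $\CCC^*$, notes that $M_1^\perp \cap M_2^\perp \neq 0$ by irreducibility, so that $M_1 + M_2 \subset (M_1^\perp \cap M_2^\perp)^\perp$ is proper and hence $M_1 = M_2$; for the converse it uses Lemma \ref{lemma:maxideal} to realize the maximal ideal $M$ as the kernel of an augmentation, producing an atom $a$ with $\mbk \cdot a = M^\perp \subset (\DDD^\perp)^\perp = \DDD$ for every nonzero sub-coalgebra $\DDD$. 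Your version buys a cleaner conceptual picture (atoms $\leftrightarrow$ simple sub-coalgebras $\leftrightarrow$ codimension-one closed ideals) at the cost of the extra step you yourself flag: you establish uniqueness of the maximal \emph{closed} ideal, whereas the paper's definition of local refers to the underlying graded algebra, so you still owe an argument that no dense maximal ideal can coexist with it. Be aware, though, that the paper's own proof is equally brisk on this point (it applies $(-)^\perp$ to arbitrary maximal ideals and implicitly needs their orthogonals to be nonzero), so your proof is at the same level of rigor; if you want to close the loop, the cleanest fix is to note that an irreducible coalgebra is conilpotent (Proposition \ref{prop:irrconil}), so that $\CCC^*$ is a limit of finite-dimensional local algebras with nilpotent maximal ideals and every element outside $M$ is invertible by a geometric series.
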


\begin{proof}
 Let $\CCC=(\CC,\Delta, \epsilon)$ be a graded cocommutative coalgebra. We first suppose that it is irreducible. Let $M_1$ and $M_2$ be two maximal ideals of the commutative algebra $\CCC^*$. Since, $\CCC$ is irreducible, then the sub-coalgebras $M_1^\perp$ and $M_2^\perp$ have a nonzero intersection. So $M_1 +M_2 \subset (M_1^\perp \cap M_2^\perp)^\perp$ is a proper ideal. Since $M_1$ and $M_2$ are maximal ideals, then $M_1=M_1+M_2=M_2$. So $\CCC^*$ is local. Conversely, suppose that $\CCC^*$ is local. We denote by $M$ its maximal ideal. By Lemma \ref{lemma:maxideal}, $M$ is the kernel of an augmentation $\CCC^* \to \mbk$. By the antiequivalence between pseudo-compact algebras and cocommutative coalgebras, we obtain a morphism of coalgebras $\mbk \to \CCC$, that is an atom $a$ of $\CCC$. For any nonzero sub-coalgebra $\DDD$ of $\CCC$, the orthogonal $\DDD^\perp$ is contained in $M$. Thus $\mbk \cdot a = M^\perp \subset (\DDD^\perp)^\perp=\DDD$. So any nonzero sub-coalgebra of $\CCC$ contains $a$. Subsequently, $\CCC$ is irreducible.
\end{proof}

\begin{lemma}\label{lemma:maxideal}
 Let $\AAA$ be a graded local pseudo-compact algebra. Then, the maximal ideal $M$ of $\AAA$ is the kernel of an augmentation $\AAA \to \mbk$.
\end{lemma}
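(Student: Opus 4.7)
The goal is to identify $\AAA/M$ with $\mbk$; once this is established, the quotient map $\AAA \twoheadrightarrow \AAA/M \simeq \mbk$ will provide the desired augmentation with kernel $M$. The plan is to first argue that $\AAA/M$ is finite-dimensional as a graded $\mbk$-module and is a graded field, and then to show that any finite-dimensional graded field over an algebraically closed field of characteristic zero must reduce to $\mbk$ concentrated in degree zero.

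For the finite-dimensionality, I would write $\AAA \simeq \lim_{u \in U} \AAA/I_u$ using the pseudo-compact structure, with each $I_u$ a dg (hence graded) ideal of finite codimension. Because $\AAA$ is graded local, every homogeneous element outside $M$ is a unit, so every proper graded ideal lies inside $M$; in particular $I_u \subset M$ for every $u$. The projection $\AAA \twoheadrightarrow \AAA/M$ therefore factors through the finite-dimensional quotient $\AAA/I_u$, making $\AAA/M$ finite-dimensional. Moreover, since $M$ is the unique maximal graded ideal of $\AAA$, the quotient $\AAA/M$ has no proper graded ideals, so every nonzero homogeneous element generates $\AAA/M$ and thus admits a homogeneous inverse; that is, $\AAA/M$ is a graded field.

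It then remains to show that a finite-dimensional graded field $F$ over an algebraically closed field $\mbk$ of characteristic zero is concentrated in degree zero and equal to $\mbk$ there. In degree zero, $F_0$ is a finite-dimensional field extension of $\mbk$, so $F_0 = \mbk$. For a nonzero homogeneous $x \in F_n$ with $n$ odd, the Koszul sign rule gives $x \cdot x = -x \cdot x$, whence $x^2 = 0$ in characteristic zero, contradicting the fact that a graded field has no homogeneous zero divisors. For a nonzero $x \in F_n$ with $n$ even and nonzero, all the invertible powers $x^k \in F_{kn}$ are nonzero, producing an infinite linearly independent family and contradicting $\dim_\mbk F < \infty$.

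The principal difficulty is this last step, where both standing hypotheses on $\mbk$ are used in an essential way: algebraic closedness rules out nontrivial field extensions in degree zero, while characteristic zero is what forces odd-degree squares to vanish. The earlier reductions are formal consequences of the locality of $\AAA$ and of the pseudo-compact topology.
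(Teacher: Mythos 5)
Your proof is correct and follows essentially the same route as the paper's: pass to a finite-dimensional quotient of $\AAA$ by $M$ using the pseudo-compact structure, observe that every nonzero homogeneous element of the quotient is invertible, kill the nonzero-degree part, and invoke algebraic closedness of $\mbk$ in degree zero. The only cosmetic difference is that you handle the vanishing of nonzero degrees by splitting into odd (Koszul sign rule) and even (infinitely many powers) cases, whereas the paper deduces both at once from the nilpotence of nonzero-degree elements in a finite-dimensional graded algebra.
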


\begin{proof}
 Since $\AAA= (\Aa,\gamma_\AAA,1)$ is the inverse limit of finite dimensional algebras and since $M$ is maximal, then $M$ is the kernel of a surjection $\AAA \to \BBB$ where $\BBB=(\BB,\gamma_\BB,1)$ is a finite dimensional commutative algebra. Since $M$ is maximal, then any nonzero element of $\BB$ is invertible. Since, the elements in nonzero degrees are nilpotent, then $\BB$ is concentrated in degree zero. So $\BB$ is a finite dimensional field extension of $\mbk$. Finally, $\BB \simeq \mbk$ because $\mbk$ is an algebraically closed field. 
\end{proof}

\begin{cor}
A graded cocommutative coalgebra is irreducible if and only if it contains a single atom.
\end{cor}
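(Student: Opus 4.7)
The plan is to derive both directions from Proposition \ref{prop:irrlocal} together with the Chuang--Lazarev--Mannan decomposition \cite[2.9]{ChuangLazarevMannan14}, using the correspondence between atoms of $\CCC$ and augmentations $\CCC^* \to \mbk$ given by the antiequivalence between cocommutative coalgebras and pseudo-compact algebras.

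For the forward direction, suppose $\CCC$ is irreducible. The proof of Proposition \ref{prop:irrlocal} already exhibits an atom $a$ of $\CCC$ with the property that every nonzero sub-coalgebra of $\CCC$ contains $a$. If $b$ is any atom, then $\mbk \cdot b$ is a one-dimensional sub-coalgebra, so $a \in \mbk \cdot b$, i.e.\ $a = \lambda b$ for some $\lambda \in \mbk$. Comparing $\Delta a = a \otimes a = \lambda^2 (b \otimes b)$ with $\Delta a = \lambda \Delta b = \lambda (b \otimes b)$ forces $\lambda^2 = \lambda$, and since $a \neq 0$ we get $\lambda = 1$, so $a = b$. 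Hence $\CCC$ has a single atom.

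For the converse, suppose $\CCC$ has a unique atom. By \cite[2.9]{ChuangLazarevMannan14}, the pseudo-compact algebra $\CCC^*$ splits as a product $\prod_{i \in I} \AAA_i$ of local pseudo-compact algebras. Under the antiequivalence, products of pseudo-compact algebras correspond to direct sums of cocommutative coalgebras, so $\CCC \simeq \bigoplus_{i \in I} \CCC_i$ with $\CCC_i^* = \AAA_i$. Each $\CCC_i$ is then irreducible by Proposition \ref{prop:irrlocal}, and by the forward direction each $\CCC_i$ contains exactly one atom $a_i$. A quick check (see next paragraph) shows that the atoms of $\CCC$ are precisely the elements $a_i$, so the uniqueness assumption forces $|I| = 1$, whence $\CCC \simeq \CCC_1$ is irreducible.

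The only ingredient that needs verification is that atoms of a direct sum of coalgebras are exactly the atoms of the summands. If $x = \sum x_i \in \bigoplus_i \CCC_i$ satisfies $\Delta x = x \otimes x$, then since $\Delta$ respects the decomposition one has $\sum_i \Delta_{\CCC_i}(x_i) = \sum_{i,j} x_i \otimes x_j$; comparing the $\CCC_i \otimes \CCC_j$ component for $i \neq j$ yields $x_i \otimes x_j = 0$, so at most one $x_i$ is nonzero, and the unique nonzero component is then an atom of the corresponding summand. No serious obstacle arises; the argument is essentially bookkeeping once Proposition \ref{prop:irrlocal} and the decomposition theorem are in hand.
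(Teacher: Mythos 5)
Your proof is correct. The forward direction is essentially the paper's own argument: the proof of Proposition \ref{prop:irrlocal} produces an atom $a$ lying in every nonzero sub-coalgebra, and your computation $\lambda^2=\lambda$ showing that any other atom $b$ spans a one-dimensional sub-coalgebra forced to equal $\mbk\cdot a$ is exactly the bookkeeping needed to conclude uniqueness. For the converse, the paper simply declares the corollary a ``direct consequence'' of Proposition \ref{prop:irrlocal}, implicitly using the correspondence between atoms of $\CCC$ and maximal ideals of $\CCC^*$; you instead route this direction through the Chuang--Lazarev--Mannan product decomposition of $\CCC^*$ into local pieces, checking that atoms of a direct sum of coalgebras live in single summands. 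This is a genuinely different (and heavier) path, but it has a real advantage: it sidesteps the question of whether \emph{every} maximal ideal of a general pseudo-compact algebra is open and hence the kernel of an augmentation --- Lemma \ref{lemma:maxideal} only establishes this in the local case --- so your version of the converse is arguably the more watertight of the two. No gaps.
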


\begin{proof}
 It is a direct consequence of Proposition \ref{prop:irrlocal}.
\end{proof}

\begin{prop}\label{prop:irrconil}
 Irreducible graded cocommutative coalgebras are conilpotent graded cocommutative coalgebras.
\end{prop}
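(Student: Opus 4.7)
The plan is to reduce the question to the finite-dimensional case via Proposition \ref{prop:indfinitecoalg} and then exploit the anti-equivalence between finite-dimensional cocommutative coalgebras and finite-dimensional commutative algebras, combined with the fact that the maximal ideal of a finite-dimensional local commutative algebra is nilpotent.

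Let $\CCC = (\CC,\Delta,\epsilon)$ be an irreducible graded cocommutative coalgebra; by the preceding corollary it has a unique atom $1 \in \CC$. I would pick any $x \in \ov \CC$ and first invoke Proposition \ref{prop:indfinitecoalg} to embed $x$ inside a finite-dimensional sub-coalgebra $\DDD \subset \CCC$. A small observation is that $\DDD$ is itself irreducible and contains the atom $1$: the sub-coalgebra $\mbk \cdot 1$ of $\CCC$ must intersect the nonzero sub-coalgebra $\DDD$ nontrivially (whence $1 \in \DDD$), and any two nonzero sub-coalgebras of $\DDD$ are nonzero sub-coalgebras of $\CCC$, so they still meet nontrivially.

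By Proposition \ref{prop:irrlocal} the finite-dimensional algebra $\DDD^{*}$ is local; call $M$ its maximal ideal. Being Artinian and local, $M$ is nilpotent: $M^{n}=0$ for some $n \geq 1$. The core of the proof is to translate this algebraic statement into coalgebraic conilpotency. Using $M = (\mbk \cdot 1)^{\perp}$ together with the standard identity $(M^{n})^{\perp} = (\Delta^{n-1})^{-1}\bigl(\sum_{i=1}^{n} \DD^{\otimes (i-1)} \otimes \mbk \cdot 1 \otimes \DD^{\otimes (n-i)}\bigr)$ inside $\DD$, the vanishing $M^{n} = 0$ reads
$$
\Delta^{n-1}(\DD) \ \subset\ \sum_{i=1}^{n} \DD^{\otimes (i-1)} \otimes \mbk \cdot 1 \otimes \DD^{\otimes (n-i)}.
$$
Setting $\pi(y) := y - \epsilon(y)\cdot 1$, whose kernel is exactly $\mbk\cdot 1$, this says $\pi^{\otimes n}\, \Delta^{n-1} = 0$ on $\DD$. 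A short computation in Sweedler notation (induction on $n$ using counitality) shows that $\pi^{\otimes n} \Delta^{n-1}(y) = \ov\Delta^{n-1}(y)$ for every $y \in \ov \DD$. Hence $\ov\Delta^{n-1}(x) = 0$, computed either in $\DDD$ or equivalently in $\CCC$. Since $x \in \ov \CC$ was arbitrary, $\CCC$ is conilpotent with atom $1$.

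The main obstacle is simply the linear-algebraic dictionary between nilpotency of $M$ in $\DDD^{*}$ and vanishing of $\ov\Delta^{n-1}$ on $\ov \DD$, which rests on the identity $\pi^{\otimes n}\Delta^{n-1} = \ov\Delta^{n-1}$ on $\ov \DD$: this is a standard but slightly fiddly counitality calculation. Note that the integer $n$ is allowed to depend on $x$ (through the choice of $\DDD$), which matches exactly the conilpotency requirement.
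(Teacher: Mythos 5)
Your proof is correct and follows essentially the same route as the paper's: reduce to a finite-dimensional sub-coalgebra containing $x$, observe its dual is a local (hence Artinian local) algebra with nilpotent maximal ideal, and dualize back to conilpotency. You supply two details the paper leaves implicit --- that the atom $1$ lies in $\DDD$ and the explicit dictionary $M^{n}=0 \Leftrightarrow \ov\Delta^{\,n-1}=0$ on $\ov\DD$ --- and you replace the paper's detour through the degree-zero part $M_0$ and Nakayama by the direct fact that the maximal ideal of a finite-dimensional local algebra is nilpotent, which is fine.
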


\begin{proof}
Let $\CCC=(\CC,\Delta,\epsilon)$ be an irreducible graded cocommutative coalgebra. Let $x$ be an element of $\CC$ and let $\DDD=(\DD,\Delta,\epsilon)$ be a finite dimensional sub-coalgebra of $\CCC$ which contains $x$. The commutative algebra $\DDD^*$ is local; its maximal ideal is $M := {\ov \DD}^*$. Then, $\DDD^*_0$ is also local with maximal ideal $M_0$. By Nakayama's lemma, $M_0$ is nilpotent. So, $M$ is nilpotent and so $\DDD$ is a conilpotent  cocommutative coalgebra.
\end{proof}

\begin{cor}
 The antiequivalence between the category of pseudo-compact algebras and the category $\uCocom$ of cocommutative coalgebras restricts to an antiequivalence between the category of local pseudo-compact algebras and the category $\uNilCocom$ of conilpotent cocommutative coalgebras. 
\end{cor}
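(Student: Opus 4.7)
The plan is to combine the anti-equivalence between pseudo-compact algebras and $\uCocom$ (already established) with Propositions \ref{prop:irrlocal} and \ref{prop:irrconil}, together with a verification that the distinguished atom of a conilpotent cocommutative coalgebra is unique. Since both inclusions ($\uNilCocom \hookrightarrow \uCocom$ and local pseudo-compact algebras into pseudo-compact algebras) are full subcategories, it suffices to check that the anti-equivalence matches objects on both sides.

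First I would check uniqueness of atoms in a conilpotent cocommutative coalgebra $\CCC=(\CC,\Delta,\epsilon,1)$: if $a=1+x$ with $x\in \ov\CC$ is another graded atom, then the relation $\Delta a = a\otimes a$ forces $\ov\Delta x = x\otimes x$, whence $\ov\Delta^n x = x^{\otimes (n+1)}$, and conilpotency gives $x=0$. (This is already essentially contained in the Proposition showing morphisms of conilpotent coalgebras preserve atoms.) Consequently a conilpotent cocommutative coalgebra has a unique atom, so by the Corollary of Proposition \ref{prop:irrlocal} its underlying graded coalgebra is irreducible, and by Proposition \ref{prop:irrlocal} its linear dual is a local pseudo-compact algebra. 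This takes care of one direction.

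Conversely, given a local pseudo-compact algebra $\AAA$, Proposition \ref{prop:irrlocal} shows that $\CCC:=\AAA^*$ is irreducible as a graded cocommutative coalgebra; by Proposition \ref{prop:irrconil} its underlying graded coalgebra is conilpotent, and by Lemma \ref{lemma:maxideal} the unique maximal ideal $M$ of $\AAA$ is the kernel of an augmentation $\AAA\to\mbk$, whose dual provides a canonical (graded) atom $1\in\CC$. Endowing $\CCC$ with this atom gives an object of $\uNilCocom$ --- notice that we only need $1$ to be a graded atom, not a dg atom, so there is no further compatibility to check with the differential. Finally, the morphisms match: a morphism of cocommutative coalgebras between conilpotent ones automatically preserves the distinguished atom (again by uniqueness), so $\uNilCocom$ is a full subcategory of $\uCocom$, and by construction pseudo-compact morphisms between local algebras form a full subcategory.

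The only potentially subtle step is the passage from the graded statement of Proposition \ref{prop:irrconil} to the dg setting: one must be sure that upgrading an irreducible graded cocommutative coalgebra to a dg object does not impose further conditions on the atom. This is harmless here because the paper's definition of a conilpotent coalgebra only requires a graded atom, and the primitive element $d1$ (which lies in $\ov\CC$ by compatibility of $d$ with $\epsilon$) need not vanish. With this minor point dispensed with, the corollary follows immediately by assembling the equivalences.
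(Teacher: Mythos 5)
Your proposal is correct and follows essentially the same route as the paper, which simply invokes Propositions \ref{prop:irrlocal} and \ref{prop:irrconil} (together with the corollary identifying irreducibility with having a single atom); you merely spell out the uniqueness of atoms and the fullness of the two subcategories, which the paper leaves implicit.
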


\begin{proof}
 It is a direct consequence of Proposition \ref{prop:irrlocal} and Proposition \ref{prop:irrconil}.
\end{proof}

\begin{thmappendix*}\label{thm:decompcocom}
Let $\CCC= (\CC,\Delta,\epsilon)$ be a dg cocommutative coalgebra over an algebraically closed field of characteristic zero and let $A$ be its set of graded atoms. There exists a unique decomposition $\CCC \simeq \bigoplus_{a \in A} \CCC_a$ where $\CCC_a$ is a sub-coalgebra of $\CCC$ which contains $a$ and which belongs to the category $\uNilCocom$. Moreover, a morphism of dg cocommutative coalgebras $f: \bigoplus_{a \in A} \CCC_a \to \bigoplus_{b \in B} \DDD_b$ is the data of a function $\phi: A \to B$ and of a morphism $f_a: \CCC_a \to \DDD_{\phi(a)}$ for any $a \in A$. 
\end{thmappendix*}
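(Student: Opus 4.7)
The plan is to pull back the theorem through the antiequivalence between $\uCocom$ and the category of pseudo-compact algebras, and then invoke Theorem 2.9 of \cite{ChuangLazarevMannan14}.

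First I would apply linear duality. The coalgebra $\CCC$ corresponds to the pseudo-compact algebra $\CCC^*$. By the Chuang--Lazarev--Mannan decomposition, there exists an isomorphism $\CCC^* \simeq \prod_{i \in I} \AAA_i$ with each $\AAA_i$ a local pseudo-compact algebra. By the corollary proved just above (combining Proposition \ref{prop:irrlocal} and Proposition \ref{prop:irrconil}), each $\AAA_i$ is the linear dual of a conilpotent cocommutative coalgebra $\CCC_i \in \uNilCocom$.

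Next I would transport the product back to $\uCocom$. Since linear duality is an antiequivalence, products in pseudo-compact algebras correspond to coproducts in $\uCocom$, which in this category are direct sums of the underlying chain complexes. Therefore $\CCC \simeq \bigoplus_{i \in I} \CCC_i$. To identify the index set $I$ with the set $A$ of graded atoms of $\CCC$, I would observe that for any atom $a$ of a direct sum $\bigoplus_i \CCC_i$, the equation $\Delta a = a \otimes a$ together with the fact that $\Delta$ respects the decomposition $\CC \otimes \CC \simeq \bigoplus_{i,j} \CC_i \otimes \CC_j$ forces $a$ to lie in a single summand $\CCC_i$; and each conilpotent $\CCC_i$ has exactly one atom (namely the distinguished $1_i$, by the last proposition before the definition of conilpotent coalgebras). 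This gives a canonical bijection $I \simeq A$ and the desired decomposition indexed by atoms.

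For uniqueness, suppose $\CCC \simeq \bigoplus_{a \in A} \CCC'_a$ is another decomposition with each $\CCC'_a$ conilpotent containing $a$. Dualising yields a second decomposition of $\CCC^*$ as a product of local pseudo-compact algebras, and the uniqueness clause of \cite[Theorem 2.9]{ChuangLazarevMannan14} (coupled with the bijection between indices and atoms established above) forces $\CCC'_a \simeq \CCC_a$ compatibly. Alternatively one may characterise $\CCC_a$ intrinsically as the largest irreducible sub-coalgebra of $\CCC$ containing $a$.

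Finally, for the statement about morphisms, I would dualise: a morphism $f: \bigoplus_a \CCC_a \to \bigoplus_b \DDD_b$ of cocommutative coalgebras corresponds to a morphism of pseudo-compact algebras $f^*: \prod_b \DDD_b^* \to \prod_a \CCC_a^*$, and the morphism part of \cite[Theorem 2.9]{ChuangLazarevMannan14} gives this as the data of a function $\phi: A \to B$ together with morphisms $\DDD_{\phi(a)}^* \to \CCC_a^*$ for each $a$; dualising back yields the claimed description. The main delicate point will be checking that direct sums in $\uCocom$ really are dual to products of pseudo-compact algebras (which requires care about completions and the pseudo-compact topology), and that atoms of the direct sum decompose as described; both follow readily from the explicit description of colimits in $\uCocom$ and from Proposition \ref{prop:indfinitecoalg}.
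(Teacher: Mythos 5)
Your proposal is correct and follows essentially the same route as the paper: dualise to pseudo-compact algebras, apply \cite[Theorem 2.9]{ChuangLazarevMannan14}, and use the antiequivalence between local pseudo-compact algebras and conilpotent cocommutative coalgebras established in the preceding propositions. The only (harmless) variation is in identifying the index set with the atoms: the paper reads an atom as a morphism of pseudo-compact algebras $\prod_{i}\CCC_i^* \to \mbk$ and applies the morphism clause of the cited theorem, whereas you argue directly from $\Delta a = a \otimes a$ and the uniqueness of the atom in each conilpotent summand.
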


\begin{proof}
The only point that needs to be cleared up is that, in the decomposition $\CCC= \bigoplus_{i \in I} \CCC_i$, the set $I$ is isomorphic to the set of graded atoms of $\CCC$. A graded atom of $\CCC$ is a morphism of graded cocommutative coalgebras from $\mbk$ to $\CCC$, that is a morphism of graded pseudo-compact algebras from $\prod_{i \in I} \CCC_i^*$ to $\mbk$. So it is the choice of an element of $I$.
\end{proof}

%%%%%%%%%%%%%%%%%%%%%%%%%%%%%%%%%%%%%

\bibliographystyle{amsalpha}
\bibliography{biblg}

\end{document}